\newtheorem{theorem}{Theorem}[section]
\newtheorem{proposition}[theorem]{Proposition}
\newtheorem{lemma}[theorem]{Lemma}
\newtheorem{corollary}[theorem]{Corollary}
\theoremstyle{definition}
\newtheorem{definition}[theorem]{Definition}
\newtheorem{statement}[theorem]{Statement}
\newtheorem{question}[theorem]{Question}
\newtheorem{remark}[theorem]{Remark}
\newcommand{\N}{\mathbb{N}}
\newcommand{\res}{\mathbin{\upharpoonright}}
\mathchardef\mhyphen="2D
\newcommand{\seq}[1]{\langle #1 \rangle}
\newcommand{\set}[1]{\{ #1 \}}
\newcommand{\tuple}[1]{\mathbf{ #1 }}
\newcommand{\0}{\mathbf{0}}
\newcommand{\cyl}[1]{[\hspace{-1.5pt}[ #1 ]\hspace{-1.5pt}]}
\newcommand{\RCA}{\mathsf{RCA}_0}
\newcommand{\ACA}{\mathsf{ACA}_0}
\newcommand{\WKL}{\mathsf{WKL}}
\newcommand{\WWKL}{\mathsf{WWKL}}
\newcommand{\ATR}{\mathsf{ATR}_0}
\newcommand{\CA}{\mathsf{CA}_0}
\newcommand{\D}{\mathsf{D}}
\newcommand{\RT}{\mathsf{RT}}
\newcommand{\SRT}{\mathsf{SRT}}
\newcommand{\COH}{\mathsf{COH}}
\newcommand{\CAC}{\mathsf{CAC}}
\newcommand{\ADS}{\mathsf{ADS}}
\newcommand{\TS}{\mathsf{TS}}
\newcommand{\CHO}{\mathsf{SHER}}
\newcommand{\PS}{\mathsf{STRIV}}
\newcommand{\RRT}{\mathsf{RRT}}
\newcommand{\Seq}{\mathsf{Seq}}
\newcommand{\red}{\leq_{\mathrm{W}}}
\newcommand{\sred}{\leq_{\mathrm{sW}}}
\newcommand{\nred}{\nleq_{\mathrm{W}}}
\newcommand{\nsred}{\nleq_{\mathrm{sW}}}
\newcommand{\after}{\bullet}
\begin{document}

\title{On uniform relationships between combinatorial problems}

\author[Dorais]{Fran\c{c}ois G. Dorais}
\address{Department of Mathematics\\
Dartmouth College\\
Hanover, New Hampshire\\
U.S.A.}
\email{francois.g.dorais@dartmouth.edu}

\author[Dzhafarov]{Damir D. Dzhafarov}
\address{Department of Mathematics\\
University of Connecticut\\
Storrs, Connecticut\\
U.S.A.}
\email{damir@math.uconn.edu}

\author[Hirst]{Jeffry L. Hirst}
\address{Department of Mathematical Sciences\\
Appalachian State University\\
Boone, North Carolina\\
U.S.A.}
\email{jlh@math.appstate.edu}

\author[Mileti]{\\ Joseph R. Mileti}
\address{Department of Mathematics and Statistics\\
Grinnell College\\
Grinnell, Iowa\\
U.S.A.}
\email{miletijo@grinnell.edu}

\author[Shafer]{Paul Shafer}
\address{Department of Mathematics\\
Ghent University\\
Ghent\\
Belgium}
\email{paul.shafer@ugent.be}

\thanks{Dzhafarov was partially supported by an NSF Postdoctoral Fellowship. Hirst was partially supported by grant ID\#20800 from the John Templeton Foundation.  (The opinions expressed in this publication are those of the authors and do not necessarily reflect the views of the John Templeton Foundation.) Shafer was supported by the Fondation Sciences Math\'{e}matiques de Paris and is also an FWO Pegasus Long Postdoctoral researcher. We are grateful to D.~Hirschfeldt and C.~Jockusch for numerous helpful comments and discussions during the preparation of this article, and for a remark that helped strengthen Proposition \ref{P:qWWKL}. We thank J.~Miller for pointing out Kummer's theorem, Theorem \ref{T:Kummer}, to us. We also thank V.~Brattka, A.~Montalb\'{a}n, A.~Marcone, C.~Mummert, and the anonymous referee for bringing to our attention the connections of our work with Weihrauch reducibility, of which we were initially unaware. We additionally thank the referee for a number of other useful comments.}

\begin{abstract}
The enterprise of comparing mathematical theorems according to their logical strength is an active area in mathematical logic, with one of the most common frameworks for doing so being reverse mathematics. In this setting, one investigates which theorems provably imply which others in a weak formal theory roughly corresponding to computable mathematics. Since the proofs of such implications take place in classical logic, they may in principle involve appeals to multiple applications of a particular theorem, or to non-uniform decisions about how to proceed in a given construction. In practice, however, if a theorem $\mathsf{Q}$ implies a theorem $\mathsf{P}$, it is usually because there is a direct uniform translation of the problems represented by $\mathsf{P}$ into the problems represented by $\mathsf{Q}$, in a precise sense formalized by Weihrauch reducibility. We study this notion of uniform reducibility in the context of several natural combinatorial problems, and compare and contrast it with the traditional notion of implication in reverse mathematics. We show, for instance, that for all $n,j,k \geq 1$, if $j < k$ then Ramsey's theorem for $n$-tuples and $k$ many colors is not uniformly, or Weihrauch, reducible to Ramsey's theorem for $n$-tuples and $j$ many colors. The two theorems are classically equivalent, so our analysis gives a genuinely finer metric by which to gauge the relative strength of mathematical propositions. We also study Weak K\"{o}nig's Lemma, the Thin Set Theorem, and the Rainbow Ramsey's Theorem, along with a number of their variants investigated in the literature. Weihrauch reducibility turns out to be connected with sequential forms of mathematical principles, where one wishes to solve infinitely many instances of a particular problem simultaneously. We exploit this connection to uncover new points of difference between combinatorial problems previously thought to be more closely related.
\end{abstract}

\maketitle

\section{Introduction}

The idea of reducing, or translating, one mathematical problem to another, with the aim of using solutions to the latter to obtain solutions to the former, is a basic and natural one in all areas of mathematics. For instance, the convolution of two functions can be reduced to a pointwise product via the Fourier transform; the study of a linear operator over a complex vector space can be reduced to the study of a matrix in Jordan normal form, via a change of basis; etc. In general, the precise forms of such reductions vary greatly with the particular problems, but they tend to be most useful when they are constructive or uniform in some appropriate sense. Typically, such reductions preserve various fundamental properties and yield more information, and they are usually easier to implement. These ideas have materialized in many areas such as category theory, complexity theory, proof theory, and set theory (see \cite{Blass-1995}). In this article, we investigate similar uniform reductions between various combinatorial problems in the setting of computability theory, reverse mathematics and computable analysis.

The program of reverse mathematics provides a unified and elegant way to compare the strengths of many mathematical theorems. Its setting is second-order arithmetic, which is a system strong enough to encompass most of classical mathematics. The formalism permits talking about natural numbers and about sets of natural numbers, and hence readily accommodates countable analogues of mathematical propositions. The fundamental idea is to calibrate the proof-theoretical strength of such propositions by classifying which set-existence axioms are needed to establish the structures needed in their proofs. In practice, we work with fragments, or subsystems, of second-order arithmetic, first finding the weakest one that suffices to prove a given theorem, and then obtaining sharpness by showing that the theorem is in fact equivalent to it. Each of the subsystems corresponds to a natural closure point under logical, and more specifically, computability-theoretic, operations. Thus, the base system, Recursive Comprehension Axiom ($\RCA$), roughly corresponds to computable or constructive mathematics; the system Weak K\"{o}nig's Lemma ($\WKL_0$) corresponds to closure under taking infinite paths through infinite binary trees; and the Arithmetical Comprehension Axiom ($\ACA$) corresponds to closure under arithmetical definability, or equivalently, under applications of the Turing jump. Other common subsystems, $\ATR$ and $\Pi^1_1$-$\CA$, which we shall not consider in this article, admit similar characterizations. The point is that there is a rich interaction between proof systems on the one hand, and computability on the other.

We refer the reader to Simpson~\cite{Simpson-2009} for background on reverse mathematics, to Soare~\cite{Soare-1987} for background on computability theory, and to Weihrauch~\cite{Weihrauch-2000} for background in computable analysis. For background on algorithmic randomness, to which some of our results in Sections~\ref{S:WWKL} and~\ref{S:RRT} will pertain, we refer to Downey and Hirschfeldt~\cite{DH-2010}.

In the context of reverse mathematics, we can say that a theorem $\mathsf{P}$ ``reduces'' to a theorem $\mathsf{Q}$ if there is a proof of $\mathsf{P}$ assuming $\mathsf{Q}$ over $\RCA$.  Since these proofs are carried out in a formal system, such a proof of $\mathsf{P}$ from $\mathsf{Q}$ may use $\mathsf{Q}$ several times to obtain $\mathsf{P}$, or may involve non-uniform decisions about which sets to use in a construction.  However, in many natural cases, a proof of $\mathsf{P}$ from $\mathsf{Q}$ uses direct, computable, and uniform translations between problems represented by $\mathsf{P}$ into problems represented by $\mathsf{Q}$.

To describe these types of arguments more precisely, we restrict our focus to $\Pi^1_2$ statements in the language of second-order arithmetic, i.e.,~statements of the form
\[
(\forall X)(\exists Y) \varphi(X,Y),
\]
where $\varphi$ is arithmetical. Each such principle has associated to it a natural class of \emph{instances}, and for each instance, a natural class of \emph{solutions} to that instance.  The following are a few important examples.

\begin{statement}[$\WKL$]
Every infinite subtree of $2^{<\omega}$ has an infinite path.
\end{statement}

\begin{statement}[$\WWKL$]
Every subtree $T$ of $2^{<\omega}$ such that
\[
\frac{|\{\sigma \in 2^n : \sigma \in T\}|}{2^n}
\]
is uniformly bounded away from zero for all $n$ has an infinite path.
\end{statement}

\begin{statement}[Ramsey's Theorem]
Fix $n,k \geq 1$. $\RT^n_k$ is the statement that for every $f \colon [\omega]^n \to k$, there exists an infinite set $H$ (called \emph{homogeneous} for $f$) such that $f$ is constant on $[H]^n$.
\end{statement}

\begin{statement}[$\COH$]
For every sequence of sets $\langle R_i : i \in \omega \rangle$, there exists an infinite set $C$ such that for all $i$, either $C \cap R_i$ is finite or $C \cap \overline{R_i}$ is finite.
\end{statement}

The idea of uniform direct translations alluded to above was made precise by Weihrauch \cite{Weihrauch-1992,Weihrauch-1992a} in the realm of computable analysis and has been widely studied ever since (see \cite[Section 1]{Brattka-Gherardi-2011wd} for a partial bibliography).
In this context, a $\Pi^1_2$ statement $(\forall X)(\exists Y)\varphi(X,Y)$ as above is viewed as a function specification; a partial realizer of such a specification is a function $F$ such that $\varphi(X,F(X))$ holds for all $X$ in the domain of $F$.
For this reason, it is traditional in this context to understand the relation $\varphi(X,Y)$ as a partial multi-valued function where $\varphi(X,Y)$ holds exactly when $Y$ is one of the possible values of the function at $X$.
Weihrauch then introduced a notion of computable reducibility between partial multi-valued functions whereby there are computable processes that serve to uniformly translate realizers of one partial multi-valued function into realizers of another partial multi-valued function. We shall use here the following equivalent definition, which may appear more familiar from perspectives outside of computable analysis, particularly reverse mathematics. However, with a view towards encouraging more collaboration between these two similary-motivated but thus far largely separate approaches, we include an equivalence of the definitions in Appendix \ref{A:equivalence}.


\begin{definition}\label{D:uniform_reductions}
Let $\mathsf{P}$ and $\mathsf{Q}$ be $\Pi^1_2$ statements of second-order arithmetic. We say that
\begin{enumerate}
\item \emph{$\mathsf{P}$ is Weihrauch reducible to $\mathsf{Q}$}, and write $\mathsf{P} \red \mathsf{Q}$, if there exist Turing reductions $\Phi$ and $\Psi$ such that whenever $A$ is an instance of $\mathsf{P}$ then $B = \Phi(A)$ is an instance of $\mathsf{Q}$, and whenever $T$ is a solution to $B$ then $S = \Psi(A \oplus T)$ is a solution to $A$.
\item \emph{$\mathsf{P}$ is strongly Weihrauch reducible to $\mathsf{Q}$}, and write $\mathsf{P} \sred \mathsf{Q}$, if there exist Turing reductions $\Phi$ and $\Psi$ such that whenever $A$ is an instance of $\mathsf{P}$ then $B = \Phi(A)$ is an instance of $\mathsf{Q}$, and whenever $T$ is a solution to $B$ then $S = \Psi(T)$ is a solution to $A$.
\end{enumerate}

\end{definition}



In other words, Weihrauch reducibility differs from strong Weihrauch reducibility only in that the ``backwards'' reduction $\Psi$ takes as oracle not only the solution $T$ to the instance $B = \Phi(A)$ of $\mathsf{Q}$, but also the original instance, $A$, of $\mathsf{P}$. The two notions thus agree on computable instances of problems, but not in general. (See also~\cite[Section 1]{Dzhafarov-COH} for a discussion of the distinction between these approaches in the non-uniform case; and \cite[Section 2.2]{Hirschfeldt-TA} for further discussion of (strong) Weihrauch and related reducibilities in the context of computable combinatorics. We note that the notation in these sources differs from ours, with $\leq_{\rm u}$ and $\leq_{\rm su}$ being used in place of $\red$ and $\sred$, respectively.) For most of our results below (with the notable exception of Theorem \ref{T:Ramsey_non-uniform}) it will not matter which of the two reducibility notions we are working with, so to present the strongest possible results, we shall prove reductions for $\sred$, and non-reductions for $\nred$.

It is straightforward to see each of these reducibilities is reflexive and transitive and thus defines a degree structure on $\Pi^1_2$ statements.


One simple example of a strong Weihrauch\ reduction is that $\RT^n_j \sred \RT^n_k$ whenever $j \leq k$ because given $f \colon [\omega]^n \to j$, we may view $f$ as a function $g \colon [\omega]^n \to k$ (by ignoring the additional colors) and then every set homogeneous for $g$ is homogeneous for $f$.  (Thus, here $\Phi$ and $\Psi$ can both be taken to be the identity reduction.) A slightly more interesting example is that $\RT^m_k \sred \RT^n_k$ whenever $m \leq n$.  To see this, given $f \colon [\omega]^m \to k$, define $g \colon [\omega]^n \to k$ by letting $g(x_1,\dots,x_m,\dots,x_n) = f(x_1,\dots,x_m)$ and notice that $g$ is uniformly obtained from $f$ via a Turing functional, and that every set homogenous for $g$ is homogeneous for $f$.

There are also many examples of such reductions using more complicated Turing functionals.  Friedman, Simpson, and Smith~\cite{FSS-1983} showed that if $\mathsf{P}$ is the statement that every commutative ring with identity has a prime ideal, then $\RCA \vdash \WKL \rightarrow \mathsf{P}$.  Adapting the proof of this result, one can show that it is possible to uniformly computably convert a commutative ring $R$ into an infinite tree $T$ such that every path of $T$ is a prime ideal of $R$, and hence that $\mathsf{P} \sred \WKL$.  For another example, Cholak, Jockusch, and Slaman~\cite[Theorem 12.5]{CJS-2001} exhibit a strong Weihrauch\ reduction of $\COH$ to $\RT^2_2$ via a non-trivial $\Phi$.\footnote{The same is not true if $\RT^2_2$ is replaced by the closely related principle $\mathsf{D}^2_2$, introduced in~\cite[Statement 7.8]{CJS-2001}.  This asserts that if $f : [\omega]^2 \to 2$ is \emph{stable}, i.e.,~if for each $x$ the limit of $f(x,y)$ as $y$ tends to infinity exists, then there is an infinite set consisting either entirely of numbers for which this limit is $0$, or entirely of numbers for which this limit is $1$. A recent result by Chong, Slaman, and Yang~\cite[Theorem 2.7]{CSY-SEP} resolves a longstanding open problem by showing that $\RCA \nvdash \D^2_2 \rightarrow \COH$. However, the model they construct to witness the separation is a non-standard one, and so leaves open the question of whether every $\omega$-model of $\RCA + \D^2_2$ is also a model of $\COH$. A typical reason for this being the case would be if $\COH$ were uniformly reducible to $\D^2_2$. However, by results of Dzhafarov~\cite[Theorem 1.5 and Corollary 1.10]{Dzhafarov-COH} it follows that $\COH \nsred \D^2_2$, and more recently, Lerman, Solomon, and Towsner (unpublished) have shown even that $\COH \nred \D^2_2$.}

Despite the fact that many natural implications in reverse mathematics correspond to Weihrauch reductions (even strong Weihrauch reductions), there are certainly examples where an implication holds in reverse mathematics but no Weihrauch reduction exists.  For example, building on work of Jockusch in~\cite{Jockusch-1972b}, it is known that $\RCA \vdash \RT^n_k \leftrightarrow \mathsf{ACA}$ whenever $n \geq 3$ and $k \geq 2$, and in particular that $\RCA \vdash \RT^3_2 \rightarrow \RT^4_2$.  However, $\RT^4_2 \nred \RT^3_2$ because every computable instance of $\RT^3_2$ has a $\emptyset'''$-computable solution, but there is a computable instance of $\RT^4_2$ with no $\emptyset'''$-computable solution (see~\cite[Theorems 5.1 and 5.6]{Jockusch-1972b}).  The underlying reason why this implication holds in reverse mathematics is that $\emptyset'$ can be coded into a computable instance of $\RT^3_2$, and by relativizing and iterating this result (i.e.,~by using multiple nested applications of $\RT^3_2$), one can obtain the several jumps necessary to compute solutions to instances of $\RT^4_2$.

There are also more subtle instances where no Weihrauch reduction exists despite the fact that degrees of solutions to the problems correspond.  For example, Jockusch~\cite[Theorem 6]{Jockusch-1989} showed that for any $k \geq 2$, the degrees of DNR$_k$ functions (i.e.,~functions $f \colon \omega \to k$ such that $f(e) \neq \Phi_e(e)$ for all $e \in \omega$) are the same as the degrees of DNR$_2$ functions, but there is no Weihrauch reduction witnessing this.  More precisely, he showed that given $k \geq 2$, there is no Turing functional $\Phi$ such that $\Phi(g) \in \text{DNR}_k$ for all $g \in \text{DNR}_{k+1}$. If we let $\mathsf{DNR}_k$ be the $\Pi^1_2$ statement ``for every $X$, there exists a $\text{DNR}_k$ function relative to $X$'', then Jockusch's theorem shows that $\mathsf{DNR}_k \nred \mathsf{DNR}_{k+1}$.

A motivating question for this article is what happens when one varies the number of colors in Ramsey's Theorem.  It is well known that if $n \geq 1$ and $j,k \geq 2$, then $\RCA \vdash \RT^n_j \leftrightarrow \RT^n_k$.  For example, to see that $\RCA \vdash \RT^n_2 \rightarrow \RT^n_3$, we can argue as follows.  Suppose that $f \colon [\omega]^n \to 3$.  Define $g \colon [\omega]^n \to 2$ by letting
\[
g(\tuple{x}) =
\begin{cases}
0 & \textrm{if } f(\tuple{x}) \in \{0,1\}, \\
1 & \textrm{if } f(\tuple{x}) = 2.
\end{cases}
\]
By $\RT^n_2$, we may fix a set $H$ such that $H$ is homogenous for $g$.  Now if $g([H]^2) = \{1\}$, then $H$ is homogeneous for $f$.  Otherwise, the function $f \res [H]^2$ is a $2$-coloring of $[H]^2$, so we may apply $\RT^n_2$ a second time to conclude that there is an infinite $I \subseteq H$ such that $I$ is homogeneous for $f$.  Notice that this proof requires two nested applications of $\RT^n_2$ to obtain a solution to $\RT^n_3$.  However, there are no known degree-theoretic differences between homogeneous sets of computable instances of $\RT^n_2$ and homogeneous sets of computable instances of $\RT^n_3$, so it is unclear whether there is a proof of $\RT^n_3$ using one uniform application of $\RT^n_2$.  We prove below in Theorem~\ref{T:Ramsey_non-uniform} that $\RT^n_k \nsred \RT^n_j$ when $j < k$.

Although the same basic idea of (strong) Weihrauch reducibility is used in the contexts of computable combinatorics, computable analysis, and reverse mathematics, there are important differences beyond terminology that the reader should keep in mind when translating back and forth. 
\begin{itemize}
\item In~\cite{Brattka-Gherardi-2011wd}, (strong) Weihrauch reduction is defined not only for partial multi-valued functions but also for abstract collections of partial functions on Baire space. This more general idea has no equivalent formulation in second-order arithmetic and, moreover, only definable relations make sense in the latter context. Thus, reverse mathematics has a limited view of the (strong) Weihrauch degrees considered in computable analysis. In practice, this limitation only surfaces when considering the general structure of (strong) Weihrauch degrees since these degrees, when considered for their own sake, generally correspond to definable relations. In this paper, we will only consider arithmetically-definable relations, which therefore make sense in all contexts.
\item Computable combinatorics and computable analysis work exclusively with the standard natural numbers whereas reverse mathematics also considers non-standard models. Since the base system $\RCA$ only postulates induction for $\Sigma^0_1$ formulas, issues related to induction often occur in translation and it is not the case that every reduction $\mathsf{P} \sred \mathsf{Q}$ translates into a proof that $\RCA \vdash \mathsf{Q} \rightarrow \mathsf{P}$. For example, a direct analysis of the reduction of Cholak, Jockusch, and Slaman showing that $\COH \sred \RT^2_2$ alluded to above appears to use $\Sigma^0_2$-induction in order to verify that homogenous sets for the transformed coloring are indeed cohesive for the given instance, and some additional work is required to verify that the proof goes through $\RCA$ (see Mileti~\cite[Appendix A]{Mileti-2004}).
\item The typical use of oracles varies in the three contexts. In computable combinatorics, results are usually stated without any use of oracles and issues of relativization are discussed where necessary. In computable analysis, both the (strong) Weihrauch reduction above and its continuous analogue, which permits the use of any oracle, are considered. In reverse mathematics, it is customary to allow any oracle that exists in the model under consideration. So the case of (strong) Weihrauch reduction corresponds to the minimal standard model of $\RCA$ where the only sets are the computable ones, and the continuous analogue corresponds to the case of the full standard model of second-order arithmetic where all sets are present.
\end{itemize}
For these reasons, most of our results will be stated in a way that includes all relevant translations, though our proofs will generally focus only on one point of view, with the others left to the reader.

We use standard notations and conventions from computability theory and reverse mathematics.  We identify subsets of $\omega$ with their characteristic functions, and we identify each $n \in \omega$ with its set of predecessors.  Lower-case letters such as $i,j,k,\ell,m,n,x,y,\dots$ denote elements of $\omega$.  Given a set $A \subseteq \omega$, we let $[A]^n$ denote the set of all subsets of $A$ of size $n$.  We use  $\tuple{x},\tuple{y},\dots$ to denote finite subsets of $\omega$, which we identity with the corresponding tuple listing the elements in increasing order.  We write $\tuple{x} < \tuple{y}$ if $\max \tuple{x} < \min \tuple{y}$. Given a Turing functional $\Phi$, we assume that if $\Phi(A)(x)\downarrow$, then $\Phi(A)(y) \downarrow$ for all $y \leq x$.  We say that a Turing functional $\Phi$ is \emph{total} if $\Phi(A)$ is a total function for every $A \in 2^{\omega}$. Given sets $A$ and $B$, we write $\Phi(A,B)$ in place of $\Phi(A \oplus B)$.

\section{The Squashing Theorem and sequential forms}


We can naturally combine two $\Pi^1_2$ principles $\mathsf{P}$ and $\mathsf{Q}$ into one as follows.  We define the {\em parallel product} $\seq{\mathsf{P},\mathsf{Q}}$ to be the $\Pi^1_2$ principle whose instances are pairs $\seq{A,B}$ such that $A$ is an instance of $\mathsf{P}$ and $B$ is an instance of $\mathsf{Q}$, and the solutions to this instance are pairs $\seq{S,T}$ such that $S$ is a solution to $A$ and $T$ is a solution to $B$. Obviously, this can be generalized to combine any number of $\Pi^1_2$ principles, even an infinite number. In particular, one of our interests will be in cases when $\mathsf{P}$ and $\mathsf{Q}$ are the same principle. For $\alpha \in \omega \cup \{\omega\}$, we let \emph{$\alpha$ applications of $\mathsf{P}$}, or $\mathsf{P}^{\alpha}$, refer to the $\Pi^1_2$ principle whose instances are sequences $\seq{A_i : i < \alpha}$ such that each $A_i$ is an instance of $\mathsf{P}$, and the solutions to this instance are sequences $\seq{S_i : i < \alpha}$ such that each $S_i$ is a solution to $A_i$.
The infinite case $\mathsf{P}^\omega$ is sometimes known as the {\em parallelization} of $\mathsf{P}$ and is also denoted $\widehat{\mathsf{P}}$.

Notice that we trivially have $\COH^2 \sred \COH$ because given two sequences of sets $\seq{R_i : i \in \omega}$ and $\seq{S_i : i \in \omega}$, we can uniformly computably interleave them to form the sequence $\seq{T_i : i \in \omega}$ where $T_{2i} = R_i$ and $T_{2i+1} = S_i$, so that any set cohesive for $\seq{T_i : i \in \omega}$ is cohesive for each of $\seq{R_i : i \in \omega}$ and $\seq{S_i : i \in \omega}$.  In fact, using a pairing function, it is easy to see that $\COH^{\omega} \sred \COH$.  For another example, we have that $\WKL^2 \sred \WKL$ as follows.  Given two infinite trees $\seq{T_0,T_1}$, form a new tree $S$ by letting $\sigma \in S$ if the sequence of even bits from $\sigma$ is an element of $T_0$ and the sequence of odd bits from $\sigma$ is an element of $T_1$.  It is straightforward to check that $S$ is an infinite tree uniformly computably obtained from $\langle T_0,T_1 \rangle$, and that given a path $B$ through $S$, the even bits form a path through $T_0$, and the odd bits form a path through $T_1$.  Moreover, using a pairing function again, we can interleave a sequence $\seq{T_i : i \in \omega}$ of infinite trees together to form one infinite tree such that from any path we can uniformly computably obtain paths through each of the original trees, and hence $\WKL^{\omega} \sred \WKL$. (This fact is also a consequence of Theorem~8.2 of \cite{Brattka-Gherardi-2011wd}, which shows that $\WKL$ is strong Weihrauch equivalent to $\mathsf{LLPO}^\omega$; see also Lemma 5 of Hirst~\cite{Hirst-2007} for a formalized version in reverse mathematics.)

We have the following important example using distinct principles.

\begin{proposition}\label{P:combining_ramsey}
If $n,j,k \geq 1$, then $\langle \RT^n_j, \RT^n_k \rangle \sred \RT^n_{jk}$.
\end{proposition}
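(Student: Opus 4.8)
The plan is to combine the two colorings into a single coloring over $jk$ colors by taking their product. First I would take an instance $\seq{f,g}$ of $\seq{\RT^n_j,\RT^n_k}$, so that $f \colon [\omega]^n \to j$ and $g \colon [\omega]^n \to k$, and define $h \colon [\omega]^n \to jk$ by setting $h(\tuple{x}) = k \cdot f(\tuple{x}) + g(\tuple{x})$. This is just the composition of $\tuple{x} \mapsto \seq{f(\tuple{x}),g(\tuple{x})}$ with the standard bijection $j \times k \to jk$, and it is computable uniformly from $f \oplus g$, so it supplies the forward reduction $\Phi$ with $\Phi(\seq{f,g}) = h$ an instance of $\RT^n_{jk}$.

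For the backward reduction $\Psi$, I would take an infinite set $H$ homogeneous for $h$, say with constant value $c$ on $[H]^n$, and write $c = ka + b$ with $a < j$ and $b < k$. Uniqueness of quotient and remainder then forces $f(\tuple{x}) = a$ and $g(\tuple{x}) = b$ for every $\tuple{x} \in [H]^n$, so $H$ is homogeneous for both $f$ and $g$ simultaneously, and hence $\seq{H,H}$ is a solution to $\seq{f,g}$. Thus $\Psi(H) = \seq{H,H}$ works; since it uses only the solution $H$ and not the original instance, the reduction is in fact a strong Weihrauch reduction, as claimed.

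I do not expect any genuine obstacle here: the forward map is an elementary coding operation and the verification of the backward map is a one-line appeal to the division algorithm. The only point worth recording is that the same argument, applied to a finite tuple of colorings at once (or iterated using Proposition~\ref{P:combining_ramsey} itself), yields $\seq{\RT^n_{k_1},\dots,\RT^n_{k_m}} \sred \RT^n_{k_1 \cdots k_m}$ for every $m \geq 1$, which is the form in which the result is sometimes applied later on.
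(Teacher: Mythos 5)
Your proof is correct and matches the paper's argument: both define $h$ as the (paired) product coloring of $f$ and $g$ and observe that any infinite $h$-homogeneous set is simultaneously homogeneous for $f$ and $g$, giving a strong Weihrauch reduction with $\Psi(H) = \seq{H,H}$. Your explicit formula $h(\tuple{x}) = k \cdot f(\tuple{x}) + g(\tuple{x})$ is simply the bijection $j \times k \to jk$ that the paper leaves implicit in its pairing notation.
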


\begin{proof}
Given $\langle f,g \rangle$ where $f \colon [\omega]^n \to j$ and $g \colon [\omega]^n \to k$, define $h \colon [\omega]^n \to jk$ by $h(\tuple{x}) = \seq{f(\tuple{x}),g(\tuple{x})}$ for all $\tuple{x} \in [\omega]^n$.  Then $h$ is uniformly computable from $\seq{f,g}$, and any infinite homogeneous set for $h$ is also homogeneous for both $f$ and $g$.
\end{proof}

Given a $\Pi^1_2$ principle $\mathsf{P}$, if $\mathsf{P}^2 \sred \mathsf{P}$, then it is
straightforward to see (by repeatedly applying the reduction procedures) that $\mathsf{P}^n \sred \mathsf{P}$ for each fixed $n \in \omega$.  For example, if $n = 4$ and we are given $\langle A_0,A_1,A_2,A_3 \rangle$ where each $A_i$ is an instance of $\mathsf{P}$, then
\[
\Phi(A_0,\Phi(A_1,\Phi(A_2,A_3)))
\]
is an instance of $\mathsf{P}$ uniformly obtained from $\langle A_0,A_1,A_2,A_3 \rangle$, and from any solution to this instance we can repeatedly apply $\Psi$ to uniformly obtain a sequence $\langle S_0,S_1,S_2,S_3 \rangle$ such that each $S_i$ is a solution to $A_i$. (The same is true if $\sred$ is replaced by $\red$.) It is not at all clear, however, whether this process can be continued into the infinite, i.e.,~does $\mathsf{P}^2 \sred \mathsf{P}$ necessarily imply that $\mathsf{P}^{\omega} \sred \mathsf{P}$?  Given a sequence $\seq{A_i : i \in \omega}$ where each $A_i$ is an instance of $\mathsf{P}$, the natural idea is to consider
\[
\Phi(A_0,\Phi(A_1,\Phi(A_2,\Phi(A_3,\dots)))).
\]
Of course, this process clearly fails to converge and so does not actually define an instance of $\mathsf{P}$.  In fact, we will see later that $\mathsf{P}^2 \sred \mathsf{P}$ does not always imply that $\mathsf{P}^{\omega} \sred \mathsf{P}$.

However, if $\mathsf{P}^2 \sred \mathsf{P}$ and $\mathsf{P}$ is reasonably well-behaved, we will prove that such a ``squashing'' of infinitely many applications of $\mathsf{P}$ into one application of $\mathsf{P}$ is indeed possible.  For example, consider $\mathsf{P} = \RT^2_2$.  The idea is to force some convergence in the above computation by approximating the second coordinate of $\Phi$ as follows.  When attempting to simulate $\Phi(A_0,\Phi(A_1,\Phi(A_2,\dots)))$, we approximate the unknown result of $\Phi(A_1,\Phi(A_2,\dots))$ by guessing that it starts as the all zero coloring.  By assuming this and hence that the second argument looks like a string of zeros, we eventually force convergence of $\Phi(A_0,0^n)$ on $0$, at the cost of introducing some finite initial error in the true ``computation'' of $\Phi(A_1,\Phi(A_2,\dots))$.  Since removing finitely many elements from an infinite homogenous set results in an infinite homogeneous set, these finitely many errors we have introduced into the coloring will not be a problem.

More precisely, we will define a sequence $\seq{B_i : i \in \omega}$ of instances of $\mathsf{P}$ (where intuitively $B_i = \Phi(A_i,\Phi(A_{i+1},\cdots))$ beyond some finite error introduced to force convergence), along with a uniformly computable sequence of numbers $\seq{m_i : i \in \omega}$, such that
\[
B_i(x) = \Phi(A_i,B_{i+1})(x) \text{ for all } x \geq m_i.
\]
Now since we no longer have $B_i = \Phi(A_i,B_{i+1})$ (due to the finite error), the Turing functional $\Psi$ may not convert a solution of $B_i$ into a pair of solutions to $A_i$ and $B_{i+1}$.  In order to deal effectively with these finite errors, to ensure that our $B_i$ are actually instances of $\mathsf{P}$, and to ensure that sequence $\seq{m_i : i \in \omega}$ is uniformly computable (and hence can be used as markers for cut-off points), we need to make some assumptions about $\mathsf{P}$.

\begin{definition}\label{D:total_finite_tolerance}
Let $\mathsf{P}$ be a $\Pi^1_2$ principle (or, more generally, any multi-valued function with domain $2^{\omega}$).
\begin{enumerate}
\item\label{l:total} $\mathsf{P}$ is {\em total} if every element of $2^{\omega}$ is (or codes) an instance of $\mathsf{P}$.
\item\label{I:finite_tolerance} $\mathsf{P}$ has {\em finite tolerance} if there exists a Turing functional $\Theta$ such that whenever $B_1$ and $B_2$ are instances of $\mathsf{P}$ with $B_1(x) = B_2(x)$ for all $x \geq m$, and $S_1$ is a solution to $B_1$, then $\Theta(S_1,m)$ is a solution to $B_2$.
\end{enumerate}
\end{definition}

\begin{proposition}\label{P:ramsey_total_tolerant}
For each $n,k \geq 1$, the principle $\RT^n_k$ is total and has finite tolerance.
\end{proposition}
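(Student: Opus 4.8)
The plan is to verify the two clauses of Definition~\ref{D:total_finite_tolerance} separately for $\mathsf{P} = \RT^n_k$.

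\textbf{Totality.} Here I would simply observe that an instance of $\RT^n_k$ is a function $f \colon [\omega]^n \to k$, and since we have fixed a coding of such functions by elements of $2^\omega$, we must check that \emph{every} $X \in 2^\omega$ codes (or can be uniformly decoded to) some coloring $[\omega]^n \to k$. This is a routine matter of fixing the coding conventions correctly: given an arbitrary $X$, read off from $X$ a function on $[\omega]^n$ and compose with a fixed computable retraction of $\omega$ onto $k$ (e.g., reduce mod $k$) so that the result always lands in $k$. The only content here is to remark that this decoding is uniformly computable and that it is the identity on (codes of) genuine $k$-colorings, so that every instance is still an instance.

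\textbf{Finite tolerance.} This is the step with actual content, though it is still short. I would describe the functional $\Theta$ as follows. Suppose $B_1, B_2 \colon [\omega]^n \to k$ agree on all $\tuple{x}$ with $\max \tuple{x} \geq m$ (reading the hypothesis $B_1(x) = B_2(x)$ for $x \geq m$ via the coding, so that disagreements are confined to tuples contained in $\{0,\dots,m-1\}$), and let $H$ be an infinite set homogeneous for $B_1$. Then $H \setminus m = \{h \in H : h \geq m\}$ is still infinite, and $[H \setminus m]^n$ consists only of tuples $\tuple{x}$ with $\min \tuple{x} \geq m$, hence with $\max \tuple{x} \geq m$; on all such tuples $B_1$ and $B_2$ agree, so $H \setminus m$ is homogeneous for $B_2$ as well. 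Thus $\Theta(S_1, m)$ simply outputs (a code for) $H \setminus m$ where $S_1$ codes $H$; this is uniformly computable from $S_1$ and $m$, and it does not even need to know $B_1$ or $B_2$. I would note this also shows $\RT^n_k$ has finite tolerance witnessed by a functional independent of the specific colorings, which is the strong form one wants for the Squashing Theorem.

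The main (and only real) obstacle is bookkeeping about the coding: one must make sure that ``$B_1(x) = B_2(x)$ for all $x \geq m$'' in the sense of the characteristic-function coding of a coloring translates correctly into ``$B_1(\tuple{x}) = B_2(\tuple{x})$ for all tuples $\tuple{x}$ not entirely below $m$.'' Once the coding is pinned down so that the bits of the code past position $m$ determine the colors of all tuples with $\max \tuple{x} \geq m$ (which is a harmless convention on the pairing of tuples with natural numbers), the argument is exactly the observation that deleting a finite initial segment from an infinite homogeneous set yields an infinite homogeneous set — precisely the intuition flagged in the discussion preceding Definition~\ref{D:total_finite_tolerance}.
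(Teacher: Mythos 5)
Your proof is correct and takes essentially the same approach as the paper. The one cosmetic difference is in how you handle the coding of tuples: you pin down a coding of $[\omega]^n$ by $\omega$ so that disagreements at positions $<m$ translate to tuples contained in $\{0,\dots,m-1\}$, and then set $\Theta(S,m) = S \setminus m$; the paper instead keeps the coding arbitrary and has $\Theta$ first compute $\ell = \max\{\max\tuple{x} : \tuple{x} \text{ is coded by some } x < m\}$ and then output $\{a \in S : a > \ell\}$, which is robust to any fixed computable coding (e.g.\ a standard Cantor-style pairing, which does not have the monotonicity property you assume). Both handle the bookkeeping you correctly flag as the only real content, and the core observation — deleting a finite initial segment of a homogeneous set yields a homogeneous set for any finitely modified coloring — is identical.
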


\begin{proof}
We can view every element of $2^{\omega}$ as a valid $k$-coloring through simple coding.  Define $\Theta$ as follows.  Given $m \in \omega$, compute the largest element $\ell$ of any tuple of $[\omega]^n$ coded by a natural number less than $m$, and let $\Theta(S,m) = \{a \in S : a > \ell\}$.  Now if $B_1$ and $B_2$ are colorings of $[\omega]^n$ using $k$ colors such that $B_1(x) = B_2(x)$ for all $x \geq m$, and $S_1$ is an infinite set homogeneous for $B_1$, then $\Theta(S_1,m)$ is also an infinite set and it is homogeneous for $B_2$.
\end{proof}

Another simple example of a total principle with finite tolerance is $\COH$, where in fact we may take $\Theta(S,m) = S$ (because anything cohesive for a given family of sets is also cohesive for any finite modification of that family).

Although we are certainly interested in the case where $\mathsf{P}^2 \sred \mathsf{P}$, i.e.,~when $\langle \mathsf{P},\mathsf{P} \rangle \sred \mathsf{P}$, we will need a slightly more general formulation below.  As above, when $\langle \mathsf{Q},\mathsf{P} \rangle \sred \mathsf{P}$, it is straightforward to see that $\langle \mathsf{Q}^n,\mathsf{P} \rangle \sred \mathsf{P}$ for each fixed $n \in \omega$.  When passing to the infinite case, however, our ``squashing" never reaches the initial instance of $\mathsf{P}$, but in good cases we can conclude that $\mathsf{Q}^{\omega} \sred \mathsf{P}$.  Notice that if $\mathsf{Q} = \mathsf{P}$, this reduces to the case discussed above.

\begin{remark}
As a rule, all results in this section about $\Pi^1_2$ principles could be formulated more generally for any multi-valued function with domain $2^{\omega}$, as in Definition \ref{D:total_finite_tolerance}. For brevity, we shall omit repeatedly stating this.
\end{remark}

\begin{theorem}[Squashing Theorem]\label{T:squashing_theorem}
Let $\mathsf{P}$ and $\mathsf{Q}$ be $\Pi^1_2$ statements, and assume that both are total and that $\mathsf{P}$ has finite tolerance.
\begin{enumerate}
\item If $\langle \mathsf{Q},\mathsf{P} \rangle \sred \mathsf{P}$ then $\mathsf{Q}^{\omega} \sred \mathsf{P}$.
\item If $\langle \mathsf{Q},\mathsf{P} \rangle \red \mathsf{P}$ then $\mathsf{Q}^{\omega} \red \mathsf{P}$.
\end{enumerate}
\end{theorem}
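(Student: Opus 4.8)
The plan is to use the forward and backward Turing functionals $\Phi$ and $\Psi$ witnessing $\langle \mathsf{Q},\mathsf{P}\rangle \sred \mathsf{P}$, together with the finite‑tolerance functional $\Theta$ for $\mathsf{P}$, to ``fold'' an arbitrary instance $\langle A_i : i \in \omega\rangle$ of $\mathsf{Q}^\omega$ into a single instance $B_0$ of $\mathsf{P}$. The first observation is that totality is exactly what makes the zero‑guessing device legitimate: each $A_i$ is a valid instance of $\mathsf{Q}$, and by totality of $\mathsf{P}$ the all‑zero function is a valid instance of $\mathsf{P}$, so $\langle A_i, 0^\omega\rangle$ is a valid instance of $\langle \mathsf{Q},\mathsf{P}\rangle$; hence $\Phi(A_i, 0^\omega)$ is total, and more generally $\Phi(A_i, B)$ is a total instance of $\mathsf{P}$ for every total $B$.

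The technical core is to build, uniformly computably from a code for $\langle A_i : i \in \omega\rangle$, a sequence $\langle B_i : i \in \omega\rangle$ of total instances of $\mathsf{P}$ and a uniformly computable sequence $\langle m_i : i \in \omega\rangle$ of natural numbers with
\[
B_i(x) = \Phi(A_i, B_{i+1})(x) \quad\text{for all } x \geq m_i .
\]
Following the heuristic of Section~2, this is done by a stagewise construction: when defining the values of $B_i$ one pretends that its successor $B_{i+1}$ begins with a long block of zeros, runs $\Phi(A_i, 0^{(\cdot)})$ until it produces the next value of $B_i$, and commits to that value; the length of zero‑block one is forced to posit for $B_{i+1}$ yields (a bound on) $m_{i+1}$, and the finitely many values of $B_{i+1}$ thereby provisionally pinned to $0$ are the tolerated initial error. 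The $m_i$ must be chosen generously enough, and the levels interleaved carefully enough, that every $B_i$ comes out total, that $\langle B_i \rangle$ is uniformly computable (equivalently, that the nested simulations always halt), and that $\langle m_i\rangle$ is uniformly computable. The instance of $\mathsf{P}$ one hands off is $B_0$.

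For the backward reduction, suppose $U$ solves $B_0$. Since $B_0$ and $\Phi(A_0, B_1) = \Phi(\langle A_0, B_1\rangle)$ are total instances of $\mathsf{P}$ agreeing above $m_0$, finite tolerance gives that $\Theta(U, m_0)$ solves $\Phi(\langle A_0, B_1\rangle)$; applying $\Psi$ yields a pair $\langle S_0, V_1\rangle$ with $S_0$ a solution to $A_0$ and $V_1$ a solution to $B_1$. Iterating, $V_i \mapsto \Theta(V_i, m_i) \mapsto \Psi(\cdot) = \langle S_i, V_{i+1}\rangle$, one obtains uniformly a solution $S_i$ to each $A_i$, hence a solution $\langle S_i : i \in \omega\rangle$ to the original instance of $\mathsf{Q}^\omega$. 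For part~(2) the backward functional additionally receives $\langle A_i : i \in \omega\rangle$ as an oracle, so it can recompute all the $m_i$ and run this cascade verbatim; for part~(1) one must instead ensure the thresholds $m_i$ are available to the cascade from $U$ alone, which is arranged by choosing the reduction so that the uniformly computable sequence $\langle m_i : i \in \omega\rangle$ is tracked alongside the handed‑off instance and reconstructed step by step, and then checking that the resulting backward procedure is a Turing functional of $U$ only.

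The main obstacle is the middle step: making the construction of $\langle B_i\rangle$ and $\langle m_i\rangle$ satisfy totality, uniform computability, and the tail‑agreement equations simultaneously — in effect, showing that the divergent formal expression $\Phi(A_0, \Phi(A_1, \Phi(A_2, \ldots)))$ can be forced to converge by the zero‑guessing device without the nested simulations running away, so that $B_0$ really is computable. Verifying that the introduced errors stay finite at every level, that substituting total oracles back in keeps each $B_i$ total, and — for the strong reduction — that the cascade never needs more than its solution oracle, are the places requiring care; the preliminary totality observation and the $\Theta$‑then‑$\Psi$ unwinding are routine.
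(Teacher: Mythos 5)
Your overall architecture matches the paper's: fold the instance sequence into a single $B_0$ via nested applications of $\Phi$ with finite cut-offs $m_i$, and unwind a solution by alternating $\Theta$ and $\Psi$. However, there is a genuine gap at exactly the point you yourself flag as ``the main obstacle,'' and it is not a routine verification — it is the crux of the theorem. Your description of how the $m_i$ arise (``runs $\Phi(A_i, 0^{(\cdot)})$ until it produces the next value of $B_i$; the length of zero-block one is forced to posit \ldots yields $m_{i+1}$'') makes each $m_i$ depend on the actual sequence $\langle A_i \rangle$. For part~(2) that is fine, since $\Psi$ receives $\langle A_i\rangle$ as oracle and can recompute the $m_i$. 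But for part~(1), the strong reduction, $\Psi$ sees only the solution $T_0$ and has no access to $\langle A_i\rangle$; a cut-off sequence computed from $\langle A_i \rangle$ cannot in general be recovered from $T_0$. Your closing remark that the $m_i$ should be ``tracked alongside the handed-off instance and reconstructed step by step'' does not resolve this: you cannot smuggle the $m_i$ into $B_0$ without $B_0$ ceasing to be an instance of $\mathsf{P}$, and nothing in the construction you describe makes them recoverable from a bare solution.

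The paper's resolution, which is missing here, is to make the $m_i$ \emph{outright computable, independent of the $A_i$}, by a compactness argument. At stage $s$, one does not simulate $\Phi$ on the actual $A_i$; instead one searches for an $n$ such that for \emph{every} choice of strings $\sigma_i,\ldots,\sigma_s \in 2^n$, the nested expression $\Phi(\sigma_i, (C\restriction m_{i+1})^\frown\cdots^\frown\Phi(\sigma_s, C\restriction n)\cdots)(s)$ converges. The existence of such an $n$, and the effectiveness of finding it, is proved by showing that the tree of tuples $\langle\sigma_i,\ldots,\sigma_s\rangle$ on which the computation diverges is finite: an infinite path would give total oracles $U_i,\ldots,U_s$, and totality of $\mathsf{Q}$ (not just $\mathsf{P}$) forces these to be instances, whence the divergence contradicts the definition of $\Phi$ as a reduction. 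Your proposal uses totality of $\mathsf{Q}$ only to say the $A_i$ are instances; the essential use — making the quantification over \emph{all possible} finite approximations legitimate so that K\"onig's Lemma applies — is absent. Without this, the forward reduction is fine but the backward functional for part~(1) cannot be built.
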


\begin{proof}
We prove~(1), the proof of~(2) being virtually the same (in fact, the argument can be made somewhat simpler because the oracle has access to the original problem). Throughout, if $\sigma,\tau \in 2^{<\omega}$, we write $\sigma\tau$ for the concatenation of $\sigma$ by $\tau$, and $\sigma^\frown\tau$ for the continuation of $\sigma$ by $\tau$, meaning
\[
\sigma^\frown\tau(i) =
\begin{cases}
\sigma(i) & \textrm{if } i < |\sigma|,\\
\tau(i) & \textrm{if } |\sigma| \leq i < |\tau|,
\end{cases}
\]
for all $i < \max\{|\sigma|,|\tau|\}$. For $A \in 2^\omega$, we similarly define $\sigma^\frown A$.

Fix functionals $\Phi$ and $\Psi$ witnessing the fact that $\seq{\mathsf{Q},\mathsf{P}} \sred \mathsf{P}$.  Since $\mathsf{P}$ is total, we may fix a computable instance $C$ of $\mathsf{P}$ (one could take $C$ to be the sequence of all $0$s, but for some particular problems it might be more convenient or natural to use a different $C$).  Given a sequence $\seq{A_i : i \in \omega}$ of instances of $\mathsf{Q}$, we uniformly define a sequence $\seq{B_i : i \in \omega}$ of instances of $\mathsf{P}$ together with a uniformly computable sequence $\seq{m_i : i \in \omega}$ of numbers so that 
\[
B_i = (C \res m_i)^\frown\Phi(A_i,B_{i+1})
\]
for all $i$.  In other words, we will have $B_i(x) = C(x)$ for all $x < m_i$, and $B_i(x) = \Phi(A_i,B_{i+1})(x)$ for all $x \geq m_i$.  We will then use the instance $B_0$ of $\mathsf{P}$ as our transformed version of $\seq{A_i : i \in \omega}$ and show how given a solution $T_0$ of $B_0$, we can uniformly transform $T_0$ into a sequence $\seq{S_i : i \in \omega}$ of solutions to $\seq{A_i : i \in \omega}$.  One subtle but very important point here is that our sequence $\seq{m_i : i \in \omega}$ of cut-off positions will need to be uniformly computable independent of the instances $\seq{A_i : i \in \omega}$, so that we can use them to unravel a solution $T_0$ of $B_0$ without knowledge of the initial instance.

Thus, our first goal is to define the uniformly computable sequence $\seq{m_i : i \in \omega}$.  We proceed in stages, initially letting $m_0 = 0$. At stage $s$, we define $m_{s+1}$.  The goal is to choose $m_{s+1}$ large enough to ensure that all potential $B_i$ for $i \leq s$ will be defined on $s$.  Intuitively, by placing enough of $C$ down in column $s+1$ (i.e.,~at the beginning of $B_{s+1}$), we must eventually see convergence on previous columns through the cascade effect of the nested $\Phi$.  Since we do not have access to the sequence $\seq{A_i : i \in \omega}$, we make essential use of compactness and the fact that $\mathsf{Q}$ is total to handle all potential inputs at once.

To this end, assume $m_t$ has been defined for each $t \leq s$.  First we claim there exists an $n \in \omega$ such that for all $\sigma_0,\ldots,\sigma_s \in 2^n$,
\[
\begin{array}{l}
\Phi(\sigma_s,C \res n)(s) \downarrow,\\
\Phi(\sigma_{s-1},(C \res m_s)^\frown\Phi(\sigma_s,C \res n))(s) \downarrow,\\
\Phi(\sigma_{s-2},(C \res m_{s-1})^\frown\Phi(\sigma_{s-1},(C \res m_s)^\frown\Phi(\sigma_s,C \res n)))(s) \downarrow,
\end{array}
\]
and for general $i \leq s$,
\begin{equation}\label{E:squashing_uniform}
\Phi(\sigma_i, (C \res m_{i+1})^\frown \cdots ^\frown\Phi(\sigma_{s-1},(C \res m_s)^\frown\Phi(\sigma_s,C \res n))\cdots)(s) \downarrow.
\end{equation}
Observe that the set of all such $n$ is closed under successor. Thus, once the claim is proved, we can define $m_{s+1}$ to be the least such $n$ that is greater than $m_t$ for all $t \leq s$ and also greater than $s$ (to ensure that $B_{s+1}$ will be defined on $0,1,\dots,s$ as well). This observation also implies that to prove the claim, it suffices to fix $i \leq s$, and prove that we can effectively find an $n$ such that \eqref{E:squashing_uniform} holds for all $\sigma_i,\ldots,\sigma_s \in 2^n$.

To this end, let $T$ be the set of all tuples $\seq{\sigma_i,\ldots,\sigma_s}$ of binary strings with $|\sigma_i| = \cdots = |\sigma_s|$ such that
\[
\Phi(\sigma_i, (C \res m_{i+1})^\frown \cdots ^\frown\Phi(\sigma_{s-1},(C \res m_s)^\frown\Phi(\sigma_s,C \res |\sigma_i|)\cdots)(s) \uparrow.
\]
Since each of the computations here has a finite string as an oracle, $T$ is a computable set. Furthermore, if $\seq{\tau_i,\ldots,\tau_s}$ is an initial segment of $\seq{\sigma_i,\ldots,\sigma_s}$ under component-wise extension, that is if $\tau_i \preceq \sigma_i,\ldots,\tau_s \preceq \sigma_s$, then $\seq{\tau_i,\ldots,\tau_s}$ belongs to $T$ if $\seq{\sigma_i,\ldots,\sigma_s}$ does. Thus, $T$ is a subtree in $(2^{<\omega})^s$ under component-wise extension.

Now if $T$ is infinite, then it must have an infinite path $\seq{U_i,\ldots,U_s}$, where $U_i,\ldots,U_s \in 2^\omega$ and $\seq{U_i \res k,\ldots,U_s \res k} \in T$ for all $k$. Then by definition of $T$,
\[
\Phi(U_i, (C \res m_{i+1})^\frown \cdots ^\frown\Phi(U_{s-1},(C \res m_s)^\frown\Phi(U_s,C))\cdots)(s) \uparrow.
\]
As $\mathsf{P}$ and $\mathsf{Q}$ are both total, each of $U_i,\ldots,U_s$ are instances of $\mathsf{Q}$, and each of the second components of any $\Phi$ above are instances of $\mathsf{P}$.  In particular,
\[
(C \res m_{i+1})^\frown \cdots ^\frown\Phi(U_{s-1},(C \res m_s)^\frown\Phi(U_s,C))
\]
is an instance $V$ of $\mathsf{P}$, as is $\Phi(U_i,V)$. But then $\Phi(U_i,V)(s)$ cannot be undefined. We conclude that $T$ is finite, whence its height can clearly serve as the desired $n$.  To complete the proof, we note that an index for $T$ as a computable tree can be found uniformly computably from $i$ and $m_0,\ldots,m_s$, and therefore so can $n$.

\medskip
We now define our reduction procedures witnessing that $\mathsf{Q}^{\omega} \sred \mathsf{P}$.  Let $\seq{A_i : i \in \omega}$ be an instance of $\mathsf{Q}^{\omega}$.  From this sequence, we uniformly computably define a sequence $\seq{B_i : i \in \omega}$ of instances of $\mathsf{P}$ as follows.  Again, we proceed by stages, doing nothing at stage $0$.  At stage $s+1$, we define $B_i(s)$ for each $i\leq s$ and define $B_{s+1}$ on $0,1,\dots,s$. If $s < m_i$, we let $B_i(s) = C(s)$. Otherwise, we let
\[
B_i(s) = \Phi(A_i, (C \res m_{i+1})^\frown \cdots ^\frown\Phi(A_{s-1},(C \res m_s)^\frown\Phi(A_s,C \res m_{s+1}))\cdots)(s),
\]
the right-hand of which we know to be convergent by definition of $m_{s+1}$. That is, we have defined
\[
\begin{array}{rll}
B_s(s) & = & \Phi(A_s,C \res m_{s+1})(s),\\
B_{s-1}(s) & = & \Phi(A_{s-1},(C \res m_s)^\frown\Phi(A_s,C \res m_{s+1}))(s),\\
B_{s-2}(s) & = & \Phi(A_{s-2},(C \res m_{s-1})^\frown\Phi(A_{s-1},(C \res m_s)^\frown\Phi(A_s,C \res m_{s+1})))(s),
\end{array}
\]
and so forth. (Each of the $A_t$ in the computations above could also be replaced by $A_t \res m_{s+1}$.)  We also define $B_{s+1}(j) = C(j)$ for all $j \leq s$.  Since, from the next stage on, $B_{s+1}$ will be defined so that $B_{s+1} \res m_{s+1} = C \res m_{s+1}$, it is not difficult to see that we do indeed succeed in arranging $B_i = (C \res m_i)^\frown \Phi(A_i,B_{i+1})$, as desired.  Furthermore, $\seq{B_i : i \in \omega}$ is defined uniformly computably from $\seq{A_i : i \in \omega}$, and each $B_i$ is an instance of $\mathsf{P}$ because $\mathsf{P}$ is total.  In particular, and there is a Turing functional that produces $B_0$ from $\seq{A_i : i \in \omega}$.

\medskip
Let $\Theta$ be a Turing functional witnessing that $\mathsf{P}$ has finite tolerance.  We claim that from any solution to the instance $B_0$ of $\mathsf{P}$, we can uniformly computably obtain a sequence of solutions to $\seq{A_i : i \in \omega}$.  So suppose $T_0$ is any such solution to $B_0$. The idea is to repeatedly apply the reduction $\Theta$ to deal with the finite errors, followed up by $\Psi$ to convert individual solutions to pairs of solutions.  Indeed, since $B_0(x) = \Phi(A_0,B_1)(x)$ for all $x \geq m_0$, we have that $\Theta(T_0,m_0)$ is a solution to $\Phi(A_0,B_1)$.  Thus, $\Psi(\Theta(T_0,m_0)) = \seq{S_0,T_1}$ is such that $S_0$ is a solution to $A_0$, and $T_1$ is a solution to $B_1$.  The first of these, $S_0$, can serve as the first member of our sequence of solutions.  Since $B_1(x) = \Phi(A_1,B_2)(x)$ for all $x \geq m_1$, we have that $\Theta(T_1,m_1)$ is a solution to $\Phi(A_1,B_2)$.  Thus, $\Psi(\Theta(T_1,m_1)) = \seq{S_1,T_2}$ is such that $S_1$ is a solution to $A_1$, and $T_2$ is a solution to $B_2$.  Continuing in this way, we build an entire sequence $\seq{S_i : i \in \omega}$ of solutions to $\seq{A_i : i \in \omega}$, and since $\seq{m_i : i \in \omega}$ is uniformly computable, we do this uniformly computably from $T_0$ alone. The proof is complete.
\end{proof}

The utility of the Squashing Theorem for our purposes, as we shall see in subsequent sections, is that in many cases it allows us to deduce that multiple applications of a given principle cannot be uniformly reduced to one. This is because there is no (strong) Weihrauch reduction of $\omega$ instances of that principle to one, and in general, showing this tends to be easier.

\begin{corollary}\label{C:squashing}
Let $\mathsf{P}$ be a $\Pi^1_2$ principle that is total and has finite tolerance.
\begin{enumerate}
\item If $\mathsf{P}^2 \sred \mathsf{P}$, then $\mathsf{P}^{\omega} \sred \mathsf{P}$.
\item If $\mathsf{P}^2 \red \mathsf{P}$, then $\mathsf{P}^{\omega} \red \mathsf{P}$.
\end{enumerate}
\end{corollary}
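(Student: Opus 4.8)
The plan is simply to invoke the Squashing Theorem (Theorem~\ref{T:squashing_theorem}) with $\mathsf{Q}$ taken to be $\mathsf{P}$ itself. First I would check that the hypotheses transfer under this choice: $\mathsf{P}$ is total and has finite tolerance by assumption, and since $\mathsf{Q} = \mathsf{P}$, the principle $\mathsf{Q}$ is total as well, so both standing requirements of Theorem~\ref{T:squashing_theorem} (``both $\mathsf{P}$ and $\mathsf{Q}$ total, $\mathsf{P}$ has finite tolerance'') are met. The only bookkeeping point is to observe that $\langle \mathsf{P}, \mathsf{P}\rangle$ is literally the principle $\mathsf{P}^2$: by the definitions of the parallel product and of $\alpha$ applications of a principle, an instance of either is a length-$2$ sequence of instances of $\mathsf{P}$, and a solution is a length-$2$ sequence of solutions. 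Hence the hypothesis ``$\mathsf{P}^2 \sred \mathsf{P}$'' of part~(1) of the Corollary is exactly the hypothesis ``$\langle \mathsf{Q}, \mathsf{P}\rangle \sred \mathsf{P}$'' of part~(1) of Theorem~\ref{T:squashing_theorem} in this case, and similarly ``$\mathsf{P}^2 \red \mathsf{P}$'' matches ``$\langle \mathsf{Q}, \mathsf{P}\rangle \red \mathsf{P}$''.

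With this identification in place, part~(1) of the Squashing Theorem yields $\mathsf{Q}^{\omega} \sred \mathsf{P}$, i.e.\ $\mathsf{P}^{\omega} \sred \mathsf{P}$, which is part~(1) of the Corollary; and part~(2) of the Squashing Theorem yields $\mathsf{P}^{\omega} \red \mathsf{P}$, which is part~(2). There is no genuine obstacle to overcome here, since all the substantive work — the stage-by-stage construction of the instances $B_i$, the compactness argument ensuring the cut-off sequence $\seq{m_i : i \in \omega}$ is uniformly computable, and the use of the finite-tolerance functional $\Theta$ together with $\Psi$ to unravel a solution to $B_0$ — has already been carried out in the proof of Theorem~\ref{T:squashing_theorem}. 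The Corollary is just the special case $\mathsf{Q} = \mathsf{P}$.
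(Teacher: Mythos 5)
Your proof is correct and matches the paper's own one-line proof exactly: both simply apply Theorem~\ref{T:squashing_theorem} with $\mathsf{Q} = \mathsf{P}$. The extra verification you give (that the hypotheses transfer and that $\langle\mathsf{P},\mathsf{P}\rangle$ is the same principle as $\mathsf{P}^2$) is sound, if more explicit than the paper bothers to be.
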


\begin{proof}
Apply Theorem~\ref{T:squashing_theorem} with $\mathsf{Q} = \mathsf{P}$.
\end{proof}

\begin{lemma}\label{L:combining_preserves_properties}
Let $\mathsf{P}$ and $\mathsf{Q}$ be $\Pi^1_2$ principles.
\begin{itemize}
\item If both $\mathsf{P}$ and $\mathsf{Q}$ are total, then $\seq{\mathsf{P},\mathsf{Q}}$ is total.
\item If both $\mathsf{P}$ and $\mathsf{Q}$ have finite tolerance, then $\seq{\mathsf{P},\mathsf{Q}}$ has finite tolerance.
\end{itemize}
\end{lemma}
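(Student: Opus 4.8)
The plan is to treat the two bullets separately; each reduces to unwinding the coding convention behind the parallel product and then applying the corresponding property of $\mathsf{P}$ and $\mathsf{Q}$ to the two coordinates independently. Throughout, fix the pairing under which an instance $\seq{A,B}$ of $\seq{\mathsf{P},\mathsf{Q}}$ is coded by the single element of $2^\omega$ whose even-indexed bits list $A$ and whose odd-indexed bits list $B$; any reasonable pairing behaves the same way.

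For totality, let $X \in 2^\omega$ be arbitrary and write $X = \seq{X_0,X_1}$, so that $X_0$ and $X_1$ are the even- and odd-indexed subsequences of $X$. If $\mathsf{P}$ and $\mathsf{Q}$ are total, then $X_0$ is (codes) an instance of $\mathsf{P}$ and $X_1$ is (codes) an instance of $\mathsf{Q}$, so $X$ codes an instance of $\seq{\mathsf{P},\mathsf{Q}}$. As $X$ was arbitrary, $\seq{\mathsf{P},\mathsf{Q}}$ is total.

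For finite tolerance, fix Turing functionals $\Theta_{\mathsf{P}}$ and $\Theta_{\mathsf{Q}}$ witnessing the property for $\mathsf{P}$ and $\mathsf{Q}$ respectively. Suppose $\seq{A_1,B_1}$ and $\seq{A_2,B_2}$ are instances of $\seq{\mathsf{P},\mathsf{Q}}$ with $\seq{A_1,B_1}(x) = \seq{A_2,B_2}(x)$ for all $x \geq m$. Under the chosen pairing this forces $A_1(k) = A_2(k)$ and $B_1(k) = B_2(k)$ for all $k \geq m$ (indeed for all $k$ whose image in either coordinate is at least $m$; the bound $k \geq m$ is crude but sufficient). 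Given a solution $\seq{S_1,T_1}$ to $\seq{A_1,B_1}$ — so that $S_1$ solves $A_1$ and $T_1$ solves $B_1$ — we then have that $\Theta_{\mathsf{P}}(S_1,m)$ solves $A_2$ and $\Theta_{\mathsf{Q}}(T_1,m)$ solves $B_2$, whence $\seq{\Theta_{\mathsf{P}}(S_1,m),\Theta_{\mathsf{Q}}(T_1,m)}$ solves $\seq{A_2,B_2}$. Thus the functional $\Theta$ that decodes its first argument into the two coordinates, runs $\Theta_{\mathsf{P}}$ and $\Theta_{\mathsf{Q}}$ on them with cut-off $m$, and recombines the outputs witnesses finite tolerance for $\seq{\mathsf{P},\mathsf{Q}}$.

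I expect no genuine obstacle here: the content is entirely bookkeeping with the pairing. The one point that deserves a sentence of care is the translation of the cut-off $m$ through the pairing — depending on the pairing one uses, passing $\lceil m/2 \rceil$ (or similar) to the component functionals would be slightly tighter, but since the set of valid cut-offs is upward closed in each component, passing $m$ itself is harmless.
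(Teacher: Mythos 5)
Your proof is correct and matches the paper's approach, which it declares "Immediate": for totality you just decode and apply the hypotheses coordinatewise, and for finite tolerance you apply the component functionals $\Theta_{\mathsf{P}}$ and $\Theta_{\mathsf{Q}}$ in parallel and re-pair, observing that a cut-off $m$ for the interleaved sequence yields a (crude but valid) cut-off $m$ in each coordinate. The only stylistic point is that the parenthetical "for all $k$ whose image in either coordinate is at least $m$" is somewhat garbled; the clean justification is simply that $2k \geq k \geq m$ (resp.\ $2k+1 \geq m$), so agreement of the pair above position $m$ gives agreement of each coordinate above position $m$.
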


\begin{proof}
Immediate.
\end{proof}

\begin{corollary}\label{C:squashing_general}
Let $\mathsf{P}$ and $\mathsf{Q}$ be $\Pi^1_2$ statements, assume that both are total and that $\mathsf{P}$ has finite tolerance, and let $m \geq 1$ be given.
\begin{enumerate}
\item If $\langle \mathsf{Q},\mathsf{P}^m \rangle \sred \mathsf{P}^m$ then $\mathsf{Q}^{\omega} \sred \mathsf{P}^m$.
\item If $\langle \mathsf{Q},\mathsf{P}^m \rangle \red \mathsf{P}^m$ then $\mathsf{Q}^{\omega} \red \mathsf{P}^m$.
\end{enumerate}
\end{corollary}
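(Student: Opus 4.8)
The plan is to deduce this directly from the Squashing Theorem (Theorem~\ref{T:squashing_theorem}), applied with the principle $\mathsf{P}$ there replaced by $\mathsf{P}^m$. For this substitution to be legitimate, one needs to know that $\mathsf{P}^m$ is again a $\Pi^1_2$ principle that is total and has finite tolerance, and that under this substitution the hypotheses of the corollary are exactly the hypotheses of the theorem.

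First I would note that, up to the obvious computable recoding of finite sequences, ``$m$ applications of $\mathsf{P}$'' is the same principle as the $m$-fold parallel product $\seq{\mathsf{P},\dots,\mathsf{P}}$; in particular $\mathsf{P}^m$ is a $\Pi^1_2$ principle. Next I would prove by induction on $m \geq 1$, using Lemma~\ref{L:combining_preserves_properties}, that $\mathsf{P}^m$ is total and has finite tolerance. The base case $m = 1$ is immediate. For the inductive step, one writes $\mathsf{P}^{m+1}$ as $\seq{\mathsf{P}^m,\mathsf{P}}$, applies Lemma~\ref{L:combining_preserves_properties} with $\mathsf{P}^m$ (total and finite-tolerant by the inductive hypothesis) and $\mathsf{P}$ (total and finite-tolerant by assumption), and observes that totality and finite tolerance are preserved under the trivial recoding that identifies a pair consisting of a length-$m$ sequence and a single instance with a length-$(m{+}1)$ sequence.

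With this in hand the corollary is immediate. Since $\mathsf{P}^m$ and $\mathsf{Q}$ are total and $\mathsf{P}^m$ has finite tolerance, Theorem~\ref{T:squashing_theorem}(1), applied with $\mathsf{P}^m$ in the role of $\mathsf{P}$, shows that $\seq{\mathsf{Q},\mathsf{P}^m} \sred \mathsf{P}^m$ implies $\mathsf{Q}^\omega \sred \mathsf{P}^m$, which is part~(1); part~(2) follows the same way from Theorem~\ref{T:squashing_theorem}(2). I do not anticipate any genuine obstacle here: the only point that warrants attention is the bookkeeping in the induction, namely checking that the coding conventions identifying $\mathsf{P}^m$ with an iterated parallel product are compatible with the definitions of \emph{total} and of \emph{finite tolerance}, and this is entirely routine.
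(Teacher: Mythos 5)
Your proposal is correct and matches the paper's proof exactly: the paper likewise establishes that $\mathsf{P}^m$ is total and has finite tolerance by repeated application of Lemma~\ref{L:combining_preserves_properties}, then invokes the Squashing Theorem with $\mathsf{P}^m$ in the role of $\mathsf{P}$. Your spelling out of the induction and the coding bookkeeping is a harmless elaboration of the same argument.
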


\begin{proof}
Repeatedly applying Lemma~\ref{L:combining_preserves_properties}, we see that $\mathsf{P}^m$ is total and has finite tolerance.  The result follows from the Squashing Theorem.
\end{proof}

\begin{corollary}\label{C:squashing_multiple}
Let $\mathsf{P}$ be a $\Pi^1_2$ principle that is total and has finite tolerance, and let $m \geq 1$ be given.
\begin{enumerate}
\item If $\mathsf{P}^{m+1} \sred \mathsf{P}^m$, then $\mathsf{P}^{\omega} \sred \mathsf{P}^m$.
\item If $\mathsf{P}^{m+1} \red \mathsf{P}^m$, then $\mathsf{P}^{\omega} \red \mathsf{P}^m$.
\end{enumerate}
\end{corollary}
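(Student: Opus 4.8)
The plan is to deduce this directly from Corollary~\ref{C:squashing_general} by taking $\mathsf{Q} = \mathsf{P}$. Since $\mathsf{P}$ is assumed total and to have finite tolerance, the hypotheses on $\mathsf{P}$ in that corollary are met, and the choice $\mathsf{Q} = \mathsf{P}$ is total as well. It therefore suffices to show that the hypothesis $\mathsf{P}^{m+1} \sred \mathsf{P}^m$ (respectively $\mathsf{P}^{m+1} \red \mathsf{P}^m$) implies $\langle \mathsf{P}, \mathsf{P}^m \rangle \sred \mathsf{P}^m$ (respectively $\langle \mathsf{P}, \mathsf{P}^m \rangle \red \mathsf{P}^m$); Corollary~\ref{C:squashing_general} then yields $\mathsf{P}^\omega = \mathsf{Q}^\omega \sred \mathsf{P}^m$ (respectively $\red$), which is exactly the conclusion.

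The key point is that $\langle \mathsf{P}, \mathsf{P}^m \rangle$ and $\mathsf{P}^{m+1}$ are strongly Weihrauch equivalent via trivial, purely syntactic translations. Indeed, an instance of $\langle \mathsf{P}, \mathsf{P}^m \rangle$ is a pair $\langle A, \langle B_0, \dots, B_{m-1} \rangle \rangle$ with $A$ and each $B_i$ an instance of $\mathsf{P}$; re-bracketing this uniformly computably as the sequence $\langle A, B_0, \dots, B_{m-1} \rangle$ produces an instance of $\mathsf{P}^{m+1}$, and a solution $\langle S, S_0, \dots, S_{m-1} \rangle$ to that instance, read back as $\langle S, \langle S_0, \dots, S_{m-1} \rangle \rangle$, is a solution to the original pair. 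The reverse translation is the same with the roles interchanged. Hence $\langle \mathsf{P}, \mathsf{P}^m \rangle \sred \mathsf{P}^{m+1}$, and composing with the assumed reduction $\mathsf{P}^{m+1} \sred \mathsf{P}^m$ via transitivity of $\sred$ gives $\langle \mathsf{P}, \mathsf{P}^m \rangle \sred \mathsf{P}^m$. The identical argument works for $\red$.

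Applying Corollary~\ref{C:squashing_general} with $\mathsf{Q} = \mathsf{P}$ then completes both parts. There is no genuine obstacle here: the only care needed is in the bookkeeping of the pairing and unpairing functions used to identify $\langle \mathsf{P}, \mathsf{P}^m \rangle$ with $\mathsf{P}^{m+1}$, which is routine. Alternatively, one could bypass Corollary~\ref{C:squashing_general} altogether and appeal to the Squashing Theorem (Theorem~\ref{T:squashing_theorem}) directly with $\mathsf{Q} = \mathsf{P}$, after observing via Lemma~\ref{L:combining_preserves_properties} that $\mathsf{P}^m$ is total and has finite tolerance; I would present whichever route keeps the exposition shortest.
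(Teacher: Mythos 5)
Your argument is correct and matches the paper's proof exactly: the paper also deduces the result from Corollary~\ref{C:squashing_general} with $\mathsf{Q} = \mathsf{P}$, after noting that $\mathsf{P}^{m+1} \sred \mathsf{P}^m$ immediately gives $\langle \mathsf{P},\mathsf{P}^m \rangle \sred \mathsf{P}^m$. You have merely spelled out the routine re-bracketing identification of $\langle \mathsf{P}, \mathsf{P}^m \rangle$ with $\mathsf{P}^{m+1}$ that the paper leaves implicit.
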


\begin{proof}
Since $\mathsf{P}^{m+1} \sred \mathsf{P}^m$, we know that $\langle \mathsf{P},\mathsf{P}^m \rangle \sred \mathsf{P}^m$, so the result follows from the previous corollary.
\end{proof}

For the remainder of this article, we employ the following short-hand to avoid excessive exponents and to give $\mathsf{P}^{\omega}$ a more evocative name.

\begin{statement}\label{S:sequential_form}
For any $\Pi^1_2$ principle $\mathsf{P}$, we denote $\omega$ applications of $\mathsf{P}$, or $\mathsf{P}^{\omega}$, by $\Seq\mathsf{P}$.  We call $\Seq\mathsf{P}$ the \emph{sequential version} of $\mathsf{P}$.
\end{statement}

\noindent So, for instance, Corollary~\ref{C:squashing} says that that if $\mathsf{P}$ is total and has finite tolerance, then $\mathsf{P}^2 \sred \mathsf{P}$ implies that $\Seq\mathsf{P} \sred \mathsf{P}$.  With this terminology, we have the following simple result.

\begin{proposition}\label{P:unif_imlies_seq_unif}
Let $\mathsf{P}$ and $\mathsf{Q}$ be $\Pi^1_2$ principles.
\begin{enumerate}
\item If $\mathsf{P} \sred \mathsf{Q}$, then $\Seq\mathsf{P} \sred \Seq\mathsf{Q}$.
\item If $\mathsf{P} \red \mathsf{Q}$, then $\Seq\mathsf{P} \red \Seq\mathsf{Q}$.
\end{enumerate}
\end{proposition}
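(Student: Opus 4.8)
The plan is to obtain the reduction witnessing $\Seq\mathsf{P} \sred \Seq\mathsf{Q}$ simply by applying the given reduction $\mathsf{P} \sred \mathsf{Q}$ componentwise along the sequence. To present the stronger conclusion, I would prove~(1) in detail; the proof of~(2) is identical except that the backward functional is additionally handed the original instance as an oracle, which only makes the argument easier.

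First I would fix Turing functionals $\Phi$ and $\Psi$ witnessing $\mathsf{P} \sred \mathsf{Q}$, so that whenever $A$ is an instance of $\mathsf{P}$ then $\Phi(A)$ is an instance of $\mathsf{Q}$, and whenever $T$ is a solution to $\Phi(A)$ then $\Psi(T)$ is a solution to $A$. Given an instance $\seq{A_i : i \in \omega}$ of $\Seq\mathsf{P}$, coded as a single element of $2^\omega$ via a fixed computable pairing of $\omega$-sequences, I would define the forward reduction by declaring the $i$-th column of its output to be $\Phi(A_i)$. Since $\Phi$ is a single fixed functional and the decoding and re-encoding of columns is uniformly computable, this is a genuine Turing functional; and since each $\Phi(A_i)$ is an instance of $\mathsf{Q}$, the output $\seq{\Phi(A_i) : i \in \omega}$ is an instance of $\Seq\mathsf{Q}$.

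Next, given a solution $\seq{T_i : i \in \omega}$ to $\seq{\Phi(A_i) : i \in \omega}$ — that is, a sequence in which each $T_i$ is a solution to $\Phi(A_i)$ — I would define the backward reduction by declaring the $i$-th column of its output to be $\Psi(T_i)$. This is again a single Turing functional, and by the choice of $\Psi$ each $\Psi(T_i)$ is a solution to $A_i$, so $\seq{\Psi(T_i) : i \in \omega}$ is a solution to the instance $\seq{A_i : i \in \omega}$ of $\Seq\mathsf{P}$. This establishes~(1). For~(2), one changes only the backward functional, letting its $i$-th output column be $\Psi(A_i \oplus T_i) = \Psi(A_i, T_i)$, using the original instance $\seq{A_i : i \in \omega}$ (available as an oracle in the Weihrauch setting) to supply $A_i$ alongside $T_i$.

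There is essentially no obstacle here; the only point requiring any care is bookkeeping, namely verifying that ``apply $\Phi$ (resp.\ $\Psi$) to every column and reassemble'' is genuinely implementable by one Turing functional. This follows because the pairing function coding $\omega$-sequences of reals as single reals is computable and because $\Phi$ and $\Psi$ are fixed functionals applied uniformly in the column index $i$, so a universal machine can locate column $i$ of its oracle, simulate $\Phi$ (resp.\ $\Psi$) on it, and write the result into column $i$ of the output.
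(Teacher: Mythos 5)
Your proof is correct and follows exactly the same approach as the paper's: apply $\Phi$ and $\Psi$ componentwise to the columns of the sequence, with the only change for~(2) being that the backward functional uses $\Psi(A_i, T_i)$ in place of $\Psi(T_i)$. The paper states this slightly more tersely but the argument is identical.
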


\begin{proof}
For~(1), fix $\Phi$ and $\Psi$ witnessing the reduction $\mathsf{P} \sred \mathsf{Q}$.  Given an instance $\seq{A_i : i \in \omega}$ of $\Seq\mathsf{P}$, we have that $\seq{\Phi(A_i) : i \in \omega}$ is an instance of $\Seq\mathsf{Q}$ uniformly computably obtained from it.  Also, if $\seq{T_i : i \in \omega}$ is a solution to $\seq{\Phi(A_i) : i \in \omega}$, then $\seq{\Psi(T_i) : i \in \omega}$ is a solution to $\seq{A_i : i \in \omega}$. For~(2), the proof is the same, except we must take $\seq{\Psi(A_i, T_i) : i \in \omega}$ as the solution.
\end{proof}

\section{Ramsey's theorem for different numbers of colors}

Throughout this section, let $n \geq 1$ be fixed. Our goal is to work up towards a proof of the following theorem.

\begin{theorem}\label{T:Ramsey_non-uniform}
For all $j,k \geq 2$ with $j < k$, we have $\RT^n_k \nsred \RT^n_j$.
\end{theorem}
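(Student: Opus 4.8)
The plan is to use the Squashing Theorem as the main engine, exactly as advertised in the text following the statement of Theorem~\ref{T:squashing_theorem}. Since $\RT^n_j$ is total and has finite tolerance by Proposition~\ref{P:ramsey_total_tolerant}, Corollary~\ref{C:squashing} tells us that if $(\RT^n_j)^2 \sred \RT^n_j$ then $\Seq{\RT^n_j} \sred \RT^n_j$. Now $(\RT^n_j)^2 \sred \RT^n_j$ does hold: by Proposition~\ref{P:combining_ramsey}, $\langle \RT^n_j,\RT^n_j\rangle \sred \RT^n_{j^2}$, and then $\RT^n_{j^2} \sred \RT^n_j$ would be needed --- but wait, that last step is false for $j^2 > j$, which is precisely what we are trying to prove is impossible. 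So the right way around is contrapositive: it suffices to show $\Seq{\RT^n_j} \nsred \RT^n_k$ is \emph{not} how to proceed either. Instead, the clean route is: to prove $\RT^n_k \nsred \RT^n_j$, I will show $\RT^n_k \nsred \Seq{\RT^n_j}$ is too strong; rather, I observe that $\RT^n_{k} \sred \langle \RT^n_{a_1},\dots,\RT^n_{a_t}\rangle$-type manipulations reduce everything to the base case $k = j+1$, and then attack $\RT^n_{j+1} \nsred \RT^n_j$ directly, using the Squashing Theorem only to handle the reduction $\RT^n_j$ to itself on the target side if one wanted iterated applications. Let me restructure.

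\textbf{Step 1 (Reduction to the base case).} First I would reduce to showing $\RT^n_{k} \nsred \RT^n_{k-1}$ for every $k \geq 3$ (and also the case of getting $\RT^n_k$ from $\RT^n_j$ with $j<k$ arbitrary, which follows since $\RT^n_j \sred \RT^n_{k-1}$ when $j \leq k-1$, so a reduction $\RT^n_k \sred \RT^n_j$ would compose to $\RT^n_k \sred \RT^n_{k-1}$). So it suffices to rule out $\RT^n_{k} \sred \RT^n_{k-1}$.

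\textbf{Step 2 (The combinatorial core).} The heart of the matter is to build, for each total Turing functional $\Phi$ (a candidate instance-transformation), a coloring $f\colon [\omega]^n \to k$ such that the instance $g = \Phi(f)\colon[\omega]^n\to k-1$ of $\RT^n_{k-1}$ has an infinite homogeneous set $H$ from which \emph{no} Turing functional $\Psi$ (given only $H$, for $\sred$) can compute an infinite $f$-homogeneous set. The strategy I expect to use is a forcing/diagonalization argument à la Jockusch's colorings: one keeps the target coloring $g$ ``spread out'' over all $k-1$ colors so that solving it gives little information, while arranging that $f$ genuinely uses all $k$ colors in a way that any $f$-homogeneous set must encode a hard-to-compute object (e.g.\ by coding a suitably generic or $\mathrm{PA}$-type set, or by a direct finite-injury diagonalization against all $\Psi_e$). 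One exploits that a single application of $\RT^n_{k-1}$ can "confuse" at most a bounded amount of information --- this is where the gap $j<k$ is essential. For strong Weihrauch reducibility the backward functional $\Psi$ sees only the solution $H$, which makes the diagonalization cleaner; to also handle $\nred$ one would need $\Psi$ with oracle $f\oplus H$, but the theorem as stated only claims $\nsred$.

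\textbf{Step 3 (Assembling).} Combine Steps 1--2: given hypothetical $\Phi,\Psi$ witnessing $\RT^n_k \sred \RT^n_{k-1}$, apply Step 2 to $\Phi$ to get $f$ and a bad solution $H$ to $\Phi(f)$; then $\Psi(H)$ fails to be $f$-homogeneous, contradiction. The main obstacle, and where essentially all the work lies, is Step 2: designing the coloring $f$ and proving that the forced homogeneous set of $\Phi(f)$ carries too little information. I expect this to require a careful preservation argument (likely a cone-avoidance or non-uniformity lemma specific to $n$-tuples), and for $n \geq 2$ this is delicate because instances of $\RT^n_{k-1}$ can already be computationally powerful; the construction must be robust enough that the \emph{particular} solution handed back is the useless one. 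I anticipate the authors prove a sequence of lemmas isolating exactly this phenomenon before arriving at the displayed theorem.
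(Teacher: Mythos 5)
Your proposal identifies the Squashing Theorem as the engine but then sets it aside, and what you are left with (Step~2) is a promissory note, not an argument. The gap is real, and the direction of your reduction in Step~1 is the opposite of what works.

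First, Step~1 reduces the theorem to the \emph{hardest} instance, $\RT^n_{k+1}\nsred\RT^n_k$, namely increasing by one color. The paper does the reverse: it first proves a \emph{wide-gap} result, $\RT^n_{2k}\nred\RT^n_k$ (Corollary~\ref{C:double_exponent}), which is easier because the gap between $2k$ and $k$ gives you room, and then converts the general case into the doubling case by the fan-out Lemma~\ref{L:spread_coloring}: a hypothetical reduction $\RT^n_k\sred\RT^n_j$ amplifies to $\RT^n_{k^s}\sred\RT^n_{j^s}$, and choosing $s$ so that $j^s\leq 2^m < 2^{m+1} < k^s$ manufactures a contradiction with the doubling result. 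That amplification lemma is precisely the step where strong Weihrauch reducibility (as opposed to $\red$) is used, and it is absent from your sketch. Without it, you have no way to close the gap between $j<k$ arbitrary and a single proved non-reduction.

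Second, your Step~2 outlines a direct finite-injury diagonalization to show $\RT^n_{k+1}\nsred\RT^n_k$, but this is not the paper's method and you give no construction. The paper does not diagonalize against a single application of $\RT^n_{k-1}$ at all. Instead, its core ingredient is Lemma~\ref{L:jump_coding}: a computable instance of $\Seq\RT^n_2$ all of whose solutions compute $\emptyset^{(n)}$. The Squashing Theorem is then applied with \emph{two distinct principles} $\mathsf{Q}=\RT^n_2$ and $\mathsf{P}=\RT^n_k$: if $\langle\RT^n_2,\RT^n_k\rangle\red\RT^n_k$ (which would follow from $\RT^n_{2k}\red\RT^n_k$ via Proposition~\ref{P:combining_ramsey}), then $\Seq\RT^n_2\red\RT^n_k$, and this contradicts Jockusch's upper bound that every computable instance of $\RT^n_k$ has a homogeneous set $H$ with $H'\leq_T\emptyset^{(n)}$. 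Your opening paragraph shows you considered only the case $\mathsf{Q}=\mathsf{P}$ and discarded the Squashing Theorem when that didn't help; the crucial move you are missing is that $\mathsf{Q}$ can be taken to be $\RT^n_2$, so that what you squash is many applications of the cheap principle against one application of the expensive one. In short, you are missing all three load-bearing pieces: the $\emptyset^{(n)}$-coding of Lemma~\ref{L:jump_coding}, the Jockusch upper bound it is played off against, and the power-amplification Lemma~\ref{L:spread_coloring}.
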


\noindent As pointed out above, we have that $\RCA \vdash \RT^n_j \rightarrow \RT^n_k$, but the obvious proof uses multiple nested applications of $\RT^n_j$.  Theorem~\ref{T:Ramsey_non-uniform} says that it is impossible to give a uniform proof of this implication using just one application of $\RT^n_j$.

The key ingredients of the proof are Proposition~\ref{P:combining_ramsey}, the Squashing Theorem, and the fact that it is possible to code more into $\Seq\RT^n_k$ than into $\RT^n_k$ alone.  To illustrate the last of these, consider $\RT^1_2$.  Notice that every computable instance of $\RT^1_2$ trivially has a computable solution because either there are infinitely many $0$s or there are are infinitely many $1$s (and each of these sets is computable), but there is one non-uniform bit of information used to determine which of these two statements is true.  However, it is a straightforward matter to build a computable instance of $\Seq\RT^1_2$ such that every solution computes $\emptyset'$.  The idea is to use each column to code one bit of $\emptyset'$ by exploiting this one non-uniform decision.  In fact, for higher exponents this result can be made sharper, as we now prove. (See also \cite[Proposition 47]{KK-2012} for a related result in the context of proof mining and program extraction.)

\begin{lemma}\label{L:jump_coding}
There is a computable instance of $\Seq\RT^n_2$ every solution to which computes $\emptyset^{(n)}$.
\end{lemma}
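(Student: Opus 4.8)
The plan is to prove the statement by induction on $n$, using the case $n=1$ as the base and relativizing for the inductive step. I will construct a computable instance $\langle f_i : i \in \omega \rangle$ of $\Seq\RT^n_2$ in which the columns are organized to code successively higher jumps.

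For the base case $n=1$, recall that $\emptyset'$ can be captured by a sequence of limits: there is a computable function $g(e,s)$ such that $\lim_s g(e,s)$ exists and equals $\emptyset'(e)$ for each $e$. I would let the $e$-th column be the coloring $f_e \colon \omega \to 2$ given by $f_e(s) = g(e,s)$. Then the only color appearing infinitely often in $f_e$ is $\emptyset'(e)$, so any infinite homogeneous set $H_e$ for $f_e$ has constant color $\emptyset'(e)$; reading off these colors from a solution $\langle H_e : e \in \omega \rangle$ uniformly computes $\emptyset'$. (More care is needed if one wants the solution itself, not just an oracle for it, to compute $\emptyset^{(n)}$ in the right uniform way, but for $n=1$ the color of $H_e$ is decidable from any single element of $H_e$, so $\bigoplus_e H_e \geq_T \emptyset'$.)

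For the inductive step, suppose there is a computable instance $\langle A_i : i \in \omega \rangle$ of $\Seq\RT^n_2$ all of whose solutions compute $\emptyset^{(n)}$. Relativizing to $\emptyset'$, there is a $\emptyset'$-computable instance of $\Seq\RT^n_2$ all of whose solutions compute $(\emptyset')^{(n)} = \emptyset^{(n+1)}$. The goal is to replace the $\emptyset'$-computable instance of $\Seq\RT^n_2$ by a genuinely computable instance of $\Seq\RT^{n+1}_2$. The standard device (as in Jockusch's analysis of $\RT^n_k$) is that a $\emptyset'$-computable $2$-coloring of $[\omega]^n$ can be approximated by a computable $2$-coloring of $[\omega]^{n+1}$ whose ``limit color'' at an $n$-tuple $\tuple{x}$ (i.e., the color of $(\tuple{x}, y)$ for all sufficiently large $y$) agrees with the $\emptyset'$-coloring; an infinite homogeneous set for the $(n+1)$-dimensional coloring is, after passing to a tail, homogeneous for the $n$-dimensional one, and moreover from such a set one can compute $\emptyset'$ on an initial segment determined by the set, which is enough to run the limit-approximation. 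Applying this column-by-column converts the $\emptyset'$-computable instance of $\Seq\RT^n_2$ into a computable instance of $\Seq\RT^{n+1}_2$, and any solution to the latter uniformly yields (an oracle computing) a solution to the former, hence computes $\emptyset^{(n+1)}$.

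The main obstacle is the bookkeeping in the inductive step: one must be careful that a homogeneous set for the $(n+1)$-dimensional approximating coloring actually lets us recover enough of $\emptyset'$ to reconstruct the $\emptyset'$-oracle computation producing the $n$-dimensional instance, and that this recovery is uniform across all columns simultaneously (so that the whole solution to $\Seq\RT^{n+1}_2$ computes $\emptyset^{(n+1)}$, not just each piece separately). This is handled by the familiar trick of using the homogeneous set itself as a ``fast-growing'' scale: from the first $k+1$ elements of an infinite homogeneous set for the approximating coloring one can correctly compute $\emptyset'(e)$ for all $e \le k$, since the limit at each relevant lower tuple has stabilized by then. I would also note that the sharpness (``$\emptyset^{(n)}$'' rather than merely ``a non-computable set'') is exactly what this iterated-approximation construction delivers, matching the computational content of $\RT^n_2$ established by Jockusch.
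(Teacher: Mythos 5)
Your strategy (induction on $n$ with relativization plus a Shoenfield-style approximation to drop the oracle) is genuinely different from the paper's, which gives a direct, non-inductive construction: it writes $\emptyset^{(n)}$ by a computable predicate $\varphi$ under an alternating block of $n$ quantifiers, defines $f_i(\tuple y)$ by replacing each quantifier $(\exists x_j)$ or $(\forall x_j)$ with the bounded one over $x_j < y_j$, and then uses Skolem functions to pick elements of $H_i$ large enough that the truth value stabilizes, so that $\emptyset^{(n)}(i) = f_i([H_i]^n)$ is read off directly. Your base case $n=1$ is correct and matches what the direct construction gives there.

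The inductive step, however, has a real gap. When you relativize the induction hypothesis to $\emptyset'$ you obtain a $\emptyset'$-computable instance $\langle A_i^{\emptyset'}\rangle$ of $\Seq\RT^n_2$ such that every solution $\langle H_i\rangle$ satisfies $\emptyset^{(n+1)} \le_T \langle H_i\rangle \oplus \emptyset'$ -- the join with $\emptyset'$ is unavoidable, since the decoding procedure in the hypothesis evaluates the ($\emptyset'$-computable) colorings on the homogeneous sets. After converting each $A_i^{\emptyset'}$ to a computable $(n+1)$-ary coloring $B_i$ via the limit approximation $h_i(\tuple x, y) \to A_i^{\emptyset'}(\tuple x)$, a homogeneous set for $B_i$ is indeed homogeneous for $A_i^{\emptyset'}$, so it is a solution to the relativized instance -- but you have still only shown $\emptyset^{(n+1)} \le_T \langle H_i\rangle \oplus \emptyset'$, and you need $\emptyset^{(n+1)} \le_T \langle H_i\rangle$. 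Your proposed fix, the ``fast-growing scale'' claim that from the first $k+1$ elements of $H$ one can compute $\emptyset'\!\res k$ ``since the limit at each relevant lower tuple has stabilized by then,'' does not follow from the construction as described: homogeneity of $H$ for $B_i$ tells you that $h_i(\tuple x, y)$ has the constant limiting value for $\tuple x < y$ in $H$, but it carries no information about the rate of convergence of a $\emptyset'$-approximation, so there is no reason the elements of $H$ should serve as stabilization bounds for $\emptyset'$ -- an infinite homogeneous set for $B_i$ can perfectly well consist of small numbers. (Forcing sparsity of the kind you want is exactly what costs an extra dimension in Jockusch's single-coloring results, e.g.\ that a computable instance of $\RT^3_2$, not $\RT^2_2$, can code $\emptyset'$.)

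The gap is repairable, and cheaply so, but not by the step you describe. Since you are working with a sequence of colorings, you can simply interleave: keep the approximating columns $B_i$ on the odd indices, and on the even indices put colorings $E_i(\tuple x) = g(i, x_0)$ where $g$ is a computable limit approximation to $\emptyset'$ (ignoring all but the first coordinate). A solution to the combined sequence then computes $\emptyset'$ outright from the even columns (by the same argument as your base case), and feeding that together with the odd columns into the relativized induction hypothesis yields $\emptyset^{(n+1)}$. With that modification your proof goes through. It is worth noting that the paper's direct construction sidesteps the whole issue; the induction-plus-relativization route works but needs this extra bookkeeping, which is precisely the ``bookkeeping in the inductive step'' you flagged as the main obstacle but then handled with an argument that does not succeed.
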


\begin{proof}
We prove the result for $n$ being odd; the case where $n$ is even is analogous. Fix a computable predicate $\varphi$ such that
\[
\emptyset^{(n)} = \{i \in \omega : (\exists x_0)(\forall x_1) \cdots (\exists x_{n-1})
~\varphi(i,x_0,x_1,\dots,x_{n-1})\}.
\]
Define a computable sequence of colorings $\seq{ f_i : {i \in \omega}}$ by
\begin{align*}
f_i(\tuple{y}) =
\begin{cases}
1 & \text{if $(\exists x_0 < y_0)(\forall x_1 < y_1) \cdots (\exists x_{n-1} < y_{n-1}) ~\varphi(i,x_0,x_1,\dots,x_{n-1})$},\\
0 & \text{otherwise},
\end{cases}
\end{align*}
for all $\tuple{y} = \seq{y_0,y_1,\ldots,y_{n-1}} \in [\omega]^n$.

Let $\seq{ H_i :{i \in \omega}}$ be any sequence of infinite homogeneous sets for the $f_i$. We claim that $\emptyset^{(n)}(i) = f_i([H_i]^n)$ for all $i$, and hence that $\emptyset^{(n)} \leq_T \seq{H_i : i \in \omega}$. To see this, suppose first that $i \in \emptyset^{(n)}$. Let $\seq{w_{2j} :{2j < n}}$ be Skolem functions for membership in $\emptyset^{(n)}$,
so that
\[
(\forall x_1)(\forall x_3)\cdots(\forall x_{n-2})~\varphi(i,w_0(i),x_1,w_2(i,x_1),x_3,\dots,w_{n-1}(i,x_1,x_3,\dots,x_{n-2})).
\]
Now define an increasing sequence $z_0 < z_1 < \cdots < z_{n-1}$ of elements $H_i$ as follows. Start by letting $z_0$ be the least $z \in H_i$ that is greater than $w_0(i)$. Then, given $j$ with $1 \leq j \leq n-1$, suppose we have defined $z_k$ for all $k < j$. If $j$ is odd, let $z_j$ be the least $z \in H_i$ that is greater than $z_{j-1}$. If $j$ is even, let $z_j$ be the least $z \in H_i$ that is greater than $z_{j-1}$, and also greater than $w_j(i,x_1,x_3,\ldots,x_{j-1})$ for all sequences $x_1,x_3,\ldots,x_{j-1}$ with $x_k < z_k$ for each odd $k < j$.

The sequence of $z_j$ so constructed now clearly satisfies
\begin{equation}\label{E:bound}
(\exists x_0 < z_0)(\forall x_1 < z_1) \cdots (\exists x_{n-1} < z_{n-1})~\varphi(i,x_0,x_1,\dots,x_{n-1}).
\end{equation}
So by definition of $f_i$, we have that $f_i(z_0,\ldots,z_{n-1}) = 1$. And since the $z_j$ all belong to $H_i$, it follows that $f([H_i]^n) = 1$, as desired.

Now suppose that $i \notin \emptyset^{(n)}$. We can similarly construct a sequence ${z_0 < \cdots < z_{n-1}}$ of elements of $H_i$ witnessing that $f([H_i]^n) = 0$. Let $\seq{w_{2j+1} : 2j+1 < n}$ be Skolem functions for non-membership in $\emptyset^{(n)}$, so that
\[
(\forall x_0)(\forall x_2)\cdots(\forall x_{n-1})~\neg \varphi(i,x_0,w_1(i,x_0),x_2,\ldots,w_{n-2}(i,x_0,x_2,\dots,x_{n-3}),x_{n-1}).
\]
Let $z_0$ be the least element of $H_i$, and suppose we are given a $j$ with $1 \leq j \leq n-1$ such that $z_k$ has been defined for all $k < j$. If $j$ is even, let $z_j$ be the least $z \in H_i$ that is greater than $z_{j-1}$. If $j$ is odd, let $z_j$ be the least $z \in H_i$ that is greater than $z_{j-1}$, and also greater than $w_j(i,x_0,x_2,\ldots,x_{j-1})$ for all sequences $x_0,x_2,\ldots,x_{j-1}$ with $x_k < z_k$ for each even $k < j$.

This sequence of $z_j$ satisfies the negation of \eqref{E:bound} above, so $f_i(z_0,\ldots,z_{n-1}) = 0$ by definition. Since all the $z_j$ belong to $H_i$, the claim follows.
\end{proof}

After relativization and translation into the language of strong Weihrauch\ reductions, we obtain from the above that $\mathsf{TJ}^n \sred \Seq\RT^n_2.$
(See the discussion following Corollary~\ref{C:TJ} for a definition of the iterated Turing jump, $\mathsf{TJ}^n$.)

\begin{lemma}\label{L:cant_add_coloring}
For all $n \geq 1$ and $k \geq 2$, we have $\langle \RT^n_2,\RT^n_k \rangle \nred \RT^n_k$.
\end{lemma}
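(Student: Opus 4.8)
The plan is to argue contrapositively: a reduction $\langle \RT^n_2,\RT^n_k\rangle \red \RT^n_k$ would, via the Squashing Theorem, yield $\Seq\RT^n_2 \red \RT^n_k$, and this can be refuted using Lemma~\ref{L:jump_coding} together with a standard lower bound on the complexity of solutions to computable instances of $\RT^n_k$.

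In detail, suppose toward a contradiction that $\langle \RT^n_2,\RT^n_k\rangle \red \RT^n_k$. By Proposition~\ref{P:ramsey_total_tolerant}, $\RT^n_k$ is total and has finite tolerance, and $\RT^n_2$ is total; so Theorem~\ref{T:squashing_theorem}(2), applied with $\mathsf{Q}=\RT^n_2$ and $\mathsf{P}=\RT^n_k$, gives $\Seq\RT^n_2 = (\RT^n_2)^\omega \red \RT^n_k$. Fix Turing functionals $\Phi$ and $\Psi$ witnessing this reduction, and let $\mathcal{A}$ be the computable instance of $\Seq\RT^n_2$ provided by Lemma~\ref{L:jump_coding}, every solution to which computes $\emptyset^{(n)}$. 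Then $g = \Phi(\mathcal{A})$ is a computable instance of $\RT^n_k$, and for every infinite set $H$ homogeneous for $g$ the object $\Psi(\mathcal{A}\oplus H)$ is a solution to $\mathcal{A}$; since $\mathcal{A}$ is computable we have $\Psi(\mathcal{A}\oplus H)\leq_T H$, and therefore $\emptyset^{(n)}\leq_T H$. Thus $g$ is a computable instance of $\RT^n_k$ all of whose solutions compute $\emptyset^{(n)}$.

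It therefore suffices to show that no computable instance of $\RT^n_k$ has this property, i.e., that every computable instance of $\RT^n_k$ has an infinite homogeneous set $H$ with $\emptyset^{(n)}\not\leq_T H$. For $n=1$ this is immediate, since every computable instance of $\RT^1_k$ has a computable solution. For $n\geq 2$ it is a consequence of the effective analysis of Ramsey's theorem: by Jockusch's theorem that every computable instance of $\RT^n_k$ has a $\Pi^0_n$ homogeneous set \cite[Theorem 5.1]{Jockusch-1972b}, combined with an application of the low basis theorem relative to $\emptyset^{(n-1)}$ (for $n=2$ this is essentially the theorem of Cholak, Jockusch, and Slaman on low solutions to $\RT^2_k$ \cite{CJS-2001}), every computable instance of $\RT^n_k$ has an infinite homogeneous set $H$ with $H'\leq_T\emptyset^{(n)}$; such an $H$ cannot compute $\emptyset^{(n)}$, for otherwise $\emptyset^{(n+1)}\leq_T H'\leq_T\emptyset^{(n)}$, which is absurd. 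The main obstacle is precisely assembling this computability-theoretic bound in a form valid uniformly in $n$; the reduction-theoretic steps above are routine bookkeeping with the Turing functionals. Finally, since we have established $\nred$, the conclusion $\langle\RT^n_2,\RT^n_k\rangle\nsred\RT^n_k$ follows as well, although in fact only the weaker reducibility $\red$ entered the argument, at the single invocation of the Squashing Theorem.
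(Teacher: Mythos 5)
Your argument follows the same route as the paper's: invoke the Squashing Theorem to pass from the hypothesized reduction to $\Seq\RT^n_2 \red \RT^n_k$, then play off Lemma~\ref{L:jump_coding} against the known upper bound on the complexity of homogeneous sets for computable instances of $\RT^n_k$. The one place to be careful is your justification of that upper bound. You propose to derive ``every computable instance of $\RT^n_k$ has an infinite homogeneous set $H$ with $H'\leq_T\emptyset^{(n)}$'' by combining Jockusch's Theorem~5.1 (existence of a $\Pi^0_n$ homogeneous set) with ``the low basis theorem relative to $\emptyset^{(n-1)}$.'' That derivation does not go through as stated: knowing that some homogeneous set lies in $\Pi^0_n$ places no bound at all on its jump (for example, $\overline{\emptyset^{(n)}}$ is $\Pi^0_n$ but has jump strictly above $\emptyset^{(n)}$), and the low basis theorem applies to $\Pi^0_1$ classes relative to an oracle, not to individual $\Pi^0_n$ sets. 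The statement you actually need is precisely Jockusch's Theorem~5.6, which the paper cites directly (with the observation that it also holds for $n=1$ since $\RT^1_k$ is computably true); you should cite it as such rather than reconstruct it from Theorem~5.1. Once that citation is fixed, the rest of your bookkeeping with $\Phi$, $\Psi$, and the specific instance $\mathcal{A}$ from Lemma~\ref{L:jump_coding} is fine and matches the paper's proof.
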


\begin{proof}
Suppose instead that $\langle \RT^n_2,\RT^n_k \rangle \red \RT^n_k$.  Since $\RT^n_2$ and $\RT^n_k$ are both total and have finite tolerance by~\ref{P:ramsey_total_tolerant}, we may use the Squashing Theorem~\ref{T:squashing_theorem} to conclude that $\Seq\RT^n_2 \red \RT^n_k$.  Fix $\Phi$ and $\Psi$ witnessing the reduction, and let $f = \seq{f_i : i \in \omega}$ be any computable instance of $\Seq\RT^n_2$.  Apply $\Phi$ to this sequence to obtain an instance $g$ of $\RT^n_k$ and notice that $g$ is computable.  By Theorem 5.6 of Jockusch~\cite{Jockusch-1972b}, we can find an infinite set $H$ homogeneous for $g$ such that $H^\prime \leq_T \emptyset^{(n)}$ (since $\RT^1_k$ is computably true, Jockusch's Theorem 5.6 holds also when $n=1$).  We then have that $S = \seq{S_i : i \in \omega} = \Psi (f,H)$ is a solution to $f = \seq{f_i : i \in \omega}$ with $S' \leq_T \emptyset^{(n)}$.

But as the sequence $f = \seq{f_i : i \in \omega}$ was chosen as an arbitrary computable instance of $\Seq\RT^n_2$, this would imply that every computable instance of $\Seq\RT^n_2$ has a solution with jump computable in $\emptyset^{(n)}$. This contradicts Lemma~\ref{L:jump_coding}, since no such set can compute $\emptyset^{(n)}$.  Therefore, we must have $\langle \RT^n_2,\RT^n_k \rangle \nred \RT^n_k$.
\end{proof}

We shall prove Theorem~\ref{T:Ramsey_non-uniform} by means of the following weaker version of the theorem, which now follows easily.

\begin{corollary}\label{C:double_exponent}
For all $n \geq 1$ and $k \geq 2$, we have $\RT^n_{2k} \nred \RT^n_k$.
\end{corollary}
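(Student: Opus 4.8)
The plan is to obtain the corollary formally, by chaining together Proposition~\ref{P:combining_ramsey} and Lemma~\ref{L:cant_add_coloring} and appealing only to basic structural facts about Weihrauch reducibility. I would argue by contradiction. Suppose, toward a contradiction, that $\RT^n_{2k} \red \RT^n_k$.

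First, apply Proposition~\ref{P:combining_ramsey} with $j = 2$ to get $\langle \RT^n_2, \RT^n_k \rangle \sred \RT^n_{2k}$. Since a pair of functionals $\Phi,\Psi$ witnessing strong Weihrauch reducibility also witnesses ordinary Weihrauch reducibility (the backward functional $\Psi$ simply does not make use of its extra oracle input, per Definition~\ref{D:uniform_reductions}), this yields $\langle \RT^n_2, \RT^n_k \rangle \red \RT^n_{2k}$. Now chain this with the assumed reduction $\RT^n_{2k} \red \RT^n_k$, using the transitivity of $\red$ noted after Definition~\ref{D:uniform_reductions}, to conclude $\langle \RT^n_2, \RT^n_k \rangle \red \RT^n_k$. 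But this contradicts Lemma~\ref{L:cant_add_coloring}, which asserts $\langle \RT^n_2, \RT^n_k \rangle \nred \RT^n_k$. Hence no such reduction $\RT^n_{2k} \red \RT^n_k$ exists.

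I do not expect a genuine obstacle here: every step is either a direct citation of an already-established result (Proposition~\ref{P:combining_ramsey}, Lemma~\ref{L:cant_add_coloring}) or an invocation of a routine property of the reducibility relations ($\sred$ refines $\red$, and $\red$ is transitive). The whole content of the corollary has effectively been front-loaded into Lemma~\ref{L:cant_add_coloring} — itself proved via the Squashing Theorem, the jump-coding Lemma~\ref{L:jump_coding}, and Jockusch's bound on homogeneous sets — so the only care needed is to make sure the composition of reductions is stated cleanly and that the strong/ordinary distinction is handled correctly in passing from Proposition~\ref{P:combining_ramsey} to the final contradiction.
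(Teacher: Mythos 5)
Your proof is correct and is essentially identical to the paper's: both argue by contradiction, chain Proposition~\ref{P:combining_ramsey} with the assumed reduction $\RT^n_{2k} \red \RT^n_k$ via transitivity, and invoke Lemma~\ref{L:cant_add_coloring}. The only difference is that you make explicit the (routine) step of weakening $\sred$ to $\red$, which the paper silently absorbs.
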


\begin{proof}
Suppose instead that $\RT^n_{2k} \red \RT^n_k$.  We know from Proposition~\ref{P:combining_ramsey} that $\langle \RT^n_2,\RT^n_k \rangle \red \RT^n_{2k}$.  Hence, using transitivity of $\red$, we have $\langle \RT^n_2,\RT^n_k \rangle \red \RT^n_k$, contrary to Lemma~\ref{L:cant_add_coloring}.
\end{proof}

In order to use this corollary to handle all cases of Theorem~\ref{T:Ramsey_non-uniform}, we use the following result saying that we can fan out a strong Weihrauch\ reduction $\RT^n_k \sred \RT^n_j$ to obtain a strong Weihrauch\ reduction with a larger spread between the number of colors used.

\begin{lemma}\label{L:spread_coloring}
Let $n,j,k,s \geq 1$.  If $\RT^n_k \sred \RT^n_j$, then $\RT^n_{k^s} \sred \RT^n_{j^s}$.
\end{lemma}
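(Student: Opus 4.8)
The plan is to reduce $\RT^n_{k^s}$ directly to $\RT^n_{j^s}$ by applying the given reduction $\RT^n_k \sred \RT^n_j$ in parallel across the $s$ ``digits'' of a base-$k$ (resp.\ base-$j$) color. Fix a computable bijection identifying the color set $k^s$ with the $s$-fold Cartesian product $k \times \cdots \times k$, and similarly $j^s$ with $j \times \cdots \times j$. Under the first identification, an instance $f \colon [\omega]^n \to k^s$ of $\RT^n_{k^s}$ is precisely an $s$-tuple $\seq{f_0, \dots, f_{s-1}}$ of colorings $f_t \colon [\omega]^n \to k$, each uniformly computable from $f$. Fix functionals $\Phi$ and $\Psi$ witnessing $\RT^n_k \sred \RT^n_j$. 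I would take the forward reduction to send $f$ to the coloring $g \colon [\omega]^n \to j^s$ corresponding under the second identification to the tuple $\seq{\Phi(f_0), \dots, \Phi(f_{s-1})}$; since $\Phi$ witnesses the reduction, each $\Phi(f_t)$ is a total $j$-coloring of $[\omega]^n$, so $g$ is a genuine instance of $\RT^n_{j^s}$ and is uniformly computable from $f$. The backward reduction would be $\Psi$ itself.

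The one point to verify is correctness of the backward reduction. Suppose $H$ is an infinite set homogeneous for $g$. Unpacking the product identification, $H$ is simultaneously homogeneous for each of the $s$ colorings $\Phi(f_0), \dots, \Phi(f_{s-1})$. Now apply the defining property of $\RT^n_k \sred \RT^n_j$ to the instance $f_t$, for each $t < s$: since $H$ is a solution to $\Phi(f_t)$, the set $\Psi(H)$ is a solution to $f_t$, i.e.\ an infinite set homogeneous for $f_t$. The essential observation is that $\Psi(H)$ does not depend on $t$: it is one and the same infinite set, and it is homogeneous for $f_0, \dots, f_{s-1}$ all at once, hence homogeneous for $f$. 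Since $\Psi$ consults only $H$, this establishes $\RT^n_{k^s} \sred \RT^n_{j^s}$.

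I do not expect a serious obstacle; the only thing to get right is the observation just made. A priori one might fear that recovering a homogeneous set for each $f_t$ would require a \emph{separate} homogeneous set for each $\Phi(f_t)$, which could not be produced from a single application of $\RT^n_{j^s}$; but homogeneity for the product coloring $g$ yields homogeneity for each of its coordinate colorings simultaneously, and then the single backward functional $\Psi$ does all the work. It is important that we reduce $\RT^n_{k^s}$ to the single principle $\RT^n_{j^s}$ (exploiting its product-of-colors structure) rather than attempting to iterate or compose the reduction, and since the backward direction never looks at $f$, the reduction obtained is indeed strong Weihrauch. The case $s = 1$ is just the hypothesis; one could alternatively phrase the general case as an induction on $s$ using a ``tensor'' of reductions, but the direct argument above is cleaner.
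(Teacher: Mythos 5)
Your proof is correct and takes essentially the same approach as the paper's: both decompose the $k^s$-coloring into $s$ coordinate $k$-colorings, push each through $\Phi$, re-merge into a single $j^s$-coloring, and then crucially observe that a homogeneous set $H$ for the merged coloring is simultaneously homogeneous for each $\Phi(f_t)$, so that the single set $\Psi(H)$ serves as a homogeneous set for every $f_t$ at once (the paper phrases the merging via base-$j$ digit expansions, $g = \sum_{i<s} j^i g_i$, which is the same product identification you use). You have also correctly isolated the point the paper emphasizes in the remark following Theorem~\ref{T:Ramsey_non-uniform}: that $\Psi$ depending only on $H$, and not on the instance, is exactly where strong Weihrauch reducibility is essential.
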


\begin{proof}
Fix $\Phi$ and $\Psi$ witnessing the fact that $\RT^n_k \sred \RT^n_j$.  In what follows, define $e(b,a,i)$ for all $b,a \in \omega$ and all $i < \lfloor \log_b a \rfloor$ to be the $i$th digit in the base $b$ expansion of $a$. Thus, for example, $e(10,25,0) = 5$ and $e(2,25,0) = 1$.

Fix an arbitrary $f \colon [\omega]^n \to k^s$.  We now convert $f$ into $s$ many colorings $f_0,\ldots,f_{s-1} \colon [\omega]^n \to k$ by setting
\[
f_i(\tuple{x}) = e(k,f(\tuple{x}),i)
\]
for all $i < s$ and all $\tuple{x} \in [\omega]^n$. Then for any $\tuple{x}$, the expansion of $f(\tuple{x})$ in base $k$ is precisely $f_0(\tuple{x}) \cdots f_{s-1}(\tuple{x})$. Hence, any set that is simultaneously homogeneous for each of the $f_i$ is also homogeneous for $f$.

Now apply the reduction $\Phi$ to each $f_i$ to obtain colorings $g_i \colon [\omega]^n \to j$ for each $i < s$. We merge these $m$ many colorings into one coloring $g \colon [\omega]^n \to j^s$ defined by
\[
g = \sum_{i = 0}^{s-1} j^i g_i.
\]
Notice that any infinite set $H$ homogeneous for $g$ is simultaneously homogeneous for each of the $g_i$. Hence, $\Psi(H)$ is simultaneously homogeneous for each of the $f_i$. But then by the observation above, it follows that $\Psi(H)$ is an infinite homogeneous set for $f$. Since the reduction from $f$ to $g$ was uniformly computable, the lemma is proved.
\end{proof}

We can now prove our main result.

\begin{proof}[Proof of Theorem~\ref{T:Ramsey_non-uniform}]
Seeking a contradiction, fix $j < k$ and assume $\RT^n_k \sred \RT^n_j$.  Since $\frac{k}{j} > 1$, we may fix $s \in \omega$ with $(\frac{k}{j})^s > 4$, so that $4j^s < k^s$.  Let $m \in \omega$ be least such that $j^s \leq 2^m$.  We then have $2^{m-1} < j^s$, so $2^{m+1} < 4j^s < k^s$, and hence
\[
j^s \leq 2^m < 2^{m+1} < k^s.
\]
Since we are assuming $\RT^n_k \sred \RT^n_j$, we can use Lemma~\ref{L:spread_coloring} to conclude that $\RT^n_{k^s} \sred \RT^n_{j^s}$.  We therefore have
\[
\RT^n_{2^{m+1}} \sred \RT^n_{k^s} \sred \RT^n_{j^s} \sred \RT^n_{2^m}
\]
Since $\sred$ is transitive, it follows that $\RT^n_{2^{m+1}} \sred \RT^n_{2^m}$, contradicting Corollary~\ref{C:double_exponent}.
\end{proof}

It is worth pointing out that, in proving of Theorem \ref{T:Ramsey_non-uniform}, the proof of Lemma \ref{L:spread_coloring} was the only moment where it mattered that we were working with the \emph{strong} form of Weihrauch reducibility. Specifically, since $\Psi$ there took solutions to $g_i$ to solutions to $f_i$ for each $i$, in finding a simultaneous solution $H$ for all the $g_i$ we found a simultaneous solution $\Psi(H)$ for all the $f_i$. This would no longer be the case if joining with original instances was permitted, since then we could not guarantee that $\Psi(g_i,H_i)$ would be the same set for all $i$. We do not know how to overcome this difficulty, and hence leave open the question of whether Lemma \ref{L:spread_coloring} and Theorem \ref{T:Ramsey_non-uniform} also holds with $\sred$ replaced by $\red$.

\section{Weak Weak K\"{o}nig's Lemma}\label{S:WWKL}

As discussed in Section 2, it is straightforward to see that $\Seq\WKL \sred \WKL$ (and the reverse direction is obvious).  However, the situation of $\WWKL$ is more interesting.  By performing the same interleaving process to show that $\WKL^2 \sred \WKL$, one checks that the resulting tree has positive measure if each of the two input trees do (in fact, the measure of the interleaved tree is the product of the measures of the original trees), and hence it follows that $\WWKL^2 \sred \WWKL$.  By iterating this, it follows that given any {\em finite} sequence $\seq{T_i : i < n}$ of trees with positive measure many paths, one can interleave them to obtain a tree $S$ whose measure will be the product of the $T_i$ (and hence also positive) such that from any path through $S$, one can uniformly compute paths through the $T_i$.  However, this idea does not carry over to the case of an infinite sequence of trees of positive measure, since then the interleaving process can produce a tree of measure $0$. Indeed, this can happen even if the measures of the trees in the sequences are uniformly bounded away from $0$.

Notice that we trivially have $\WWKL \sred \WKL$, so $\Seq\WWKL \sred \Seq\WKL$ by Proposition~\ref{P:unif_imlies_seq_unif}.  As explained in Section 2, we have $\Seq\WKL \sred \WKL$, and hence $\Seq\WWKL \sred \WKL$ by transitivity of $\sred$.  One can also show that this can be formalized to give $\RCA \vdash \WKL \rightarrow \Seq\WKL \rightarrow \Seq\WWKL$.  The next theorem shows that the converses are also true, and hence $\Seq\WWKL$, even in this weaker form, is in fact strictly stronger than $\WWKL$.

\begin{theorem}\label{T:Equiv_SeqWWKL_WKL}
\
\begin{enumerate}
\item $\WKL \sred \Seq\WWKL$.
\item $\RCA \vdash \Seq\WWKL \rightarrow \WKL$.
\end{enumerate}
In fact, both of these statements hold even if we restrict $\Seq\WWKL$ to infinite sequences of subtrees of $2^{<\omega}$ of measure uniformly bounded away from $0$.
\end{theorem}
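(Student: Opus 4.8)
The plan is to reduce finding an infinite path through a given infinite tree $T\subseteq 2^{<\omega}$ to a system of binary ``which branch to take'' decisions, one attached to each node $\sigma\in 2^{<\omega}$, and to encode each decision as a single subtree of $2^{<\omega}$ of measure at least $1/2$. Call $\sigma\in 2^{<\omega}$ \emph{extendible} in $T$ if for every $m$ there is $\tau\in T$ with $\sigma\preceq\tau$ and $|\tau|=m$; note that an extendible node lies in $T$, that $\langle\rangle$ is extendible exactly when $T$ is infinite, that if $\sigma$ is extendible then at least one of $\sigma 0,\sigma 1$ is extendible, and that ``$\sigma$ is non-extendible'' is $\Sigma^0_1$ relative to $T$ and so can be \emph{confirmed} by a $T$-computable search. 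For each $\sigma$ I define a $T$-computable subtree $\widehat T_\sigma$ of $2^{<\omega}$ as follows: search for a confirmation that $\sigma 0$ is non-extendible or that $\sigma 1$ is non-extendible; at the first stage at which one of these is confirmed (if ever), commit permanently to the corresponding index $i$ (ties broken toward $i=0$) and thereafter discard the basic clopen set $\cyl{i}$. The resulting class of surviving reals is $2^\omega$, or $\cyl{0}$, or $\cyl{1}$, hence always of measure $\ge 1/2$; since the discarded set is $\Sigma^0_1(T)$, one re-presents this $\Pi^0_1(T)$ class in the usual way as the set of infinite paths of an honestly $T$-computable subtree $\widehat T_\sigma$, still of measure $\ge 1/2$. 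The forward reduction $\Phi$ sends $T$ to the sequence $\langle\widehat T_\sigma:\sigma\in 2^{<\omega}\rangle$ (via a fixed computable identification of $2^{<\omega}$ with $\omega$); this is always a legitimate instance of $\Seq\WWKL$, indeed of the restricted version in which all the trees have measure uniformly bounded away from $0$.

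The crucial property is: \emph{if $\sigma$ is extendible in $T$, then for every infinite path $X$ through $\widehat T_\sigma$ the node $\sigma^\frown\seq{X(0)}$ is again extendible.} Indeed, if $\sigma$ is extendible then either both of $\sigma 0,\sigma 1$ are extendible, in which case nothing is discarded and $X(0)$ may be either bit, or exactly one child $\sigma i$ is non-extendible, in which case ``$\sigma i$ non-extendible'' is confirmed while ``$\sigma(1-i)$ non-extendible'' never is, so the committed index is $i$, $\cyl{i}$ is discarded, and therefore $X(0)=1-i$; either way $\sigma^\frown\seq{X(0)}$ is extendible. The backward reduction $\Psi$ takes a solution $\langle X_\sigma:\sigma\in 2^{<\omega}\rangle$ of the $\Seq\WWKL$-instance and returns the function $P$ defined by the recursion $P(n)=X_{P\res n}(0)$; since this uses only the solution sequence (not $T$), the reduction is \emph{strong}. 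To see that $P$ is an infinite path through $T$: if not, then some $P\res n$ is non-extendible, and as non-extendibility is $\Sigma^0_1$ we may take $n$ least with this property; then $n\ge 1$ because $\langle\rangle$ is extendible, $P\res(n-1)$ is extendible by minimality, and the crucial property applied to $\sigma=P\res(n-1)$ shows that $P\res n=(P\res(n-1))^\frown\seq{X_{P\res(n-1)}(0)}$ is extendible, a contradiction. Hence every $P\res n$ is extendible, so lies in $T$, so $P$ is a path through $T$. This establishes $\WKL\sred\Seq\WWKL$, even in the restricted form. The one point requiring care is this measure/forcing balance: naively encoding ``which of the at most $2^n$ level-$n$ nodes of $T$ is extendible'' yields trees whose measure may drop to $2^{-n}$, whereas committing to discard at most one of the two halves $\cyl{0},\cyl{1}$ keeps the measure at $1/2$ while still pinning down a correct bit whenever the node it is attached to is extendible.

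For part (2), the same construction is carried out inside $\RCA$: given an infinite $T\subseteq 2^{<\omega}$, the sequence $\langle\widehat T_\sigma\rangle$ is $\Delta^0_1(T)$-definable, hence exists, and forms an instance of $\Seq\WWKL$ (even the restricted one); applying $\Seq\WWKL$ and defining $P$ by the recursion above, one checks exactly as in part (1) that $P$ is an infinite path through $T$. The only delicate point is the verification that $P$ is a path: ``$P\res k$ is extendible'' is $\Pi^0_1$, so one cannot simply induct on it, but the argument above is arranged to sidestep this---it only uses that ``$\exists n\,(P\res n\text{ non-extendible})$'' is $\Sigma^0_1$ in order to extract a least such $n$, which is available from $\Sigma^0_1$ induction. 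Since this shows every infinite subtree of $2^{<\omega}$ has an infinite path, and the only $\Seq\WWKL$-instance used consists of trees of measure $\ge 1/2$, the ``in fact'' clause for~(2) comes for free as well. I expect the main obstacle to be locating the right form of the construction---arranging that the discarded open sets are simultaneously small enough (measure $\le 1/2$) and $\Sigma^0_1(T)$, so that the $\widehat T_\sigma$ really are uniformly measure-bounded computable-in-$T$ trees---rather than the verification, which is then short.
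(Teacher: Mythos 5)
Your proof is correct and takes essentially the same approach as the paper: both attach to each node $\sigma$ a tree of measure at least $\tfrac12$ encoding the binary decision of which child of $\sigma$ to follow (committing to kill at most one half once the search confirms a dead child), and both recover a path via the recursion $P(n)=X_{P\res n}(0)$. One small correction to a side remark: $\Pi^0_1$ induction \emph{is} available in $\RCA$ (it is provably equivalent to $\Sigma^0_1$ induction), so one may induct directly on ``$P\res n$ is extendible'' as the paper does, although your route through the $\Sigma^0_1$ least-number principle is of course also valid.
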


\begin{proof}
We prove (2) in the stronger form in order to handle the formalized version carefully, but our construction is completely uniform and hence can be turned into a proof of (1).

Let $S$ be an arbitrary infinite subtree of $2^{<\omega}$.  We define a sequence of trees $\seq{T_{\sigma} : \sigma \in 2^{<\omega}}$ indexed by finite binary strings $\sigma \in 2^{<\omega}$ (which of course can be put in bijection with $\omega$).  Intuitively, $T_{\sigma}$ is constructed as follows.  Put the empty string $\emptyset$, $0$, and $1$ in $T_{\sigma}$.  Keep building above both $0$ and $1$ putting in all possible extensions as long as $\sigma 0$ and $\sigma 1$ both look extendible in $S$.  If we discover that one of $\sigma 0$ or  $\sigma 1$ is not extendible in $S$, then stop building above $0$ or $1$ in $T_{\sigma}$ accordingly, and forever build above the other side (even if the other also ends up not extendible in $S$).  In this way, $T_{\sigma}$ will always have measure either $\frac12$ or $1$.

More formally, we define our sequence as follows.  Given $\rho \in 2^{<\omega}$ and $k \in \omega$, let $Ext_S(\rho,k)$ be the $\Delta_0$ predicate saying that either $k \leq |\rho|$, or there exists an element of $S$ extending $\rho$ of length $k$.  Given $\sigma \in 2^{<\omega}$, define $T_{\sigma}$ to be $\emptyset$ together with the set of $\tau \in 2^{<\omega} \backslash \{\emptyset\}$ satisfying one of the following:
\begin{itemize}
\item $\tau(0) = 0$ and $Ext_S(\sigma 0, |\tau|)$.
\item $\tau(0) = 1$ and $Ext_S(\sigma 1, |\tau|)$.
\item $\tau(0) = 0$ and $(\exists k < |\tau|)[Ext_S(\sigma 0,k) \wedge \neg Ext_S(\sigma 1,k)]$.
\item $\tau(0) = 1$ and $(\exists k < |\tau|)[Ext_S(\sigma 1,k) \wedge \neg Ext_S(\sigma 0,k)]$.
\item $(\exists k < |\tau|)[Ext_S(\sigma 0, k) \wedge Ext_S(\sigma 1, k) \wedge \neg Ext_S(\sigma 0, k+1) \wedge \neg Ext_S(\sigma 1, k+1)]$.
\end{itemize}
Note that the last condition handles the case when both sides die at the same level, and in this situation we (arbitrarily) build the full tree.

Since $S$ is tree, if $k < m$ and $Ext_S(\rho,m)$, then $Ext_S(\rho,k)$.  By $\Sigma_1^0$-induction and the fact that $Ext_S(\rho,0)$ holds by definition, if $\neg Ext_S(\rho,m)$ then there exists a unique $k \in \omega$ with $k < m$ such that $Ext_S(\rho,k)$ and $\neg Ext_S(\rho,k+1)$.  Using these facts, it is straightforward to check that each $T_{\sigma}$ is a tree, and that for each $m \in \omega$, either every element of $2^m$ is in $T_{\sigma}$ or exactly half of the elements of $2^m$ are in $T_{\sigma}$.

Applying $\Seq\WWKL$ to the sequence $\seq{T_{\sigma} : \sigma \in 2^{<\omega}}$, we obtain a sequence $\seq{B_{\sigma} : \sigma \in 2^{<\omega}}$ of paths through the trees $\seq{T_{\sigma} : \sigma \in 2^{<\omega}}$.  We now define a function $C \colon \omega \to \{0,1\}$ recursively by letting $C(n) = B_{C \res n}(0)$, where $C \res n$ is the finite sequence $C(0) C(1) \cdots C(n-1)$.  We claim that $C$ is a path through $S$.  To show this, we prove the stronger fact that for each $n \in \omega$, we have $(\forall m) Ext_S(C \res n, m)$.  The proof is by induction on $n$ (using $\Pi_1^0$-induction, which follows from $\Sigma_1^0$-induction).  For $n = 0$, note that $C \res n = \emptyset$, and we know that $(\forall m) Ext_S(\emptyset,m)$ because $S$ is an infinite tree by assumption.  Suppose that we have a given $n \in \omega$ for which $(\forall m) Ext_S(C \res n, m)$.  In this case, at least one of $(\forall m) Ext_S((C \res n) 0, m)$ or $(\forall m) Ext_S((C \res n) 1, m)$ must hold.  Now if $i \in \{0,1\}$ is such that $\neg Ext_S((C \res n) i, m)$, then $T_{C \res n}$ has no node extending $i$ of length $m$ (by definition of the $T_{\sigma}$), so it must be the case that $B_{C \res n}(0) = 1-i$.  Therefore, we must have $(\forall m) Ext_S(C \res (n+1), m)$.  This completes the induction, and the proof.
\end{proof}

Fact~(1) above can also be derived from the result of Brattka and Gherardi~\cite[Theorem 8.2]{Brattka-Gherardi-2011wd} that $\WKL \sred \Seq\mathsf{LLPO}$ and the observation of Brattka and Pauly~\cite[Figure 1]{Brattka-Pauly-2010} that that $\mathsf{LLPO} \sred \WWKL$. (See \cite[Section 1]{Brattka-Gherardi-2011wd} for a definition of $\mathsf{LLPO}$.)

On the other hand, we have the following fact, which follows in this form from more general results of Brattka and Pauly~\cite[Proposition 22]{Brattka-Pauly-2010}, and also essentially by the proof of Simpson and Yu~\cite{Simpson-Yu-1990} that $\WWKL \nrightarrow \WKL$ over $\RCA$. We include a proof for completeness.

\begin{proposition}
$\WKL \nred \WWKL$.
\end{proposition}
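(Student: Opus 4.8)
The plan is to separate $\WKL$ from $\WWKL$ using an algorithmic randomness argument, exactly along the lines of Simpson and Yu~\cite{Simpson-Yu-1990}, but now tracked at the level of a single Weihrauch reduction rather than an $\omega$-model. Recall the two relevant facts from randomness: first, every Martin-L\"{o}f random set $Z$ computes a path through any tree $T\subseteq 2^{<\omega}$ of positive measure, uniformly in $T$ and in a bound $\delta>0$ with $\mu([T])\geq\delta$; this is essentially because the complement of $[T]$ is a single effectively open set of measure $<1$, so it is contained in (a level of) a universal Martin-L\"{o}f test, and a random avoids it. Second, there are Martin-L\"{o}f random sets of low degree (equivalently, computable instances of $\WWKL$ --- namely trees of positive measure whose only paths are of low degree, obtained from the Low Basis Theorem applied to a single test level). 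Third, there is a computable infinite tree $S\subseteq 2^{<\omega}$ with no low path, e.g. a tree whose paths all compute $\emptyset'$ (code a completion of $\mathsf{PA}$, or use a $\Pi^0_1$ class with no low member).

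Concretely, suppose for contradiction that $\WKL\red\WWKL$ via Turing functionals $\Phi,\Psi$. Take the computable tree $S$ above (no low path) as an instance of $\WKL$. Then $B=\Phi(S)$ is a computable instance of $\WWKL$, so $B$ is a subtree of $2^{<\omega}$ with $\mu([B])\geq\delta$ for some rational $\delta>0$, and this $\delta$ can be read off effectively from the index of $B$ (this is part of what it means to be an instance of $\WWKL$). Now pick any low Martin-L\"{o}f random set $Z$. By the first fact, $Z$ uniformly computes a path $T$ through $B$; since $Z$ is low and $T\leq_T Z$, the path $T$ is low. Then $S' = \Psi(S\oplus T)\leq_T T$ is a path through $S$ --- but $S$ is computable, so $S\oplus T\equiv_T T$ is low, hence $S'$ is low, contradicting the choice of $S$. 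Therefore no such $\Phi,\Psi$ exist, i.e. $\WKL\nred\WWKL$.

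The main obstacle --- really the only point requiring care --- is making sure the positive-measure bound $\delta$ for $B=\Phi(S)$ is available \emph{effectively}, so that the random $Z$ can genuinely compute a path through $B$ and not merely do so relative to some noncomputable information. This is handled by our formulation of $\WWKL$: an instance is required to come with a uniform lower bound on the measures of its levels, so $\delta$ is part of the data of the instance and is obtained computably from $S$. Everything else is standard: the uniformity of ``a random computes a path through any positive-measure $\Pi^0_1$ class (with a known measure bound)'' is classical, and the existence of a low random together with a computable $\Pi^0_1$ class with no low member is exactly the content underlying the Simpson--Yu separation. One could alternatively cite Brattka and Pauly~\cite[Proposition 22]{Brattka-Pauly-2010} for the Weihrauch non-reduction directly, but the self-contained argument above is short enough to include.
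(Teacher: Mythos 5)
Your proposal has a genuine gap, and it is in precisely the step you flag as the ``third fact.'' You claim there is a computable infinite tree $S\subseteq 2^{<\omega}$ with no low path, suggesting the tree of completions of $\mathsf{PA}$ or ``a $\Pi^0_1$ class with no low member.'' No such tree exists: by the Jockusch--Soare Low Basis Theorem, \emph{every} nonempty $\Pi^0_1$ class (equivalently, every infinite computable subtree of $2^{<\omega}$) has a member of low degree. In particular the $\Pi^0_1$ class of completions of $\mathsf{PA}$ has low members, and the Cone Avoidance Basis Theorem rules out any computable tree all of whose paths compute $\emptyset'$. So the contradiction you aim for --- producing a low path through $S$ from a low $1$-random --- is unreachable because $S$ already has a low path for trivial reasons, for any choice of computable $S$.

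The fix is to replace the degree-theoretic invariant ``lowness'' by a measure-theoretic one, which is what the paper does. By Jockusch and Soare~\cite[Theorem 5.3]{JS-1972a} there is a computable instance $S$ of $\WKL$ such that the set of $X \in 2^{\omega}$ computing a path through $S$ has measure $0$. If $\WKL \red \WWKL$ via $\Phi,\Psi$, then $B = \Phi(S)$ is a computable instance of $\WWKL$; every $1$-random $Z$ computes a path $T$ through $B$ (by the Ku\v{c}era--G\'{a}cs argument, as in~\cite[Lemma 1.3]{AKLS-2004}), and then $\Psi(S\oplus T) \leq_T Z$ is a path through $S$. Since the $1$-randoms have measure $1$, this contradicts the choice of $S$. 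The rest of your argument --- the use of randomness, the uniformity discussion about the measure bound $\delta$, and the observation that one could instead cite Brattka and Pauly~\cite[Proposition 22]{Brattka-Pauly-2010} --- is sound and matches the paper's approach; only the choice of $S$ and the associated invariant need to be corrected.
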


\begin{proof}
By results of Jockusch and Soare~\cite[Theorem 5.3]{JS-1972a}, there is a computable instance of $\WKL$ for which only measure $0$ many elements of $2^{\omega}$ compute a solution.  However, every $1$-random computes an infinite path through every infinite computable instance of $\WWKL$. (See, e.g.,~\cite[Lemma 1.3]{AKLS-2004}.)
\end{proof}



Thus while $\WWKL^n \sred \WWKL$ for each $n \in \omega$, we have that $\Seq\WWKL \nred \WWKL$.  Notice that the Squashing Theorem does not apply to $\WWKL$ because it is not total (there is no clear way to view every real as coding an instance of $\WWKL$).

We now turn to questions about uniformly passing back and forth between trees of positive measure. Consider any such tree $T$ of $2^{<\omega}$. A question that seems natural is whether from a positive rational $q < 1$, it is possible to build a tree $S$ of measure at least $q$, each path through which computes a path through $T$. Intuitively, is it possible to blow up the measure of $T$ without losing information about its paths? It is not difficult to see that the answer is affirmative, and in fact, that such an $S$ can be obtained uniformly from $q$ and an index for $T$. Indeed, fix a universal Martin-L\"{o}f test $\{U_i : i \in \omega\}$ and let $S = 2^{<\omega} - U_i$ for the least $i$ with $q \leq 1-2^{-i}$. Every path through $S$ is 1-random, and hence computes a path through $T$, but not uniformly. The following lemma and proposition show that if we allow $S$ to be defined non-uniformly from $T$ and $q$, then we can arrange for the computations from paths to paths to be uniform.

\begin{lemma}
Given a tree $T \subseteq 2^{<\omega}$ of positive measure $p$, and given $\varepsilon > 0$, there is a tree $S$, each path of which uniformly computes a path through $T$, such that the measure of the complement of $S$ is at most $(1+\varepsilon)(1-p)^2$.
\end{lemma}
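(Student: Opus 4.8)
The plan is to build the tree $S$ so that each of its paths $X$ splits, via its even and odd bits, into two independent ``tracks'' $X_0 = \langle X(2n) : n\in\omega\rangle$ and $X_1 = \langle X(2n+1):n\in\omega\rangle$; recall that $X\mapsto(X_0,X_1)$ is a measure isomorphism of $2^{\omega}$ onto $2^{\omega}\times 2^{\omega}$ with the product measure, so $X_0,X_1$ behave as two independent points of $2^{\omega}$. Writing $[T]\subseteq 2^{\omega}$ for the (closed, measure $p$) set of infinite paths of $T$ and $\mu$ for the uniform measure, I want $X$ to be a path of $S$ exactly when $X_0\in[T]$, or when $X_1\in[T]$ and $X_0$ lies in a suitable auxiliary set $E$. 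The reason for the two tracks is that ``both tracks miss $[T]$'' has probability about $(1-p)^2$, which accounts for the $(1-p)^2$ in the bound. The catch is that a path of $S$ must \emph{uniformly} compute a path through $T$, and the naive recipe ``output $X_0$ if $X_0\in[T]$, else output $X_1$'' is not computable, since ``$X_0\in[T]$'' is only a $\Pi^0_1$ event in $X$. The one genuine idea in the proof is to take $E$ to be \emph{clopen and disjoint from $[T]$}: then ``$X_0\in E$'' is decidable from finitely many bits of $X$, and whenever it holds we know $X_0\notin[T]$, hence — by the very definition of $S$ — that $X_1\in[T]$, so outputting $X_1$ is safe; and whenever it fails we must have $X_0\in[T]$ and output $X_0$.

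Concretely I would carry this out as follows. First, since $2^{\omega}\setminus[T]$ is open of measure $1-p$ and open subsets of $2^{\omega}$ are approximated in measure from inside by clopen sets, fix a clopen $E\subseteq 2^{\omega}\setminus[T]$ (a finite union of cylinders) with $\mu(E)\ge (1-p)-\varepsilon(1-p)^2/p$; when that quantity is $\le 0$ just take $E=\emptyset$, in which case one checks directly that $1-p\le(1+\varepsilon)(1-p)^2$. Next, set $A=\{X:X_0\in[T]\}$ and $B=\{X:X_0\in E\text{ and }X_1\in[T]\}$, let $D=A\cup B$ (closed, and nonempty since $p>0$), and take $S$ to be the tree of all initial segments of members of $D$, so that $[S]=D$; note that $S$ need not be computable in $T$, but the statement permits $S$ to depend non-uniformly on $T$ and $\varepsilon$. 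Then define the Turing functional $\Gamma$: on oracle $X$, decide from finitely many bits whether $X_0\in E$, outputting $X_1$ if so and $X_0$ otherwise. For $X\in[S]=D$ this is correct: if $X_0\in E$ then $X_0\notin[T]$, so $X\notin A$, hence $X\in B$ and $X_1\in[T]$, giving $\Gamma(X)=X_1\in[T]$; if $X_0\notin E$ then $X\notin B$, so $X\in A$ and $\Gamma(X)=X_0\in[T]$. As $\Gamma$ does not depend on $X$, every path of $S$ is uniformly mapped by $\Gamma$ to a path through $T$.

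It then remains to bound $\mu(2^{\omega}\setminus[S])=1-\mu(D)$. Because $E\cap[T]=\emptyset$, a point of $A$ has $X_0\in[T]$ and hence $X_0\notin E$, so $A$ and $B$ are disjoint and $\mu(D)=\mu(A)+\mu(B)$; under the even/odd isomorphism $A$ corresponds to $[T]\times 2^{\omega}$ and $B$ to $E\times[T]$, so $\mu(A)=p$ and $\mu(B)=\mu(E)\,p$. Hence $\mu(2^{\omega}\setminus[S])=1-p-p\,\mu(E)\le 1-p-\bigl(p(1-p)-\varepsilon(1-p)^2\bigr)=(1-p)^2+\varepsilon(1-p)^2=(1+\varepsilon)(1-p)^2$, as required. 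I expect the verification of $\Gamma$ and this measure bookkeeping to be entirely routine; the only step needing thought is the choice of $E$. Disjointness from $[T]$ is doing double duty there — it is what makes the extraction rule well-defined and uniform, and simultaneously what keeps $\mu(D)=\mu(A)+\mu(B)$ rather than forcing an inclusion–exclusion correction — and the factor $(1+\varepsilon)$ rather than $1$ in the conclusion is exactly the price of approximating the open set $2^{\omega}\setminus[T]$, which need not be clopen, by a clopen subset $E$.
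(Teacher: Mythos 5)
Your proof is correct, and it takes a genuinely different route from the paper's. The paper works ``additively'' on the tree itself: it picks minimal strings $\sigma_0,\dots,\sigma_{n-1}\notin T$ (so the cylinders $\cyl{\sigma_i}$ are disjoint from $[T]$ and cover most of $2^\omega\setminus[T]$), sets $S=T\cup\bigcup_{i<n}\sigma_iT$, and has the functional strip the prefix $\sigma_i$ whenever the path extends some $\sigma_i$ and otherwise output the path unchanged. You instead work ``multiplicatively'' via the even/odd interleaving isomorphism $2^\omega\cong 2^\omega\times 2^\omega$, putting $[S]=\{X:X_0\in[T]\}\cup\{X:X_0\in E,\ X_1\in[T]\}$ for a clopen $E\subseteq 2^\omega\setminus[T]$ of nearly full measure, with the functional outputting $X_1$ if $X_0\in E$ and $X_0$ otherwise. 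The two constructions share the one essential idea: the ``backup'' region must be \emph{clopen and disjoint from $[T]$}, since disjointness both makes the branch of the extraction rule safe (detecting membership in the backup region certifies that the primary track has already failed) and avoids any inclusion--exclusion correction in the measure count, while the $\varepsilon$ in the bound is in each case the cost of approximating the open set $2^\omega\setminus[T]$ by a clopen one from inside. What the paper's version buys is that $S$ is visibly a subtree of $2^{<\omega}$ computable from $T$ and the finite list $\sigma_0,\dots,\sigma_{n-1}$ — convenient when one wants to track effectiveness, as in the later applications — whereas you take $S$ to be the pruned tree of a closed set and explicitly disclaim computability; if that ever mattered you could instead take $S$ to be the canonical $T\oplus E$-computable tree presenting the $\Pi^0_1$ class $D$. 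What your version buys is arguably cleaner bookkeeping: $A$ and $B$ are literally a product decomposition, so $\mu(A)=p$, $\mu(B)=p\,\mu(E)$, and disjointness is automatic. One small thing worth making explicit: the edge case $E=\emptyset$ (needed when $(1-p)-\varepsilon(1-p)^2/p\le 0$) occurs precisely when $p\le\varepsilon/(1+\varepsilon)$, which is indeed exactly the range where $1-p\le(1+\varepsilon)(1-p)^2$ holds, so your fallback is consistent rather than a lucky coincidence.
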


\begin{proof}
We may assume $p < 1$, since otherwise we can just take $S = T$. Fix a positive $\delta < 1$ such that $1-\delta p \leq (1+\varepsilon)(1-p)$. Choose minimal, hence incompatible, strings $\sigma_0,\ldots,\sigma_{n-1} \notin T$ such that
\[
\sum_{i<n} 2^{-|\sigma_i|} \geq \delta (1-p),
\]
and let
\[
S = T \cup (\bigcup_{i<n} {\sigma_i}T),
\]
where $\sigma_iT = \{\sigma_i\tau : \tau \in T\}$. Then the measure of the complement of $S$ is
\[
(1-p) - \sum_{i < n} 2^{-|\sigma_i|}p \leq (1-p) - \delta p(1-p) = (1-p)(1-\delta p) \leq (1+\varepsilon)(1-p)^2.
\]
Now let $\Phi$ be the functional that sends $A \in 2^{\omega}$ to $A(|\sigma_i|)A(|\sigma_i|+1)\cdots$ if $\sigma_i \preceq A$ for some $i < n$, and to $A$ otherwise. Clearly, $\Phi(A)$ is a path through $T$ whenever $A$ is a path through $S$.
\end{proof}

\begin{proposition}\label{P:uniform_trees}
Given a tree $T \subseteq 2^{<\omega}$ of positive measure $p$, and given a positive rational $q < 1$, there is a tree $S \subseteq 2^{<\omega}$ of measure at least $q$, each path of which uniformly computes a path through $T$.
\end{proposition}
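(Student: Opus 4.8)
The plan is to iterate the preceding lemma, blowing up the measure a little at each step, and then compose the resulting path-to-path functionals. I may assume $p < q$, since otherwise $S = T$ already works. Because $1 - p < 1$, I can fix a positive $\varepsilon$ small enough that $(1+\varepsilon)(1-p) < 1$.

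First I would build a sequence of trees $T = T_0, T_1, T_2, \ldots$, where $T_j$ has measure $p_j$ (so $p_0 = p$), together with Turing functionals $\Phi_0, \Phi_1, \ldots$, as follows: given $T_j$, apply the preceding lemma to $T_j$ with this fixed $\varepsilon$ to obtain a tree $T_{j+1}$ and a functional $\Phi_j$ such that $\Phi_j(A)$ is a path through $T_j$ whenever $A$ is a path through $T_{j+1}$, and such that the measure of the complement of $T_{j+1}$, namely $1 - p_{j+1}$, is at most $(1+\varepsilon)(1-p_j)^2$.

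Next I would check that $p_j \to 1$. Setting $s_j = (1+\varepsilon)(1-p_j)$, the bound $1 - p_{j+1} \le (1+\varepsilon)(1-p_j)^2$ gives
\[
s_{j+1} = (1+\varepsilon)(1-p_{j+1}) \le (1+\varepsilon)^2(1-p_j)^2 = s_j^2,
\]
so $s_j \le s_0^{2^j}$ for every $j$ by induction. Since $0 \le s_0 = (1+\varepsilon)(1-p) < 1$, the $s_j$ tend to $0$ (doubly exponentially fast), and hence $1 - p_j = s_j/(1+\varepsilon) \to 0$ as well. I would then choose $n$ large enough that $1 - p_n \le 1 - q$, i.e. $p_n \ge q$, and set $S = T_n$. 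This $S$ is a subtree of $2^{<\omega}$ of measure at least $q$. Letting $\Phi$ be the composition $\Phi_0 \circ \Phi_1 \circ \cdots \circ \Phi_{n-1}$ (the identity if $n = 0$), which is again a single Turing functional, we get that $\Phi(A)$ is a path through $T_0 = T$ whenever $A$ is a path through $T_n = S$, so every path through $S$ uniformly computes a path through $T$, as required.

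The only mildly delicate point is the choice of $\varepsilon$: one needs $(1+\varepsilon)(1-p) < 1$ so that the substitution $s_j = (1+\varepsilon)(1-p_j)$ converts the quadratic recurrence into $s_{j+1} \le s_j^2$ with $s_0 < 1$, which forces $p_j \to 1$. Everything else is bookkeeping, since the single-step ``blow up the measure'' construction and its uniform path-to-path functional are exactly what the lemma provides, and composing finitely many Turing functionals is harmless.
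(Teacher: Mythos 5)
Your proof is correct and takes essentially the same approach as the paper: iterate the preceding lemma and compose the resulting path-to-path functionals. The only difference is cosmetic bookkeeping — the paper chooses a finite sequence $\varepsilon_0,\ldots,\varepsilon_{n-1}$ directly so that the accumulated bound on the complement measure drops below $1-q$, whereas you fix a single $\varepsilon$ with $(1+\varepsilon)(1-p)<1$ and observe that the substitution $s_j=(1+\varepsilon)(1-p_j)$ turns the recurrence into $s_{j+1}\le s_j^2$, forcing the complement measure to $0$.
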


\begin{proof}
Given $T$, $p$, and $q$, choose $\varepsilon_0,\ldots,\varepsilon_{n-1}$ so that
\begin{equation}\label{E:estimate}
(1+\varepsilon_{n-1}) (1+\varepsilon_{n-2})^2 \cdots (1+\varepsilon_0)^{2(n-1)} (1-p)^{2n} < 1-q.
\end{equation}
Now iterate the lemma. Let $S_{-1} = T$, and given $S_{i-1}$ obtain $S_i$ with complement of measure at most  $(1+\varepsilon_i)(1-\mu(S_{i-1}))^2$ such that each path through $S_i$ computes a path through $S_{i-1}$. By induction, the complement of $S_{n-1}$ has measure bounded by \eqref{E:estimate}, and each path through it computes a path through $T$.
\end{proof}

Thus, we can either uniformly blow up the measure of a given tree $T$, and have paths through the new tree non-uniformly compute paths through the old; or we can non-uniformly blow up the measure of $T$, and have paths through the new tree uniformly compute paths through the old. The following proposition, which is a direct corollary of Theorem~\ref{T:Equiv_SeqWWKL_WKL}, shows that we cannot achieve both types of uniformity simultaneously.

\begin{proposition}\label{P:nonuniform_trees}
There is no effective procedure that, given (an index for) a computable subtree $T$ of $2^{<\omega}$ of positive measure, and a positive rational $q$, produces (an index for) a computable subtree $S$ of $2^{<\omega}$ of measure at least $q$ and an $e \in \omega$ such that $\Phi_e^A$ is a path through $T$ for every path $A$ through $S$.
\end{proposition}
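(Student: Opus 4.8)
The plan is to argue by contradiction, pitting Theorem~\ref{T:Equiv_SeqWWKL_WKL}(1) against the Jockusch--Soare instance of $\WKL$ already invoked in the proof of ``$\WKL \nred \WWKL$'' above: there is a computable instance $R$ of $\WKL$ for which only measure $0$ many elements of $2^\omega$ compute a solution (\cite[Theorem 5.3]{JS-1972a}). So suppose such an effective procedure $P$ existed. I would fix Turing functionals $\Phi,\Psi$ witnessing $\WKL \sred \Seq\WWKL$ and set $\seq{T_i : i \in \omega} = \Phi(R)$. Since $R$ is computable, this is a computable sequence of trees of positive measure, and one can uniformly compute an index for each $T_i$.

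Next I would feed, for each $i$, the index of $T_i$ together with the positive rational $q_i = 1 - 2^{-i-2}$ to $P$, obtaining (uniformly in $i$) an index for a computable tree $S_i \subseteq 2^{<\omega}$ with $\mu(S_i) \geq 1 - 2^{-i-2}$ and a number $e_i$ such that $\Phi_{e_i}^A$ is a path through $T_i$ for every path $A$ through $S_i$. Then I would assemble the $S_i$ into a single computable ``product'' tree $S = \{\sigma \in 2^{<\omega} : (\forall i)\, \sigma^{[i]} \in S_i\}$, where $\sigma^{[i]}$ denotes the $i$-th column of $\sigma$ under a fixed partition of the coordinates into $\omega$ many infinite columns. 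The reason for letting the $q_i$ approach $1$ geometrically is that, since distinct columns use disjoint coordinates, $\mu(S) = \prod_i \mu(S_i) \geq \prod_i (1 - 2^{-i-2}) \geq 1 - \sum_i 2^{-i-2} = \tfrac12 > 0$; so $S$ is a computable tree of positive measure (the defining condition on $\sigma$ is a finite test, so $S$ is computable uniformly from the $S_i$).

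From any path $A$ through $S$ one reads off each column $A^{[i]}$, which is a path through $S_i$, so $\Phi_{e_i}^{A^{[i]}}$ is a path through $T_i$; hence $\seq{\Phi_{e_i}^{A^{[i]}} : i \in \omega}$ is a solution to the $\Seq\WWKL$-instance $\Phi(R)$, and it is obtained from $A$ by a single Turing functional (the sequence $\seq{e_i : i \in \omega}$ being computable). Applying $\Psi$ then yields a fixed Turing functional $\Gamma$ with $\Gamma^A$ a path through $R$ for every path $A$ through $S$. In particular every path through $S$ computes a solution to $R$; but $[S]$ has positive measure, while by choice of $R$ the reals computing a solution to $R$ form a null set, so $[S]$ cannot be contained in that set. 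This contradiction completes the argument.

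The main obstacle, and essentially the only nonroutine point once Theorem~\ref{T:Equiv_SeqWWKL_WKL} is granted, is making the two uniformities compose: $P$ must be applied with thresholds $q_i \to 1$ quickly enough that the product tree still has positive measure, and the passage from a path through $S$ to a solution of the $\Seq\WWKL$-instance $\Phi(R)$ must be genuinely uniform so that $\Psi$ may legitimately be applied to it. The remaining verifications---that the product construction preserves computability and yields the stated measure, and that a positive-measure set of reals cannot consist entirely of reals computing a solution to the Jockusch--Soare instance---are straightforward.
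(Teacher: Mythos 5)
Your proof is correct and takes essentially the same approach as the paper's: feed the trees from Theorem~\ref{T:Equiv_SeqWWKL_WKL} into the hypothetical procedure with thresholds tending to $1$, interleave the outputs into a single positive-measure tree whose paths uniformly compute a path through the original $\WKL$ instance, and derive a contradiction from the fact that the reals computing such a path form a null set. The paper phrases the final contradiction via $1$-randoms and PA degree rather than invoking the Jockusch--Soare null-set result directly, and chooses its thresholds inductively rather than by an explicit geometric formula, but these differences are cosmetic.
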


\begin{proof}
Suppose otherwise and fix any computable sequence $\seq{T_i : i \in \omega}$ of (indices for) subtrees of $2^{<\omega}$ of positive measure. We build a single tree $S$ of positive measure, every path through which computes a sequence of sets $\seq{A_i : i \in \omega}$ such that each $A_i$ is a path through $T_i$. In particular, every 1-random set computes such a sequence. Of course, this contradicts the proof of Theorem~\ref{T:Equiv_SeqWWKL_WKL}, as it follows from what is shown there that there exists a sequence of trees for which only sets of PA degree can compute a sequence of paths, but not every $1$-random computes a set of PA degree.

We obtain $S$ by interleaving the members of a new sequence $\seq{S_i : i \in \omega}$ of subtrees of $2^{<\omega}$, constructed inductively as follows. By adding a tree to $\seq{T_i : i \in \omega}$ if necessary, we may assume $\mu(T_0) < 1$, and fix a positive rational number $r$ with $\mu(T_0) < r < 1$. Define $S_0 = T_0$, choose $q_0 < 1$ with $r < q_0$, and let $e_0$ be an index for the identity reduction. Now suppose we have defined $S_i$, $q_i$, and $e_i$. Choose a rational $q_{i+1} < 1$ such that $\prod_{j \leq i+1} q_j \geq r$, which we may assume exists by induction. Let $S_{i+1}$ and $e_{i+1}$ be as given by the hypothesized effective procedure in the statement, with $T = T_{i+1}$ and $q = q_{i+1}$.

Clearly, the resulting sequences $\seq{S_i : i \in \omega}$ and $\seq{e_i : i \in \omega}$ are computable. It follows that $S$ is computable, and by construction, $\mu(S) = \prod_{i \in \omega} \mu(S_i) \geq r > 0$. Now suppose $B$ is any path through $S$. By undoing the interleaving process along $B$, we computably define a sequence $\seq{B_i : i \in \omega}$ such that each $B_i$ is a path through $S_i$. Setting $A_i = \Phi_{e_i}^{B_i}$ for each $i$, it follows that $\seq{A_i: i \in \omega}$ is the desired sequence of paths through the $T_i$.
\end{proof}

The preceding results inspire the following restriction of $\WWKL$. Let $q < 1$ be a positive rational.

\begin{statement}[$q\textrm{-}\WWKL$]\label{S:qWWKL}
Every subtree $T$ of $2^{<\omega}$ such that
\[
\frac{|\{\sigma \in 2^n : \sigma \in T\}|}{2^n} \geq q
\]
for all $n$ has an infinite path.
\end{statement}

\noindent Note that Proposition~\ref{P:uniform_trees} can be formalized to show that $\RCA \vdash \WWKL \leftrightarrow q\textrm{-}\WWKL$, for each $q$. We conclude this section with the following contrasting result.

\begin{proposition}\label{P:qWWKL}
For all positive rationals $p < q < 1$, $p\textrm{-}\WWKL \nred q\textrm{-}\WWKL$.
\end{proposition}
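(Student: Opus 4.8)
The plan is to refute any uniform reduction by a measure‑theoretic diagonalization that exploits the single inequality $1-p>1-q$: an instance of $p\textrm{-}\WWKL$ may delete from $2^{<\omega}$ strictly more measure than an instance of $q\textrm{-}\WWKL$ is permitted to. Suppose, for contradiction, that Turing functionals $\Phi$ and $\Psi$ witness $p\textrm{-}\WWKL\red q\textrm{-}\WWKL$. For each finite antichain $D\subseteq 2^{<\omega}$ with $\sum_{\tau\in D}2^{-|\tau|}\le 1-p$, let $T_D$ be the tree obtained by deleting from $2^{<\omega}$ every $\rho$ that extends some $\tau\in D$. Then $T_D$ is a computable instance of $p\textrm{-}\WWKL$ (computable uniformly from $D$), and its paths are exactly the reals $X$ with $X\res\ell\notin D$ for all $\ell$. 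By hypothesis, $\Phi(T_D)$ is a $q$-dense tree, so $\mu([\Phi(T_D)])\ge q$, and for every path $A$ through $\Phi(T_D)$ we have $\Psi(T_D\oplus A)\in[T_D]$, i.e.\ $\Psi(T_D\oplus A)$ extends no element of $D$.

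The idea is then to construct $D$ in stages, together with a single real $A$, so as to arrange simultaneously that $A$ is a path through the final $\Phi(T_D)$ and that $\Psi(T_D\oplus A)$ does extend some $\tau\in D$. Two applications of the finite-use property drive this. On the one hand, computing any given initial segment of $\pi:=\Psi(T_D\oplus A)$ uses only finitely much of $T_D$ and of $A$; so once a committed finite piece of $A$ pins down $\pi\res\ell$, adjoining to $D$ a string $\tau\succeq(\pi\res\ell)^\frown 0$ of sufficiently large length leaves that computation, and hence $\pi\res\ell$, unchanged, and pushes $\pi$ toward $\tau$. On the other hand, each column of $\Phi(T_D)$ depends on only finitely much of $T_D$, so $\tau$ can be chosen deep enough that the committed initial segment of $A$ is still a node of $\Phi(T_D)$; since $\Phi(T_D)$ remains $q$-dense, that initial segment of $A$ can always be extended further into $\Phi(T_D)$. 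Because the $\tau$'s may be taken arbitrarily deep, the total measure $\sum_{\tau\in D}2^{-|\tau|}$ removed is kept below any prescribed $\varepsilon<1-p$ (so $T_D$ stays $p$-dense) and never exhausts the budget before the construction settles.

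The step I expect to be the main obstacle is forcing termination. In a naive single pass, after the finitely many perforations a malicious $\Psi$ can keep $\pi$ among the positive (indeed $\ge p$) fraction of positions never perforated, and, worse, the depth needed to freeze more and more of $\pi$ may outrun any bound imposed in advance. Overcoming this seems to require a priority-style construction that perforates at a level where $\pi$ has first been cornered — stabilizing enough of $\Phi(T_D)$, hence of $A$, hence of $\pi$, before acting — and, since the perforation depths at a stage depend on computations relative to $T_D$ itself, an appeal to the recursion theorem so that an index for $T_D$ is available in advance. The essential tension is that the strict gap $q-p>0$ always leaves the $p$-instance room for another perforation while the fixed bound $\mu([\Phi(T_D)])\ge q$ keeps a solution $A$ available, and these two facts cannot coexist in the limit. (Alternatively, one might hope to deduce the proposition from the general measure-theoretic separations of Brattka and Pauly cited above in connection with $\WKL\nred\WWKL$.)
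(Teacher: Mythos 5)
You correctly identify the right opening moves — pass to a computable $T$ via the recursion theorem, exploit the finite use of $\Psi$, and perforate $T$ to drive a contradiction out of the gap $q-p>0$ — but the heart of the argument is missing, and you flag the exact spot where it breaks: termination. Your plan tries to defeat a single path $A$ of $\Phi(T)$, and that cannot be made to close. At every finite stage $\Psi$ has only seen finitely much of $T$ and of $A$, so it is free to keep emitting a real consistent with the not-yet-perforated part of $T$; the depth needed to freeze more of $\pi=\Psi(T\oplus A)$ genuinely can outrun any budget, and taking the perforation measures to shrink rapidly makes things worse, not better, because $\pi$ is never actually forced into a forbidden cylinder. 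The measure intuition in your first sentence is correct, but it is never cashed in: nothing in your construction converts the inequality $1-p>1-q$ into a finite bound on the number of perforations.

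The paper's proof changes the target. Rather than defeating $\Psi$ on one path, it forces $\mu([\Phi(T)])<q$ outright, so that $\Phi(T)$ fails to be an instance of $q\textrm{-}\WWKL$ at all. Fix $a$ with $2^{-a}<q-p$. At each action stage, choose $a$ fresh coordinates $x_0<\cdots<x_{a-1}$ and wait until $\Psi(\tau)(x_j)$ converges for every $j<a$ and every $\tau$ at the current level of $\Phi(T_s)$. By pigeonhole, at least a $2^{-a}$ fraction of these $\tau$ agree on a common pattern $\alpha\in 2^a$ for $\Psi(\tau)\res\set{x_0,\ldots,x_{a-1}}$; then delete from $T$ exactly the $\sigma$'s that agree with $\alpha$ on those coordinates. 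This cuts $\mu(T)$ by exactly $2^{-a}$, and simultaneously kills at least a $2^{-a}$ chunk of $\Phi(T)$, since any path through $\Phi(T)$ extending one of those $\tau$'s would be mapped by $\Psi$ into the just-deleted part of $T$. Because $2^{-a}<q-p$, the measure of $\Phi(T)$ drops below $q$ strictly before the measure of $T$ drops below $p$; from that stage on the construction never acts again, so $T$ is a genuine $p$-dense computable tree whose $\Phi$-image has measure $<q$, the desired contradiction. The decisive idea you are missing is to decrement the \emph{measure} of $\Phi(T)$ in uniform $2^{-a}$-sized chunks chosen by majority vote over $\Psi$'s outputs, rather than to chase a single deep string aimed at one path of $\Phi(T)$.
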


\begin{proof}
Suppose not, and let $\Phi$ and $\Psi$ witness a Weihrauch reduction from $p\textrm{-}\WWKL$ to $q\textrm{-}\WWKL$. We build a computable tree $T$ of measure at least $p$ such that $\Phi(T)$ has measure less than $q$, and thus obtain the desired contradiction. Intuitively, we use the fact that $\Psi$ must take paths through $\Phi(T)$ to paths through $T$ to successively cut down larger and larger portions of $\Phi(T)$ by cutting down larger and larger portions of~$T$. Although this results in the measures of both trees becoming smaller, we will only cut down each tree finitely many times, and we will be able to control for how much of the measure of $T$ is left.

We shall regard each partial computable function as defining an initial segment of a computable subtree of $2^{<\omega}$, with each new convergence giving an entire new level of the tree, and only strings of maximal length at the previous level being extended. Then $\Phi$ in the construction can be viewed as a monotone map between such initial segments. This will ensure that the construction of $T$ will be uniform, and so by the recursion theorem, we can fix an index for it ahead of time. This permits us the convenience of not needing to consider $T$ in the oracle for $\Psi$, by replacing that functional, if necessary, by $\widehat{\Psi}(X) = \Psi(T,X)$.

\medskip
\noindent \emph{Construction.} Fix a positive number $a$ such that $2^{-a} < q - p$. At stage $s$ of the construction we shall define $T_s = T \cap 2^{\leq s}$, starting with $T_0 = \{\emptyset\}$. That is, $T_s$ will have height $s$. Let $n_s$ be the height of $\Phi(T_s)$, and assume without loss of generality that $n_s \leq s$ for all $s$ and that $\Phi(T_0) = \{\emptyset\}$.

At stage $s+1$, choose the least $a$ many numbers $x_0 < \cdots < x_{a-1}$ that we have not yet acted for, as defined below. Assume inductively that for each $\alpha \in 2^a$ there is a string $\sigma \in T_s$ of length $s$ with $\sigma(x_j) = \alpha(j)$ for all $j$ with $x_j < s$. We consider two cases.

\medskip
\noindent \emph{Case 1.} If any of the following apply:
\begin{itemize}
\item $\Phi(T_s)$ contains fewer than $2^{n_s}q$ many strings of length $n_s$;
\item $x_{a-1} \geq s$;
\item $x_{a-1} < s$ but $\Psi(\tau)(x_j) \uparrow$ for some $\tau \in \Phi(T_s)$ of length $n_s$ and some $j < a$;
\end{itemize}
then we obtain $T_{s+1}$ from $T_s$ by adding $\sigma 0$ and $\sigma 1$ for each $\sigma \in T_s$ of length $s$.

\medskip
\noindent \emph{Case 2.} Otherwise, choose $\alpha \in 2^a$ so that $\Psi(\tau)(x_j) \downarrow = \alpha(j)$ for all $j < a$ for at least $2^{-a}$ many strings $\tau \in \Phi(T_s)$ of length $n_s$. Then, we obtain $T_{s+1}$ from $T_s$ by adding $\sigma 0$ and $\sigma 1$ for each $\sigma \in T_s$ of length $s$ with $\sigma(x_j) \neq \alpha(j)$ for some $j < a$. Say we have \emph{acted} for $x_0,\ldots,x_{a-1}$.

\medskip
\noindent \emph{Verification.}
Clearly, $T$ is a computable subtree of $2^{<\omega}$. Note that the measure of $T$ is cut down only when the construction enters Case 2, at which point it is cut down by a factor of precisely $2^{-a}$. Likewise, whenever the construction enters Case 2, the measure of $\Phi(T)$ is cut down by at least a factor of $2^{-a}$. We claim there is a stage $s$ such that $\Phi(T_s)$ contains fewer than $2^{n_s}q$ many strings of length $s$, so that the measure of $\Phi(T)$ is less than $q$. Fix the least such $s$. Then as $2^{-a} < q - p$, it follows that $T_s$ contains at least $2^{s}p$ many strings of length $s$. But the construction can never enter Case 2 at any stage after $s$, so the measure of $T$ is at least $p$.

It thus remains only to prove the claim. To this end, let $t$ be any stage such that $\Phi(T_t)$ contains at least $2^{n_t}q$ many strings of length $n_t$. Fix the least $x_0 < \cdots < x_{a-1}$ not yet acted for prior to stage $t+1$. For each path $B$ through $\Phi(T)$, we have that $\Psi(B)(x_j) \downarrow$ for all $j < a$, so by compactness, there is an $s > \max\{t,x_{a-1}\}$ such that $\Psi(\tau)(x_j) \downarrow$ for all $j < a$ and all $\tau \in \Phi(T_s)$ of length $n_s$. Fix the least such $s$. Then the construction never enters Case 2 strictly between stages $t$ and $s$, so $T_s$ contains at least $2^{n_s}q$ many strings of length $n_s$, so Case 2 applies at stage $s$. Hence, we have shown that the construction continues to enter Case 2 until the measure of $\Phi(T)$ has been sufficiently cut down, from which the claim follows.
\end{proof}

\section{The Thin Set Theorem}

For all $n \geq 1$ and $k \in \{2,3,4,\dots,\omega\}$, say that a subset $S$ of $\omega$ is \emph{thin} for a coloring $f \colon [\omega]^n \to k$ if there exists a $c < k$ such that $f(\tuple{x}) \neq c$ for all $\tuple{x} \in [S]^n$. In this section, we shall concentrate on the following combinatorial principle, known as the Thin Set Theorem.

\begin{statement}[$\TS^n_k$]
Let $n \geq 1$ and let $k \in \{2,3,4,\dots,\omega\}$.  Every $f \colon [\N]^n \to k$ admits an infinite thin set.
\end{statement}

\noindent The statement $\TS^n_{\omega}$ is the usual Thin Set Theorem as studied in~\cite{CGHJ-2005}.\footnote{This should not be confused with the principle $\TS^n_{<\infty}$, which, by analogy with Ramsey's theorem, should be defined as $(\forall k \geq 2)~\TS^n_k$. By contrast, $\TS^n_{\omega}$ is the statement of the Thin Set Theorem for colorings $f : [\N]^n \to \omega$, i.e.,~colorings employing infinitely many colors. Using Proposition~\ref{P:TS_trivial_implication}, is not difficult to see that $\TS^n_{<\infty}$ is equivalent to $\TS^n_2$ under strong Weihrauch\ reducibility.}  Note that $\TS^n_2$ is logically equivalent to $\RT^n_2$, i.e., the thin sets for $2$-colorings are precisely the homogeneous sets. Likewise, observe that whereas $\RT^n_1$ is plainly true, $\TS^n_1$ is not even defined above, as it would be plainly false.

Implications between versions of the Thin Set Theorem for different numbers of colors go opposite the way they do for Ramsey's theorem.
For the purpose of viewing $\TS^n_k$ as a multi-valued function, it is important that a solution to an instance of $\TS^n_k$ includes which color is omitted by the thin set since there is no uniformly computable way to recover that information from the thin set alone.

\begin{proposition}\label{P:TS_trivial_implication}
Let $n \geq 1$.
\begin{enumerate}
\item If $j,k \geq 2$ with $j < k$, then $\TS^n_k \sred \TS^n_j$.
\item If $j,k \geq 2$ with $j < k$, then $\RCA \vdash \TS^n_j \rightarrow \TS^n_k$.
\item If $j \geq 2$, then $\TS^n_{\omega} \sred \TS^n_j$.
\item If $j \geq 2$, then $\RCA \vdash \TS^n_j \rightarrow \TS^n_{\omega}$.
\end{enumerate}
\end{proposition}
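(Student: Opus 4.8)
The plan is to prove all four parts by giving explicit uniform translations of colorings with more colors into colorings with fewer colors, in such a way that a thin set (together with its omitted color) for the simpler coloring yields a thin set (together with an omitted color) for the original. The key observation is that thinness only requires \emph{some} color to be avoided, so collapsing several colors to one is harmless: if $S$ avoids a merged color, then $S$ avoids each of the original colors that were merged into it.

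First I would handle part (1), the case $j < k$ with $j,k \geq 2$ finite. Given $f \colon [\N]^n \to k$, define $g \colon [\N]^n \to j$ by leaving the values $0,1,\dots,j-2$ fixed and sending every value in $\{j-1,j,\dots,k-1\}$ to $j-1$; this $g$ is uniformly computable from $f$. A thin set $S$ for $g$ omits some color $c < j$. If $c < j-1$, then $c$ is also omitted by $f$ on $[S]^n$, so $(S,c)$ is a solution to the instance $f$ of $\TS^n_k$. If $c = j-1$, then $f$ avoids \emph{every} color in $\{j-1,\dots,k-1\}$ on $[S]^n$, so in particular it omits the color $j-1 < k$, and $(S,j-1)$ works. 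In either case the omitted color for $f$ is computed uniformly from the omitted color for $g$ (a simple case split), so $\Psi$ can be taken to not even need the original instance; this gives the strong Weihrauch reduction. Part (3) is the same construction with $k = \omega$: send $0,\dots,j-2$ to themselves and every value $\geq j-1$ to $j-1$, and argue identically. Part (2) is obtained by formalizing part (1) in $\RCA$ — apply $\TS^n_j$ to $g$, obtain an infinite thin set $S$ with witness color $c$, and do the same case split; the only arithmetic needed is bounded, so this goes through $\RCA$ without difficulty. Part (4) formalizes part (3) the same way.

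I do not expect a genuine obstacle here: the statement is essentially a bookkeeping lemma, and the main subtlety — the one flagged in the remark just before the proposition — is that a solution to an instance of $\TS^n_k$ must \emph{include} the omitted color, because there is no uniform way to recover it from the thin set alone. So the one point to be careful about is that $\Psi$ genuinely produces the pair $(S, c')$ with the correct $c' < k$, not merely the set $S$; the case analysis above does exactly this. If one wanted to be even more economical, parts (1) and (3) could be merged into a single statement allowing $k \in \{3,4,\dots,\omega\}$, since the construction is uniform in $k$, but I would keep them separate to match the statement as given.
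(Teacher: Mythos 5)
Your proof is correct and essentially identical to the paper's: both collapse all colors $\geq j-1$ to $j-1$ and then do the same two-case analysis on the omitted color $c$. Your extra attention to the fact that the solution must carry the omitted color (so that $\Psi$ can output the pair $(S,c')$ without needing $f$) is exactly the subtlety the paper flags in the remark preceding the proposition, and it confirms the reduction is indeed strong.
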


\begin{proof}
We prove $(1)$ and $(2)$ (the argument is uniform and can easily be formalized in $\RCA$).  Let $j,k \geq 2$ with $j < k$.  Fix $f \colon [\omega]^n \to k$. Define $g \colon [\omega]^n \to j$ by letting
\[
g(\tuple{x}) =
\begin{cases}
f(\tuple{x}) & \textrm{if } f(\tuple{x}) < j-1,\\
j-1 & \textrm{otherwise}
\end{cases}
\]
for all $\tuple{x} \in [\omega]^n$. Now suppose $S \subseteq \omega$ is an infinite thin set for $g$, say with $c < j$ such that $g(\tuple{x}) \neq c$ for all $\tuple{x} \in [S]^n$. If $c < j-1$, then $f(\tuple{x}) \neq c$ for all $\tuple{x} \in S$, while if $c = j-1$ then $f(\tuple{x}) < j-1$ for all such $\tuple{x}$, so in particular $f(\tuple{x}) \neq j-1 = c$. Either way, $c$ witnesses that $S$ is an infinite thin set for $f$.  The proof of (3) and (4) similarly proceeds by collapsing all colors greater then $j-1$ to be $j-1$.
\end{proof}

Thus, we have the following chain for any $n$:
\[
\TS_{\omega}^n \sred \dots \sred \TS_4^n \sred \TS_3^n \sred \TS_2^n = \RT_2^n \sred \RT_3^n \sred \RT_4^n \sred \dots
\]
By Theorem~\ref{T:Ramsey_non-uniform}, none of the reductions to the right of the equals sign reverse. We shall see in Theorem~\ref{T:TS1_nonsquashing} that the same is true of the left side when $n=1$.

\subsection{General reverse mathematics results}

Before discussing uniform implications and sequential forms, we prove several results about the principles $\TS^n_k$. General questions about the strength of $\TS^n_k$ were asked by J.~Miller at the \emph{Reverse Mathematics: Foundations and Applications Workshop} in Chicago in November, 2009. Another recent investigation of these principles appears in Wang \cite{Wang-TA2}.

\begin{proposition}\label{P:Step}
For each $m,n,k \geq 1$, we have $\RCA \vdash \TS^{mn+1}_{k^n} \rightarrow \TS^{m+1}_{k}$.
\end{proposition}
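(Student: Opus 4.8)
The plan is to reduce an arbitrary instance $f\colon[\N]^{m+1}\to k$ of $\TS^{m+1}_k$ to a \emph{single} instance of $\TS^{mn+1}_{k^n}$, followed by an $\RCA$-provable extraction that is iterated $n-1$ times. Throughout, for an $m$-element set $\tuple u$ and $y>\max\tuple u$ I write $\tuple u{}^\frown y$ for the $(m{+}1)$-element tuple $\tuple u\cup\set{y}$. Given an increasing tuple $y_0<\dots<y_{mn}$, split the first $mn$ entries into $n$ consecutive blocks of length $m$, with $y_{mn}$ acting as a common top, and define $g\colon[\N]^{mn+1}\to k^n$ by
\[
g(y_0,\dots,y_{mn})=\seq{\,f(y_0,\dots,y_{m-1},y_{mn}),\ f(y_m,\dots,y_{2m-1},y_{mn}),\ \dots,\ f(y_{(n-1)m},\dots,y_{mn-1},y_{mn})\,}.
\]
This $g$ is primitive recursive in $f$. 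Applying $\TS^{mn+1}_{k^n}$ to $g$ produces an infinite set $S$ and (implicitly) a vector $\tuple c=\seq{c_0,\dots,c_{n-1}}\in k^n$ with $g(\tuple y)\neq\tuple c$ for all $\tuple y\in[S]^{mn+1}$.

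Unwinding the definition of $g$ shows that for every $y\in S$ there is \emph{no} chain of $m$-element blocks $\tuple u_0<\dots<\tuple u_{n-1}$ from $S$, all below $y$, with $f(\tuple u_i{}^\frown y)=c_i$ for each $i<n$ (such a chain, listed in order and followed by $y$, would be an element of $[S]^{mn+1}$ on which $g$ equals $\tuple c$). The core of the argument is the following lemma, to be proved by induction on $n$: \emph{if $Z$ is infinite, $a$ is a number, $\tuple d=\seq{d_0,\dots,d_{n-1}}\in k^n$, and for every $y\in Z$ there is no chain $\tuple u_0<\dots<\tuple u_{n-1}$ of $m$-element blocks from $Z\cap(a,y)$ with $f(\tuple u_i{}^\frown y)=d_i$ for all $i$, then $f$ has an infinite thin set contained in $Z\cap(a,\infty)$.} Applying the lemma to $S$ with $\tuple d=\tuple c$ and no lower restriction then finishes the proof.

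For the lemma, the base case $n=1$ is immediate: the hypothesis says $f(\tuple u{}^\frown y)\neq d_0$ whenever $y\in Z\cap(a,\infty)$ and $\tuple u\subseteq Z\cap(a,y)$ is an $m$-block, so $Z\cap(a,\infty)$ is thin for $f$ omitting $d_0$ (each of its $(m{+}1)$-tuples is an $m$-block followed by a larger top). For the inductive step I would try to build greedily an infinite $T=\set{t_0<t_1<\dots}\subseteq Z\cap(a,\infty)$ with $f$ omitting $d_0$ on $[T]^{m+1}$: having chosen $t_0,\dots,t_{j-1}$, let $t_j$ be the least element of $Z$ above $t_{j-1}$ with $f(\tuple u{}^\frown t_j)\neq d_0$ for every $m$-block $\tuple u\subseteq\set{t_0,\dots,t_{j-1}}$. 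If this search never fails, $T$ works. If it first fails at stage $J$, then for every $y\in Z$ above $t_{J-1}$ some $m$-block $\tuple u\subseteq\set{t_0,\dots,t_{J-1}}$ satisfies $f(\tuple u{}^\frown y)=d_0$; since there are only finitely many candidate blocks, the finitary pigeonhole principle (provable in $\RCA$) gives a single block $\tuple v$ working for an infinite set $Z'\subseteq Z$ of such $y$. Now for $y\in Z'$, since $f(\tuple v{}^\frown y)=d_0$, there can be no chain $\tuple w_1<\dots<\tuple w_{n-1}$ of $m$-blocks from $Z'\cap(\max\tuple v,y)$ with $f(\tuple w_i{}^\frown y)=d_i$ for all $i$ (prepending $\tuple v$ would yield a forbidden length-$n$ chain). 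Thus the induction hypothesis applies with $Z'$, $\max\tuple v$, and $\seq{d_1,\dots,d_{n-1}}$ in place of $Z$, $a$, $\tuple d$ — note that the relevant colorings remain of the form $\tuple u\mapsto f(\tuple u{}^\frown y)$ for the \emph{same} $f$ — yielding an infinite thin set for $f$.

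The argument formalizes in $\RCA$ for each fixed $m,n,k$: $\TS^{mn+1}_{k^n}$ is invoked exactly once; the greedy construction of $T$ is a $\Sigma^0_1$-recursion whose graph is a $\Delta^0_1$ initial segment of $\N$, so $\Sigma^0_1$-induction suffices to divide into the ``search is total'' and ``search halts at a least $J$'' cases; and the pigeonhole step and the per-stage verifications involve only bounded quantification. Since $n$ is fixed, the induction proving the lemma is a finite unwinding and needs no set induction beyond $\RCA$'s. I expect the main obstacle to be precisely the inductive step for a non-constant $\tuple c$: one must correctly ``spend'' the leading coordinate $c_0$, either by directly extracting a $c_0$-omitting thin set or by trading it for a restriction (to $Z'$ above $\tuple v$) that shortens the avoided vector, while making sure the restricted problem is again an instance of the same lemma for the same $f$. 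A smaller but genuine point of care is justifying the ``stuck-or-not'' dichotomy for the greedy process within $\RCA$.
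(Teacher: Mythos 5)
Your argument is correct and takes a genuinely different route from the paper's. The paper's auxiliary coloring shares the \emph{bottom} element: $g(x,\tuple{y}_0,\dots,\tuple{y}_{n-1})=\seq{f(x,\tuple{y}_0),\dots,f(x,\tuple{y}_{n-1})}$ for $x<\tuple{y}_0<\dots<\tuple{y}_{n-1}$, whereas you share the \emph{top}. The extraction arguments differ more substantively. The paper's is a one-shot argument: choose the greatest $i<n$ for which infinitely many $x\in H$ admit a partial witness chain realizing $a_0,\dots,a_i$; delete finitely many elements so that no such chain can be extended to realize $a_{i+1}$; then define by $\Delta^0_1$ recursion an increasing sequence $\seq{x_j : j\in\omega}$ with witness blocks interleaved between consecutive terms, whose range omits color $a_{i+1}$. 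Your extraction runs a meta-level induction on $n$: greedily try to build a set omitting $d_0$; when the greedy process stalls, pigeonhole localizes a single block $\tuple{v}$ and an infinite $Z'$ on which $f(\tuple{v}{}^\frown y)=d_0$, whereupon the no-chain hypothesis passes to $Z'$ with the shortened vector $\seq{d_1,\dots,d_{n-1}}$. The paper's version is tighter --- a single recursion, no nested case split --- but the choice of the maximal $i$ is the non-obvious step; yours makes the ``spend one color, shrink the ambient set, recurse'' structure explicit, at the cost of repeating the greedy/pigeonhole analysis at each of the $n$ levels. Both formalize in $\RCA$ for each fixed $m,n,k$, each using $\TS^{mn+1}_{k^n}$ exactly once, and your worry about the stuck-or-not dichotomy is resolvable by the same device the paper uses for its sequence $\seq{x_j}$: the graph of the greedy recursion is $\Delta^0_0$ and strictly increasing, so its domain is a $\Delta^0_0$-definable initial segment, $\Sigma^0_1$-induction produces the first failure point in the stuck case, and $\Delta^0_1$-comprehension yields the range $T$ in the other.
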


\begin{proof}
The result is trivial for $n = 1$, so we may assume $n \geq 2$. Let $f \colon [\N]^{m+1} \to k$ be a coloring. Define $g \colon [\N]^{mn+1} \to k^n$ by
\[
g(x,\tuple{y}_0,\ldots,\tuple{y}_{n-1}) = \seq{f(x,\tuple{y}_0),\ldots,f(x,\tuple{y}_{n-1})}
\]
for all $x \in \omega$ and $\tuple{y}_0,\ldots,\tuple{y}_{n-1} \in [\omega]^m$ with $x < \tuple{y}_0 < \cdots < \tuple{y}_{n-1}$.

Suppose $H$ is an infinite set that avoids the color $\seq{a_0,\ldots,a_{n-1}} < k^n$ for the coloring $g$. Choose the greatest $i<n$ for which there are infinitely many $x \in H$ such that
\begin{equation}\label{E:initial_agreement}
f(x,\tuple{y}_0) = a_0, \ldots, f(x,\tuple{y}_i) = a_i
\end{equation}
for some $\tuple{y}_0,\ldots,\tuple{y}_i \in [H]^m$ with $x < \tuple{y}_0 < \cdots < \tuple{y}_i$. By assumption on the color avoided by $H$, it must be that $i < n-1$.

By choice of $i$, we can remove finitely many elements from $H$ if necessary to ensure that if $x < \tuple{y}_0 < \dots < \tuple{y}_i$ satisfy \eqref{E:initial_agreement} above, then there is no $\tuple{y} > \tuple{y}_i$ such that $f(x,\tuple{y}) = a_{i+1}$. Let $H'$ be $H$ with these finitely many elements deleted.

Now using $\Delta^0_1$ comprehension, we can define a sequence $\seq{x_j : j \in \omega}$ of elements of $H'$ so that for each $j$, \eqref{E:initial_agreement} holds for some $\tuple{y}_0,\ldots,\tuple{y}_i \in [H']^m$ with
\begin{equation}\label{E:increasing}
x_j < \tuple{y}_0 < \cdots < \tuple{y}_i < x_{j+1}.
\end{equation}
Let $R \subseteq H'$ be the range of this sequence, which exists because the sequence is increasing. We claim that $R$ avoids the color $a_{i+1}$ for $f$. Indeed, suppose $f(x,\tuple{y}) = a_{i+1}$ for some $x \in R$ and $\tuple{y} \in [R]^m$ with $x < \tuple{y}$. Let $\tuple{y}_0,\ldots,\tuple{y}_i \in [H']^m$ be the witnesses for having chosen $x$ to belong to our sequence. Then by \eqref{E:increasing}, it follows that $\tuple{y}_i < \tuple{y}$, which contradicts the definition of $H'$.
\end{proof}

\noindent
Setting $k = 2$, we obtain:

\begin{corollary}\label{C:TS>RT22}
For each $m,n \geq 1$, we have $\RCA \vdash \TS^{mn+1}_{2^n} \rightarrow \RT^{m+1}_2$.
\end{corollary}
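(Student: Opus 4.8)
The plan is to simply specialize Proposition~\ref{P:Step} and then invoke the elementary identification of thin sets for $2$-colorings with homogeneous sets. First I would take Proposition~\ref{P:Step}, which says $\RCA \vdash \TS^{mn+1}_{k^n} \rightarrow \TS^{m+1}_k$ for all $m,n,k \geq 1$, and instantiate it at $k = 2$. This immediately yields $\RCA \vdash \TS^{mn+1}_{2^n} \rightarrow \TS^{m+1}_2$.

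Next I would recall the observation made just after the statement of $\TS^n_k$: for a coloring $f\colon[\omega]^{m+1}\to 2$, an infinite set $S$ that omits some color $c < 2$ on $[S]^{m+1}$ is precisely an infinite set on which $f$ is constantly equal to $1-c$, i.e.\ a homogeneous set for $f$, and conversely every homogeneous set is thin. This equivalence is provable in $\RCA$ with no induction-theoretic subtleties, so $\RCA \vdash \TS^{m+1}_2 \leftrightarrow \RT^{m+1}_2$. Chaining this with the implication from the previous paragraph and using that implication over $\RCA$ is transitive gives $\RCA \vdash \TS^{mn+1}_{2^n} \rightarrow \RT^{m+1}_2$, as desired.

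I do not expect any obstacle here: all of the work is already contained in Proposition~\ref{P:Step}, and this corollary is just the case $k=2$ rephrased via the $\TS^n_2 \equiv \RT^n_2$ dictionary. The only thing worth a word of care is making explicit that the passage from $\TS^{m+1}_2$ to $\RT^{m+1}_2$ is literally an identity of the two principles rather than a nontrivial reduction, so that formalization in $\RCA$ is automatic.
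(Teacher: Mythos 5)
Your proposal is correct and matches the paper's own argument exactly: the paper derives the corollary by the single line ``Setting $k=2$, we obtain,'' relying on the earlier remark that $\TS^n_2$ and $\RT^n_2$ are the same principle. Nothing further is needed.
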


\noindent
The proof of Proposition~\ref{P:Step} is not entirely uniform.
Indeed, one assumes $\Sigma^0_2$-induction in order to prove that $\forall n(\TS^{mn+1}_{k^n} \to \TS^{m+1}_k).$
Similarly, the proof does not give a Weihrauch reduction of $\TS^{m+1}_k$ to $\TS^{mn+1}_{k^n}$.

Since $\RT^3_2$ is equivalent to arithmetic comprehension, we also get that $\TS^{2n+1}_{2^n}$ implies arithmetic comprehension for each $n \in \omega$.  However, we can do better by carefully choosing the coloring.

\begin{proposition}\label{P:TS>ACA}
For each $n \geq 1$, we have $\RCA \vdash \TS^{n+2}_{2^n} \rightarrow \mathsf{ACA}$.
\end{proposition}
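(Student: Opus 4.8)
The plan is to code the Turing jump into a $2^n$-coloring of $[\N]^{n+2}$ in such a way that any infinite thin set computes $\emptyset'$, which over $\RCA$ is equivalent to $\mathsf{ACA}$. The strategy is a refinement of the coloring appearing in the proof of Corollary~\ref{C:TS>RT22} and Proposition~\ref{P:Step}: rather than first proving $\RT^3_2$ and relativizing/iterating (which would only give arithmetic comprehension via nested applications), I would build a single coloring of $(n+2)$-tuples whose thin sets directly reveal membership in $\emptyset'$. The point of using exponent $n+2$ together with $2^n$ colors (versus $\TS^{2n+1}_{2^n}$ from the corollary) is that we get to spend $n$ of the ``coordinate slots'' encoding an $n$-bit quantity and still have two extra coordinates to play the usual Ramsey-style trick that forces a limit to exist.

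First I would fix a computable enumeration $\emptyset' = \bigcup_s W_s$ and design $f \colon [\N]^{n+2} \to 2^n$ so that on a tuple $x < y_1 < \cdots < y_{n+1}$, the color records, for each of $n$ consecutive ``windows'' of numbers below $x$, whether some number in that window has entered $\emptyset'$ by stage $y_1$ (or by some stage read off from the larger coordinates). The key design requirement is that if $S$ is an infinite thin set avoiding color $\seq{a_0,\dots,a_{n-1}} \in 2^n$, then $S$ together with knowledge of the $a_i$ lets us decide $\emptyset'$: for each number $m$, by looking at sufficiently large elements of $S$ we can find the window containing $m$, determine which coordinate-slot that window corresponds to relative to a given $x \in S$, and use the avoided color in that slot to conclude whether $m \in \emptyset'$. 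As in Proposition~\ref{P:Step}, the argument will need to pass to an infinite subset of $S$ (removing finitely many elements) to stabilize the relevant limiting behavior, and this passage is where $\Delta^0_1$-comprehension and a bit of $\Sigma^0_2$-style reasoning (available once we have the thin set in hand) gets used; the statement is only claimed over $\RCA$ as an implication, not as a uniform reduction, so this is fine.

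The main obstacle I expect is arranging the coloring so that the $n$ separate bits being encoded do not interfere: with only $2^n$ colors we can encode exactly $n$ bits, so every coordinate slot must be ``used,'' and we must guarantee that for a thin set $S$ the avoided color is genuinely informative in all $n$ slots simultaneously rather than, say, being forced to $\mathbf{0}$ for trivial reasons. Concretely, I would need the combinatorics to ensure that along any infinite set the color is eventually ``saturated'' — i.e., for cofinitely many $x \in S$ and all large enough $y_1 < \cdots < y_{n+1}$, each bit of $f(x,y_1,\dots,y_{n+1})$ has stabilized to the true $\emptyset'$-answer for the corresponding window — so that the unique avoided color must be the complement of the true answer vector on whatever windows are ``seen,'' and then a pigeonhole/cofinality argument (choosing, for each target number $m$, an $x \in S$ whose window structure places $m$ in a controlled slot) extracts $\emptyset'(m)$. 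Once $\emptyset' \leq_T S \oplus \seq{a_0,\dots,a_{n-1}}$ is established for every instance, arithmetic comprehension follows by relativizing the coloring to an arbitrary set $X$ and invoking the standard equivalence of $\mathsf{ACA}$ with ``$X'$ exists for all $X$'' over $\RCA$.
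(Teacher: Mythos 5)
You have the right high-level instinct --- encode $n$ bits using $n$ binary colorings of $(n{+}2)$-tuples and read them back off the avoided color of a thin set --- but the specific coloring you sketch does not yield a working decoding step, and the obstacle you correctly flag is not overcome by the ``saturation'' argument you propose.

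The crucial design choice in the paper's proof, which your sketch lacks, is \emph{where} the windows live. You put the windows below the first coordinate $x$ and make every bit depend on $x$ and the stage parameters $y_1,\dots,y_{n+1}$. The paper instead fixes a computable injection $f$ and, for each $i<n$, sets
\[
g_i(x_0,\dots,x_{n+1}) = 1 \;\Longleftrightarrow\; (\exists z)\bigl[x_i < z < x_{i+1} \;\wedge\; f(z) < x_0\bigr].
\]
The point of placing the $i$-th window between the coordinates $x_i$ and $x_{i+1}$ is that bit $i$ depends \emph{only} on $x_0,x_i,x_{i+1}$: the first $m$ bits are determined by the prefix $x_0<\cdots<x_m$ and are unaffected by how the tuple is completed. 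This prefix-determinacy is exactly what powers the decoding. Given a thin set $H$ avoiding the color $b=\seq{b_0,\dots,b_{n-1}}$, one takes the largest $m<n$ for which the first $m$ bits of $b$ are realizable on $H$, and then the impossibility of extending any such realization to match bit $m$ yields, for suitable $x_0<\cdots<x_m$ in $H$, a uniform bound: $f(z)\geq x_0$ for all $z>x_m$. From this, membership of $y<x_0$ in $\operatorname{ran}(f)$ reduces to a finite search through $f(0),\dots,f(x_m)$.

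Your proposal tries to avoid this maximal-prefix argument by claiming that for sufficiently large $x\in S$ and sufficiently large $y$'s, the color ``saturates'' to the true answer vector, so the avoided color must be its bitwise complement. That claim is false in general. The true bit vector varies with $x$ (a fixed window partition of $[0,x)$ carves up different intervals for different $x$), and avoidance of $b$ only tells you that each tuple's color differs from $b$ in \emph{at least one} slot, with that slot possibly changing from tuple to tuple. So when you fix a target $m$ and pick $x$ so that $m$ sits in window $j$, you have no guarantee that the disagreement with $b$ occurs at slot $j$; you cannot read off $\emptyset'(m)$ from bit $j$ of $b$. This is precisely the interference you worry about in your own last paragraph, and it is not resolved by passing to a tail or removing finitely many elements. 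The missing idea is the maximality argument over realizable prefixes of the avoided color, which in turn requires the prefix-determinate window structure of the paper's coloring. As written, your coloring does not support that argument, so there is a genuine gap.
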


\begin{proof}
We will show how to reduce finding the range of an injection $f : \N\to\N$ to an instance $g : [\N]^{n+2} \to 2^n$ of $\TS^{n+2}_{2^n}$. Namely, for each $i < n$, define the coloring $g_i : [\N]^{n+2} \to 2$ by
   \[g_i(x_0,\dots,x_{n+1}) = \begin{cases} 1 & (\exists z)[x_i < z < x_{i+1} \wedge f(z) < x_0], \\ 0 & (\forall z)[x_i < z < x_{i+1} \rightarrow f(z) \geq x_0], \end{cases}\]
  and let $g = \seq{g_0,\ldots,g_{n-1}}$.
  Let $H$ be an infinite set that avoids at least one color $b = \seq{b_0,\ldots,b_{n-1}} < 2^n$.
  Let $m<n$ be the largest index for which there are $x_0 < \cdots < x_{n+1}$ in $H$ with $g_i(x_0,\dots,x_{n+1}) = b_i$ for all $i < m$, and assume without loss of generality that such $x_0,\ldots,x_{n+1}$ can be found in every tail of $H$.


  To determine whether some number $y$ is in the range of $f$, choose some elements $x_0 < \cdots < x_n$ of $H$ with $y < x_0$ and $g_i(x_0,\dots,x_n) = b_i$ for $i < m$.
  Note that $f(z) \geq x_0$ for all $z > x_m$, otherwise we could pick $y_0 = x_0 < \cdots < y_m = x_m < y_m < \cdots < y_{n+1}$ in $H$ to realize at least $m+1$ bits of $b$, contradicting the choice of $m$.
  Therefore, $y$ is in the range of $f$ if and only if $y \in \set{f(0),\dots,f(x_m)}$.
\end{proof}

By contrast, Wang \cite[Theorem 3.1]{Wang-TA2} has shown that for every $n$, there is a $k$ such that $\TS^n_k$ does not imply $\mathsf{ACA}$ over $\RCA$. Thus, the number of colors above is important.

\begin{proposition}[$\RCA$]\label{P:TS>RT1}
For all $n \geq 1$, we have $\RCA \vdash \TS^{n+1}_{3^n} \rightarrow \RT^1_{<\infty}$.
\end{proposition}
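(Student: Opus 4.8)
The plan is to adapt the method of Proposition~\ref{P:TS>ACA}: encode an arbitrary finite coloring $f$ into a single instance of $\TS^{n+1}_{3^n}$ built out of order-comparisons between $f$-values, and then argue that any infinite thin set, together with the color it omits, produces an infinite set on which $f$ is constant. Concretely, fix $k$ and $f\colon\N\to k$; to verify $\RT^1_{<\infty}$ it suffices to produce an infinite $f$-homogeneous set. Define $g=\seq{g_0,\dots,g_{n-1}}\colon[\N]^{n+1}\to 3^n$ by setting, for $x_0<\cdots<x_n$,
\[
g_i(x_0,\dots,x_n)=\begin{cases}0 & \text{if } f(x_i)=f(x_{i+1}),\\ 1 & \text{if } f(x_i)<f(x_{i+1}),\\ 2 & \text{if } f(x_i)>f(x_{i+1}),\end{cases}
\]
so that $g_i$ depends only on $x_i$ and $x_{i+1}$. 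Applying $\TS^{n+1}_{3^n}$ yields an infinite set $H$ together with a color $b=\seq{b_0,\dots,b_{n-1}}<3^n$ that $g$ omits on $[H]^{n+1}$.

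Next I would isolate the ``deepest realizable prefix'' of $b$. For $m\le n$, say $(\star_m)$ holds if every tail of $H$ contains a tuple $x_0<\cdots<x_m$ with $g_i=b_i$ for all $i<m$ (a condition involving only $x_0,\dots,x_m$). Then $(\star_0)$ holds because $H$ is infinite, while $(\star_n)$ fails because it would exhibit a tuple in $[H]^{n+1}$ of color $b$; so we may fix the least $m\le n-1$ with $(\star_{m+1})$ false, and then $(\star_m)$ holds. This is a case split over the finite set $\{0,\dots,n-1\}$ and needs no additional induction, which is why the proposition is proved for each fixed $n$ rather than uniformly. Fix a threshold $t^*$ witnessing the failure of $(\star_{m+1})$, set $H'=H\cap(t^*,\infty)$, and let
\[
H''=\{\,z\in H' : (\exists x_0<\cdots<x_{m-1}<z\text{ in }H')(\forall i<m)\;g_i(x_0,\dots,x_{m-1},z)=b_i\,\}.
\]
Since the witnessing tuple is bounded by $z$, this is a $\Delta^0_1$ definition, so $H''$ exists; and $(\star_m)$, relativized to the tail $H'$, makes $H''$ infinite. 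The key point is that for $x<y$ in $H''$, appending $y$ to a witnessing tuple for $x$ gives an $(m+2)$-tuple in $H'$ whose first $m$ comparisons read $b_0,\dots,b_{m-1}$; since $(\star_{m+1})$ fails above $t^*$, its next comparison cannot be $b_m$, which, unwinding the definition of $g_m$, says precisely that $f(x)$ and $f(y)$ never compare as $b_m$ for $x<y$ in $H''$.

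It remains to split on $b_m$. If $b_m=0$ then $f$ is injective on the infinite set $H''$ while $\range(f)\subseteq k$, contradicting the pigeonhole principle (which $\RCA_0$ proves), so this case cannot occur. If $b_m=1$ then $f$ is nonincreasing along $H''$; using $\Sigma^0_1$ minimization, fix the least $c<k$ attained by $f$ on $H''$ and some $x^*\in H''$ with $f(x^*)=c$, so that $c\le f(x)\le f(x^*)=c$ for every $x\ge x^*$ in $H''$, giving an infinite $f$-homogeneous set. The case $b_m=2$ is symmetric (use the greatest value attained, equivalently apply the previous case to $k-1-f$). In every case we obtain an infinite $f$-homogeneous set, and since $k$ and $f$ were arbitrary this establishes $\RT^1_{<\infty}$. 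I expect the conceptual content to be just the choice of the comparison coloring and the observation that omitting one of its $3^n$ colors forces injective or monotone behavior of $f$ along a definable infinite subset; the only delicate points are routine bookkeeping --- confirming that $H'$, $H''$, and the relativization of $(\star_m)$ stay within $\Delta^0_1$-comprehension, and invoking the standard facts that finite pigeonhole and $\Sigma^0_1$ minimization are available in $\RCA_0$.
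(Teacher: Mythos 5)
Your proof is correct, but it takes a genuinely different route from the paper's. The paper first appeals to Proposition~\ref{P:Step} to reduce the claim to the single base case $\TS^2_3 \rightarrow \RT^1_{<\infty}$, and then proves that base case directly with the comparison coloring $g(x,y) \in \{\text{equal}, <, >\}$ on pairs. You instead construct the comparison coloring directly at exponent $n+1$, with $g_i$ comparing $f(x_i)$ and $f(x_{i+1})$ along a consecutive chain, and then inline the ``deepest realizable prefix'' analysis (which in spirit is the engine of Proposition~\ref{P:TS>ACA} and of Proposition~\ref{P:Step}'s proof) to recover an infinite set on which exactly one comparison is forbidden. Your treatment of the realizable-prefix case split as an external disjunction over $m \in \{0,\dots,n-1\}$, the observation that the $\Pi^0_2$ statements $(\star_m)$ are nested so that a least failure index exists, the $\Delta^0_1$ definition of $H''$ (because witnesses are bounded by $z$), and the use of $\Sigma^0_1$-least-number-principle to extract a homogeneous tail are all correctly handled within $\RCA_0$. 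The only superficial discrepancy is that your color codes for $<$ and $>$ are swapped relative to the paper's, so your $b_m=1$ case matches the paper's ``avoid color $2$'' case and vice versa; this is immaterial. What the paper's factorization buys is modularity (Proposition~\ref{P:Step} is reused elsewhere, e.g., in Corollary~\ref{C:TS>RT22}); what your approach buys is a self-contained argument and a coloring (comparing consecutive $f$-values) that is arguably more natural than the root-to-leaf comparisons $f(x_0,x_j)$ that come out of composing Proposition~\ref{P:Step} with the base case.
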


\begin{proof}
  By Proposition~\ref{P:Step}, it suffices to show that $\TS^2_3$ implies $\RT^1_{<\infty}$.
  Given $f \colon \N \to k$, define $g \colon [\N]^2 \to 3$ by
  \[
  g(x,y) =
  \begin{cases}
  0 & \text{if $f(x) = f(y)$,} \\
  1 & \text{if $f(x) > f(y)$,}\\
  2 & \text{if $f(x) < f(y)$,}
  \end{cases}
  \]
  for all $x < y$.
  Suppose that $H$ is an infinite set that avoids one of the three colors.

  Note that $H$ cannot avoid the color $0$, since otherwise the restriction of $f$ to $H$ would be an injection, which is impossible since $H$ is infinite. So suppose $H$ avoids the color $1$, so that the restriction of $f$ to $H$ is then non-decreasing. Any bounded non-decreasing function on an infinite set eventually stabilizes to a maximal value $m$. Then $f^{-1}(m)$ is an infinite homogeneous set for $f$. The case when $H$ avoids color $2$ is symmetric.
\end{proof}

\subsection{Results for triples}

In order to properly state the strong Weihrauch\ reduction form of the next few results, we introduce an operation on $\Pi^1_2$ statements that essentially corresponds to composition of partial multi-valued functions in the context of computable analysis. Given $\Pi^1_2$ statements $\mathsf{P}$ and $\mathsf{Q}$ and a Turing functional $\Theta$, we define $\mathsf{Q} \after \mathsf{P}$ to be the $\Pi^1_2$ principle whose instances are instances $A$ of $\mathsf{P}$ and whose solutions are pairs $\seq{B,C}$ where $B$ is a $\mathsf{P}$-solution for $A$ and $C$ is a $\mathsf{Q}$-solution for the instance $\Theta(A,B)$ of $\mathsf{Q}$. In other words, $\mathsf{Q} \after \mathsf{P}$ first takes an instance $A$ of $\mathsf{P}$ and seeks a solution $B$, then uses $A$ and $B$ to construct an instance $\Theta(A,B)$ of $\mathsf{Q}$ and seeks a solution $C$. The Turing functional $\Theta$ is only a matter of convenience in order to translate the output of $\mathsf{P}$ into an input for $\mathsf{Q}$ and it is usually obvious from the context what $\Theta$ needs to be. (The notion here essentially corresponds to function composition from the context of computable analysis, although because of the use of the functional $\Theta$, and because solutions here are pairs, the two are not formally the same. Compare this with the compositional product in the Weihrauch lattice, as defined in \cite[Definition 4.1]{BGM-2012}.)

To illustrate this definition, consider for example the implication of $\RT^2_2$ by $\SRT^2_2 + \COH$ from Cholak, Jockush and Slaman~\cite[Theorem 12.5]{CJS-2001}. Their proof that $\SRT^2_2 + \COH$ implies $\RT^2_2$ breaks into three steps, as follows. A given coloring $f : [\omega]^2 \to 2$ is first transformed into the instance $\seq{R_x : x \in \omega}$ of $\COH$ given by $R_x = \{y : y > x \land f(x,y) = 0\}$; a solution $C = \{c_0 < c_1 < \ldots \}$ to this instance is then transformed, along with $f$, into the stable coloring $g : [\omega]^2 \to 2$ given by $g(x,y) = f(c_x,c_y)$; and finally, a homogeneous set $G$ for $g$ is transformed into the homogeneous set $H = \{c_x : x \in G\}$ for $f$. Thus, by letting $\Theta$ be the functional that defines $g$ from $C$ and $f$ as here, we see that this argument corresponds to a reduction $\RT^2_2 \sred \SRT^2_2 \after \COH$. The key property used here is that the restriction of a computable coloring to a cohesive set is stable, and thus keeping the output $C$ of $\COH$ in the composition is essential.

We know that $\TS^3_2$ implies arithmetic comprehension, and Corollary~\ref{C:TS>RT22} shows that $\TS^3_4$ implies $\RT^2_2$.
This leaves a gap around $\TS^3_3$. Wang \cite[Corollary 3.2]{Wang-TA2} has shown that $\TS^3_3$ does not imply $\mathsf{ACA}$. The next result gives a little more information. 

\begin{proposition}\label{P:TS33}
$\RCA \vdash \TS^3_3 \rightarrow \RT^2_{<\infty}$.
\end{proposition}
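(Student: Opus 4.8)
The plan is to obtain the implication by composing results already available in the paper. First, Proposition~\ref{P:TS_trivial_implication}(2), with $n = 3$, $j = 3$, and $k = 4$, gives $\RCA \vdash \TS^3_3 \rightarrow \TS^3_4$, since in the Thin Set Theorem allowing more colors only weakens the hypothesis. Second, Corollary~\ref{C:TS>RT22} with $m = 1$ and $n = 2$ (so that $mn + 1 = 3$ and $2^n = 4$) gives $\RCA \vdash \TS^3_4 \rightarrow \RT^2_2$; since $n$ is fixed here, the non-uniformity remarked upon after Proposition~\ref{P:Step} does not intervene. It then remains only to invoke the well-known fact that $\RCA \vdash \RT^2_2 \rightarrow \RT^2_{<\infty}$, and composing the three implications yields $\RCA \vdash \TS^3_3 \rightarrow \RT^2_{<\infty}$, as desired.

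The one step needing care, and hence the only real obstacle, is the last one. One proves $\RT^2_k$ from $\RT^2_2$ by $k-1$ nested applications, exactly as in the argument for $\RCA \vdash \RT^n_2 \rightarrow \RT^n_3$ recalled in the introduction: given $f \colon [\N]^2 \to k$, apply $\RT^2_2$ to the coloring recording whether $f$ takes a value less than $k-1$; a homogeneous set of color $1$ is homogeneous for $f$, while one of color $0$ carries a $(k-1)$-coloring to which one recurses. Because $\RT^2_{<\infty}$ is the single statement $(\forall k \geq 2)\,\RT^2_k$, this nesting must be performed uniformly in $k$, including nonstandard $k$; for this one invokes the theorem of Hirst that $\RT^2_2$ proves $\B\Sigma^0_2$, which supplies a single bound controlling all stages of the iteration at once. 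As an alternative avoiding the detour through $\TS^3_4$, one can argue directly in the manner of the proof of Proposition~\ref{P:TS>RT1}: given $f \colon [\N]^2 \to k$, let $g \colon [\N]^3 \to 3$ compare $f(x,y)$ with $f(x,z)$ for $x < y < z$, taking the values $0$, $1$, $2$ according as $f(x,y) = f(x,z)$, $f(x,y) > f(x,z)$, or $f(x,y) < f(x,z)$; apply $\TS^3_3$ to obtain an infinite thin set $S$; note that the value $0$ cannot be the one omitted, since otherwise $y \mapsto f(x,y)$ would be injective on an infinite subset of $S$; conclude that $f$ restricted to $S$ is stable, and extract an infinite homogeneous set in the usual way. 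This route meets the same $\B\Sigma^0_2$ bookkeeping, now available because $\TS^3_3$ already implies $\RT^2_2$.
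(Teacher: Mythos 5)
Your first two steps are sound and in fact parallel reasoning the paper itself makes within its proof: $\TS^3_3 \to \TS^3_4$ is Proposition~\ref{P:TS_trivial_implication}(2), and $\TS^3_4 \to \RT^2_2$ is Corollary~\ref{C:TS>RT22} with $m=1,n=2$, where fixing $n$ does dodge the induction caveat after Proposition~\ref{P:Step}. The paper even uses this chain implicitly when it invokes ``$\B\Pi^0_1$, which follows from $\RT^2_2$ and hence from $\TS^3_3$.''

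The gap is the third step, and it is a real one. The claim $\RCA \vdash \RT^2_2 \to \RT^2_{<\infty}$ is far more delicate than your sketch suggests. The nested-application argument you recall from the introduction eliminates one color per application of $\RT^2_2$; carried out for a coloring into $k$ colors, it is a recursion of length $k-1$, and in a nonstandard model of $\RCA + \RT^2_2$ this length may be nonstandard. $\B\Sigma^0_2$ is a bounding scheme, not an induction scheme, and it does not by itself license a definable recursion of nonstandard length whose steps are set-existence assertions. You would need either a genuinely uniform single-application argument or a reference, and the fact that the paper takes pains to prove $\TS^3_3 \to \RT^2_{<\infty}$ by a fresh combinatorial argument---rather than observing $\TS^3_3 \to \RT^2_2$ (which it visibly knows) and citing a black box---is strong circumstantial evidence that no such easy black box is available.

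Your alternative route has a related gap, and here the comparison with the paper is instructive. Applying $\TS^3_3$ to $g(x,y,z)$ recording whether $f(x,y)$ is $=$, $>$, or $<$ $f(x,z)$ correctly rules out omission of the ``$=$'' color, and leaves $y \mapsto f(x,y)$ monotone, hence eventually constant, on the tail of the thin set $S$ above $x$. But monotone-stable is not min-homogeneous: the limit coloring $\bar{f}(x) = \lim_{y \in S} f(x,y)$ is only $\Delta^0_2$ in $f$ and $S$, not computable. Extracting a homogeneous set ``in the usual way'' for a $\Delta^0_2$ coloring into $k$ cells, with $k$ possibly nonstandard, requires $\D^2_{<\infty}$ (equivalently $\SRT^2_{<\infty}$), which is not something you can get from $\RT^2_2 + \B\Sigma^0_2$ by any obvious argument; it reproduces in miniature the very problem of the third step. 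The paper's proof is designed precisely to avoid this: the two-stage use of $\TS^3_3$ (first $g$ counting distinct colors in a triangle, then $h$ identifying which pair of $f$-values agree) is engineered so that in the residual case the set $G$ is genuinely min-homogeneous ($f(x,y)=f(x,z)$ for all $x<y<z$ in $G$), making $\bar{f}$ \emph{computable} in $f$ and $G$. Only then can $\RT^1_{<\infty}$ (that is, $\B\Pi^0_1$, which the paper really does have from $\RT^2_2$) be applied. The distinction between ``stable'' and ``min-homogeneous'' is exactly what your proposal glosses over, and it is the whole content of the second application of $\TS^3_3$ in the paper's argument.
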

  
\begin{proof}
Let $f \colon [\N]^2\to k$ be any finite coloring. Let $g \colon [\N]^3\to 3$ be defined by
\[
g(x,y,z) = |\set{f(x,y),f(x,z),f(y,z)}| - 1.
\]  
By $\TS^{3}_{3}$ there is an infinite set $H$ that omits one of the three possible colors.
  
  Since every infinite set contains at least one homogeneous triangle for $f$, the set $H$ cannot omit color $0$ for $g$.
  If $H$ omits color $1$ for $g$, then pick $x \in H$ and consider the sets $H_i = \set{y \in H : y > x \land f(x,y) = i}$ for $i < k$.
  Since $H$ omits triples which take exactly two $f$-colors, each $H_i$ is homogeneous with color $i$.
  By $\mathsf{B}{\Pi^0_1}$, which follows from $\RT^2_2$ and hence from $\TS^3_3$, one of these sets must be infinite. Thus we have an infinite homogeneous set for $f$.

  The only remaining case is when $H$ omits color $2$ for $g$.
  In that case, consider the coloring \[h(x,y,z) = \begin{cases} 0 & \text{if $f(y,z) = f(x,y) = f(x,z)$,} \\ 1 & \text{if $f(y,z) \neq f(x,y) = f(x,z)$,} \\ 2 & \text{if $f(x,z) \neq f(x,y) = f(y,z)$,} \\ 3 & \text{if $f(x,y) \neq f(x,z) = f(y,z)$,} \end{cases}\] where $x < y < z$ are elements of $H$.
  Since $H$ omits color $2$ for $g$, these four cases are exhaustive.
  By $\TS^3_3$, there is an infinite set $G \subseteq H$ that omits two colors for $h$.

  Since every infinite set contains a homogeneous triangle, color $0$ cannot be among the colors omitted by $G$.
  If $1$ is among the two colors omitted by $G$, then pick $x \in G$ and consider the sets $G_i = \set{y \in G : y > x \land f(x,y) = i}$ for $i < k$.
  Since $G$ omits color $1$ for $h$, each $G_i$ is homogeneous of color $i$.
  By $\mathsf{B}{\Pi^0_1}$, one of these sets $G_i$ must be infinite and thus we have an infinite homogeneous set for $f$.

  The only remaining case is when $G$ omits both colors $2$ and $3$ for $h$.
  In that case, $G$ is $\min$-homogeneous for $f$, i.e., $f(x,y) = f(x,z)$ for all $x < y < z$ in $G$.
  We may then unambiguously define the coloring $\bar{f} \colon \N \to k$ by $\bar{f}(x) = f(x,y)$ where $x < y$ in $G$.
  By $\mathsf{B}{\Pi^0_1}$, one of the sets $G_i = \set{x \in G : \bar{f}(x) = i}$ must be infinite, and this $G_i$ is an infinite homogeneous set for $f$ of color $i$.
\end{proof}

\noindent
Because this argument compounds multiple uses of $\TS^3_3$ and its consequences, Proposition~\ref{P:TS33} does not lead a direct reduction of $\RT^2_{<\infty}$ to $\TS^3_3$. Carefully going through the proof and using the $\bullet$ composition described above, we obtain the following cumbersome reduction: $$\RT^2_k \sred \RT^1_k\bullet\TS^3_3\bullet\TS^3_3\bullet\RT^1_k\bullet\TS^3_3.$$

It is also unclear how strong $\TS^3_k$ is for $k \geq 4$.
The next three results give some non-trivial lower bounds for $k = 6,7,8$.

For these results, we use the following related notions:

\begin{definition}\label{D:transitive_hereditary_colorings}
A coloring $f \colon [\N]^2 \to k$ is \emph{transitive} if for all $i < k$ and all $x < y < z$, whenever $f(x,y) = f(y,z) = i$ then $f(x,z) = i$. A coloring is \emph{semi-transitive} if this property holds for all but possibly one $i < k$.

A coloring $f \colon [\N]^2 \to k$ is \emph{semi-hereditary} if for all $i < k$ except possibly one, whenever $x < y < z$ and $f(x,z) = f(y,z) = i$ then $f(x,y) = i$.

A coloring $f \colon [\N]^2 \to k$ is \emph{semi-trivial} if for all $i < k$ except possibly one, the set $\{y \in \N : x < y \land f(x,y) = i\}$ is homogeneous for $f$ for each $x$ (in a possibly different color).
\end{definition}

These are associated with restrictions of $\RT^2_2$.

\begin{statement}[$\ADS$]
Every transitive coloring $f \colon [\N]^2 \to 2$ has an infinite homogeneous set.
\end{statement}
\begin{statement}[$\CAC$]
Every semi-transitive coloring $f \colon [\N]^2 \to 2$ has an infinite homogeneous set.
\end{statement}
\begin{statement}[$\CHO$]
Every semi-hereditary coloring $f \colon [\N]^2 \to 2$ has an infinite homogeneous set.
\end{statement}
\begin{statement}[$\PS$]
Every semi-trivial coloring $f \colon [\N]^2 \to 2$ has an infinite homogeneous set.
\end{statement}

The restrictions $\CAC$ and $\ADS$ were studied by Hirschfeldt and Shore~\cite{HS-2007}, who showed that $\ADS$ is implied by $\CAC$ over $\RCA$, and that both are strictly weaker than $\RT^2_2$. Recently, Lerman, Solomon and Towsner~\cite[Section 2]{LST-2013} have shown that $\ADS$ does not imply $\CAC$ over $\RCA$. (The usual definitions of these principles, as given in Section 1 of~\cite{HS-2007}, are equivalent to the ones above by Theorems 5.2 and 5.3 of~\cite{HS-2007}, respectively.) The restriction $\CHO$ was studied by Dorais (unpublished), who showed that it follows from $\ADS$. It is unknown whether $\CHO$ implies $\ADS$ over $\RCA$.
The last restriction, $\PS$, is equivalent to the infinite pigeonhole principle $\RT^1_{<\infty}$ over $\RCA$. However, the proof of $\PS$ from $\RT^1_{<\infty}$ is not uniform and hence there does not appear to be a strong Weihrauch equivalence between $\PS$ and $\RT^1_{<\infty}$. $\CAC$, $\ADS$, $\CHO$ all imply $\PS$ over $\RCA$ and none of those implications reverse.

We now prove a number of implications between $\Pi^1_2$ principles in $\RCA$ that can also be presented as strong Weihrauch reductions between compositions of $\Pi^1_2$ principles.
To properly state the relevant reductions, we need the \emph{alternative product} $[\mathsf{P},\mathsf{Q}]$ of $\Pi^1_2$ principles $\mathsf{P}$ and $\mathsf{Q}$.
An instance of $[\mathsf{P},\mathsf{Q}]$ is a either a pair $\seq{0,A}$ where $A$ is an instance of $\mathsf{P}$ or a pair $\seq{1,B}$ where $B$ is an instance of $\mathsf{Q}$; a corresponding solutions are, respectively, solutions to the instance $A$ of $\mathsf{P}$ or solutions to the instance $B$ of $\mathsf{Q}$.
This is indeed a product since $\mathsf{P}, \mathsf{Q} \sred [\mathsf{P},\mathsf{Q}]$ but it is not always equivalent to the parallel product $\seq{\mathsf{P},\mathsf{Q}}.$
In fact, $[\mathsf{P},\mathsf{Q}]$ is the least upper bound of $\mathsf{P}$ and $\mathsf{Q}$ in the strong Weihrauch preordering. In particular, $[\mathsf{P},\mathsf{P}]$ is always strong Weihrauch equivalent to $\mathsf{P}$.
(See Blass~\cite{Blass-1995} for a discussion of these two products in a broader setting. The alternative product is called the co-product in the Weihrauch lattice, and was originally introduced in this context by Pauly \cite{Pauly-2010}.)


\begin{proposition}\label{P:TS38CAC>RT22}
$\RT^2_2 \sred [\PS,\CAC,\CHO] \after \TS^3_8$ and $\RCA \vdash (\TS^3_8 + \CAC) \rightarrow \RT^2_2$.
\end{proposition}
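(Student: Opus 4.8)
The plan is to show how an arbitrary coloring $f \colon [\N]^2 \to 2$ can be reduced to an instance of $\TS^3_8$ followed by one of the restricted principles $\PS$, $\CAC$, or $\CHO$. The guiding idea is the one used in Proposition~\ref{P:TS33}: from $f$ build a coloring $g \colon [\N]^3 \to 8$ that records, for a triple $x < y < z$, the ``shape'' of the $2$-coloring induced on $\{x,y,z\}$ together with enough extra bookkeeping (using the full eight colors rather than three) so that avoiding one $g$-color forces a strong structural constraint on an infinite subset $H$. First I would enumerate the possible colorings of a triangle under a $2$-coloring and attach to each the relevant ``pattern of agreements'' among $f(x,y)$, $f(x,z)$, $f(y,z)$, refining the three-valued coloring of Proposition~\ref{P:TS33} to eight values so that, for each color $c<8$, the hypothesis ``$H$ omits $c$ for $g$'' translates into one of: $f \res H$ has some min-homogeneity/transitivity property, or $f \res H$ is semi-transitive, or $f \res H$ is semi-hereditary, or a pigeonhole-type statement applies. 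As in Proposition~\ref{P:TS33}, the color coding ``every infinite set contains a homogeneous triangle'' must be excluded, and the remaining colors partition into cases handled by $\PS$, $\CAC$, and $\CHO$ respectively.

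Concretely, I would apply $\TS^3_8$ to $g$ to get an infinite $H$ and a color $c<8$ it omits; by the triangle-counting observation $c$ is not the ``rainbow/homogeneous-triangle'' color. For each remaining $c$, the restriction $f\res H$ becomes (after possibly deleting finitely many elements and passing to a uniformly $\Delta^0_1$-definable subset, exactly as in the proof of Proposition~\ref{P:Step}) a transitive, semi-transitive, or semi-hereditary coloring of $[H]^2$ into $2$ — or else one can extract directly via $\PS$ an infinite homogeneous set. Feeding this restricted coloring to the appropriate component of $[\PS,\CAC,\CHO]$ yields an infinite homogeneous set for $f\res H$, which (being a subset of $H$) is homogeneous for $f$. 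Since $\CHO$ follows from $\ADS$ which follows from $\CAC$ over $\RCA$, and $\PS$ follows from $\RT^1_{<\infty}$ which follows from $\RT^2_2$ and hence from $\CAC$, the reverse-mathematics statement $\RCA \vdash (\TS^3_8 + \CAC) \to \RT^2_2$ follows from the same construction — each of the at most two nested applications of the restricted principles is available from $\CAC$, and the one application of $\TS^3_8$ is in hand; the only non-uniform-looking steps are the finite deletions and the choice of which branch of the case analysis we are in, both of which are harmless in $\RCA$ (the finite modifications by the usual argument, and the case split because it is a $\Sigma^0_0$ choice over the omitted color $c$). For the Weihrauch statement, the Turing functional $\Theta$ in the composition $[\PS,\CAC,\CHO]\after\TS^3_8$ is the one that reads $f$, $H$, and $c$, performs the finite deletion and the $\Delta^0_1$ thinning, and outputs the tagged instance $\seq{0,\,\cdot}$, $\seq{1,\,\cdot}$, or $\seq{2,\,\cdot}$ of the alternative product according to which structural property $f\res H$ now enjoys; and the back-reduction reads off the homogeneous set and re-expands through the deletions.

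The main obstacle I expect is designing $g$ so that eight colors genuinely suffice — that is, arranging the coding so that each of the seven ``usable'' omitted colors lands us in a case that one of $\PS$, $\CAC$, or $\CHO$ (rather than full $\RT^2_2$) can discharge, while keeping the reduction from $f$ to $g$ uniformly computable and the post-processing $\Delta^0_1$. This is essentially a finite but fiddly bookkeeping problem: one must check that the ``shape'' information about a triple, combined with agreement patterns, is rich enough to detect transitivity/semi-transitivity/semi-hereditariness failures but coarse enough to fit in eight colors. A secondary point to watch is that the case analysis may require \emph{two} successive applications of a restricted principle (as the $h$-coloring step does in Proposition~\ref{P:TS33}), which is fine for the $\RCA$ statement but means the Weihrauch reduction genuinely needs the composition $[\PS,\CAC,\CHO]\after\TS^3_8$ and possibly an iterated composition on the $\CAC$/$\CHO$ side; I would absorb any such second application into the definition of the instance produced for the alternative product, exploiting that $\CAC$ and $\CHO$ are themselves closed under the relevant restrictions, or else state the reduction with the extra composition factor made explicit as was done after Proposition~\ref{P:TS33}.
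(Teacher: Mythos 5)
You have the right overall strategy --- build a single $\TS^3_8$ instance $g$ that records the full structure of $f$ on each triple, so that the omitted color tells you which restricted principle applies --- but you have not actually written down $g$, and your anticipated complications do not arise. The coloring is simply $g(x,y,z) = \seq{f(y,z), f(x,z), f(x,y)}$ for $x < y < z$, a three-bit value ranging over all of $2^3 = 8$. Given an infinite $H$ omitting $c = \seq{a,b,d}$, all eight values of $c$ are usable; there is no ``homogeneous-triangle color that must be excluded'' as in Proposition~\ref{P:TS33}, because this $g$ records the literal triple of edge colors rather than the number of distinct colors on the triangle. Concretely: if $c \in \{\seq{0,0,0},\seq{1,0,0},\seq{1,1,1},\seq{0,1,1}\}$, then for each $x \in H$ the set $\{y \in H : y > x,\ f(x,y) = 0\}$ (or the analogous set with $f(x,y) = 1$, depending on $c$) is $f$-homogeneous, so $f \res [H]^2$ is semi-trivial and $\PS$ applies; if $c \in \{\seq{0,1,0},\seq{1,0,1}\}$, then $f \res [H]^2$ is semi-transitive and $\CAC$ applies; if $c \in \{\seq{0,0,1},\seq{1,1,0}\}$, then $f \res [H]^2$ is semi-hereditary and $\CHO$ applies.

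Because of this, several of your worries disappear. No finite deletions, $\Delta^0_1$ thinnings, or second nested applications are needed: one application of $\TS^3_8$ followed by one application of exactly one of $\PS$, $\CAC$, $\CHO$ suffices --- that is precisely why eight colors are used here rather than the three of Proposition~\ref{P:TS33}, which \emph{does} iterate. And the case split is not a non-uniform step to be finessed: by convention a $\TS^n_k$-solution includes the omitted color $c$ (the paper flags this explicitly), so $c$ uniformly determines the tag of the $[\PS,\CAC,\CHO]$-instance, and $\Theta$ just re-indexes $f \res [H]^2$ as a coloring of $[\N]^2$ and attaches that tag. Your treatment of the reverse-mathematics half is fine: $\RCA$ proves $\CAC \to \ADS \to \CHO$ and $\CAC \to \PS$, so $\CAC$ alone covers all three branches.
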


\begin{proof}
  We argue in $\RCA$. Let $f \colon [\N]^2 \to 2$ be a coloring.
  Define the coloring $g \colon [\N]^3 \to 8$ by \[g(x,y,z) = \seq{f(y,z),f(x,z),f(x,y)},\] for all $x < y < z$.
  By $\TS^3_8$, we know that there is an infinite set $H$ that avoids one of the eight possible colors for $g$.
  
  The proof now divides into cases according to which color is avoided. Call this color $c$.

  \par\medskip\noindent\emph{Case 1.} 
  If $c = \seq{0,0,0}$ or $c = \seq{1,0,0}$, then for every $x \in H$, the set $H_x = \set{y \in H : y > x \land f(x,y) = 0}$ is homogeneous for $f$ (with color $1$ or $0$, respectively). 
  If $c = \seq{1,1,1}$ or $c = \seq{0,1,1}$, then for every $x \in H$, the set $H_x = \set{y \in H : y > x \land f(x,y) = 1}$ is homogeneous for $f$ (with color $0$ or $1$, respectively).
  In all these cases, appliying $\PS$ gives an infinite homogeneous set for $f$.

  \par\medskip\noindent\emph{Case 2.} If $c = \seq{0,1,0}$ or $c = \seq{1,0,1}$, then $f$ is semi-transitive on $H$. Applying $\CAC$ gives an infinite homogeneous set for $f$.

  \par\medskip\noindent\emph{Case 3.} If $c = \seq{0,0,1}$ or $c = \seq{1,1,0}$, then $f$ is semi-hereditary on $H$. Applying $\CHO$ gives an infinite homogeneous set for $f$.
\end{proof}

\begin{proposition}\label{P:TS37ADS>RT22}
$\RT^2_2 \sred [\PS,\ADS,\CHO] \after \TS^3_7$ and $\RCA \vdash (\TS^3_7 + \ADS) \rightarrow \RT^2_2$.
\end{proposition}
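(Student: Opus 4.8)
The plan is to follow the proof of Proposition~\ref{P:TS38CAC>RT22} almost verbatim, economizing one color by fusing the two ``transitivity--violating'' patterns. Given $f \colon [\N]^2 \to 2$, I would define $g \colon [\N]^3 \to 7$ by letting $g(x,y,z)$ encode $\seq{f(y,z),f(x,z),f(x,y)}$ for all $x<y<z$, except that the patterns $\seq{0,1,0}$ and $\seq{1,0,1}$ are assigned a common seventh color; this $g$ is uniformly computable from $f$. Apply $\TS^3_7$ to obtain an infinite $H$ avoiding some color $c^*$, and case on $c^*$ exactly as in the eight--color argument. If $c^*$ is (the code of) one of $\seq{0,0,0}$, $\seq{1,0,0}$, $\seq{1,1,1}$, $\seq{0,1,1}$, then for each $x\in H$ an appropriate level set $\set{y\in H : y>x \wedge f(x,y)=\epsilon}$ is $f$--homogeneous, so the restriction of $f$ to $H$ is semi--trivial and $\PS$ applies. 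If $c^*$ is the fused color, then $H$ avoids both $\seq{0,1,0}$ and $\seq{1,0,1}$, so the restriction of $f$ to $H$ is now \emph{transitive} rather than merely semi--transitive, and $\ADS$ applies in place of $\CAC$. If $c^*$ is one of $\seq{0,0,1}$, $\seq{1,1,0}$, the restriction is semi--hereditary and $\CHO$ applies. In every case the resulting infinite homogeneous set, pulled back along $H$, is homogeneous for $f$.

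Packaging the three regimes as a single instance of the alternative product $[\PS,\ADS,\CHO]$, tagged according to which regime $c^*$ lands in, and composing via $\after$ with the instance $g$ of $\TS^3_7$, yields the strong Weihrauch reduction $\RT^2_2 \sred [\PS,\ADS,\CHO] \after \TS^3_7$: the backward functional merely reads off $H$ from the $\TS^3_7$--solution and pulls back the $[\PS,\ADS,\CHO]$--solution along it, using no information about the original $f$. The $\RCA$ statement then follows since $\ADS$ proves both $\PS$ and $\CHO$ over $\RCA$, so that $\TS^3_7+\ADS$ has all four ingredients available, and the construction above formalizes with only $\Delta^0_1$ comprehension (the case split on $c^*$ is a finite case split, and the verification that the restriction of $f$ to $H$ is of the required type in each case is an arithmetical consequence of the avoided--color property).

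The one step that genuinely differs from the eight--color case, and the one I expect to require the most care, is that once $\seq{0,1,0}$ and $\seq{1,0,1}$ are merged, $f$ is no longer directly legible from $g$, whereas the functional $\Theta$ that builds the $[\PS,\ADS,\CHO]$--instance is handed $g$ and $H$, not $f$. I would resolve this by a short local recovery: one checks that whenever $g(x,y,z)$ is the fused color one has $f(x,y)=f(y,z)$, and that any pattern of the shape $\seq{\ast,t,t}$ is never the fused color. Hence for $h_a<h_b$ in $H$ the value $f(h_a,h_b)$ can be recovered by inspecting $g$ on at most three triples drawn from $H$ --- namely on $h_a,h_b$ together with the next two elements $h_c<h_d$ of $H$ above $h_b$: if either $g(h_a,h_b,h_c)$ or $g(h_a,h_b,h_d)$ is not fused, read $f(h_a,h_b)$ off its last coordinate; otherwise $f(h_a,h_b)$ equals $f(h_b,h_c)=f(h_b,h_d)$, so $g(h_b,h_c,h_d)$ has shape $\seq{\ast,t,t}$, is not fused, and its last coordinate is $f(h_a,h_b)$. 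This recovery is uniform in $H$ and independent of which $c^*$ was avoided, and it is the only substantive change; everything else is a transcription of the proof of Proposition~\ref{P:TS38CAC>RT22}, the sole conceptual difference being that $\CAC$ is replaced by $\ADS$.
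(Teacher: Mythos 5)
Your proposal is correct and matches the paper's proof exactly: merge the dual pair $\seq{0,1,0}$ and $\seq{1,0,1}$ into one of the seven colors, and replace $\CAC$ by $\ADS$ in the fused case, since an $H$ avoiding that color renders $f$ fully transitive on $H$. Your additional observation that the fused pattern always has first coordinate equal to last and middle unequal to last, and the resulting three-triple recovery of $f\res[H]^2$ from $g\res[H]^3$, is a correct and useful clarification of a detail the paper's proof leaves implicit when stating the strong Weihrauch reduction.
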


\begin{proof}
  The construction is basically the same as that of Proposition~\ref{P:TS38CAC>RT22}, with the exception that the dual pair of colors $\seq{0,1,0}$ and $\seq{1,0,1}$ are merged into one.
  Any infinite set $H$ that avoids both of these colors is transitive for $f$, so $\ADS$ suffices to give an infinite homogeneous set for $c$.
\end{proof}

\begin{proposition}\label{P:TS36CHO>RT22}
$\RT^2_2 \sred [\PS,\CHO] \after \TS^3_6$ and $\RCA \vdash (\TS^3_6 + \CHO) \rightarrow \RT^2_2$.
\end{proposition}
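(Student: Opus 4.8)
The plan is to follow the template of Propositions~\ref{P:TS38CAC>RT22} and~\ref{P:TS37ADS>RT22}, starting from the coloring $\seq{f(y,z),f(x,z),f(x,y)}$ used there but now merging the two ``$\CAC$-type'' colors $\seq{0,1,0}$ and $\seq{1,0,1}$ \emph{individually} into the two ``$\CHO$-type'' colors having the same first bit, namely $\seq{0,0,1}$ and $\seq{1,1,0}$, respectively. Concretely, given $f \colon [\N]^2 \to 2$, I would define $g \colon [\N]^3 \to 6$ by letting, for $x < y < z$,
\[
g(x,y,z) =
\begin{cases}
\seq{f(y,z),f(x,z)} & \text{if } f(x,z) = f(x,y),\\
\seq{f(y,z),*} & \text{if } f(x,z) \neq f(x,y),
\end{cases}
\]
where $*$ is a symbol distinct from $0$ and $1$. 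Here the four ``pure'' values $\seq{0,0},\seq{0,1},\seq{1,0},\seq{1,1}$ correspond to the colors $\seq{0,0,0},\seq{0,1,1},\seq{1,0,0},\seq{1,1,1}$ of Proposition~\ref{P:TS38CAC>RT22}, and the two ``merged'' values $\seq{0,*}$ and $\seq{1,*}$ each absorb one $\CAC$-type color into one $\CHO$-type color, for a total of $6$.

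Next I would apply $\TS^3_6$ to obtain an infinite set $H$ avoiding a color $c$, and argue by cases on $c$. If $c$ is one of the four pure values, the argument is verbatim that of Case~1 of Proposition~\ref{P:TS38CAC>RT22}: for each $x \in H$, the set $\set{y \in H : y > x \wedge f(x,y) = b}$ (with $b$ the second coordinate of $c$) is homogeneous for $f$, so $f \res H$ is semi-trivial and an application of $\PS$ yields an infinite homogeneous set for $f$. If $c = \seq{0,*}$, then $H$ in particular avoids the color $\seq{0,0,1}$, which says precisely that $f(x,z) = f(y,z) = 0$ implies $f(x,y) = 0$ for all $x<y<z$ in $H$; hence $f \res H$ is semi-hereditary, with exceptional color $1$, and $\CHO$ yields an infinite homogeneous set for $f$. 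The case $c = \seq{1,*}$ is symmetric, via the color $\seq{1,1,0}$. Since $f$ is recoverable from $g$ (the first coordinate of $g(w,x,y)$ equals $f(x,y)$ for any $w<x$, so discarding the least element of $H$ removes the only ambiguity), the construction is uniform and yields $\RT^2_2 \sred [\PS,\CHO] \after \TS^3_6$. For the reverse-mathematics statement one works in $\RCA + \TS^3_6 + \CHO$, using that $\CHO$ implies $\PS$ over $\RCA$ (as recorded before Proposition~\ref{P:TS38CAC>RT22}) to make both principles available for the case analysis.

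The step requiring care --- and the reason this method stops at $6$ colors rather than fewer --- is the choice of \emph{how} to merge. One must keep $\seq{0,*}$ and $\seq{1,*}$ as distinct colors rather than collapsing all four non-pure values into a single one: a set $H$ avoiding such a combined color would only be guaranteed to satisfy $f(x,z)=f(x,y)$ for all $x<y<z$ in $H$, i.e.\ to be min-homogeneous for $f$, and a min-homogeneous coloring is not an instance of $\CHO$ (dealing with it would instead require the pigeonhole principle $\RT^1_2$). Distributing each $\CAC$-type color into a distinct $\CHO$-type color of the same first bit is exactly what keeps every avoided-color case within reach of $\PS$ or $\CHO$; once the merging is set up this way, checking the six cases is routine and essentially identical to the corresponding cases of Propositions~\ref{P:TS38CAC>RT22} and~\ref{P:TS37ADS>RT22}.
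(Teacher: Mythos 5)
Your main argument is correct and follows the same template as the paper's proof: start from the $8$-coloring $\seq{f(y,z),f(x,z),f(x,y)}$ of Proposition~\ref{P:TS38CAC>RT22}, merge each of the two ``$\CAC$-type'' colors with a ``$\CHO$-type'' color to get down to $6$ colors, then dispatch each avoided color to $\PS$ or $\CHO$. The concrete merging is different, though: you pair $\seq{0,1,0}$ with $\seq{0,0,1}$ and $\seq{1,0,1}$ with $\seq{1,1,0}$ (same first coordinate), whereas the paper pairs $\seq{0,1,0}$ with $\seq{1,1,0}$ and $\seq{1,0,1}$ with $\seq{0,0,1}$ (same last two coordinates). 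Both pairings work, and for exactly the reason you give: avoiding the merged color in particular means avoiding its $\CHO$-type member, which alone yields hereditarity in one color, hence semi-hereditarity.

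One side remark, however, is wrong. You claim that collapsing all four non-pure values into a single color would fail because ``a min-homogeneous coloring is not an instance of $\CHO$.'' In fact a $2$-coloring that is min-homogeneous on $H$ \emph{is} semi-hereditary there: min-homogeneity gives $f(x,y)=f(x,z)$, so if $f(x,z)=f(y,z)=i$ then $f(x,y)=i$, i.e.\ the hereditary condition holds in \emph{both} colors. So your stated obstruction to a $5$-color version doesn't hold up; the reason for stopping at $6$ here is simply to match the proposition as given, not a combinatorial barrier of the kind you describe. Since this remark is peripheral to the actual proof of the $6$-color statement, it does not affect correctness, but you should either fix it or drop it.
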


\begin{proof}
  The construction is basically the same as that of Proposition~\ref{P:TS38CAC>RT22}, with the exception that the pairs of colors $\seq{0,1,0}$, $\seq{1,1,0}$ are merged into one, and similarly for the dual pair $\seq{1,0,1}$, $\seq{0,0,1}$.
  Then $\CHO$ suffices to give an infinite homogeneous set for $f$ these two cases.
\end{proof}

\subsection{Sequential forms}\label{Sec:sequential_forms}

From $\TS^n_k$ we can, in accordance with Statement~\ref{S:sequential_form}, form the sequential version $\Seq\TS^n_k$.  Surprisingly, the sequential forms of these weaker thin set principles can still code $\emptyset^{(n)}$ just as $\Seq\RT^n_k$ did in Lemma~\ref{L:jump_coding}.  We need the following theorem of Kummer.

\begin{theorem}[Kummer~\cite{Kummer-1992}, p.~678]\label{T:Kummer}
Fix $k \geq 2$, and let $A,B \subseteq \omega$ be arbitrary. Suppose $g$ is a computable function such that, for all $\tuple{x} \in [\omega]^{k-1}$, the $B$-c.e.\ set $W^B_{g(\tuple{x})}$ is a proper subset of $k$ and contains $| \tuple{x} \cap A|$. Then $A$ is computable in $B$.
\end{theorem}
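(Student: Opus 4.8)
The plan is to decide membership in $A$ by running a finite search that terminates because the hypothesis forces every $B$-c.e.\ set $W^B_{g(\tuple{x})}$ to stabilize onto a proper subset of $k$ once it has enumerated $|\tuple{x}\cap A|$. The key combinatorial device is to consider, for a fixed large finite set $F$, all $(k-1)$-element subsets $\tuple{x}\subseteq F$ simultaneously, and to count how many of them can have $|\tuple{x}\cap A|=j$ for each $j<k$. First I would fix some threshold $m$ (to be determined) and, using $B$ as an oracle, enumerate the sets $W^B_{g(\tuple{x})}$ for all $\tuple{x}\in[\{0,1,\dots,m-1\}]^{k-1}$ until each of them has enumerated at least $k-1$ many elements of $k$; since each is a proper subset of $k$, this means each has enumerated exactly $k-1$ elements, i.e.\ we have learned the single element $c(\tuple{x})<k$ that is \emph{missing} from $W^B_{g(\tuple{x})}$. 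This search halts because the hypothesis guarantees each $W^B_{g(\tuple{x})}$ really does have exactly $k-1$ elements. Crucially, we know $|\tuple{x}\cap A|\neq c(\tuple{x})$ for every $\tuple{x}$.

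Next I would show that this information pins down $A\cap\{0,\dots,m-1\}$ exactly, provided $m$ is chosen large enough relative to $|A\cap\{0,\dots,m-1\}|$ — more precisely, I would argue by a counting/pigeonhole argument that if two distinct subsets $A_1,A_2$ of $\{0,\dots,m-1\}$ were both consistent with all the constraints ``$|\tuple{x}\cap A_i|\neq c(\tuple{x})$'', then picking $\tuple{x}$ to straddle a point of disagreement between $A_1$ and $A_2$ yields a contradiction; one arranges $\tuple{x}$ so that $|\tuple{x}\cap A_1|$ and $|\tuple{x}\cap A_2|$ differ by exactly $1$ and together already occupy two of the $k-1$ allowed values, which is fine, so the real work is to iterate this over enough choices of $\tuple{x}$ to squeeze $A$ down to a unique possibility. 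The honest way to do this is Kummer's original cardinality argument: the function $\tuple{x}\mapsto c(\tuple{x})$ is a $B$-computable ``$(k-1)$-enumeration'' of the cardinality function $\tuple{x}\mapsto|\tuple{x}\cap A|$ on $[\omega]^{k-1}$, i.e.\ it gives $B$-computably, for each $\tuple{x}$, a size-$(k-1)$ set of candidates containing the true value; and one shows that any set with a $B$-computable $(k-1)$-enumeration of its cardinality function on $(k-1)$-tuples is $B$-computable, by induction on $k$.

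The induction step I have in mind: from a $B$-computable $(k-1)$-enumeration of $|\cdot\cap A|$ on $[\omega]^{k-1}$, one extracts a $B$-computable $(k-2)$-enumeration of $|\cdot\cap A|$ on $[\omega]^{k-2}$ (by looking at tuples $\tuple{y}\in[\omega]^{k-2}$ together with one extra variable ranging over a large block, and using that $A$ does contain \emph{some} fixed finite number of elements in that block to eliminate one candidate value), and then one invokes the case $k-1$ inductively; the base case, a $1$-enumeration, means $|\tuple{x}\cap A|$ for singletons $\tuple{x}=\{x\}$ — i.e.\ $A(x)$ — is outright $B$-computable. I expect the main obstacle to be making the ``eliminate one candidate'' step in the inductive reduction precise: one must fix in advance the number $|A\cap[0,N)|$ for a suitable block, which is a single bit of non-uniform information, but since we are allowed to use $B$ and only need \emph{existence} of the reduction (the statement is ``$A\leq_T B$'', not a uniform procedure), this finite non-uniformity is harmless. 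I would be careful to note explicitly that the argument is non-uniform in exactly this way, since that is precisely the feature that makes the theorem usable in the sequential setting of the following section — one codes one non-uniform bit per column.
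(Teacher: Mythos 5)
The paper does not prove this theorem at all; it is cited directly from Kummer~\cite{Kummer-1992} with a page reference and used as a black box in Corollary~\ref{C:coding_into_seqTSnk}. So there is no ``paper's proof'' to compare against, and your argument has to stand entirely on its own. As a proof it has two genuine gaps.

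First, the opening search does not terminate. You propose to enumerate $W^B_{g(\tuple{x})}$ with oracle $B$ ``until each has enumerated at least $k-1$ many elements of $k$,'' asserting that this halts because the hypothesis forces $W^B_{g(\tuple{x})}$ to have exactly $k-1$ elements. But the hypothesis only says $W^B_{g(\tuple{x})}$ is a \emph{proper} subset of $k$; it could perfectly well have $1$, or $2$, or any number up to $k-1$ of elements. So you cannot $B$-computably extract the ``missing color'' $c(\tuple{x})$, and more fundamentally you cannot $B$-computably produce a $(k-1)$-element superset of $W^B_{g(\tuple{x})}$ that is guaranteed to contain $|\tuple{x}\cap A|$ after a finite search; you only have a $B$-c.e.\ approximation. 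This is not a cosmetic issue: the difference between a computable $(k-1)$-enumeration and a c.e.\ one is exactly what makes Kummer's theorem hard.

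Second, the inductive step --- passing from a $(k-1)$-enumeration of the cardinality function on $[\omega]^{k-1}$ to a $(k-2)$-enumeration on $[\omega]^{k-2}$ --- is exactly the crux of the theorem, and your sketch does not supply it. Knowing one non-uniform bit such as $|A\cap[0,N)|$ for some block does not let you strike a candidate from the lists $W^B_{g(\tuple{y}\cup\{x\})}$ as $x$ ranges over the block, because different $x$'s contribute different (and unknown) values of $A(x)$, and the lists are only c.e. The historical record is instructive here: Owings established the $k=2$ case using Ramsey's theorem, the general case resisted this kind of induction for some years, and Kummer's eventual proof is a genuinely different, Ramsey-/tree-theoretic argument for all $k$ at once rather than an elimination induction. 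If the ``eliminate one candidate'' step you gesture at worked, the problem would not have been open. You correctly identify this as ``the main obstacle,'' but flagging the obstacle is not the same as overcoming it; as written, the plan does not constitute a proof of Kummer's theorem.
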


\begin{corollary}\label{C:coding_into_seqTSnk}
For all $n \geq 1$ and all $k \geq 2$, there is a computable instance of $\Seq\TS^n_k$, every solution to which computes $\emptyset^{(n)}$.
\end{corollary}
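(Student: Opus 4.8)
The plan is to invoke Kummer's theorem (Theorem~\ref{T:Kummer}) with $A = \emptyset^{(n)}$ and with $B$ equal to the given solution sequence itself. Following the proof of Lemma~\ref{L:jump_coding}, I would fix a computable predicate $\varphi$ with
\[
\emptyset^{(n)} = \set{v \in \omega : (\exists a_0)(\forall a_1) \cdots (Q a_{n-1})\, \varphi(v,a_0,\dots,a_{n-1})},
\]
and for each $v \in \omega$ let $\chi_v \colon [\omega]^n \to \set{0,1}$ be the computable coloring with $\chi_v(\tuple y) = 1$ precisely when $(\exists a_0 < y_0)(\forall a_1 < y_1) \cdots (Q a_{n-1} < y_{n-1})\, \varphi(v,\tuple a)$, where $\tuple y = \seq{y_0 < \cdots < y_{n-1}}$. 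For each $(k-1)$-element set $\tuple x = \set{v_0 < \cdots < v_{k-2}} \in [\omega]^{k-1}$, define a computable coloring $f_{\tuple x} \colon [\omega]^n \to k$ by $f_{\tuple x}(\tuple y) = \sum_{i<k-1} \chi_{v_i}(\tuple y)$, which indeed takes values below $k$. Identifying $[\omega]^{k-1}$ with $\omega$, the sequence $\seq{f_{\tuple x} : \tuple x \in [\omega]^{k-1}}$ is then a computable instance of $\Seq\TS^n_k$; note that each $f_{\tuple x}$ does have an infinite thin set since $\TS^n_k$ is true. A solution to this instance is a sequence $B = \seq{\seq{H_{\tuple x}, c_{\tuple x}} : \tuple x \in [\omega]^{k-1}}$ in which $H_{\tuple x}$ is an infinite thin set for $f_{\tuple x}$ omitting color $c_{\tuple x} < k$.

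The crux is to show that for every $\tuple x \in [\omega]^{k-1}$ we have $c_{\tuple x} \neq m_{\tuple x}$, where $m_{\tuple x} := \lvert \tuple x \cap \emptyset^{(n)}\rvert < k$; equivalently, that the color $m_{\tuple x}$ is realized by $f_{\tuple x}$ on $[H_{\tuple x}]^n$. For this I would produce a tuple $\tuple z = \seq{z_0 < \cdots < z_{n-1}} \in [H_{\tuple x}]^n$ with $\chi_{v_i}(\tuple z) = \emptyset^{(n)}(v_i)$ for \emph{every} $i < k-1$ simultaneously, whence $f_{\tuple x}(\tuple z) = \sum_{i} \emptyset^{(n)}(v_i) = m_{\tuple x}$. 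The tuple is chosen coordinate by coordinate out of $H_{\tuple x}$ exactly in the style of Lemma~\ref{L:jump_coding}: fixing Skolem functions witnessing membership of each $v_i \in \emptyset^{(n)}$ and Skolem functions witnessing non-membership of each $v_i \notin \emptyset^{(n)}$, at stage $j$ one takes $z_j \in H_{\tuple x}$ with $z_j > z_{j-1}$ and with $z_j$ larger than every relevant Skolem value applied to arguments bounded by the previously chosen $z_0,\dots,z_{j-1}$. The step I expect to be the main obstacle is precisely this simultaneity: a membership-type $v_i$ asks that the even coordinates of $\tuple z$ be large, while a non-membership-type $v_i$ asks that the odd coordinates be large, and these demands look conflicting. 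The resolution is that it is harmless to make \emph{all} coordinates large relative to the preceding ones — for a membership-type $v_i$, enlarging an odd coordinate is absorbed by choosing the next even coordinate large enough relative to it, and dually for a non-membership-type $v_i$ — so that a single choice of $\tuple z$ works for all $i$ at once. Since $H_{\tuple x}$ is infinite and each constraint is a finite bound, such $\tuple z$ exists. As in Lemma~\ref{L:jump_coding}, the cases $n$ even and $n$ odd are handled analogously, differing only in the parity of the leading quantifier.

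With this in hand I would finish via Kummer's theorem. Define a computable function $g$ so that $g(\tuple x)$ is an index for the $B$-c.e.\ procedure which reads $c_{\tuple x}$ off the oracle $B$ and enumerates $\set{0,1,\dots,k-1} \setminus \set{c_{\tuple x}}$. Then for every $\tuple x \in [\omega]^{k-1}$ the set $W^B_{g(\tuple x)} = \set{0,\dots,k-1} \setminus \set{c_{\tuple x}}$ is a proper subset of $k$ and, by the claim just established, contains $m_{\tuple x} = \lvert \tuple x \cap \emptyset^{(n)}\rvert$. Theorem~\ref{T:Kummer}, applied with $A = \emptyset^{(n)}$, therefore gives $\emptyset^{(n)} \leq_T B$; that is, every solution to the instance $\seq{f_{\tuple x} : \tuple x \in [\omega]^{k-1}}$ computes $\emptyset^{(n)}$, as desired.
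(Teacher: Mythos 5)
Your proposal is correct and follows essentially the same strategy as the paper's proof: apply Kummer's theorem with $A = \emptyset^{(n)}$ and $B$ the solution sequence, using a sequence of ``counting'' colorings $f_{\tuple{x}}(\tuple{y})$ that record how many of the indices $v_i \in \tuple{x}$ appear to belong to $\emptyset^{(n)}$ at the finite stage $\tuple{y}$, and then showing the true count $\lvert \tuple{x} \cap \emptyset^{(n)}\rvert$ cannot be the omitted color.

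The one genuine point of divergence is the characterization of $\emptyset^{(n)}$. You use the $\Sigma^0_n$ quantifier prefix with bounded quantifiers, exactly as in Lemma~\ref{L:jump_coding}, which forces you to handle membership-type and non-membership-type indices $v_i$ with Skolem functions of opposite parity; you then have to argue, correctly, that the resulting largeness demands at even versus odd coordinates do not conflict because slack at any coordinate is harmless. The paper instead writes $\emptyset^{(n)} = \{i : \lim_{y_0} \cdots \lim_{y_{n-1}} h(i,y_0,\dots,y_{n-1}) = 1\}$ for a computable $\{0,1\}$-valued $h$, and sets $f_{\tuple{x}}(\tuple{y}) = \lvert\{i \in \tuple{x} : h(i,\tuple{y}) = 1\}\rvert$. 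With this form, the Skolemized statement $(\exists s_0)(\forall y_0 > s_0)\cdots(\exists s_{n-1})(\forall y_{n-1} > s_{n-1})[h(i,\tuple{y}) = \emptyset^{(n)}(i)]$ has the \emph{same} quantifier pattern for every $i$, regardless of membership status, so the simultaneity issue you wrestled with never arises --- one simply takes the maximum Skolem bound over all $i \in \tuple{x}$ at each level. Your argument is sound, but the paper's choice of characterization is the cleaner route precisely because it makes the uniformity automatic. A second, minor difference: you read the omitted color $c_{\tuple{x}}$ directly off the solution and set $W^B_{g(\tuple{x})} = k \setminus \{c_{\tuple{x}}\}$, while the paper enumerates $W^{\vec{H}}_{g(\tuple{x})}$ as the set of colors actually \emph{realized} by $f_{\tuple{x}}$ on $[H_{\tuple{x}}]^n$; both satisfy Kummer's hypotheses, though the latter works even if the solution did not explicitly carry the omitted-color data.
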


\begin{proof}
The proof is somewhat similar to that of Lemma~\ref{L:jump_coding}, but our argument here is slightly more delicate on account of needing to fit the rather unique conditions of Kummer's theorem. We define a computable sequence $\seq{f_{\tuple{x}} : \tuple{x} \in [\omega]^{k-1}}$ of $k$-colorings of $[\omega]^n$ to serve as the desired instance of $\Seq\TS^n_k$. Let $h$ be a $\{0,1\}$-valued computable function such that
\[
\emptyset^{(n)} = \{i \in \omega : \lim_{y_0} \cdots \lim_{y_{n-1}} h(i,y_0,\ldots,y_{n-1}) = 1\},
\]
and for each $\tuple{x} \in [\omega]^{k-1}$ set
\[
f_{\tuple{x}}(\tuple{y}) = |\{i \in \tuple{x} : h(i,y_0,\ldots,y_{n-1}) = 1\}|
\]
for all $\tuple{y} = \seq{y_0,\ldots,y_{n-1}} \in [\omega]^n$.

Suppose we are given $\vec{H} = \seq{H_{\tuple{x}} : \tuple{x} \in [\omega]^{k-1}}$ such that each $H_{\tuple{x}}$ is an infinite thin set for $f_{\tuple{x}}$. That is, $\vec{H}$ is a solution to the instance $\seq{f_{\tuple{x}} : \tuple{x} \in [\omega]^{k-1}}$. Let $g$ be a computable function such that for all $\tuple{x} \in [\omega]^{k-1}$,
\[
W^{\vec{H}}_{g(\tuple{x})} = \{ c < k : (\exists \tuple{y} \in [H_{\tuple{x}}]^n)[f_{\tuple{x}}(\tuple{y}) = c] \}.
\]
Since it is thin, $H_{\tuple{x}}$ necessarily avoids some $c < k$, so $W^{\vec{H}}_{g(\tuple{x})}$ is a proper subset of $k$. We claim that $|\tuple{x} \cap \emptyset^{(n)}| \in W^{\vec{H}}_{g(\tuple{x})}$, whence it will follow by Theorem~\ref{T:Kummer} that $\emptyset^{(n)} \leq_T \vec{H}$, as desired.

To prove the claim, fix $\tuple{x} \in [\omega]^{k-1}$. For each $i \in \tuple{x}$, we have that
\[
(\exists s_0)(\forall y_0 > s_0) \cdots (\exists s_{n-1})(\forall y_{n-1} > s_{n-1})~[h(i,y_0,\ldots,y_{n-1}) = \emptyset^{(n)}(i)]
\]
by definition of the limit. Let $w_0,\ldots,w_{n-1}$ be Skolem functions for this definition, so that for each $i$,
\[
(\forall y_0 > w_0(i)) \cdots (\forall y_{n-1} > w_{n-1}(i,y_0,\ldots,y_{n-2}))~[h(i,y_0,\ldots,y_{n-1}) = \emptyset^{(n)}(i)].
\]
We define a sequence $s_0 < y_0 < s_1 < y_1 \cdots < s_{n-1} < y_{n-1}$ with each $s_j \in \omega$ and each $y_j \in H_{\tuple{x}}$, as follows. Let $s_0 = \max_{i \in \tuple{x}} \{w_0(i)\}$, and suppose $s_j$ has been defined for some $j < n$. Let $y_j$ be the least element of $H_{\tuple{x}}$ greater than $s_j$, and if $j < n-1$, let $s_{j+1} = \max_{i \in \tuple{x}}\{ w_{j+1}(i,z_0,\ldots,z_j) : (\forall k \leq j)[z_k \leq y_k]\}$.

By construction, we have that $h(i,y_0,\ldots,y_{n-1}) = \emptyset^{(n)}(i)$ for all $i \in \tuple{x}$. Hence, by definition, $f_{\tuple{x}}(y_0,\ldots,y_{n-1}) = |\tuple{x} \cap {\emptyset^{(n)}}|$. But as the $y_j$ were all chosen from $H_{\tuple{x}}$, this means that $|\tuple{x} \cap \emptyset^{(n)}|$ is not a color omitted by $H_{\tuple{x}}$. This is what was to be shown.
\end{proof}

\noindent
Unfortunately, the proof of Kummer's theorem is not uniform and hence Corollary~\ref{C:coding_into_seqTSnk} does not lead to a Weihrauch reduction.

Since each $\TS^n_k$ is clearly total and has finite tolerance, we may now apply the Squashing Theorem to obtain the following consequence.

\begin{corollary}
For all $n \geq 1$ and $j,k \geq 2$, we have $\seq{\TS^n_k,\TS^n_j} \nred \TS^n_j$.
\end{corollary}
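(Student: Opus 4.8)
The plan is to mimic the proof of Lemma~\ref{L:cant_add_coloring}, with Ramsey's theorem replaced by the Thin Set Theorem and with Corollary~\ref{C:coding_into_seqTSnk} playing the role of Lemma~\ref{L:jump_coding}. Suppose toward a contradiction that $\seq{\TS^n_k,\TS^n_j} \red \TS^n_j$. Since $\TS^n_k$ and $\TS^n_j$ are both total and $\TS^n_j$ has finite tolerance (as noted just above), the Squashing Theorem~\ref{T:squashing_theorem}(2), applied with $\mathsf{Q} = \TS^n_k$ and $\mathsf{P} = \TS^n_j$, gives $\Seq\TS^n_k \red \TS^n_j$. Fix Turing functionals $\Phi$ and $\Psi$ witnessing this reduction.

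The next ingredient is an upper bound on the complexity of solutions to computable instances of $\TS^n_j$. By Proposition~\ref{P:TS_trivial_implication}(1) we have $\TS^n_j \sred \TS^n_2 = \RT^n_2$, so combining this with Theorem~5.6 of Jockusch~\cite{Jockusch-1972b} (which applies also when $n = 1$), every computable instance $g$ of $\TS^n_j$ has a solution $T$ with $T' \leq_T \emptyset^{(n)}$: namely, pass $g$ to the corresponding computable instance $\hat g$ of $\RT^n_2$, choose a set $H$ homogeneous for $\hat g$ with $H' \leq_T \emptyset^{(n)}$, and pull $H$ back to a solution $T$ of $g$ with $T \leq_T H$ (using that $g$ and $\hat g$ are computable).

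Now let $\vec f = \seq{f_i : i \in \omega}$ be the computable instance of $\Seq\TS^n_k$ provided by Corollary~\ref{C:coding_into_seqTSnk} (after re-indexing $[\omega]^{k-1}$ by $\omega$), so that every solution to $\vec f$ computes $\emptyset^{(n)}$. Then $g = \Phi(\vec f)$ is a computable instance of $\TS^n_j$, so by the previous paragraph there are $T$ and $H$ with $T$ a solution to $g$, $T \leq_T H$, and $H' \leq_T \emptyset^{(n)}$. Applying $\Psi$ produces a solution $\vec S = \Psi(\vec f \oplus T)$ to $\vec f$ with $\vec S \leq_T \vec f \oplus T \leq_T H$, since $\vec f$ is computable. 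But $\vec S$ computes $\emptyset^{(n)}$, whence $\emptyset^{(n)} \leq_T H$ and therefore $\emptyset^{(n+1)} = (\emptyset^{(n)})' \leq_T H' \leq_T \emptyset^{(n)}$, a contradiction.

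All of the genuine work here is carried out in the earlier results --- the Squashing Theorem collapses the $\omega$ applications of $\TS^n_k$ to a single instance of $\TS^n_j$, Jockusch's theorem supplies the upper bound, and Corollary~\ref{C:coding_into_seqTSnk} (via Kummer's theorem) supplies the coding --- so the only subtlety to watch is that one must track jumps rather than mere Turing degrees. The Kummer coding only forces $\emptyset^{(n)} \leq_T \vec S$, which by itself is consistent with $\vec S \leq_T \emptyset^{(n)}$; the contradiction arises precisely because the Jockusch bound controls $H'$, so that relativizing the jump operator to $\emptyset^{(n)} \leq_T H$ produces something strictly above $\emptyset^{(n)}$.
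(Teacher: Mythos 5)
Your proof is correct and follows the paper's own argument essentially verbatim: both invoke the Squashing Theorem with $\mathsf{Q}=\TS^n_k$, $\mathsf{P}=\TS^n_j$ to reduce to showing $\Seq\TS^n_k \nred \TS^n_j$, then combine the Jockusch $\emptyset^{(n)}$-bound on jumps of solutions to computable $\TS^n_j$-instances (via $\TS^n_j \sred \RT^n_2$) with the Kummer coding of $\emptyset^{(n)}$ into $\Seq\TS^n_k$ from Corollary~\ref{C:coding_into_seqTSnk}. The remark about tracking jumps rather than degrees is exactly the subtlety the paper relies on.
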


\begin{proof}
By Proposition~\ref{P:TS_trivial_implication}, $\TS^n_j \sred \TS^n_2 \sred \RT^n_2$. Hence, as described in the proof of Lemma~\ref{L:cant_add_coloring}, every computable instance of $\TS^n_k$ has a solution $H$ with $H' \leq_T \emptyset^{(n)}$.  As there is a computable instance of $\Seq\TS^n_k$ all of whose solutions compute $\emptyset^{(n)}$ by Corollary \ref{C:coding_into_seqTSnk}, it follows that $\Seq\TS^n_k \nred \TS^n_j$. The result follows by applying Theorem~\ref{T:squashing_theorem}.
\end{proof}

We have shown that for each $k \geq 2$, we can code $\emptyset^{(n)}$ into a computable instance of $\Seq\TS^n_k$.  However, $\emptyset^{(n)}$ is not able to solve all computable instances.  We first prove this in the case when $n = 1$.

\begin{theorem}\label{T:diagonalize_TS1k}
For each $k \geq 2$, there exists a computable instance of $\Seq\TS^1_k$ with no $\emptyset'$-computable solution.
\end{theorem}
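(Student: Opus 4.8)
The plan is to diagonalize directly against the $\emptyset'$-computable candidate solutions. Every function of Turing degree at most $\emptyset'$ is of the form $\Phi_e^{\emptyset'}$ for some $e$ with $\Phi_e^{\emptyset'}$ total, so it suffices to build a single computable sequence $\seq{f_e : e \in \omega}$ of colorings $f_e \colon \N \to k$ with the property that, for each $e$, the $e$th component of the sequence coded by $\Phi_e^{\emptyset'}$ (if it is defined) fails to be a thin set with its declared omitted color for $f_e$. Fix a computable approximation $\seq{\emptyset'_s : s \in \omega}$ to $\emptyset'$. The construction of $f_e$ is as follows: at stage $s$, run $\Phi_e$ with oracle $\emptyset'_s$ for $s$ steps and, using the fixed coding of solution sequences $\seq{(S_i,c_i) : i \in \omega}$ into reals, decode the claimed omitted color of the $e$th component; call this value $d_s$, setting $d_s = 0$ by default if the relevant part of the output has not yet converged or does not code a color below $k$. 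Declare $f_e(s) = d_s$. This is uniformly computable in $e$ and $s$, and each $d_s < k$, so $\seq{f_e : e \in \omega}$ is a computable instance of $\Seq\TS^1_k$.

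For the verification, suppose toward a contradiction that some $\emptyset'$-computable $\seq{(S_i,c_i) : i \in \omega}$ is a solution, and write it as $\Phi_e^{\emptyset'}$. Being a genuine solution sequence, it is total; in particular the computation decoding the $e$th omitted color $c_e$ from $\Phi_e^{\emptyset'}$ converges, say with finite use $u$ and within some finite number of steps. For every sufficiently large $s$ we have $\emptyset'_s \res (u+1) = \emptyset' \res (u+1)$ and $s$ exceeds that running time, whence $d_s = c_e$. Thus $f_e(s) = c_e$ for all sufficiently large $s$: the coloring $f_e$ is eventually constant with value $c_e$. Consequently $\set{x : f_e(x) \neq c_e}$ is finite, and since $S_e$ must satisfy $f_e(x) \neq c_e$ for all $x \in S_e$, the set $S_e$ is finite. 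This contradicts the requirement that $(S_e, c_e)$ be a solution to the instance $f_e$ of $\TS^1_k$, which demands that $S_e$ be infinite. Hence no $\emptyset'$-computable solution to $\seq{f_e : e \in \omega}$ exists.

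The one step requiring care, and the crux of the argument, is the convergence $d_s \to c_e$: this is the standard fact that a halting $\emptyset'$-oracle computation is eventually simulated correctly once the approximation $\emptyset'_s$ has stabilized on its finitely many queried positions, and it is important here that $c_e$ is a single natural number rather than an infinite object, so that once its value is determined there is nothing further to track. I note that no appeal to the recursion theorem is needed, since the definition of $f_e$ refers only to the fixed functional $\Phi_e$ and not to its own index, and that the number of colors plays no role beyond $k \geq 2$ (so that colors other than $c_e$ exist and $\TS^1_k$ is meaningfully defined) — indeed each $f_e$ ends up using only the single color $c_e$ cofinitely.
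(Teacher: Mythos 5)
Your proof is correct, but it takes a genuinely different route from the paper's, and it is worth pinpointing exactly where the two diverge.

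You lean essentially on the paper's convention (stated just after the definition of $\TS^n_k$) that a solution to an instance of $\TS^n_k$ carries the \emph{declared} omitted color. Your diagonalization is then almost trivial: read off the color $c_e$ that the $e$th candidate declares for its $e$th component, and make $f_e$ cofinitely equal to $c_e$. Nothing further is needed; as you note, $f_e$ is eventually constant and so actually has plenty of infinite thin sets---just none avoiding the particular color the candidate committed to. The paper's proof is a more elaborate combinatorial construction: it fixes a universal $\Delta^0_2$ approximation $g$, monitors the approximation $A_{e,s}$ to the $e$th column of the $e$th $\Delta^0_2$ object, tracks which colors have appeared on that approximation, and at each stage either introduces a missing color (Case 1) or, once all $k$ colors have appeared, repeats the color whose first occurrence on the approximation was latest (Case 2). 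The verification then shows that if the approximated set $D_e$ is infinite, it must pick up \emph{all} $k$ colors and hence cannot be thin under any declaration.

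The payoff of the paper's heavier machinery is a strictly stronger conclusion: there is no $\emptyset'$-computable sequence $\seq{D_i}$ of sets, \emph{colors unspecified}, such that each $D_i$ is an infinite thin set for $f_i$. Your construction does not give this; an $\emptyset'$-computable (indeed computable) sequence of cofinite sets solves your $\seq{f_e}$ perfectly well once the colors are relabeled. Both arguments do prove the theorem as literally stated under the paper's multi-valued-function convention, and both supply an adequate base case for the induction in Corollary~\ref{C:diagonalize_TSnk} (the color transfers along the limit, so the inductive step goes through with the annotated convention as well). Your observation that the recursion theorem is unnecessary is also correct and matches the paper, where $f_e$ is likewise defined directly from the fixed approximation without self-reference. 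In short: your proof is a clean, minimal argument for the stated theorem, while the paper's proof trades simplicity for robustness by defeating the candidate thin sets themselves rather than their declared colors.
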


\begin{proof}
Using the Limit Lemma, we may fix a computable $g \colon \omega^4 \to 2$ such that for every $\Delta^0_2$ set $D \subseteq \omega^2$, there exists an $e \in \omega$ such that:
\begin{itemize}
\item for all $\seq{i,a} \in D$, we have $\lim_s g(e,i,a,s) = 1$;
\item for all $\seq{i,a} \notin D$, we have $\lim_s g(e,i,a,s) = 0$.
\end{itemize}
Concretely, we may let
\[
g(e,i,a,s) =
\begin{cases}
1 & \text{ if } \Phi_{e,s}^{K_s}(i,a)\downarrow \ = 1, \\
0 & \text{ otherwise}.
\end{cases}
\]
We now define our computable instance $\seq{f_i : i \in \omega}$ of $\Seq\TS^1_k$.  We build our sequence so that each $f_i$ is defined independently of the others in such a way that $f_e$ defeats the $e$th potential $\Delta_2^0$ solution $D = \seq{D_i : i \in \omega}$ by ensuring that $D_e$ is not an infinite thin set for $f_e$.  

\medskip
\noindent \emph{Construction.} For a given $i$, we define $f_i(s)$ recursively in stages based on $s$.  Fix $i \in \omega$, and suppose that we are at stage $s$ so that we have defined $f_i(t)$ for all $t < s$.  Let $A_{i,s}$ be the approximation to those elements in the $i${th} column of the $i${th} possible $\Delta_2^0$ set at stage $s$, i.e.,
\[
A_{i,s} = \{b \in \omega : b < s \text{ and } g(i,i,b,s) = 1\}
\]
Let $C_{i,s} = \{f_i(b) : b \in A_{i,s}\}$ be the set of colors used by the elements of this approximation.  We have two cases.

\medskip
\noindent \emph{Case 1.} Suppose that there exists $n < k$ such that $n \notin C_{i,s}$, i.e.,~suppose that some color is not used on the approximation.  We then define $f_i(s)$ to be the least $n < k$ such that $n \notin C_{i,s}$.

\medskip
\noindent \emph{Case 2.} Suppose that $C_{i,s} = \{0,1,\dots,k-1\}$.  For each $n < k$, let $b_{n,s} < s$ be least such that $f_i(b_{n,s}) = n$, i.e.,~$b_{n,s}$ is the first place where color $n$ occurs.  Fix $\ell < k$ such that $b_{\ell,s} = \max\{b_{n,s} : n < k\}$, i.e.,~pick the color whose first occurrence is as late as possible.  Define $f_i(s) = \ell$.  

\medskip
\noindent \emph{Verification.} We now verify that there is no $\Delta_2^0$ solution for $\seq{f_i : i \in \omega}$.  Suppose that $D = \seq{D_i : i \in \omega}$ is a $\Delta_2^0$ set.  Fix $e \in \omega$ such that:
\begin{itemize}
\item for all $\seq{i,b} \in D$, we have $\lim_s g(e,i,b,s) = 1$;
\item for all $\seq{i,b} \notin D$, we have $\lim_s g(e,i,b,s) = 0$.
\end{itemize}
In particular, we have the following:
\begin{itemize}
\item for all $b \in D_e$, we have $\lim_s g(e,e,b,s) = 1$;
\item for all $b \notin D_e$, we have $\lim_s g(e,e,b,s) = 0$.
\end{itemize}
If $D_e$ is finite, then $\seq{D_i : i \in \omega}$ is not a solution to $\seq{f_i : i \in \omega}$ by definition.  Assume then that $D_e$ is infinite.  Let $C_e = \{f_e(b) : b \in D_e\}$ be the set of colors that occur on $D_e$.  We claim that $C_e = \{0,1,\dots,k-1\}$.  Suppose not.  For each $n \in C_e$, let $b_n$ be the least element of $D_e$ such that $f_e(b_n) = n$.  Let $m = \max\{b_n : n \in C_e\}$.  Fix $t > m$ such that the approximation to each element of the $e${th} column below $m$ has settled down, i.e.,~such that:
\begin{itemize}
\item for all $b \in D_e$ with $b \leq m$, we have $g(e,e,b,s) = 1$ whenever $s \geq t$;
\item for all $b \notin D_e$ with $b \leq m$, we have $g(e,e,b,s) = 0$ whenever $s \geq t$.
\end{itemize}
Now take any $s \geq t$.  Notice that $A_{e,s} \cap \{0,1,\dots,m\} = D_e \cap \{0,1,\dots,m\}$, hence $C_e \subseteq C_{e,s}$ and $b_{n,s} = b_n$ for all $n \in C_e$.  Furthermore, if $\ell \notin C_e$ and $b_{\ell,s}$ is defined, then we must have $b_{\ell,s} > m$.  Now if $C_{e,s} \neq \{0,1,\dots,k-1\}$, then we enter Case 1 of the construction and define $f_e(s) \notin C_{e,s}$, so $f_e(s) \notin C_e$.  On the other hand, if $C_{e,s} = \{0,1,\dots,k-1\}$, then since $b_{n,s} = b_n \leq m$ for all $n \in C_e$ and $b_{n,s} > m$ for all $n \notin C_e$, it follows that the $\ell$ chosen in Case 2 of the construction must satisfy $\ell \notin C_e$, so $f_e(s) \notin C_e$.

We have therefore shown that $f_e(s) \notin C_e$ for all $s \geq t$.  Since $D_e$ is infinite, we may fix $b \in D_e$ with $b \geq t$.  We then have have $f_e(b) \notin C_e$, contradicting the definition of $C_e$.
\end{proof}

\begin{corollary}\label{C:diagonalize_TSnk}
For each $n \geq 1$ and $k \geq 2$, there exists a computable instance of $\Seq\TS^n_k$ with no $\emptyset^{(n)}$-computable solution.
\end{corollary}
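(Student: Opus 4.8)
The plan is to obtain the cases $n \geq 2$ from the case $n = 1$, which is Theorem~\ref{T:diagonalize_TS1k} itself, by pushing the construction up $n-1$ Turing jumps: an instance of $\Seq\TS^1_k$ taken relative to the oracle $\emptyset^{(n-1)}$ will be turned into an honestly computable instance of $\Seq\TS^n_k$, using the $n-1$ extra arguments of each $n$-ary coloring to simulate $n-1$ nested limits. So fix $n \geq 2$. The first step is to observe that the proof of Theorem~\ref{T:diagonalize_TS1k} relativizes without change: it uses only the Limit Lemma and a universal enumeration of $\Delta^0_2$ approximations, both of which hold relative to any oracle $X$ on replacing $\emptyset'$ and $K_s$ by $X'$ and its $X$-computable stagewise approximation. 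Applying the relativized statement with $X = \emptyset^{(n-1)}$ produces a sequence $\seq{f_i : i \in \omega}$ of colorings $f_i \colon \omega \to k$, uniformly computable from $\emptyset^{(n-1)}$, such that no $\emptyset^{(n)}$-computable sequence $\seq{S_i : i \in \omega}$ has each $S_i$ an infinite thin set for $f_i$.

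Since $\seq{f_i : i \in \omega}$ is uniformly $\emptyset^{(n-1)}$-computable, the iterated Limit Lemma supplies a single computable function $h$ with
\[
f_i(x) = \lim_{y_1}\lim_{y_2}\cdots\lim_{y_{n-1}} h(i,x,y_1,\ldots,y_{n-1})
\]
for all $i$ and $x$. I would then set $g_i(x,y_1,\ldots,y_{n-1}) = h(i,x,y_1,\ldots,y_{n-1})$ for all increasing tuples $x < y_1 < \cdots < y_{n-1}$, so that each $g_i \colon [\omega]^n \to k$; restricting to increasing tuples does not affect the displayed identity, since at each level the relevant limit variable still ranges over a cofinite set. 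Thus $\seq{g_i : i \in \omega}$ is a computable instance of $\Seq\TS^n_k$, and everything reduces to the following claim: if $S = \{s_0 < s_1 < \cdots\}$ is an infinite thin set for some $g_i$ with omitted color $c < k$, then $S$ is an infinite thin set for $f_i$, with the same omitted color $c$. Granting this, a $\emptyset^{(n)}$-computable solution to $\seq{g_i : i \in \omega}$ immediately provides a $\emptyset^{(n)}$-computable sequence of infinite thin sets for $\seq{f_i : i \in \omega}$, contradicting the choice of that sequence.

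To prove the claim I would propagate the ``avoids $c$'' property outward through the nested limits. We are given that $g_i(s_m, s_{j_1},\ldots,s_{j_{n-1}}) \neq c$ whenever $m < j_1 < \cdots < j_{n-1}$. Fix $s_m \in S$. For any increasing $s_{j_1} < \cdots < s_{j_{n-2}}$ in $S$ above $s_m$, the numbers $g_i(s_m, s_{j_1},\ldots,s_{j_{n-2}}, s_j)$ with $s_j \in S$ and $s_j > s_{j_{n-2}}$ form a subsequence of the sequence whose limit is $\lim_{y_{n-1}} h(i, s_m, s_{j_1},\ldots,s_{j_{n-2}}, y_{n-1})$; every term avoids $c$, so the limit avoids $c$. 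Hence after peeling off one limit, the resulting partial limit avoids $c$ on all increasing tuples from $S$ above $s_m$. Iterating this --- at the $t$-th step restricting the newly exposed limit variable $y_{n-t}$ to elements of $S$ and invoking the conclusion of step $t-1$ --- we reach $f_i(s_m) \neq c$ after $n-1$ steps.

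The step I expect to require the most care is exactly this last one: one must keep track that at every level the tuples under consideration remain strictly increasing and lie entirely in $S$, and that ``avoids $c$'' holds not merely for one tuple but uniformly over all admissible tuples from $S$ --- it is this uniformity that makes the next limit legitimate. The other ingredients, namely the relativization of Theorem~\ref{T:diagonalize_TS1k} and the uniform iterated Limit Lemma, are routine. No uniformity of the passage between instances is needed for the corollary itself, although in fact $\seq{f_i : i \in \omega} \mapsto \seq{g_i : i \in \omega}$ is uniform and solutions transfer back by the identity on the underlying sets.
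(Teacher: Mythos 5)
Your proof is correct and is essentially the same as the paper's: both relativize Theorem~\ref{T:diagonalize_TS1k}, use the Limit Lemma to trade the oracle for extra coloring arguments, and rely on the observation that if infinitely many terms of a convergent sequence (in a finite space) avoid a color $c$, so does the limit. The paper organizes this as an induction on $n$ that moves one jump at a time (from an $X'$-computable instance of $\Seq\TS^n_k$ to an $X$-computable instance of $\Seq\TS^{n+1}_k$), so the limit argument only ever needs to be made once per step, whereas you telescope the whole construction via the iterated Limit Lemma and prove the ``avoids $c$'' propagation through all $n-1$ nested limits at once; the two are equivalent, and your extra bookkeeping is exactly the content the paper's induction hides.
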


\begin{proof}
We prove the following stronger claim:  For each $n \geq 1$, $k \geq 2$, and $X \in 2^{\omega}$, there exists an $X$-computable instance of $\Seq\TS^n_k$ with no $X^{(n)}$-computable solution.  We fix $k$ and prove this result by induction on $n$.  The base case of $n = 1$ is given by the relativized version of Theorem~\ref{T:diagonalize_TS1k}.  Suppose that we know the result for a fixed $n \geq 1$.  Let $X \in 2^{\omega}$ be arbitrary.  By induction, we may fix an $X'$-computable instance $\seq{g_i : i \in \omega}$ of $\Seq\TS^n_k$ with no $X^{(n+1)}$-computable solution.  By the relativized Limit Lemma, we may fix an $X$-computable sequence $\seq{f_i : i \in \omega}$ such that $g_i(\tuple{x}) = \lim_s f_i(\tuple{x},s)$ for all $i$ and all $\tuple{x}$.  We may assume that $f_i \colon [\omega]^{n+1} \to k$ for each $i$, and hence that $\seq{f_i : i \in \omega}$ is an $X$-computable instance of $\Seq\TS^{n+1}_k$.  Now if $\seq{T_i : i \in \omega}$ is a solution to $\seq{f_i : i \in \omega}$, then each $T_i$ is an infinite thin set for $f_i$, so each $T_i$ is an infinite thin set for $g_i$, and hence $\seq{T_i : i \in \omega}$ is a solution to $\seq{g_i : i \in \omega}$.  Therefore, $\seq{f_i : i \in \omega}$ has no $X^{(n+1)}$-computable solution.  This completes the induction.
\end{proof}

\noindent
After relativization and translation into the language of Weihrauch reducibility, we obtain the following.

\begin{corollary}\label{C:TJ}
For each $n \geq 1$ and $k \geq 2$, $\Seq\TS^n_k \nred \mathsf{TJ}^n$.
\end{corollary}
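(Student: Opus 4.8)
The plan is to derive the non-reduction directly from Corollary~\ref{C:diagonalize_TSnk}, using the basic fact that every computable instance of $\mathsf{TJ}^n$ has a solution computable from $\emptyset^{(n)}$. Indeed, an instance of $\mathsf{TJ}^n$ is an arbitrary set $X$ whose solution is $X^{(n)}$ (as in the definition preceding Proposition~\ref{P:TS>ACA}), so the solution to a computable instance is Turing equivalent to $\emptyset^{(n)}$, and in particular is $\emptyset^{(n)}$-computable.

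Seeking a contradiction, I would assume $\Seq\TS^n_k \red \mathsf{TJ}^n$ and fix Turing functionals $\Phi,\Psi$ witnessing this. By Corollary~\ref{C:diagonalize_TSnk}, fix a computable instance $\vec{f} = \seq{f_i : i \in \omega}$ of $\Seq\TS^n_k$ with no $\emptyset^{(n)}$-computable solution. Then $\Phi(\vec{f})$ is an instance of $\mathsf{TJ}^n$ that is computable (since $\vec{f}$ is), so its solution $T$ satisfies $T \leq_T \emptyset^{(n)}$. By the definition of Weihrauch reducibility, $S = \Psi(\vec{f} \oplus T)$ is a solution to $\vec{f}$; but $S \leq_T \vec{f} \oplus T \leq_T \emptyset^{(n)}$ because $\vec{f}$ is computable. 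This contradicts the choice of $\vec{f}$.

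There is no real obstacle here. The only point needing a moment's care is confirming the behavior of $\mathsf{TJ}^n$ as a $\Pi^1_2$ principle, namely that the solution to an instance $X$ is exactly $X^{(n)}$, so that computable instances have $\emptyset^{(n)}$-computable solutions; this follows immediately from the definition of the iterated Turing jump. Everything else is the standard observation that, on a computable instance, a Weihrauch reduction to a problem $\mathsf{Q}$ cannot produce a solution of higher Turing degree than the solutions $\mathsf{Q}$ supplies on computable instances of itself.
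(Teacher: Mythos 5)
Your argument is correct and is exactly the translation the paper has in mind when it says, immediately after Corollary~\ref{C:diagonalize_TSnk}, that the statement follows ``after relativization and translation into the language of Weihrauch reducibility.'' You rightly observe that a single computable instance with no $\emptyset^{(n)}$-computable solution already refutes the reduction, since both $\Phi(\vec{f})$ and its $\mathsf{TJ}^n$-solution $\Phi(\vec{f})^{(n)} \leq_T \emptyset^{(n)}$ stay below $\emptyset^{(n)}$, forcing $\Psi(\vec{f} \oplus T) \leq_T \emptyset^{(n)}$.
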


\noindent
Here and henceforth, $\mathsf{TJ}^n$ denotes the multi-valued function that sends each set $X$ to its Turing jump $X^{(n)}$.
From the perspective of reverse mathematics, the corresponding $\Pi^1_2$-principles $\mathsf{TJ}^n$ for standard $n \geq 1$, are all equivalent to arithmetic comprehension and hence indistinguishable from each other.

\subsection{Infinitely Many Colors}

Although the principles $\Seq\TS^1_k$ for $k \in \omega$ appear to behave similarly with regards to diagonalizing and coding, the situation for $\Seq\TS^1_{\omega}$ is very different.  We first prove the following result that contrasts with Theorem~\ref{T:diagonalize_TS1k}.

\begin{proposition}
Every computable instance of $\Seq\TS^1_{\omega}$ has a $\emptyset'$-computable solution.
\end{proposition}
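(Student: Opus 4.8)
The plan is to unwind the definition and give a single $\emptyset'$-effective construction. An instance of $\Seq\TS^1_\omega$ is a computable sequence $\langle f_i : i \in \omega \rangle$ with each $f_i \colon \omega \to \omega$, and a solution assigns to each $i$ an infinite set $S_i$ together with a color $c_i$ that $f_i$ omits on $S_i$; I need such an assignment computable from $\emptyset'$ uniformly in $i$. It is worth recording first why the obvious strategy fails. For a fixed computable $f \colon \omega \to \omega$, if $\range(f)$ is infinite one can \emph{computably} thin out an increasing sequence $x_0 < x_1 < \cdots$ on which $f$ is injective and take $S = \{x_1, x_2, \dots\}$ with omitted color $f(x_0)$; if instead $\range(f) \neq \omega$ one can take $S = \omega$ with omitted color any $c \notin \range(f)$ (which $\emptyset'$ can find by a search that halts exactly when $\range(f) \neq \omega$). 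But ``$\range(f_i) = \omega$'' is $\Pi^0_2$ in $i$, so $\emptyset'$ cannot decide which case applies; for a single instance the two-case argument is rescued only by non-uniformity, which is unavailable for the sequential version.

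The fix is to fuse the two cases. Fix $i$ and write $f = f_i$. I will build a finite-or-infinite increasing sequence $x_0 < x_1 < \cdots$ with pairwise distinct $f$-values as follows. Given $x_0 < \cdots < x_{m-1}$, with $F_m = \{f(x_j) : j < m\}$ (so $|F_m| = m$), ask $\emptyset'$ the $\Sigma^0_1$ question whether some $x > x_{m-1}$ has $f(x) \notin F_m$. If the answer is yes, let $x_m$ be the least such witness, found by a terminating search, and continue. If the answer is no — which forces $m \geq 1$, since for $m = 0$ the question is trivially true — then $f(x) \in F_m$ for every $x > x_{m-1}$, so by pigeonhole on $\{0,1,\dots,m\}$ there is a least color $c_i \leq m$ with $c_i \notin F_m$, and $S_i = \{x : x > x_{m-1}\}$ is an infinite thin set for $f$ omitting $c_i$; stop. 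If the construction never stops, set $S_i = \{x_j : j \geq 1\}$ and $c_i = f(x_0)$, which is thin because $f(x_0) \neq f(x_j)$ for all $j \geq 1$.

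To complete the argument I would verify that $\langle S_i, c_i : i \in \omega \rangle$ is a genuine $\emptyset'$-computable solution. Each step of the construction consists of one $\emptyset'$-query (uniformly decidable since $\langle f_i \rangle$ is computable) followed by a halting computable search, so whether the construction has stopped by stage $s$, and the values $x_0, x_1, \dots$ it produces, are all $\emptyset'$-computable uniformly in $i$; hence membership of a given $y$ in $S_i$ is decided by running the construction until the fate of $y$ is settled. The main obstacle is precisely the non-uniformity of the naive dichotomy, and the point of the greedy construction is that it never needs to settle any $\Pi^0_2$ fact — in particular it never decides whether $\range(f_i)$ is infinite — so once the construction is set up the remaining verification is routine bookkeeping.
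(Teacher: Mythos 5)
Your construction has a gap in the claimed $\emptyset'$-computability of $S_i$. The set $S_i$ you output is defined by cases: if the construction runs forever, $S_i=\{x_j : j\geq 1\}$; if it stalls at stage $m$, $S_i=\{x : x>x_{m-1}\}$, which \emph{discards} the earlier $x_1,\dots,x_{m-1}$. This means that when $x_1$ is found, its membership in $S_i$ is not yet settled: it lies in $S_i$ iff the construction never stalls, and a stall at any $m\geq 2$ expels $x_1$. Since a stall occurs exactly when $\range(f_i)$ is finite (a stall at stage $m$ confines all values beyond $x_{m-1}$ to the finite set $F_m$; conversely $|\range(f_i)|=r$ caps the construction at $r$ steps since $|F_m|=m$), this is a $\Sigma^0_2$ event that $\emptyset'$ cannot decide. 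So ``running the construction until the fate of $y$ is settled'' does not terminate for $y=x_1$, contrary to your last paragraph, and your $S_i$ need not be $\emptyset'$-computable: although the \emph{machine} never decides whether $\range(f_i)$ is infinite, the \emph{set} it outputs encodes exactly that $\Pi^0_2$ fact.

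The repair is small: in the stall case keep the already-chosen points, taking $S_i=\{x_1,\dots,x_{m-1}\}\cup\{x : x>x_{m-1}\}$, which is still thin (all its colors lie in the size-$m$ set $F_m$) and now agrees with the never-stall set on any initial segment you have uncovered, restoring decidability. The paper sidesteps the issue by a simpler greedy step: it tries to avoid only color $0$, setting $a_{i,n+1}$ to the least $b>a_{i,n}$ with $f_i(b)\neq 0$ if one exists, and $a_{i,n}+1$ otherwise. The construction then \emph{always} produces a next point, so $A_i=\{a_{i,n}:n\in\omega\}$ is a single increasing sequence with trivially $\emptyset'$-decidable membership, and it is thin: either it avoids color $0$, or the search fails from some point on, which forces $\range(f_i)$ to be finite so that any infinite set is thin. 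Your rainbow-style step is an attractive idea but the extra bookkeeping re-creates exactly the uniformity problem you set out to avoid; the moral is that not only must the construction avoid committing to a $\Pi^0_2$ case split, but the output set must not encode one either. (As a side remark, the paper's own proof, like yours, does not explicitly produce the omitted color $c_i$, which under the convention stated earlier in the paper should be part of a $\TS$-solution; deciding that color hinges on the same $\Pi^0_2$ fact.)
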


\begin{proof}
Let $\seq{f_i : i \in \omega}$ be a computable instance of $\Seq\TS^1_{\omega}$.  Using $\emptyset'$ as an oracle, we compute a sequence of thin sets $\seq{A_i : i \in \omega}$.  We define each $A_i = \{a_{i,m} : m \in \omega\}$ independently by using $\emptyset'$ to compute an increasing sequence $a_{i,0} < a_{i,1} < \dots$.  Given $i$, start by asking $\emptyset'$ if there exists $b \in \omega$ such that $f_i(b) \neq 0$.  Let $a_{i,0}$ be the least such $b$ if one exists, and otherwise let $a_{i,0} = 0$.  Suppose that we have defined $a_{i,n}$.  Ask $\emptyset'$ if there exists $b > a_{i,n}$ such that $f_i(b) \neq 0$.  Let $a_{i,n+1}$ be least such $b$ if one exists, and otherwise let $a_{i,n+1} = a_{i,n} + 1$.  Let
\[
A_i = \{a_{i,n} : n \in \omega\}
\]
and notice that $\seq{A_i : i \in \omega}$ is $\emptyset'$-computable.

Let $i \in \omega$.  We claim that $A_i$ is thin for $f_i$.  If the set $\{b \in \omega : f_i(b) \neq 0\}$ is infinite, then $A_i \subseteq \{b \in \omega : f_i(b) \neq 0\}$, so $A_i$ is thin for $f_i$.  On the other hand, if $\{b \in \omega : f_i(b) \neq 0\}$ is finite, then $\text{range}(f_i)$ is finite, so $A_i$ is trivially thin for $f_i$.  Therefore, $\seq{A_i : i \in \omega}$ is a $\emptyset'$-computable solution to $\seq{f_i : i \in \omega}$.
\end{proof}

\noindent
After relativization and translation into the language of Weihrauch reducibility, we obtain the following.

\begin{corollary}
$\Seq\TS^1_{\omega} \sred \mathsf{TJ}^1$.
\end{corollary}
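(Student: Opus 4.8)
The plan is to observe that the construction in the proof of the preceding proposition is entirely uniform and relativizes, so that after unwinding the definition of $\mathsf{TJ}^1$ as a $\Pi^1_2$ principle it immediately supplies the required strong Weihrauch reduction. Recall that an instance of $\mathsf{TJ}^1$ is a set $X$ and its unique solution is the jump $X'$. For the forward functional $\Phi$ I would take the identity: an instance $A = \langle f_i : i \in \omega \rangle$ of $\Seq\TS^1_\omega$, coded by a single set, is sent to $\Phi(A) = A$, and the solution returned is $A'$. It then remains to produce a Turing functional $\Psi$ with $\Psi(A')$ a solution to $A$; for a \emph{strong} Weihrauch reduction $\Psi$ may use only $A'$ as oracle, but this is no restriction here since $A \leq_T A'$ (in particular $A'$ can compute each $f_i$).

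For $\Psi$, I would relativize to $A$ the construction of the thin sets from the previous proof. For each $i$, the only use of the oracle there is to decide statements of the form ``$(\exists b > a_{i,n})[f_i(b)\neq 0]$'', which are $\Sigma^0_1(A)$ and hence uniformly decided (with witnesses located) by $A'$; so $A'$ uniformly computes the increasing enumeration $a_{i,0} < a_{i,1} < \cdots$ and thus the infinite set $A_i$. Since a solution to $\TS$ is officially a pair consisting of a thin set together with an omitted color, $\Psi$ must also output such a color $c_i$ for each $i$: if the enumeration for column $i$ never stalls then $A_i \subseteq \{b : f_i(b)\neq 0\}$ and we may put $c_i = 0$, while if it stalls at some stage $n$ (the case where $f_i$ is eventually $0$) then $A'$ has in hand the finitely many values $f_i(a_{i,0}),\ldots,f_i(a_{i,n-1})$ and we may put $c_i = 1 + \max(\set{f_i(a_{i,j}) : j < n} \cup \set{0})$. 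In either case $c_i$ is omitted on $A_i$, so $\Psi(A') = \langle (A_i, c_i) : i \in \omega \rangle$ is a solution to $A$, by the verification already carried out in the preceding proposition.

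Since all of the real content is contained in that proposition, I do not expect a genuine obstacle; the only points needing (routine) care are checking that the omitted colors can indeed be produced $A'$-uniformly alongside the thin sets, and observing that the strengthening from $\red$ to $\sred$ is free because the original instance is itself computable from the jump it is mapped to.
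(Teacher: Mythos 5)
Your construction of the thin sets $A_i$ is fine and follows the paper's own approach to the preceding proposition: the enumeration queries only $\Sigma^0_1(A)$ facts, $A'$ answers them uniformly, and taking $\Phi$ to be (a coding of) the identity gives a strong reduction since $A \leq_T A'$.

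The gap is in the production of the omitted colors $c_i$. You define $c_i$ by a case split: output $0$ if the enumeration of column $i$ ``never stalls,'' and output $1 + \max(\cdots)$ if it ``stalls at some stage $n$.'' But $\Psi$ must determine the number $c_i$ after finitely many queries to $A'$. At any finite stage $s$, $A'$ can indeed tell whether a fallback has occurred by stage $s$; if none has, however, $A'$ cannot distinguish ``never will'' (where $c_i = 0$) from ``will at some later stage $m$'' (where the least omitted color can be as large as $m+1$, i.e., unbounded). The former is a $\Pi^0_2(A)$ event and is not decidable from $A'$, so the procedure you describe never halts on inputs in the never-stalls branch and is not a Turing functional. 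Defaulting to $c_i=0$ after waiting ``long enough'' doesn't help, because the correct color in the stalling case is bounded only in terms of the (unknown) stall stage.

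Worth flagging: this is not merely a gap in your write-up but appears to be an obstacle to the stated corollary under the paper's own convention, which stipulates (just after Statement $\TS^n_k$) that a $\TS$-solution includes the omitted color. The proposition the corollary rests on only produces the sets $A_i$; the color is never computed, and a jump-forcing argument shows it cannot be. Build $f_0$ (with all other columns trivial) by finite extensions while forcing bits of $\Phi(A)'$ with $\emptyset'$-oracle; once the finitely many bits of $\Phi(A)'$ that $\Psi$ uses to output $c_0$ have been forced, extend the condition so that $f_0(b) = c_0$ for all large $b$. The forced bits of $\Phi(A)'$ survive, so $\Psi$ still outputs $c_0$, yet now $f_0^{-1}(c_0)$ is cofinite and no infinite thin set can omit $c_0$. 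So the reduction to $\mathsf{TJ}^1$ holds only in the weaker sense where a solution to $\TS^1_\omega$ is just a thin set without the color tag --- which is what your argument (and the paper's) actually delivers, and which suffices for the degree-theoretic corollaries drawn from it, but is strictly less than what the stated definitions demand.
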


Finally, we have a strong non-coding result for infinitely many colors to contrast with Corollary~\ref{C:coding_into_seqTSnk}.  Recall that given two degrees ${\bf a}$ and ${\bf b}$, the notation ${\bf a} \gg {\bf b}$ means that every infinite subtree of $2^{<\omega}$ of degree $\mathbf{b}$ has an infinite path of degree at most $\mathbf{a}$. (See~\cite[pp. 10--11]{CJS-2001} for some of the basic properties of this relation.)

\begin{theorem}
Every computable instance of $\Seq\TS^1_{\omega}$ has a low$_2$ solution.  In fact, if ${\bf d} \gg {\bf 0}'$, then every computable instance of $\Seq\TS^1_{\omega}$ has a solution $A$ such that $\text{deg}(A)' \leq {\bf d}$.
\end{theorem}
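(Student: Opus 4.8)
The plan is to establish the stronger (``in fact'') clause first, since the low$_2$ statement follows from it: applying the low basis theorem relative to $\emptyset'$ to the nonempty $\Pi^0_1(\emptyset')$ class of $\{0,1\}$-valued functions $g$ with $g(e) \neq \Phi^{\emptyset'}_e(e)$ for all $e$ (a form of $\DNR$ relative to $\emptyset'$), we obtain such a $g$ with $(g \oplus \emptyset')' \leq_T \emptyset''$; then $\mathbf{d} := \deg(g \oplus \emptyset')$ satisfies $\mathbf{d} \gg \mathbf{0}'$ and $\mathbf{d}' = \mathbf{0}''$, so any solution $A$ with $\deg(A)' \leq \mathbf{d}$ automatically has $\deg(A)'' \leq \mathbf{0}''$ and is therefore low$_2$.

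So fix a computable instance $\langle f_i : i \in \omega \rangle$ of $\Seq\TS^1_\omega$ (each $f_i \colon \omega \to \omega$) and a set $D$ with $\deg(D) = \mathbf{d} \gg \mathbf{0}'$. For each $i$ put $G_i = \{b : f_i(b) = 0\}$ and $H_i = \{b : f_i(b) \neq 0\}$; these are uniformly computable and partition $\omega$, so at least one is infinite, and whichever is infinite is an infinite thin set for $f_i$ (witnessed by omitting color $1$, respectively color $0$). For $s \in 2^\omega$ let $\Gamma(s)$ be the sequence whose $i$th entry is $(G_i,1)$ if $s(i) = 0$ and $(H_i,0)$ if $s(i) = 1$; this is a Turing functional. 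Let $\mathcal{V}$ be the set of $s \in 2^\omega$ for which, for every $i$, the set chosen at coordinate $i$ is infinite. Since ``$G_i$ finite'' and ``$H_i$ finite'' are $\Sigma^0_1(\emptyset')$ uniformly in $i$, the set $2^\omega \setminus \mathcal{V}$ is $\Sigma^0_1(\emptyset')$-open, so $\mathcal{V}$ is a $\Pi^0_1(\emptyset')$ class; it is nonempty because each $f_i$ has at least one of $G_i, H_i$ infinite, and every $s \in \mathcal{V}$ makes $\Gamma(s)$ a genuine solution with $\Gamma(s) \leq_T s$.

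To also control the jump, pass to the subclass $\mathcal{U}$ of pairs $s \oplus K$ such that $s \in \mathcal{V}$ and $K$ records, for each $e$, a bit $j_e$ together with a convergence witness when $j_e = 1$, subject to: if $j_e = 1$ then the witness (an explicit oracle-use and step bound) certifies $\Phi_e^{\Gamma(s)}(e)\downarrow$, and if $j_e = 0$ then $\Phi_e^{\Gamma(s)}(e)\uparrow$. A bad witness on the positive side, or a convergence contradicting the negative side, is detected by a finite part of $s \oplus K$, so together with the enumeration of $2^\omega \setminus \mathcal{V}$ this exhibits $2^\omega \setminus \mathcal{U}$ as $\Sigma^0_1(\emptyset')$-open; hence $\mathcal{U}$ is a nonempty $\Pi^0_1(\emptyset')$ class (take any $s \in \mathcal{V}$ and let $K$ record $(\Gamma(s))'$ with genuine witnesses). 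Using the standard fact that $\mathbf{d} \gg \mathbf{0}'$ implies every nonempty $\Pi^0_1(\emptyset')$ class has a member of degree $\leq \mathbf{d}$ (see \cite[pp.~10--11]{CJS-2001}), fix $s \oplus K \in \mathcal{U}$ with $s \oplus K \leq_T D$. Then $A := \Gamma(s)$ is a solution with $A \leq_T s \leq_T D$, and the two clauses defining $\mathcal{U}$ give $e \in A' \iff j_e = 1$, so $A' \leq_T K \leq_T D$; thus $\deg(A)' \leq \mathbf{d}$.

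The point requiring care is the definition of $\mathcal{U}$: naively demanding ``$K = (\Gamma(s))'$'' does not yield a $\Pi^0_1(\emptyset')$ class, because ``$e \in K \Rightarrow \Phi_e^{\Gamma(s)}(e)\downarrow$'' is a closed, not enumerable, condition along a path — one only ever sees evidence for convergence, never against it — and attaching explicit convergence witnesses to the positive bits of $K$ is what repairs this. One should also check that the $\emptyset'$-enumeration of $2^\omega \setminus \mathcal{U}$ goes through uniformly (the oracle $\emptyset'$ is needed only to recognize when $G_i$ or $H_i$ becomes finite), and recall the basic properties of $\gg$ and of the low basis theorem relativized to $\emptyset'$ used above.
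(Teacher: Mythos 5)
Your reduction of the low$_2$ clause to the ``in fact'' clause is correct, and the $\Pi^0_1(\emptyset')$ class $\mathcal{V}$ built from the dichotomy $G_i = \{b : f_i(b)=0\}$ versus $H_i = \{b : f_i(b)\neq 0\}$ is a clean reformulation of the color-choice problem (cleaner, in fact, than the paper's use of arbitrary unused colors). But there is a genuine gap in the jump control. Your class $\mathcal{U}$ attaches to each positive bit $j_e = 1$ of $K$ a convergence witness, and these witnesses are unbounded natural numbers: for a fixed $e$, the convergence time of $\Phi_e^{\Gamma(s)}(e)$ is generally unbounded as $s$ ranges over the open subset of $\mathcal{V}$ where it converges, and there is no $\emptyset'$-computable bound. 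Hence $\mathcal{U}$ is a $\Pi^0_1(\emptyset')$ subset of $2^\omega\times\omega^\omega$ (or, after any coding, a non-compact subset of Cantor space), not the set of paths through a $\emptyset'$-computable subtree of $2^{<\omega}$. The property $\mathbf{d}\gg\mathbf{0}'$ as defined in the paper, and the CJS facts you cite, concern precisely such compact classes and so do not apply to $\mathcal{U}$. The statement that would close the gap --- that $\mathbf{d}\gg\mathbf{0}'$ yields, in every nonempty compact $\Pi^0_1(\emptyset')$ class, a member $X$ with $X' \leq_T \mathbf{d}$ --- is not a standard consequence of the definition of $\gg$: the relativized Jockusch--Soare low-basis argument needs to decide $\Pi^0_1(\emptyset')$ truths, which a PA degree over $\emptyset'$ cannot in general do.

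The paper avoids this by forcing with conditions $\langle\sigma,\tau\rangle$ in which the color-choice information $\tau$ is \emph{finite} at every stage. The crucial structural consequence is that the tree of $\sigma^*$ respecting the current $\tau$ is a \emph{computable} tree (since each $f_i$ is computable), so the $\mathcal{S}_e$ jump-forcing question is genuinely $\Sigma^0_1$ and decidable from $\mathbf{0}'\leq\mathbf{d}$; the remaining color choices are then committed one at a time via $\mathbf{d}\gg\mathbf{0}'$ (CJS Lemma~4.2), interleaved with the jump forcing. Your construction commits to the entire $s$ in a single $\Pi^0_1(\emptyset')$ class, which is exactly what raises the jump-forcing question from $\Sigma^0_1$ to $\Sigma^0_1(\emptyset')$ and pushes the needed basis fact beyond what $\gg$ gives you. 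Your $G_i/H_i$ dichotomy would still simplify the $\mathcal{R}_i$ step of the paper's forcing, but the forcing machinery itself cannot be dispensed with by the argument as you have written it.
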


\begin{proof}
Let $f = \seq{f_i : i \in \omega}$ be a computable instance of $\Seq\TS^1_{\omega}$, and fix $\mathbf{d} \gg \0'$. We obtain the solution $A = \seq{A_i : i \in \omega}$ generically for the following notion of forcing, $\mathbb{P} = (P,\leq)$. An element of $P$ is a pair $\langle \sigma, \tau \rangle$ where $\sigma \in 2^{<\omega}$ and $\tau \in \omega^{<\omega}$, such that:
\begin{itemize}
\item for all $i$ and all $x$, if $\sigma(\seq{i,x})\downarrow = 1$ and $\tau(i)\downarrow$, then $\sigma(\seq{i,x}) \neq \tau(i)$;
\item for all $i$, if $\tau(i)\downarrow$, then the set $\{x \in \omega : f_i(x) \neq \tau(i)\}$ is infinite.
\end{itemize}
We think of $\sigma$ as being broken into columns $\sigma = \seq{\sigma_i : i \in \omega}$ and as being a finite initial segment of the resulting $A = \seq{A_i : i \in \omega}$. Thus, $\sigma(\seq{i,x}) = \sigma_i(x)$, and $\sigma_i$ is a initial segment of $A_i$. The finite sequence $\tau$ says which colors are being omitted on a given column, i.e.,~if $\tau(i)\downarrow$, then the resulting $A_i$ will have the property that $f_i(a) \neq \tau(i)$ for all $a \in A_i$. We define $\langle \sigma^*,\tau^* \rangle \leq \langle \sigma,\tau \rangle$ if both $\sigma \preceq \sigma^*$ and $\tau \preceq \tau^*$.

From a sufficiently generic sequence of conditions
\begin{equation}\label{E:generic}
\seq{\sigma_0,\tau_0} \geq \seq{\sigma_1,\tau_1} \geq \cdots
\end{equation}
we can compute the set $A = \bigcup_{i \in \omega} \sigma_i$ and $B = \bigcup_{i \in \omega} \tau_i$, and $A$ will clearly be a solution to $f$. The appropriate level of genericity corresponds to meeting the following requirements for all $e,i \in \omega$:
\[
\begin{array}{rll}
\mathcal{Q}_{e,i} & : & |A_i| \geq e;\\
\mathcal{R}_{i} & : & B(i) \textrm{ is defined}.
\end{array}
\]
We wish to obtain such a sequence with jump of degree at most $\mathbf{d}$, so we also have the requirement:
\[
\begin{array}{rll}
\mathcal{S}_{e} & : A'(e) \textrm{ is forced}.
\end{array}
\]
It thus suffices to show that these requirements are $\mathbf{d}$-effectively dense, i.e.,~that we can use ${\bf d}$ to extend a given condition $\langle \sigma,\tau \rangle$ to meet a given one of the above requirements.

First, suppose we wish to meet $\mathcal{Q}_{i,e}$. If $\tau(i)\uparrow$, we effectively extend $\sigma$ to $\sigma^*$ by adding $e$ many $1$s in the $i$th column, and only $0$s in the other columns. Then $\seq{\sigma^*,\tau}$ is the desired extension. If $\tau(i) \downarrow$, we can computably find distinct $x_0,\ldots,x_{e-1} > |\sigma_i|$ with $f_i(x_0) \neq \tau(i), \ldots, f_i(x_{e-1}) \neq \tau(i)$. (This is possible because $\seq{\sigma,\tau}$ is a condition, so $\{x \in \omega : f_i(x) \neq \tau(i)\}$ is infinite.) We effectively extend $\sigma$ to $\sigma^*$ so that $\sigma_i^*(x_0) = \cdots = \sigma_i^*(x_{e-1}) = 1$, and all other new bits of $\sigma^*$ are $0$. We then take $\seq{\sigma^*,\tau}$ for the extension. Clearly, in either case, if $A$ extends $\sigma^*$ then $A$ satisfies~$\mathcal{Q}_e$.

Next, suppose we wish to meet $\mathcal{R}_i$. If $\tau(i) \downarrow$, we can just keep $\seq{\sigma,\tau}$, so suppose otherwise. Since the set $F = \{x \in \omega : \sigma_i(x)\downarrow = 1\}$ is finite, so is the set $C = \{c \in \omega : (\exists x \in F)f_i(x) = c\}$, and we can find a canonical index for it effectively from $i$ and $\seq{\sigma,\tau}$. Fix $c_0,c_1 \notin C$ with $c_0 \neq c_1$.  Now at least one of the two sets $\{x \in \omega : f_i(x) \neq c_0\}$ or $\{x \in \omega : f_i(x) \neq c_1\}$ must be infinite. Since these are two effectively given $\Pi_2^0$ sets, Lemma 4.2 of~\cite{CJS-2001} implies that we can ${\bf d}$-effectively determine a $k \in \{0,1\}$ such that $\{x \in \omega : f_i(x) \neq c_k\}$ is infinite.  If we let $\tau^*$ be $\tau$ extended so that $\tau^*(i) = c_k$, then $\seq{\sigma,\tau^*}$ is a condition by choice of $k$, and so we can take it to be our extension. Clearly, if $B$ extends $\tau^*$ then $B(i)$ is defined.

Finally, suppose we wish to meet $\mathcal{S}_e$. Notice that the set of $\sigma^* \succeq \sigma$ that respect $\tau$, i.e.,~the set of $\sigma^* \succeq \sigma$ such that $\langle \sigma^*,\tau \rangle$ is a condition, is computable and we can find an index for it as such effectively from $e$ and $\langle \sigma,\tau \rangle$.  Since ${\bf d} \gg {\bf 0}'$, we have that ${\bf d} \geq {\bf 0}'$, and hence ${\bf d}$ can determine if there exists such a $\sigma^*$ with $\Phi_e^{\sigma^*}(e)\downarrow$.  If so, we extend to the condition $\langle \sigma^*,\tau \rangle$, and otherwise we keep $\langle \sigma,\tau \rangle$.  Notice that so long as $A$ extends $\sigma^*$, then in the former case we will have $e \in A'$, while in the latter we will have $e \notin A'$.

The argument is now put together in the usual way. We let $\seq{\sigma_0,\tau_0} = \seq{\emptyset,\emptyset}$, and then repeatedly use the density of the requirements to $\mathbf{d}$-effectively produce the sequence in \eqref{E:generic}.
\end{proof}

\noindent
After relativization and translation into the language of Weihrauch reducibility, we obtain the following.

\begin{corollary}
$\mathsf{TJ}^1 \after \Seq\TS^1_{\omega} \sred \WKL \after \mathsf{TJ}^1$.
\end{corollary}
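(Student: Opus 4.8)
The plan is to spell out the ``relativization and translation into the language of Weihrauch reducibility'' of the preceding theorem, using one standard extra ingredient: a path through the tree $S_Y$ of $\{0,1\}$-valued diagonally noncomputable functions relative to $Y$ has PA degree over $Y$. First I would unwind the two composition operators. An instance of $\mathsf{TJ}^1 \after \Seq\TS^1_\omega$ is just an instance $\vec{f} = \seq{f_i : i \in \omega}$ of $\Seq\TS^1_\omega$, and a solution is a pair $\seq{\vec{A},\vec{A}'}$, where $\vec{A}$ is a $\Seq\TS^1_\omega$-solution to $\vec{f}$ and the governing functional $\Theta$ simply passes $\vec{A}$ along to $\mathsf{TJ}^1$ (so $\Theta(\vec{f},\vec{A}) = \vec{A}$). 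An instance of $\WKL \after \mathsf{TJ}^1$ is a set $X$, and a solution is a pair $\seq{X',P}$ where $P$ is a path through the tree $\Theta(X,X')$; I will take $\Theta(X,X') = S_{X'}$, the uniformly $X'$-computable infinite binary tree whose strings $\sigma$ satisfy $\sigma(e) \neq \Phi_{e,|\sigma|}^{X'}(e)$ for every $e < |\sigma|$ such that the right-hand side converges within $|\sigma|$ steps to a value in $\{0,1\}$. Thus any path $P$ through $\Theta(X,X')$ has PA degree over $X'$, i.e., $\deg(P) = \mathbf{d}$ with $\mathbf{d} \gg \deg(X')$.

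The forward reduction $\Phi$ will be the identity: given $\vec{f}$, output $\vec{f}$ itself as the instance $X$ of $\WKL \after \mathsf{TJ}^1$. A solution to this instance is then a pair $\seq{\vec{f}',P}$ with $\mathbf{d} := \deg(P) \gg \deg(\vec{f})'$; note in particular $\vec{f} \leq_T \vec{f}' \leq_T P$. The backward reduction $\Psi$, given oracle $\seq{\vec{f}',P}$, first recovers $\vec{f}$ from $\vec{f}'$ and then runs the forcing construction from the proof of the preceding theorem, relativized to $\vec{f}$, with $P$ (a set of degree $\mathbf{d} \gg \deg(\vec{f})'$) in place of the oracle there. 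That construction is $\mathbf{d}$-effective and uniform in $\vec{f}$, and because it explicitly meets the requirements $\mathcal{S}_e$ forcing $A'(e)$, it produces from $P \oplus \vec{f}$ not only a $\Seq\TS^1_\omega$-solution $\vec{A}$ to $\vec{f}$ but also $\vec{A}'$ (the whole sequence of conditions, hence $\vec{A}'$, being $\mathbf{d}$-computable uniformly). So $\Psi$ outputs $\seq{\vec{A},\vec{A}'}$, which is a solution to $\vec{f}$ as an instance of $\mathsf{TJ}^1 \after \Seq\TS^1_\omega$, and since $\Psi$ consulted only $\vec{f}'$ and $P$ this witnesses a \emph{strong} Weihrauch reduction.

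I expect the main obstacle to be purely a matter of bookkeeping rather than new mathematics: one must check that the proof of the preceding theorem is genuinely uniform both in the instance $\vec{f}$ and in the PA-over-$\vec{f}'$ oracle, and that it simultaneously yields the jump of the constructed solution. The latter is exactly what the requirements $\mathcal{S}_e$ were built to guarantee, and the uniformity of the $\mathcal{R}_i$ step follows from the uniform version of the relevant lemma of Cholak, Jockusch, and Slaman (an oracle $\gg \vec{f}'$ uniformly picks out an infinite one among two effectively given $\Pi^0_2(\vec{f})$ sets, one of which is infinite), while the $\mathcal{Q}_{e,i}$ and $\mathcal{S}_e$ steps only use $\vec{f}$ and $\vec{f}' \leq_T P$ directly. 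The one genuinely ``choice'' point is selecting $\Theta$ for $\WKL \after \mathsf{TJ}^1$ so that applying $\WKL$ after $\mathsf{TJ}^1$ delivers precisely a PA-over-$\vec{f}'$ degree, which is what makes the hypothesis $\mathbf{d} \gg \mathbf{0}'$ (relativized to $\vec{f}$) of the preceding theorem applicable; the tree $S_{X'}$ above does this.
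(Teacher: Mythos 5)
Your proof is correct and spells out exactly the ``relativization and translation'' that the paper invokes but does not write down: the identity forward functional, the choice of $\Theta$ on each side of $\after$ (the DNC$_2$ tree relative to $X'$ on the right, and the projection onto $\vec A$ on the left), and the observation that the forcing construction from the preceding theorem is uniform in $\vec f$ and in an oracle of PA degree over $\vec f'$ while simultaneously deciding the jump $\vec A'$. One small imprecision: you write $\deg(P) \gg \deg(\vec f')$ and ``$\vec f' \leq_T P$,'' but a path through the $\{0,1\}$-DNR tree relative to $\vec f'$ need not compute $\vec f'$ on its own; what is true is $\deg(\vec f' \oplus P) \gg \deg(\vec f')$, and since $\Psi$'s oracle is the pair $\seq{\vec f', P}$ this is exactly what the argument actually uses, so nothing breaks.
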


\noindent
Using the uniform low basis theorem of Brattka, de Brecht, and Pauly~\cite{BBP-2012}, we also obtain the following.

\begin{corollary}
$\Seq\TS^1_{\omega} \sred \mathfrak{L}_2$.
\end{corollary}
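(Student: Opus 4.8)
The plan is to obtain this purely by assembling earlier results, the one genuinely new ingredient being the uniform low basis theorem of~\cite{BBP-2012}. First I would record the trivial reduction $\Seq\TS^1_{\omega} \sred \mathsf{TJ}^1 \after \Seq\TS^1_{\omega}$, whose forward functional is the identity and whose backward functional projects a solution $\seq{A,A'}$ onto its first coordinate $A$. Composing this with the preceding corollary, which states $\mathsf{TJ}^1 \after \Seq\TS^1_{\omega} \sred \WKL \after \mathsf{TJ}^1$, and using transitivity of $\sred$, I obtain $\Seq\TS^1_{\omega} \sred \WKL \after \mathsf{TJ}^1$.

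Next I would unpack what $\WKL \after \mathsf{TJ}^1$ yields on a set $f$: a pair $\seq{f',C}$ in which $C$ is a path through the $f'$-computable tree used in that composition, so that $\deg(C \oplus f') \gg \deg(f)'$. This is exactly what is needed to drive the construction of the preceding theorem in its form relativized to $f$: that construction is a single Turing functional which, given $f$ together with any $\mathbf{d} \gg \deg(f)'$, produces a solution $A$ to $f$ along with $A'$ and arranges $\deg(A)' \leq \mathbf{d}$. Taking $\mathbf{d} = \deg(C \oplus f')$, which is available from the solution of $\WKL \after \mathsf{TJ}^1$ since $f \leq_T f'$, I get $A$ and $A'$ uniformly computable from $C \oplus f'$, and hence $A'' \leq_T (C \oplus f')' \equiv_T C' \oplus f''$, still uniformly.

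Finally I would invoke the uniform low basis theorem of~\cite{BBP-2012}: the path $C$ through the $f'$-computable tree can be chosen so that $C' \leq_T f''$ uniformly, that is, the $\WKL$-step performed after the jump can be carried out ``low'' over the jumped oracle, so that the composite is a uniformly low$_2$ operation. In the terminology of~\cite{BBP-2012} this is precisely the statement $\WKL \after \mathsf{TJ}^1 \sred \mathfrak{L}_2$, and one further appeal to transitivity of $\sred$ then gives $\Seq\TS^1_{\omega} \sred \mathfrak{L}_2$. The routine part is the bookkeeping between the various composition operators; the point I expect to require the most care---and so the main obstacle---is confirming that the construction in the preceding theorem is genuinely uniform in the parameter $\mathbf{d}$, so that $A$, $A'$, and $A''$ are all delivered by fixed functionals rather than through non-uniform choices, and that this ``uniformly low$_2$'' packaging coincides with the problem $\mathfrak{L}_2$ extracted from the uniform low basis theorem.
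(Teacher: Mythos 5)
Your proposal is correct and follows the paper's (implicit) argument exactly: the paper gives no proof beyond citing the uniform low basis theorem, and the chain you describe---projecting $\Seq\TS^1_\omega$ into $\mathsf{TJ}^1 \after \Seq\TS^1_\omega$, composing with the preceding corollary, and then applying the uniform low basis theorem to the $\WKL$-step performed after the jump---is the intended one. The one place to be precise is that the uniform low basis theorem must be invoked in its relativized form, yielding $(C \oplus f')' \leq_T f''$ (i.e.\ $C$ is low \emph{over} $f'$), rather than merely $C' \leq_T f''$; this stronger form is what justifies $A'' \leq_T (C \oplus f')' \leq_T f''$ and is what your phrase ``low over the jumped oracle'' correctly captures, even though the displayed inequality $C' \leq_T f''$ alone would not suffice.
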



\noindent
Where the low$_2$ principle $\mathfrak{L}_2$ is the composition of the inverse function of $\mathsf{TJ}^2$ with the iterated limit operator $\mathsf{lim}^2$ in a manner similar to the definition of $\mathfrak{L}$ in \cite{BBP-2012}.

By looking for splittings instead of forcing the jump, one can also prove a cone-avoidance theorem saying that if $\seq{C_j : j \in \omega}$ is a sequence of non-computable sets, then every computable instance of $\Seq\TS^1_{\omega}$ has a solution $A$ such that $C_j \nleq_T A$ for all $j$.  As in~\cite[Theorem 3.4]{DJ-2009}, it is also possible to combine these arguments to produce low$_2$ cone-avoiding solutions to computable instances in the case that the sequence $\seq{C_j : j \in \omega}$ is $\emptyset'$-computable. We omit the details, which are standard.

\subsection{Weihrauch Reductions for Thin Sets}

We are not able to prove an analog of Theorem~\ref{T:Ramsey_non-uniform} for $\TS^n_k$ using the Squashing Theorem because we lack an analog of Proposition~\ref{P:combining_ramsey} for thin sets.  However, we can give a direct proof that $\TS^1_j \nred \TS^1_k$ when $j < k$.

\begin{theorem}\label{T:TS1_nonsquashing}
For all $j,k \geq 2$ with $j < k$, we have $\TS^1_j \nred \TS^1_k$.
\end{theorem}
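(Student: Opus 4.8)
The plan is to diagonalize directly against all pairs of Turing functionals $\langle\Phi,\Psi\rangle$ that could witness $\TS^1_j \red \TS^1_k$. Suppose toward a contradiction that $\TS^1_j \red \TS^1_k$ via $\Phi$ and $\Psi$. I will build a computable instance $f \colon \N \to j$ of $\TS^1_j$ such that $\Phi(f)$ is an instance $g \colon \N \to k$ of $\TS^1_k$, but whatever infinite thin set $H$ (together with its omitted color $c < k$) one produces for $g$, the set $\Psi(f \oplus H \oplus c)$ fails to be an infinite thin set for $f$. Since the construction of $f$ will be uniform, I can use the recursion theorem to know an index for $f$ in advance, and hence can dispense with $f$ in the oracle for $\Psi$, replacing it by a functional $\widehat\Psi$ as in the proof of Proposition~\ref{P:qWWKL}. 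The essential asymmetry to exploit is that with fewer colors available ($j < k$), omitting one color in $f$ is ``harder'' than omitting one color in $g$: for a thin set for $f$ we must confine $f$ to at most $j-1$ colors, whereas $g$ may legitimately use all $k-1$ remaining colors on its thin set.

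The construction proceeds in stages, at each stage deciding $f$ on the next block of integers. The idea is to run $\Phi$ on the current finite approximation to $f$, watch the finite tree of possible colorings $g$ it produces, and for each of the finitely many candidate omitted colors $c < k$ wait for $\Psi$ (via $\widehat\Psi$) to commit some large element $x$ to the purported thin set for $f$, reading off from $\Psi$ a guess about which colors $f$ will avoid. Because $\Psi$ is only told one color $c$ of the $k$ colors of $g$, but must output a thin set for the $j$-coloring $f$, a counting/pigeonhole argument shows that over the course of the construction I can repeatedly "use up" colors: by forcing $f$ to realize, on $\Psi$'s committed elements, colors that $\Psi$ is implicitly betting against. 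Concretely, I maintain for each candidate $c$ the set of colors that $\Psi$'s outputs have so far forced to be omitted from $f$; whenever this set would otherwise shrink the available color budget below what is needed, I assign $f$ a value on one of $\Psi$'s committed elements that lies outside the allowed set, so that $\Psi(f\oplus H\oplus c)$ contains an element on which $f$ takes every color $\leq j-1$ — contradicting thinness. The finiteness of the color set $k$ guarantees that only finitely many such actions per strategy are needed, so the construction converges and $f$ is genuinely total and computable; moreover $f$ itself is arranged to be thin (e.g. $f$ can be made to avoid the color $j-1$ except on the finitely many committed elements), so $f$ is a legitimate instance of $\TS^1_j$.

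The verification has two halves. First, $\Phi(f)$ must actually be an instance of $\TS^1_k$: this is automatic from totality of $f$ together with the hypothesis that $\Phi$ takes instances of $\TS^1_j$ to instances of $\TS^1_k$, but one should check that the finite-approximation bookkeeping in the construction does not stall — here one uses compactness, exactly as in the $m_s$-construction of the Squashing Theorem and the claim-verification in Proposition~\ref{P:qWWKL}, to see that every relevant computation eventually converges once enough of $f$ is specified. Second, one fixes any solution $\langle H, c\rangle$ to $\Phi(f)$ and traces through the construction: since $H$ is infinite, $\Psi$ must commit infinitely many elements to $\Psi(f\oplus H\oplus c)$, so the strategy associated with $c$ acts cofinally often, and each action records one more forbidden color or else plants a witness to non-thinness; a straightforward case analysis on whether the color budget is ever exhausted shows that in either outcome $\Psi(f\oplus H\oplus c)$ is not an infinite thin set for $f$.

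The main obstacle I anticipate is the bookkeeping that keeps all $k$ candidate-color strategies consistent simultaneously on the \emph{same} computable $f$: unlike in a priority argument, here there is a single object $f$ that must diagonalize against every $c < k$ at once, and the actions taken for the sake of candidate $c$ alter $f$ globally and hence interfere with the finite-tree analysis of $\Phi(f)$ and with the other candidates. Managing this will require carefully segregating the elements on which each strategy is allowed to act (a coding of columns or a sufficiently sparse sequence of "reserved" positions, as in Lemma~\ref{L:jump_coding}), and arguing that restricting attention to reserved positions does not cost us the ability to drive $\Psi$'s output into non-thinness. I expect this to be the only genuinely delicate point; the compactness and recursion-theorem ingredients are by now routine and can be imported almost verbatim from the earlier proofs in the paper.
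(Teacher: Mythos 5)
Your proposal aims at something stronger than the theorem requires, and this overshoot is the source of the difficulties you yourself flag at the end. To show $\TS^1_j \nred \TS^1_k$ you need only exhibit, for given $\Phi,\Psi$, one computable $f$ and \emph{one} solution $T$ of $\Phi(f)$ such that $\Psi(f,T)$ is not a solution to $f$; you instead try to arrange that \emph{every} solution $\langle H,c\rangle$ defeats $\Psi$. The paper exploits the weaker target together with a combinatorial trick that does not appear in your write-up and that is really the heart of the argument: a union of at most $j-1$ finite $\Phi(f)$-homogeneous blocks $F_0<\cdots<F_{j-2}$, followed by one infinite $\Phi(f)$-homogeneous set $H$, uses at most $j < k$ of the $k$ colors and so is \emph{automatically} thin for $\Phi(f)$. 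This lets the construction choose a single problematic solution $T = \bigcup_i F_i \cup H$ rather than fight all of them: the blocks $F_i$ are picked in sequence so that $\Psi(\bigcup_i F_i)$ commits elements of all $j$ colors of $f$, after which $\Psi(T)$ cannot be thin. Without the ``union of homogeneous sets is thin'' observation you have no way to certify that whatever finite commitments you extract from $\Psi$ can be assembled into a legal input $T$.

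The concrete gap in the construction you sketch is the phrase ``wait for $\Psi$ to commit some large element.'' The functional $\Psi$ takes $f \oplus H \oplus c$ as oracle, and $H$ is not available during the construction. Running $\Psi$ on arbitrary finite strings $\sigma$ in place of $H$ does not repair this, because there is no effective criterion for which $\sigma$ are initial segments of genuine thin sets for $\Phi(f)$: a strategy may burn its finitely many allotted actions on strings that no actual $H$ extends, in which case the intended diagonalization never fires for the true solution. You identify the coordination of the $k$ strategies as ``the only genuinely delicate point'' and propose column-coding as in Lemma~\ref{L:jump_coding}, but that device only segregates \emph{where} each strategy acts; it does nothing to tell a strategy \emph{which} $\sigma$ are relevant. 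The paper sidesteps the whole issue by only ever feeding $\Psi$ finite $\Phi(f)$-homogeneous sets, which it can recognize computably, and by constructing the single adversarial $T$ itself rather than diagonalizing against all solutions.
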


\begin{proof}
Suppose instead that $\TS^1_j \red \TS^1_k$, and fix reductions $\Phi$ and $\Psi$ witnessing this fact. We shall build a computable coloring $f : \omega \to j$ and an infinite thin set $T \subseteq \omega$ for $\Phi(f) : \omega \to k$ such that $\Psi(T)$ cannot be an infinite thin set for $f$. For convenience, we assume that for any finite $F \subseteq \omega$, if $\Psi(F)(x) \downarrow$ for some $x \in \omega$ then the use of the computation is bounded by $\max F$. As the construction will be uniformly computable, we may use the recursion theorem as in the proof of Proposition \ref{P:qWWKL} so as to not have to consider the instance $f$ in the oracle for $\Psi$. 

The idea of the proof is to start by defining $f$ arbitrarily, monitoring the coloring $\Phi(f)$ as it forms alongside, and waiting to find a finite homogeneous set $F_0$ for $\Phi(f)$ that is large enough so that $\Psi(F_0)$ contains some number $x_0$. Once this happens, we change how we define $f$ so that all future numbers have a different color from $x_0$. In this way, we force any sufficiently large set extending $\Psi(F_0)$ to contain numbers of at least two different colors. We then repeat the process, looking for a finite homogeneous set $F_1 > F_0$ for $\Phi(f)$ large enough so that $\Psi(F_0 \cup F_1)$ contains some number $x_1$ colored differently from $x_0$. We then change how we define $f$ again so that all future numbers have a different color from $x_0$ and from $x_1$. In this way, we force any sufficiently large set extending $\Psi(F_0 \cup F_1)$ to contain numbers of at least three different colors.

Continuing in this way, we build $F_0 < \cdots < F_{j-2}$ such that any sufficiently large set extending $\Psi(\bigcup_{i < j-1} F_i)$ contains numbers of all $j$ many colors. Thus, to define the desired set $T$, we have only to produce an infinite set extending $\bigcup_{i < j-1} F_i$ that is thin for $\Phi(f)$. But since each $F_i$ was chosen to be homogeneous for $\Phi(f)$, this coloring assumes at most $j-1$ many colors on $\bigcup_{i < j-1} F_i$. So, if we let $H > F_{j-2}$ be any infinite homogeneous set for $\Phi(f)$, then $\Phi(f)$ assumes at most $j$ many colors on $\bigcup_{i < j-1} F_i \cup H$, which we take to be $T$. Then $T$ is thin for $\Phi(f)$ since $j < k$.

We proceed to the formal details.

\medskip
\noindent \emph{Construction.} We proceed by stages. At stage $s$, we define an initial segment $f_s$ of $f$ on $[0,s]$. During the construction, we also define $j-1$ many sets $F_0,\ldots,F_{j-2}$ that will be used in the definition of $T$.

At stage $s = 0$ we set $f_0 = \emptyset$, and declare all colors $c < j$ \emph{valid}.

At stage $s > 0$, let $l \in \omega$ be such that we have already defined $F_i$ for each $i < l$. Call $s$ an \emph{action stage} if $l < j-1$, and if there exists a finite set $F \leq s$ and number $x \leq s$ such that
\begin{itemize}
\item $\bigcup_{i < l} F_i < F$;
\item $F$ is homogeneous for $\Phi(f_{s-1})$;
\item $f_{s-1}(x)$ is some currently valid color;
\item $\Psi(\bigcup_{i < l} F_i)(x) \uparrow$ and $\Psi(\bigcup_{i < l} F_i \cup F)(x) \downarrow = 1$.
\end{itemize}
In this case, let $F_l$ and $x_l$ be the least such $F$ and $x$, respectively, and declare the color $f_{s-1}(x)$ to no longer be valid. By induction, this leaves at least one valid color.

Regardless of whether $s$ is an action stage or not, we extend $f_{s-1}$ to $f_s$ by choosing the least color $c < j$ that is still valid, and letting $f_s(y) = c$ for all $y \leq s$ on which $f_{s-1}$ has not yet been defined.

\medskip
\noindent \emph{Verification.} It is clear that $f = \bigcup_s f_s$ is a computable $j$-coloring. We begin with an observation. Note that there can be no more than $j-1$ many action stages, since the number of $F_i$ defined at the start of such a stage must be fewer than $j-1$, and a new $F_i$ is then defined. So, let $l \leq j-1$ be the total number of action stages; we claim that $l = j-1$. Since the number of valid colors is reduced by one at every action stage, this implies that there is precisely one color that is permanently valid.

Before proving the claim, we define $T$. Since each $F_i$ for $i < l$ is homogeneous for $\Phi(f)$, it follows that $\Phi(f)$ assumes at most $l$ many colors on $\bigcup_{i < l} F_i$. Thus if $H > F_{l-1}$ is any infinite homogeneous set for $\Phi(f)$, then $\Phi(f)$ assumes at most $l+1 \leq j < k$ many colors on $\bigcup_{i < l} F_i \cup H$. It follows that $T = \bigcup_{i < l} F_i \cup H$ is thin for $\Phi(f)$.

Now to see the claim, let $t$ be $0$, or any action stage before the $(j-1)$st. Seeking a contradiction, suppose there is no action stage greater than $t$. In particular, all the $F_i$ for $i < l$ are defined at or before stage $t$. If $c < j$ is the least color still valid at the end of stage $t$, then all sufficiently large numbers are colored $c$ by $f$. Thus, since $T$ is an infinite thin set for $\Phi(f)$, it follows that $\Psi(T)$, being an infinite thin set for $f$, contains some number $x$ colored $c$ by $f$ on which $\Psi(\bigcup_{i < l} F_i)$ diverges. But now if $F$ is a sufficiently long initial segment of $H$ so that $\Psi(\bigcup_{i < l} F_i \cup F)(x) \downarrow = 1$, then any stage $s > t$ with $s \geq F$ and $s \geq x$ will be an action stage. The proof is complete.
\end{proof}

Since the circulation of a pre-print of the present article, Hirschfeldt and Jockusch have extended the above argument to all $n > 1$. Their proof will appear in~\cite{HJ-TA}.

\begin{theorem}[Hirschfeldt and Jockusch~\cite{HJ-TA}]
For all $n > 1$ and all $j,k \geq 2$ with $j < k$, we have $\TS^n_j \nred \TS^n_k$.
\end{theorem}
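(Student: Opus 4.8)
The plan is to carry out the adversary construction of Theorem~\ref{T:TS1_nonsquashing} one exponent up. Assume $\TS^n_j \red \TS^n_k$ and fix Turing functionals $\Phi,\Psi$ witnessing it; as in the proof of Proposition~\ref{P:qWWKL}, the recursion theorem lets us fix an index for the coloring we are about to build in advance, so that we need not carry it in the oracle for $\Psi$. We build a computable $f\colon[\omega]^n\to j$ by stages, maintaining a pool of \emph{valid} colors that begins as all of $j$ and only ever shrinks, and always coloring a newly appearing $n$-tuple (one with $\max\tuple{x}=s$ at stage $s$) with the least currently valid color; thus $f$ is eventually constant with value the color $c^\ast$ that stays valid forever. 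An \emph{action stage} looks, just as before, for a finite set $F$ lying above the finite sets $F_0<\cdots<F_{l-1}$ committed so far, together with an $n$-tuple $\tuple{x}$, so that $f_{s-1}(\tuple{x})$ is a currently valid color, $\Psi$ given $\bigcup_{i<l}F_i\cup F$ as oracle converges to $1$ on every element of $\tuple{x}$, and $\Psi$ given only $\bigcup_{i<l}F_i$ does not; on success we put $F_l=F$, $\tuple{x}_l=\tuple{x}$, record $c_l=f_{s-1}(\tuple{x}_l)$, and declare $c_l$ invalid. Exactly as in Theorem~\ref{T:TS1_nonsquashing}, one shows that $j-1$ action stages must occur: otherwise $f$ is eventually constant, the set $T$ described below is thin for $\Phi(f)$, so $\Psi(T)$ would have to be thin for $f$, and this produces an overlooked action stage.

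Given the $j-1$ action stages we have $F_0<\cdots<F_{j-2}$ and invalidated colors $c_0,\ldots,c_{j-2}$ which together with $c^\ast$ exhaust $j$. Adjoining an infinite set $H>F_{j-2}$ to form $T=\bigcup_{i<j-1}F_i\cup H$ and unwinding the reduction as in the exponent~$1$ case, each $\tuple{x}_l$ sits inside $\Psi(T)$ because $\bigcup_{i\le l}F_i$ is an initial segment of $T$, so $\Psi(T)$ realizes every $c_l$ under $f$; and being infinite, $\Psi(T)$ contains $n$-tuples of arbitrarily large minimum, all colored $c^\ast$. Hence $\Psi(T)$ realizes all $j$ colors and cannot be thin for $f$ --- the sought contradiction --- \emph{provided} $T$ is genuinely thin for $\Phi(f)$.

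Securing that proviso is where the exponent matters, and is the step I expect to be the real obstacle. For $n=1$ it is automatic: $[T]^1=T$ is a union of finitely many $\Phi(f)$-homogeneous sets, so it meets at most $j<k$ colors. For $n>1$, however, $[T]^n$ contains $n$-tuples straddling several of the blocks $F_i$ and the tail $H$, and the piecewise homogeneity says nothing about these ``mixed'' tuples, so the naive $T$ need not be thin. There seem to be two natural routes. One is to keep the blocks-plus-tail shape but insist that each $F_l$ and the tail $H$ be homogeneous not merely for $\Phi(f)$ but simultaneously for all of the finitely many parametrized restriction colorings $\tuple{y}\mapsto\Phi(f)(\vec p,\tuple{y})$ with $\vec p$ an increasing tuple drawn from the blocks already committed; this fixes the color of a mixed tuple from its trace on the finite part, after which one must run a careful color-accounting argument, exploiting the freedom still available in defining $f$ on the uncommitted tuples to steer $\Phi(f)$, so as to keep the colors appearing on $[T]^n$ from exhausting all $k$ of them. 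The other is to abandon the blocks-plus-tail shape and instead build the single set $T$ together with $f$ so that $T$ is outright homogeneous (hence trivially thin) for $\Phi(f)$, at the price of interleaving the two constructions --- each action stage then extends $T$ itself by enough new elements to force the relevant $\Psi$-convergences --- and of handling, by a priority argument on the target homogeneous color, the possibility that earlier commitments to $f$ preclude extending $T$ homogeneously in the desired color. Either way, meshing this control of $\Phi(f)$ on $[T]^n$ with the action-stage machinery (and re-checking that the overlooked action remains available in the ``$j-1$ actions must occur'' step) is the heart of the matter; it is carried out in~\cite{HJ-TA}.
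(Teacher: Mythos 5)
The paper does not actually contain a proof of this theorem: it is stated as a result of Hirschfeldt and Jockusch and deferred to \cite{HJ-TA}, with the paper itself only proving the $n=1$ case in Theorem~\ref{T:TS1_nonsquashing}. So there is no in-paper proof to compare against, and I can only assess your proposal on its own terms.

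Your diagnosis of the obstacle is exactly right and shows you understand why the $n=1$ argument does not lift verbatim: for $n>1$ the blocks-plus-tail set $T=\bigcup_{i<j-1}F_i\cup H$ contains $n$-tuples straddling several of the finitely many $\Phi(f)$-homogeneous pieces, and piecewise homogeneity says nothing about those mixed tuples, so the crucial claim that $T$ is thin for $\Phi(f)$ simply fails without further work. But what you have written is a correct diagnosis plus an honest IOU, not a proof. Both of your sketched repairs are left at the level of intent, and you yourself defer the decisive step to \cite{HJ-TA}. For the first route, you would have to show that at each action stage one can find a finite $F$ above the committed blocks that is simultaneously homogeneous for \emph{all} of the restriction colorings $\tuple{y}\mapsto\Phi(f)(\vec{p},\tuple{y})$ with $\vec{p}$ drawn from earlier blocks \emph{and} triggers the needed $\Psi$-convergences on a validly colored $\tuple{x}$, and then run a color-accounting argument bounding the number of $\Phi(f)$-colors realized on $[T]^n$ below $k$; none of that is carried out, and the count of restriction colorings grows with the number of blocks, so the bound is not automatic. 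For the second route, the priority argument for building $T$ outright homogeneous for $\Phi(f)$, while interleaving with the action-stage machinery and while only partially controlling $\Phi(f)$ through your choices for $f$, is entirely unspecified, including how to recover when an early commitment to $f$ blocks the homogeneous color you are aiming for. Until one of these routes is actually executed, the theorem remains unproved in your write-up.
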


\section{The Rainbow Ramsey's Theorem and measure}\label{S:RRT}

For our final results, we turn to the rainbow Ramsey's theorem and further connections with randomness.

\begin{definition}
Fix $n,k \geq 1$.
\begin{enumerate}
\item A coloring $f \colon [\omega]^n \to \omega$ is \emph{$k$-bounded} if for each $c \in \omega$, there are at most $k$ many $\tuple{x} \in [\omega]^n$ such that $f(\tuple{x}) = c$.
\item A set $S \subseteq \omega$ is a \emph{rainbow for $f$} if $f$ is injective on $[S]^n$.
\end{enumerate}
\end{definition}

\begin{statement}[Rainbow Ramsey's Theorem] Given $n,k \geq 1$, let $\RRT^n_k$ denote the statement every $k$-bounded $f \colon [\omega]^n \to \omega$ has an infinite rainbow. Let $\RRT^n_{<\infty}$ denote $(\forall k \geq 1)~\RRT^n_k$.
\end{statement}

Just as for $\RT^1_k$ and $\TS^1_k$, every computable instance of $\RRT^1_k$ has a computable solution.  However, in contrast to the situations for $\Seq\RT^1_k$ and $\Seq\TS^1_k$, every computable instance of $\Seq\RRT^1_k$ also has a computable solution.  In fact, we have the following stronger fact.

\begin{proposition}
Every computable instance of $\Seq\RRT^1_{<\infty}$ has a computable solution.
\end{proposition}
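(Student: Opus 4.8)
The plan is to produce a single Turing functional $\Gamma$ which, applied to \emph{any} coloring $f\colon[\omega]^1\to\omega$ that is $k$-bounded for some $k\geq 1$, returns an infinite rainbow for $f$. Since this $\Gamma$ will make no reference to the bound $k$, it handles $\RRT^1_{<\infty}$ uniformly; and since it is a fixed functional, uniform in an index for $f$, applying it coordinatewise to a sequence $\seq{f_i:i\in\omega}$ of instances settles the sequential form at once.

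The functional $\Gamma$ builds a rainbow $S=\{s_0<s_1<\cdots\}$ greedily. Put $s_0=0$. Having defined $s_0<\cdots<s_{n-1}$, let $s_n$ be the least $x>s_{n-1}$ with $f(x)\notin\{f(s_0),\ldots,f(s_{n-1})\}$. The only point requiring an argument is that this search always terminates, and this is exactly where boundedness is used: if $f$ is $k$-bounded, then each of the finitely many colors $f(s_0),\ldots,f(s_{n-1})$ has preimage of size at most $k$, so the set $\{x:f(x)\in\{f(s_0),\ldots,f(s_{n-1})\}\}$ is finite, and hence all sufficiently large $x>s_{n-1}$ witness convergence. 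In particular the procedure never needs to know $k$, which is why a single $\Gamma$ works for all bounded $f$ simultaneously.

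The set $S=\Gamma(f)$ is then infinite by construction, and it is a rainbow for $f$ because for $i<j$ we chose $s_j$ with $f(s_j)\neq f(s_i)$; moreover $S$ is computable from $f$, uniformly in an index. Consequently, given a computable instance $\seq{f_i:i\in\omega}$ of $\Seq\RRT^1_{<\infty}$ — so each $f_i$ is $k_i$-bounded for some $k_i$ — the sequence $\seq{\Gamma(f_i):i\in\omega}$ is computable and is a solution. There is no real obstacle here; the one substantive line is the termination of the greedy search, which uses only that each coloring in the sequence is bounded. It is worth contrasting this with $\RT^1_k$ and $\TS^1_k$, where the naive greedy analogue fails because one must commit to a single color for the homogeneous (respectively thin) set, and where in the sequential setting it is precisely this one non-uniform bit per column that is exploited to code $\emptyset'$ in Lemma~\ref{L:jump_coding} and Corollary~\ref{C:coding_into_seqTSnk}. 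For rainbows no such choice arises, which is why $\Seq\RRT^1_{<\infty}$ remains computably solvable.
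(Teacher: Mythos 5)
Your proof is correct and follows essentially the same greedy construction as the paper: recursively pick each next element to have a color distinct from all colors used so far, with termination guaranteed because a bounded coloring has finite preimages of any finite set of colors. You spell out the uniformity in an index a bit more explicitly, but the underlying argument is the same.
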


\begin{proof}
Let $\seq{f_i : i \in \omega}$ be a computable instance of $\Seq\RRT^1_{<\infty}$.  We then have that for each $i$ and each $c$, the set $\{x \in \omega : f_i(x) = c\}$ is finite.  From this it follows that for each $i$ and each \emph{finite} set $C \subseteq \omega$, the set $\{x \in \omega : f_i(x) \in C\}$ is finite.  We can now define a computable sequence $\seq{A_i : i \in \omega}$ by choosing the elements of each $A_i$ recursively so that the color of a new element is distinct from all previous elements already chosen to be $A_i$.
\end{proof}

\begin{theorem}[Csima and Mileti~\cite{CM-2009}, Theorem 3.10]\label{T:Csima_Mileti}
For all $k \geq 1$, if $X \subseteq \omega$ is $2$-random then every computable $k$-bounded coloring $f \colon [\omega]^2 \to \omega$ has an infinite $X$-computable rainbow.
\end{theorem}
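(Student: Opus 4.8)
The plan is to prove the theorem directly, for arbitrary $k$, by describing an explicit procedure that, using an oracle $X$, attempts to build an infinite rainbow for $f$, and then showing by a measure estimate that the procedure succeeds whenever $X$ is $2$-random. Fix a computable $k$-bounded coloring $f\colon[\omega]^2\to\omega$. The rainbow is constructed as an increasing sequence $a_0<a_1<\cdots$, where at stage $n$ the element $a_n$ is selected from a long interval $[a_{n-1}+1,\,a_{n-1}+2^{h(n)})$ using the next $h(n)$ bits of $X$ to make a (roughly) uniform choice among the admissible values in that interval, for a suitably fast-growing computable $h$ to be specified. Maintaining a finite set of ``forbidden'' numbers along the way, the procedure either runs forever, producing an infinite $X$-computable rainbow, or \emph{stalls} at some stage because no admissible value remains; the whole argument then reduces to bounding the measure of the set of oracles for which it stalls.

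First I would analyze which values must be forbidden. Suppose $a_0<\cdots<a_{n-1}$ have been chosen and form a rainbow. There are two ways adding $a_n$ could create a monochromatic pair of pairs. The first is $f(a_i,a_n)=f(\mathbf y)$ for some $i<n$ and some $\mathbf y\in[\{a_0,\dots,a_{n-1}\}]^2$: for each such $i,\mathbf y$, $k$-boundedness allows at most $k-1$ offending values of $a_n$, so the set of values to forbid for this reason is \emph{finite} and computable from $a_0,\dots,a_{n-1}$. The second is $f(a_i,a_n)=f(a_j,a_n)$ for some $i<j<n$; here $a_n$ must avoid the set $C_{i,j}:=\{x:f(a_i,x)=f(a_j,x)\}$, which is computable but may be infinite, and it is this second obstruction that can force a stall. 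The easy but essential observation is that forbidding exactly these two kinds of values suffices: if $a_n$ avoids them and all previously forbidden values, then $\{a_0,\dots,a_n\}$ is again a rainbow, and the choice of later elements is never retroactively invalidated (any future monochromatic pair of pairs, analyzed at the stage where its larger element enters, would have been one of the two forbidden types). So failure can only occur as a stall.

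The core of the argument is the combinatorial estimate controlling the $C_{i,j}$. The key inequality, a consequence of $k$-boundedness, is that for any fixed $t$ and length $L$,
\[
\sum_{0\le u<v\le t}\bigl|\{x\in[t,\,t+L):f(u,x)=f(v,x)\}\bigr|\ \le\ \tfrac12(k-1)\,t\,L,
\]
obtained by fixing $x$ in the interval and bounding, via $k$-boundedness, the number of pairs $u<v\le t$ with $f(u,x)=f(v,x)$. Averaging over the $\binom{t+1}{2}$ pairs shows that only an $O_k(1/t)$ fraction of pairs $(u,v)$ have $\{x\in[t,t+L):f(u,x)=f(v,x)\}$ occupying more than a fixed small fraction of the interval. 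Since, beyond the finite collision-obstructions handled above, the only way to stall is for $\bigcup_{i<j<n}C_{i,j}$ to exhaust the stage-$n$ interval, I would use this density estimate — together with the freedom to take $h$ growing fast and the intervals long, and the fact that each $a_i$ is chosen essentially uniformly over a long interval independently of the others — to bound, for each pair $i<j$ and each later stage $n$, the probability that $C_{i,j}$ is dense in the stage-$n$ interval, and then sum these over all pairs and stages to make the total measure of the stall event at most any prescribed $\varepsilon>0$. Because the bookkeeping needed to place the intervals and verify this estimate refers to facts about the $C_{i,j}$ of complexity $\Sigma^0_2$ (such as whether a given $C_{i,j}$ is eventually sparse), the relevant null set of bad oracles is effectively null relative to $\emptyset'$; running this for $\varepsilon=2^{-m}$, $m\in\omega$, then yields a $\emptyset'$-Martin-L\"of test whose complement every $2$-random $X$ enters, and for such $X$ the procedure never stalls and outputs an infinite rainbow computable from $X$.

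I expect the main obstacle to be not any single combinatorial fact but the measure-theoretic bookkeeping: choosing the growth rate of $h$, placing the intervals, and using $\emptyset'$ to steer around the infinite $C_{i,j}$'s so that the union bound over the infinitely many pairs $i<j$ actually converges. The reduction of ``failure'' to ``stall'', the verification that the two families of forbidden values preserve the rainbow property, and the counting lemma itself should all be routine; making the probabilistic estimate summable, uniformly in the target $\varepsilon$, is the delicate part.
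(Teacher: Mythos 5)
Your outline captures several of the right ideas, and they are indeed the combinatorial core of Csima and Mileti's argument: the split of new collisions into the finitary kind ($f(a_i,a_n)$ hitting a color used by a pair among $a_0,\dots,a_{n-1}$) and the potentially infinitary kind ($a_n\in C_{i,j}:=\{x:f(a_i,x)=f(a_j,x)\}$), and the $k$-boundedness counting lemma, whose cleaner local form is that for every fixed $u$ and every interval $I$ one has $\sum_{v\ne u}|C_{u,v}\cap I|\le(k-1)|I|$, so only $O_k(1/\delta)$ values $v$ can make $C_{u,v}$ occupy a $\delta$-fraction of $I$. Your plan, however, departs from Csima and Mileti's in organization — a direct oracle-driven construction rather than a $\emptyset'$-computable subtree of $2^{<\omega}$ of positive measure — and this reorganization conceals a genuine gap.

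With a \emph{computably} placed sequence of intervals $I_n$, your procedure is a Turing functional in $X$ alone and the stall event is $\Sigma^0_1$ uniformly in the growth parameter $m$. If the measure of the stall set were really below $2^{-m}$, these sets would form a plain Martin-L\"of test and you would have proved that every $1$-random computes a rainbow. That is false: Csima and Mileti also show there is a computable $2$-bounded coloring of pairs with no $\Delta^0_2$ infinite rainbow, so the $\Delta^0_2$ $1$-random $\Omega$ computes no rainbow for it. Hence the measure bound cannot go through with purely computable bookkeeping, and the gap must lie in the estimate.

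The failure is the union bound over future stages. Once $a_i$ and $a_j$ are fixed, $C_{a_i,a_j}$ is a fixed set whose density in $I_n$ need not decrease with $n$ — it can even be cofinite — and the expected-density bound you get from the counting lemma (over the random choice of $a_j$ from $I_j$) is roughly $(k-1)/|I_j|$, a quantity independent of $n$. To prevent a stall at stage $n$ you need $C_{a_i,a_j}$ to have density below some $\delta_n$ with $\binom{n}{2}\delta_n<1/2$, forcing $\delta_n\lesssim n^{-2}$, and Markov then gives
\[
\Pr\bigl[\text{density of }C_{a_i,a_j}\text{ in }I_n\ge\delta_n\bigr]\ \lesssim\ \frac{n^{2}(k-1)}{|I_j|},
\]
which diverges when summed over $n>j$ no matter how fast $|I_j|$ grows. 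Identifying which $v$ make $C_{a_i,v}$ \emph{permanently} sparse, or at what level a chosen $C_{a_i,a_j}$ settles down, is a genuine $\Pi^0_2$ matter, and this is precisely what $\emptyset'$ is used for in Csima and Mileti's construction: $T_f$ is built with oracle $\emptyset'$ so that the branching at each level respects these $\Pi^0_2$ facts, yielding a $\Pi^0_1(\emptyset')$ class of positive measure, through which every $2$-random computes a path. Your remark that the bookkeeping ``refers to facts about the $C_{i,j}$ of complexity $\Sigma^0_2$'' points at the right obstruction, but the procedure you describe remains computable in $X$ and so cannot exploit it; you need the $\emptyset'$ in the construction of the tree (or equivalently, of the test), not merely in the verification.
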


The proof of this theorem proceeds by constructing a $\emptyset'$-computable subtree $T$ of $2^{<\omega}$ of positive measure, each infinite path through which computes an infinite rainbow for $f$. This proof is very nearly uniform®. The tree $T$ can be obtained uniformly $\emptyset'$-computably from an index for $f$, and the reduction from the infinite paths through $T$ to the infinite rainbows for $f$ is uniform as well. The only non-uniformity stems from the way $2$-random sets pick out infinite paths through $T$. We begin by showing that this non-uniformity is essential.

For each $i \in \omega$ and each bounded coloring $f \colon [\omega]^2 \to \omega$, let
\[
\mathcal{S}_{f,i} = \{ S \subseteq \omega : \Phi_i(S) \textrm{ is an infinite rainbow for } f\}.
\]
Let $\mu$ denote the uniform measure on Cantor space.

\begin{proposition}\label{P:Csima_Mileti_non-uniform}
There is no computable function $h$ such that for all $i \in \omega$ and all $2$-random $R \subseteq \omega$, if $\Phi_i$ is a $2$-bounded coloring $[\omega]^2 \to \omega$ then $\Phi_{h(i)}(R)$ is an infinite rainbow for $f$.
\end{proposition}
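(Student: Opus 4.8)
The plan is to assume such an $h$ exists and reach a contradiction by a measure-theoretic diagonalization. First I would use the recursion theorem to obtain an index $e$ for the computable $2$-bounded coloring $f = \Phi_e$ that I am about to build, so that throughout the construction the reduction $\Psi := \Phi_{h(e)}$ is available. Granting the construction, applying the hypothesis with $i = e$ (note that $\Phi_e$ will indeed be $2$-bounded) yields that $\Psi(R)$ is an infinite rainbow for $f$ for \emph{every} $2$-random $R$. The construction will instead arrange that $\mathcal{A} = \{R : \Psi(R)\text{ is an infinite rainbow for }f\}$ has measure $0$; since the $2$-random reals form a set of measure one, this is the desired contradiction. (There appears to be a typo in the statement: the final ``$f$'' should read ``$\Phi_i$''.)

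The construction builds $f$ in stages, driven by the elementary fact that, since $f$ must be $2$-bounded, giving one common fresh color to two distinct pairs $(a,b)$ and $(c,d)$ forbids $\{a,b,c,d\}$ from lying inside any rainbow for $f$. The key move, for a finite string $\tau$: once the computation $\Psi(\tau)$ has, by the current stage, converged to value $1$ on four numbers $a<b<c<d$ all exceeding a ``reservation threshold'' $N$ recording every number already used in a coloring decision, I link $(a,b)$ and $(c,d)$ with a brand-new color and then raise $N$ past $d$; all still-undefined pairs below $N$ receive private fresh colors, and colors are never reused, so $f$ stays $2$-bounded and becomes total and computable in the limit. Since converged values of $\Psi$ on $\tau$ persist for every oracle extending $\tau$, after this step every $R\in[\tau]$ satisfies $\{a,b,c,d\}\subseteq\Psi(R)$ with $f(a,b)=f(c,d)$, so $\Psi(R)$ is not a rainbow for $f$; we then say $[\tau]$ is \emph{handled}. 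At each stage I handle (an as-yet-unhandled refinement of) the highest-priority string that currently looks active, raising the threshold after every handling so that distinct linkings use pairwise disjoint quadruples.

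For the verification, note first that every handled real lies outside $\mathcal{A}$, and that $\mathcal{A}\subseteq\bigcap_N I_N$, where $I_N=\{R : \text{some prefix of }R\text{ has seen }\Psi\text{ converge on four elements exceeding }N\}$ is open with $I_N\supseteq I_{N+1}$; thus $\mathcal A$ is contained in $\bigl(\bigcap_N I_N\bigr)\setminus O$, where $O$ is the final handled region. It therefore suffices to show that, for every $N$ and every $\varepsilon>0$, the bookkeeping eventually handles a prefix-free subfamily of cylinders inside $I_N$ of total measure at least $\mu(I_N)-\varepsilon$: this forces $\mu\bigl(O\cap\bigcap_N I_N\bigr)\ge\mu\bigl(\bigcap_N I_N\bigr)$, hence $\mu(\mathcal A)=0$. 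Carrying this out is the crux of the proof, and is the step I expect to fight with: one must dovetail the handling of all candidate cylinders while the reservation threshold climbs, and when a high-priority handling bumps the threshold past a lower-priority cylinder's reserved quadruple, that lower-priority requirement must restart --- looking for an even larger quadruple along a subbranch, which exists precisely because $\Psi(R)$ is infinite for the $R$'s in $I_N$ we care about. Verifying that each requirement nonetheless succeeds (cumulatively) in the limit is a priority-style argument; the recursion-theorem setup, the $2$-boundedness and totality of $f$, and the final measure computation are all routine.
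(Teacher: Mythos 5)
Your plan has a gap exactly where you flag it: the measure-zero argument, which you call ``the crux'' and ``the step I expect to fight with,'' is not carried out and is considerably harder than the rest of the sketch. You would need to dovetail the handling of infinitely many cylinders while the reservation threshold climbs, deal with restarts when a lower-priority requirement's reserved quadruple is overrun, and actually prove that the handled region catches up in measure to all of $\bigcap_N I_N$. As written this is a plan for a fusion construction, not a proof.

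The more important point is that the measure-zero target is overkill. The hypothesis asserts that $\Psi(R)$ is a rainbow for \emph{every} $2$-random $R$, and the $2$-randoms have measure one, so it suffices to make $\mathcal{A} = \{R : \Psi(R)\text{ is an infinite rainbow for }f\}$ have measure \emph{strictly less than} $1$: the complement then has positive measure and so contains a $2$-random, which defeats $h$. With this weaker target your construction collapses to a single step. Search for the least $\sigma$ with $\Psi(\sigma)(x)\downarrow = \Psi(\sigma)(y)\downarrow = 1$ for some $x < y$ (two points, not four, suffice because you can exploit the infinitude of the putative rainbow); once found, set $f(x,s) = f(y,s) = \seq{x,s}$ for every $s$ beyond the current stage, which stays $2$-bounded. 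Then every $R \succeq \sigma$ has $x,y \in \Psi(R)$, and since $f(x,s) = f(y,s)$ for a tail of $s$, no infinite set containing both $x$ and $y$ is a rainbow; hence $\mathcal{A} \cap [\sigma] = \varnothing$ and $\mu(\mathcal{A}) \le 1 - 2^{-|\sigma|} < 1$. If no such $\sigma$ ever appears, no $\Psi(R)$ is even infinite and $\mathcal{A}$ is trivially empty. This is the paper's argument; its recursion-theorem bookkeeping differs cosmetically (it builds a uniform family $f_i$ designed to defeat $\Phi_i$ and takes a fixed point with $\Phi_{h(g(i))} = \Phi_i$, rather than folding the self-reference directly into the construction as you do), but the two schemes are interchangeable.
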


\begin{proof}
First, fix any $i \in \omega$. Let $w(i)$ be the least $\sigma \in 2^{<\omega}$, if one exists, such that
\begin{equation}\label{E:convergence}
\Phi_i(\sigma)(x) \downarrow = \Phi_i(\sigma)(y) \downarrow = 1
\end{equation}
for some $x < y$. Then, define a coloring $f_i \colon [\omega]^2 \to \omega$ by stages, as follows. At stage $s$, we define $f_i$ on $[0,s) \times \{s\}$. If $w(i)$ has not yet converged, let $f_i(z,s) = \seq{z,s}$ for all $z < s$. Otherwise, choose the least $x < y < s$ satisfying \eqref{E:convergence} above for $\sigma = w(i)$, and define
\[
f_i(z,s) =
\begin{cases}
\seq{x,s} & \text{if } z = x \text{ or } z = y,\\
\seq{z,s} & \text{else,}
\end{cases}
\]
for all $z < s$.

Clearly, $f_i$ is $2$-bounded for each $i$. Moreover, if there exists an $S \subseteq \omega$ such that $\Phi_i(S)$ is an infinite rainbow for $f_i$, then $w(i)$ is defined. Say $w(i) = \sigma$. Then for the least $x < y$ satisfying \eqref{E:convergence}, we have $f_i(x,s) = f_i(y,s)$ for all sufficiently large $s$, so $x$ and $y$ cannot belong to any infinite rainbow for $f_i$. In particular, if $S \succeq \sigma$ then $\Phi_i(S)$ is not such a rainbow for $f_i$. It follows that
\begin{equation}\label{E:measure}
\mu(\mathcal{S}_{f_i,i}) \leq 1 - 2^{-|\sigma|} < 1.
\end{equation}

Now note that $f_i$ is uniformly computable in $i$. So let $g$ be a computable function such that $f_i = \Phi_{g(i)}$ for all $i$. Seeking a contradiction, suppose a function $h$ as in the statement exists. By the recursion theorem, we may fix an $i \in \omega$ such that $\Phi_{h(g(i))}(S) = \Phi_i(S)$ for all $S \subseteq \omega$. In particular, $\mathcal{S}_{f_i,h(g(i))} = \mathcal{S}_{f_i,i}$, so by assumption, $\mathcal{S}_{f_i,i}$ contains all $2$-random subsets of $\omega$. But since the set of all $2$-random subsets of $\omega$ has measure $1$, this contradicts \eqref{E:measure}.
\end{proof}

We wish to know whether Theorem~\ref{T:Csima_Mileti} carries over to $\omega$ applications, i.e., whether every computable instance of $\Seq\RRT^2_k$ also has a solution computable in each $2$-random. By the preceding proposition, the most direct way of obtaining this fails, as the theorem cannot be proved uniformly. Nevertheless, we are able to give an affirmative answer to the question.

\begin{theorem}\label{T:SeqRRT_2-randoms}
If $k \geq 1$ and $X \subseteq \omega$ is $2$-random, then every computable instance of $\Seq\RRT^2_k$ has an $X$-computable solution.
\end{theorem}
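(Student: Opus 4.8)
The plan is to establish a uniform, sequential, measure-amplified strengthening of Theorem~\ref{T:Csima_Mileti}, from which the theorem follows by a routine argument about $\emptyset'$-Martin-L\"of tests. Given a computable instance $\seq{f_i : i \in \omega}$ of $\Seq\RRT^2_k$, I would construct a uniformly $\emptyset'$-computable sequence of subtrees $\seq{T^{(N)} : N \in \omega}$ of $2^{<\omega}$ together with a single Turing functional $\Lambda$ such that (a) $\mu([T^{(N)}]) \geq 1 - 2^{-N}$ for every $N$, and (b) for every $N$, every path $P$ through $T^{(N)}$, and every $i$, the set $\Lambda(N,i,P)$ is an infinite rainbow for $f_i$. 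Granting this, the sets $2^\omega \setminus [T^{(N)}]$ are uniformly $\Sigma^0_1(\emptyset')$ with $\mu(2^\omega \setminus [T^{(N)}]) \leq 2^{-N}$, so they constitute a $\emptyset'$-Martin-L\"of test; since $X$ is $2$-random, i.e.\ Martin-L\"of random relative to $\emptyset'$, there is an $N_0$ with $X \in [T^{(N_0)}]$, and then $\seq{\Lambda(N_0,i,X) : i \in \omega}$ is an $X$-computable solution to $\seq{f_i : i \in \omega}$. The number $N_0$ depends on $X$, but this is harmless since we only need the existence of an $X$-computable solution, not a uniform reduction.

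The construction of $\seq{T^{(N)}}$ would re-run the Csima--Mileti argument for all of the $f_i$ at once, in disjoint blocks of coordinates, with extra padding so that the measure can be driven as close to $1$ as desired. Recall that the proof of Theorem~\ref{T:Csima_Mileti} builds an infinite rainbow for a single computable $k$-bounded coloring by extending a finite rainbow one element at a time, the only difficulty being to avoid continuations that are doomed to produce a future collision; $k$-boundedness bounds the measure of dangerous continuations at each stage and $\emptyset'$ is used to locate them, so that ``most'' continuations are safe, which is what gets packaged as a $\emptyset'$-computable tree of positive measure. For the sequential version I would dedicate the $i$-th block of coordinates of a potential path to building a rainbow for $f_i$, delete from $T^{(N)}$ exactly the $\emptyset'$-located dangerous continuations, and in addition thin the branching of block $i$ by a $2^{-(N+i+n+c)}$ fraction at level $n$, so that $\mu([T^{(N)}]) \geq \prod_i (1 - 2^{-(N+i+c')}) \geq 1 - 2^{-N}$ for suitable constants $c, c'$. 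Since only finitely much is removed at each level and everything removed is identified uniformly $\emptyset'$-computably from $(N,i,n)$, the sequence $\seq{T^{(N)}}$ is uniformly $\emptyset'$-computable, and since each block's surviving bits directly encode the $f_i$-rainbow being built (with the bad continuations already pruned), the read-off functional $\Lambda$ is uniform in $i$ and $N$. This is consistent with Propositions~\ref{P:Csima_Mileti_non-uniform} and~\ref{P:nonuniform_trees}: the uniformity obtained is uniformity \emph{given a path through $T^{(N)}$}, and that path already carries the $\emptyset'$-gathered, $f_i$-specific pruning data, so we neither contradict the non-uniformity of extracting a rainbow from a $2$-random alone, nor blow up an opaque tree after the fact.

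The main obstacle is precisely to carry out the measure amplification \emph{inside} the rainbow-building construction rather than after it: one cannot first produce a Csima--Mileti tree and then enlarge it, since Proposition~\ref{P:nonuniform_trees} forbids a uniform version of the blow-up of Proposition~\ref{P:uniform_trees}. Thus one must check that padding does not interfere with the combinatorics --- that after reserving a $(1 - 2^{-(N+i+c')})$-fraction of the measure in block $i$, there are still enough safe continuations at every level for the tree to remain infinite and for the rainbow in block $i$ to grow without bound --- and that the padded family of dangerous sets is genuinely uniformly $\Sigma^0_1(\emptyset')$ in $(N,i,n)$ with the claimed measure bounds. In spirit this parallels the non-amplification results for $\Seq\TS^1_\omega$ proved earlier in this section, where sequentialization does not push the complexity of solutions past $\emptyset'$, and I expect the bookkeeping verifying that the amplified construction still succeeds to be the bulk of the work.
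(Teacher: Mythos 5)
Your proposal takes a genuinely different route from the paper. The paper's argument is a very brief adaptation of the Csima--Mileti construction: interleave the work on the various $f_i$ across the levels of a \emph{single} $\emptyset'$-computable tree $T_f$ of positive measure (working on $f_i$ at level $\langle i,n\rangle$), and then cite the Kučera-style fact (their ``Proposition 3.9'') that every 2-random computes a path through any $\emptyset'$-computable tree of positive measure. You instead build a \emph{family} of trees $T^{(N)}$ with $\mu([T^{(N)}]) \geq 1-2^{-N}$ in disjoint blocks of coordinates, observe that the complements $2^{\omega}\setminus[T^{(N)}]$ form a $\emptyset'$-Martin-L\"of test, and conclude that any 2-random $X$ lies in some $[T^{(N_0)}]$. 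Both routes are sound in principle, and your awareness that the amplification must happen \emph{inside} the construction (so as not to collide with Proposition~\ref{P:nonuniform_trees}) and that the resulting non-uniformity in $N_0$ is harmless are exactly the right observations.

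The genuine gap is precisely the step you flag and defer: driving the measure of the Csima--Mileti tree arbitrarily close to $1$, uniformly in $N$, with per-block control tight enough that the infinite product $\prod_i (1 - 2^{-(N+i+c')})$ clears $1-2^{-N}$, while still keeping enough safe continuations at every level for each block's rainbow to grow unboundedly. This is not automatic from the original construction (which only asserts positive measure), and your proposal does not carry out the verification that the $k$-boundedness estimates, the $\emptyset'$-located ``doomed continuations,'' and the inflated block sizes cohere. As presented, the argument relies on the bulk of its technical content being ``bookkeeping'' that is claimed plausible but not done. By contrast, the paper's route sidesteps the issue entirely: once the combined tree has \emph{some} positive measure, the relativized Kučera lemma does the rest, so no measure amplification or explicit Martin-L\"of test is needed. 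If you wish to pursue your version, the thing to actually prove is the quantitative lemma ``given a normal $k$-bounded coloring $f$, for every rational $\epsilon > 0$ one can uniformly $\emptyset'$-compute a pruned tree of measure $\geq 1-\epsilon$ whose paths uniformly compute infinite rainbows for $f$''; you would also need to verify that the dedicated-blocks bookkeeping (rather than the paper's level interleaving) actually yields a $\Pi^0_1(\emptyset')$ class, so the complements are genuinely $\Sigma^0_1(\emptyset')$ uniformly in $N$.
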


\begin{proof}
This is a small adaptation of the proof of Theorem 3.10 in~\cite{CM-2009}, and we refer to results in that article.  Let $f = \seq{f_i : i \in \omega}$ be a computable instance of $\Seq\RRT^2_k$, so $f_i \colon [\omega]^2 \to \omega$ is $k$-bounded for all $i$.  The proof of Proposition 3.3 is uniform, so we may assume that each $f_i$ is normal.  When defining $\varphi_f$ and $T_f$ in Definition 3.7 and Definition 3.8, instead interleave the process of working on the various $f_i$ across the levels of the tree, i.e.,~at level $\langle i,n \rangle$, work on the function $f_i$.  Proposition 3.9 still applies so that any $2$-random $X$ will compute a path through this combined tree, and any such path computes a solution to $\seq{f_i : i \in \omega}$.
\end{proof}

To translate the above result into the language of Weihrauch reducibility or the language of reverse mathematics, we first need to isolate the assertion of the existence of 2-randoms as a $\Pi^1_2$ principle. To do so in a formal setting takes some care, since this is intrinsically a statement about paths through non-computable trees. A detailed account of this and associated difficulties in the specific context of second-order arithmetic is presented in Avigad, Dean, and Rute \cite[Section 3]{ADR-2012}, where they also introduce the principle $2\mhyphen\WWKL$ as one possible formalization (not to be confused with the principle $q\mhyphen\WWKL$ discussed in Section~\ref{S:WWKL}). Here we shall content ourselves with the informal definition below (which agrees with theirs in $\omega$-models) and refer the reader to their paper for technical details.

\begin{statement}[$2\mhyphen\WWKL$]
For every set $X$, there is a $2$-random set relative to $X$.
\end{statement}

\noindent 

Thus, after relativization and translation into the language of Weihrauch reducibility, we obtain from Theorem \ref{T:SeqRRT_2-randoms} the following:

\begin{corollary}
For each $k \geq 1,$ $\Seq\RRT^2_k \red 2\mhyphen\WWKL$ and $\RCA \vdash 2\mhyphen\WWKL \rightarrow \Seq\RRT^2_k$.
\end{corollary}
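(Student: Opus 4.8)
The plan is to obtain both halves of the corollary as the routine ``relativization and translation'' of Theorem~\ref{T:SeqRRT_2-randoms}, so the mathematical content is essentially already in place. The first step is to state and verify the relativized form of that theorem: for every set $A$, if $X$ is $2$-random relative to $A$, then every $A$-computable instance $\seq{f_i : i \in \omega}$ of $\Seq\RRT^2_k$ has a solution computable from $A \oplus X$, and, crucially, this solution is produced by a single Turing functional applied to $A \oplus X$ with an $A$-index for $\seq{f_i : i \in \omega}$ as a numerical parameter. This is obtained by rerunning the proof of Theorem~\ref{T:SeqRRT_2-randoms} (itself an adaptation of \cite[Theorem~3.10]{CM-2009}) with every computability notion relativized to $A$: normalizing the $f_i$ and building the combined, level-interleaved tree $T_f$ now happens $A'$-computably and uniformly in an $A$-index for the instance; any $X$ that is $2$-random relative to $A$ still computes an infinite path through $T_f$; and such a path codes the required sequence of rainbows in a way that is extracted computably. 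All of these steps are uniform, which is what yields the claimed functional.

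For the reduction $\Seq\RRT^2_k \sred 2\mhyphen\mathsf{RAN}$, I would let $\Phi$ send an instance $\seq{f_i : i \in \omega}$ of $\Seq\RRT^2_k$ to a set $A$ that codes it (and from which an $A$-index for it is recoverable). Since every set is an instance of $2\mhyphen\mathsf{RAN}$, $A$ is such an instance; on the naive reading where a solution is just a set $X$ that is $2$-random relative to $A$, the backward functional cannot see $\seq{f_i : i \in \omega}$ and one obtains only $\red$, but the formalization of $2\mhyphen\mathsf{RAN}$ in second-order arithmetic (see \cite[Section~3]{ADR-2012}) naturally packages $A$ together with the $2$-random-relative-to-$A$ set $X$, so that a solution is Turing equivalent to $A \oplus X$. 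The functional $\Psi$ then recovers $\seq{f_i : i \in \omega}$ from $A$ and applies to $A \oplus X$ the functional furnished by the relativized version of Theorem~\ref{T:SeqRRT_2-randoms}, producing a solution to $\seq{f_i : i \in \omega}$. Since $\Psi$ consults only the $2\mhyphen\mathsf{RAN}$-solution and not the original instance separately, this gives the strong reduction $\sred$; this is exactly why it matters that the $2\mhyphen\mathsf{RAN}$-solution retains a copy of the oracle $A$.

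For the implication $\RCA \vdash 2\mhyphen\mathsf{RAN} \to \Seq\RRT^2_k$, I would argue inside an arbitrary model of $\RCA + 2\mhyphen\mathsf{RAN}$: given an instance $\seq{f_i : i \in \omega}$, fix a set $A$ in the model relative to which it is computable, apply $2\mhyphen\mathsf{RAN}$ to obtain $X$ that is $2$-random relative to $A$, and then formalize the relativized construction above to obtain, by $\Delta^0_1$-comprehension from $A \oplus X$, a sequence of infinite rainbows, one for each $f_i$. The two places that need genuine care rather than bookkeeping are: checking that the combinatorial heart of the Csima--Mileti construction — the definition of $T_f$ and the verification that a $2$-random-relative-to-$A$ set computes a path through it — goes through over $\RCA$, which should require only a modest amount of induction to handle the interleaving of the $f_i$ across the levels of $T_f$; and pinning down the second-order formulation of ``$2$-random relative to $A$'', for which I would not give a self-contained account but appeal to \cite[Section~3]{ADR-2012}, whose formalization of $2\mhyphen\mathsf{RAN}$ agrees with the informal one on $\omega$-models and is what makes the $\RCA$ deduction precise. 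I expect this formalization of randomness, rather than any part of the combinatorics, to be the main obstacle.
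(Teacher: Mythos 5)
Your approach matches the paper's intent: the paper itself offers no proof, stating only that the corollary follows from Theorem~\ref{T:SeqRRT_2-randoms} ``after relativization and translation into the language of Weihrauch reducibility,'' and relativizing that theorem is exactly what you do. Where you add genuine value is in noticing that the \emph{strong} Weihrauch reduction is not automatic: if a solution to an instance $A$ of $2\mhyphen\mathsf{RAN}$ were only a set $Y$ that is $2$-random relative to $A$, then $\Psi(Y)$ could not in general recover $A$ (such a $Y$ typically does not compute $A$), and one would obtain only $\red$, since the Csima--Mileti extraction procedure genuinely needs the instance as an oracle. Your resolution—relying on the Avigad--Dean--Rute formalization of $2\mhyphen\mathsf{RAN}$ to bundle the oracle into the solution—is plausible and is the only way $\sred$ can hold, but it is a load-bearing assertion about a definition you do not reproduce; you should verify that their formalization does in fact make a $2\mhyphen\mathsf{RAN}$-solution compute the instance $A$ (for example by carrying along a name for the relevant $\Sigma^0_2(A)$ test and the oracle $A$ itself). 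If it does not, the corollary as stated should read $\red$ rather than $\sred$. On the $\RCA$ side, you are right that the formalization of relative $2$-randomness is the real obstacle; you might also note that the tree $T_f$ in the Csima--Mileti argument is defined using an $A'$-recursion, and since $A'$ need not exist as a set in a model of $\RCA + 2\mhyphen\mathsf{RAN}$, the construction must be recast in terms of $\Sigma^0_2(A)$ approximations, which is precisely what the ADR machinery is designed to handle—so deferring to \cite[Section~3]{ADR-2012} is appropriate, but the deferral covers more than just the definition of randomness.
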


The following consequence complements a result of Conidis and Slaman \cite[Theorem 2.1 and Corollary 4.2]{CS-2013} that $\RRT^2_k$ does not imply $\mathsf{B}\Sigma^0_2$ over $\RCA$.

\begin{corollary}
For each $k \geq 1$, $\RT^1_2 \nred \RRT^2_k$.
\end{corollary}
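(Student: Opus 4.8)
The plan is to argue by contradiction, leveraging the sequential form of $\RT^1_2$. Suppose $\RT^1_2 \red \RRT^2_k$ for some $k \geq 1$. The first move is to pass to sequential versions: by Proposition~\ref{P:unif_imlies_seq_unif}(2) this gives $\Seq\RT^1_2 \red \Seq\RRT^2_k$. This is the crucial step, since although computable instances of $\RT^1_2$ trivially have computable solutions, Lemma~\ref{L:jump_coding} (with $n=1$) furnishes a computable instance of $\Seq\RT^1_2$ every solution of which computes $\emptyset'$. On the other side, Theorem~\ref{T:SeqRRT_2-randoms} tells us that every computable instance of $\Seq\RRT^2_k$ has a solution computable from each $2$-random set (equivalently, one may invoke the immediately preceding corollary, that $\Seq\RRT^2_k \sred 2\mhyphen\mathsf{RAN}$).

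Carrying the reduction through, I would fix Turing functionals $\Phi,\Psi$ witnessing $\Seq\RT^1_2 \red \Seq\RRT^2_k$ and let $\vec f = \seq{f_i : i \in \omega}$ be the computable instance of $\Seq\RT^1_2$ produced by Lemma~\ref{L:jump_coding}. Then $\Phi(\vec f)$ is a computable instance of $\Seq\RRT^2_k$, so by Theorem~\ref{T:SeqRRT_2-randoms} it has a solution $\vec T$ computable from any chosen $2$-random set $X$. Since $\vec f$ is computable, $\Psi(\vec f \oplus \vec T)$ is a solution to $\vec f$ that is computed by $X$. But every solution to $\vec f$ computes $\emptyset'$, so $X$ computes $\emptyset'$ --- contradicting the standard fact that no $2$-random set computes $\emptyset'$ (see, e.g., \cite{DH-2010}). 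This contradiction yields $\RT^1_2 \nred \RRT^2_k$, as desired.

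I do not expect a genuine obstacle here: the argument simply assembles Proposition~\ref{P:unif_imlies_seq_unif}, Lemma~\ref{L:jump_coding}, and Theorem~\ref{T:SeqRRT_2-randoms}, together with one classical non-computability fact about $2$-randoms. The one place to be careful is the passage from the Weihrauch reduction to a statement about computable instances: one must note that $\Phi$ applied to a computable instance of $\Seq\RT^1_2$ yields a \emph{computable} instance of $\Seq\RRT^2_k$, so that Theorem~\ref{T:SeqRRT_2-randoms} applies verbatim and, in the $2\mhyphen\mathsf{RAN}$ formulation, a solution may be taken to be a genuine $2$-random set rather than merely a set $2$-random relative to some noncomputable oracle.
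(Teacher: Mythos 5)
Your proof is correct and follows essentially the same route as the paper's: pass to sequential forms via Proposition~\ref{P:unif_imlies_seq_unif}, contrast Lemma~\ref{L:jump_coding} (a computable instance of $\Seq\RT^1_2$ whose solutions all compute $\emptyset'$) with Theorem~\ref{T:SeqRRT_2-randoms} (solutions of computable $\Seq\RRT^2_k$ instances computable from any $2$-random), and conclude from the fact that no $2$-random computes $\emptyset'$. The only difference is that you spell out the chase through $\Phi$ and $\Psi$ explicitly, which the paper leaves implicit.
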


\begin{proof}
Suppose instead that $\RT^1_2 \red \RRT^2_k$.  By Proposition~\ref{P:unif_imlies_seq_unif}, this would imply that $\Seq\RT^1_2 \red \Seq\RRT^2_k$.  By Lemma~\ref{L:jump_coding}, there is a computable instance of $\Seq\RT^1_2$ such that every solution computes $\emptyset'$.  By Theorem~\ref{T:SeqRRT_2-randoms}, every $2$-random $X \subseteq \omega$ computes a solution to every computable instance of $\Seq\RRT^2_k$.  This is a contradiction because there is $2$-random that does not compute $\emptyset'$ (in fact, no $2$-random computes $\emptyset'$).
\end{proof}

For our final result, we exhibit a degree-theoretic difference between $\Seq\RRT^2_k$ and $\Seq\RRT^2_{<\infty}$. This contrasts with the situation between $\Seq\RT^2_k$ and $\Seq\RT^2_{<\infty}$, i.e.,~the sequential forms of Ramsey's Theorem for $k$ many colors and finitely many colors. Specifically, it is not difficult to see that if $X$ is a set with $\deg(X) \gg \0''$ then every computable instance of either of these principles has an $X$-computable solution. That this bound is sharp follows by recent work of Wang~\cite[Section 3.1]{Wang-TA}.

\begin{lemma}\label{L:arb_bounds}
For each rational number $q > 0$ and each $i \in \omega$, there exists a bounded coloring $f \colon [\omega]^2 \to \omega$ such that $\mu(\mathcal{S}_{f,i}) < q$. Moreover, an index for $f$ as a computable function can be found uniformly computably from $q$ and $i$.
\end{lemma}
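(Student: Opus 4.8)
The plan is to push further the idea used in the proof of Proposition~\ref{P:Csima_Mileti_non-uniform}: there, a single ``convergence event'' of $\Phi_i$ was used to kill off one extendibility requirement and thereby shrink $\mathcal{S}_{f,i}$ below $1$. Here the goal is to iterate this so as to shrink the measure below an arbitrary $q$. Fix a rational $q > 0$ and an index $i$. Choose $N$ with $2^{-N} \le q$, or more conveniently, set things up so that after $N$ successful ``diagonalizations'' at distinct strings the measure of $\mathcal{S}_{f,i}$ is bounded by the measure of the set of reals passing through none of $N$ pairwise incompatible strings of length roughly $\ell_1 < \ell_2 < \cdots < \ell_N$; that product can be made less than $q$.

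First I would build $f = \Phi_{g(q,i)}$ by stages, maintaining a finite antichain $A_s \subseteq 2^{<\omega}$ of ``dead'' strings (initially empty). At stage $s$, we have defined $f$ on $[0,s) \times \{s'\}$ for $s' < s$, and we define it on $[0,s) \times \{s\}$. We look for a string $\sigma$, incompatible with every element of $A_s$, and a pair $x < y$, such that $\Phi_i(\sigma)(x)\downarrow = \Phi_i(\sigma)(y)\downarrow = 1$ (with small enough use relative to what has been decided). If we find the least such $\sigma$ (of minimal length, then least), we put $\sigma$ into $A_{s+1}$ and, exactly as in Proposition~\ref{P:Csima_Mileti_non-uniform}, we henceforth force $f(x,s') = f(y,s')$ for all large $s'$ by collapsing the colors $\langle x, s'\rangle$ and $\langle y, s'\rangle$ for this particular $x,y$; otherwise $f$ is defined injectively (as $f(z,s') = \langle z,s'\rangle$) on all freshly-decided values not yet constrained. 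We stop adding new strings to $A$ once $|A| = N$. The key bookkeeping point is that each $x,y$ pair associated to a string in $A$ is eventually ``merged'' by $f$, so $\{x,y\}$ cannot be a subset of any infinite rainbow for $f$; and crucially, for any string $\tau \succeq \sigma$ with $\sigma \in A$, the set $\Phi_i(\tau)$ still contains both $x$ and $y$, hence $\Phi_i(\tau)$ is not an infinite rainbow for $f$. This is exactly what bounds the measure: any $S$ with $\Phi_i(S)$ an infinite rainbow must avoid the cylinder $[\sigma]$ for every $\sigma \in A$, so $\mu(\mathcal{S}_{f,i}) \le 1 - \mu(\bigcup_{\sigma \in A}[\sigma])$.

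So the argument reduces to the combinatorial claim that the construction actually succeeds in enumerating $N$ strings into $A$ (unless $\mathcal{S}_{f,i}$ is already small for a trivial reason, e.g. $\Phi_i(S)$ is never an infinite rainbow, in which case $\mu(\mathcal{S}_{f,i}) = 0 < q$). This is where the real work lies: one must verify that if $A$ stabilizes with fewer than $N$ strings, then for every string $\sigma$ incompatible with all of them, $\Phi_i(\sigma)$ never outputs two $1$'s on distinct arguments — but then no $S$ passing through such a $\sigma$ can have $\Phi_i(S)$ infinite, so again $\mathcal{S}_{f,i}$ is confined to the already-killed cylinders and has measure $\le 1 - \mu(\bigcup_{\sigma \in A}[\sigma])$; choosing the lengths of the strings we search for to increase fast enough (searching for a string of length $\ge \ell_m$ at the $m$-th success) guarantees $1 - \mu(\bigcup [\sigma]) < q$ regardless of whether we reach $N$ successes or get stuck earlier. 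Finally, $f$ depends on $q$ and $i$ only through a fixed computable procedure, so an index for it is obtained uniformly computably, as claimed. The main obstacle I anticipate is making the stopping/length-schedule precise enough that \emph{both} the ``we found $N$ strings'' case and the ``we got stuck'' case yield the same measure bound $< q$ — this requires fixing the target lengths $\ell_1 < \cdots < \ell_N$ in advance from $q$ (so that $1 - \sum_{m} 2^{-\ell_m}$-type estimates work) and checking the antichain/incompatibility invariants survive the merging of colors.
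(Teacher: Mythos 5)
Your high-level plan — iterate the idea from the proof of Proposition~\ref{P:Csima_Mileti_non-uniform} so as to kill off enough cylinders that $\mathcal{S}_{f,i}$ is forced below $q$ — is the right starting point, and you correctly identify the stopping/measure-bookkeeping as the crux. But the length-schedule you propose cannot work, and the measure estimate is backwards. If you kill pairwise incompatible single strings $\sigma_1,\dots,\sigma_N$ of lengths $\ell_1 < \cdots < \ell_N$, the measure you remove is $\sum_m 2^{-\ell_m}$, which \emph{decreases} as the $\ell_m$ grow; to get $1 - \mu(\bigcup_m [\sigma_m]) < q$ you would need short strings (essentially $\ell_m = m$, forcing $\sigma_1$ of length $1$, $\sigma_2$ of length $2$, etc.). There is no reason whatsoever for $\Phi_i$ to produce two $1$'s when run on, say, the single-bit string $0$ or $1$, so there is no way to guarantee that the construction will ever find strings of those lengths, and if you instead search for strings of length $\geq \ell_m$, you only remove a geometric tail of measure, not almost everything. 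The stuck-case analysis compounds the confusion: if $A$ stabilizes small, $\mathcal{S}_{f,i}$ does not lie \emph{inside} the killed cylinders (those are exactly the cylinders where $\Phi_i$ is forced \emph{not} to be a rainbow), and the quantity $1-\mu(\bigcup[\sigma])$ is then close to $1$, not to $0$, so the claimed bound "$\mu(\mathcal{S}_{f,i}) \leq 1 - \mu(\bigcup[\sigma]) < q$" is not achieved by your schedule.

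The paper's proof fills exactly this gap with a different round structure: at each round $n$ it does not look for a single string of prescribed length, but rather for a \emph{finite antichain} $F$ of strings, disjoint from all previously killed cylinders, with \emph{total} measure $\mu(\cyl{F}) \geq q$, each member of which witnesses two $1$'s on as-yet-unused arguments. This solves both of your problems at once. Termination is automatic by a pigeonhole count: you cannot have more than $\lceil 1/q \rceil + 1$ pairwise disjoint subsets of $2^\omega$ of measure $\geq q$, so the process stops after finitely many rounds regardless of where $\Phi_i$ converges. And the final measure estimate comes from the \emph{failure} to continue: once no such $F$ can be found, if $\mu(\mathcal{S}_{f,i})$ were $\geq q$ you could (by compactness, since any rainbow must produce two $1$'s on unused arguments) extract a qualifying $F$ of measure $\geq q$, a contradiction — so $\mu(\mathcal{S}_{f,i}) < q$ whether or not the killed cylinders themselves are large. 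You also need the "not previously used" clause on the witnessing pairs $x < y$ (which your sketch omits) both to make this compactness argument run and to control the boundedness of $f$ when pairs from different rounds are merged into colors. Without the set-based rounds and the pigeonhole termination argument, your construction has no guaranteed way to reduce the measure below $q$.
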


\begin{proof}
The idea is to elaborate on the proof of Proposition~\ref{P:Csima_Mileti_non-uniform}. For all $i,n \in \omega$, we inductively define $w(i,n)$ to be the least canonical index of a finite subset $F$ of $2^{<\omega}$ such that
\begin{enumerate}
\item $\cyl{F} \cap \bigcup_{m < n} \cyl{D_{w(i,m)}} = \emptyset$;
\item $\mu(\cyl{F}) \geq q$;
\item for each $\sigma \in F$, there exist $x < y$ such that $\Phi_i(\sigma)(x) \downarrow = \Phi_i(\sigma)(y) \downarrow = 1$, and $x$ and $y$ are not used by $D_{w(i,m)}$ for any $m < n$, as defined below.
\end{enumerate}
For each $\sigma$ in $F$, choose the least $x$ and $y$ satisfying condition 3, and say these are \emph{used} by $\sigma$ and by $F$.

The coloring $f$ is now defined by stages. At stage $s$, we define $f$ on $[0,s) \times \{s\}$. Choose the least $n$ such that $w(i,n)$ has not yet converged. For each $m < n$, and each $\sigma \in D_{w(i,m)}$, choose the $x < y$ used by $\sigma$, and define $f(x,s) = f(y,s) = \seq{x_0,s}$ for the least $x_0$ used by $D_{w(i,m)}$. (We may assume that if $w(i,m)$ has converged by stage $s$ then all numbers used by $D_{w(i,m)}$ are smaller than $s$.) For $z < s$ not used by any $D_{w(i,m)}$, let $f(z,s) = \seq{z,s}$.

Clearly, $f$ is computable. We claim that it is bounded. To this end, observe that $w(i,n)$ is defined for  only finitely many $n$, since otherwise
\[
D_{w(e,0)},\ldots,D_{w(e,\lceil 1/q \rceil)}
\]
would determine $\lceil 1/q \rceil+1$ many disjoint subsets $2^{\omega}$, each of measure at least $q$. So let $n$ be least such that $w(i,n)$ is undefined, and for each $m < n$, let $k_m$ be the number of elements used by $D_{w(i,m)}$. The only colors used more than once by $f$ are of the form $\seq{x_0,s}$, where $x_0$ is the least number used by some $D_{w(i,m)}$, and in this case, $f(x,t) = \seq{x_0,s}$ only if $s = t$ and $x$ is used by $D_{w(i,m)}$. Thus, $f$ uses each such color $\seq{x_0,s}$ at most $k_m$ many times, implying that $f$ is $k$-bounded for $k = \sup_{m < n} k_m$.

Now with $n$ as above, notice that if an $S \subseteq \omega$ extends some $\sigma \in D_{w(i,m)}$ for $m < n$, then $\Phi_i(S)(x) \downarrow = \Phi_i(S)(y) \downarrow = 1$ for the $x < y$ used by $\sigma$. By construction, $f(x,s) = f(y,s)$ for all sufficiently large $s$, so $\Phi_i(S)$ cannot be an infinite rainbow for $f$. Thus, any $S$ such that $\Phi_i(S)$ is such a rainbow must lie outside of $\bigcup_{m < n} \cyl{D_{w(i,m)}}$. But this means that the measure of all such $S$ is less than $q$, because otherwise we could find a finite set $F$ satisfying conditions 1, 2, and 3 in the definition of $w$, and $w(i,n)$ would be defined.
\end{proof}

\begin{proposition}
There exists a computable instance of $\Seq\RRT^2_{<\infty}$ such that not every $2$-random $X \subseteq \omega$ computes a solution.
\end{proposition}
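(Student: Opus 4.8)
The plan is to build the computable instance $\seq{f_i : i\in\omega}$ of $\Seq\RRT^2_{<\infty}$ by devoting, for each index $e\in\omega$, the single column $f_{\seq{e,0}}$ to defeating $\Phi_e$ regarded as a candidate procedure for computing a solution, and then to show that the set $\{X : X\text{ computes a solution to }\seq{f_i}\}$ has measure strictly less than $1$. Since the class of $2$-randoms has measure $1$, it cannot be contained in a set of measure below $1$, so some $2$-random will compute no solution to $\seq{f_i}$.

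To carry this out, I would first record the reduction bookkeeping: if $X$ computes a solution via an index $e$ --- that is, if $\Phi_e(X)$ is total and, under the usual decoding of a set into its sequence of columns, codes a sequence $\seq{A_i : i\in\omega}$ with each $A_i$ an infinite rainbow for $f_i$ --- then by the $s$-$m$-$n$ theorem there is a primitive recursive $\sigma$, depending only on this coding and not on the instance, with $\Phi_{\sigma(e,i)}(X)=A_i$ for all $i$. Hence for each $e$: if $\Phi_e(X)$ is a solution then $\Phi_{\sigma(e,\seq{e,0})}(X)$ is an infinite rainbow for $f_{\seq{e,0}}$, i.e.\ $X\in\mathcal{S}_{f_{\seq{e,0}},\,\sigma(e,\seq{e,0})}$. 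Now I would define the instance: for $i$ not of the form $\seq{e,0}$ put $f_i(x,y)=\seq{x,y}$, which is $1$-bounded and has $\omega$ as a rainbow; for $i=\seq{e,0}$ apply Lemma~\ref{L:arb_bounds} with $q=2^{-e-2}$ and with $\sigma(e,\seq{e,0})$ playing the role of the index $i$ in that lemma, obtaining a bounded coloring $f_{\seq{e,0}}$ with $\mu\bigl(\mathcal{S}_{f_{\seq{e,0}},\,\sigma(e,\seq{e,0})}\bigr)<2^{-e-2}$. Because $\sigma(e,\seq{e,0})$ and $q=2^{-e-2}$ are computable from $e$, Lemma~\ref{L:arb_bounds} furnishes an index for $f_{\seq{e,0}}$ uniformly in $e$, so $\seq{f_i : i\in\omega}$ is a computable instance of $\Seq\RRT^2_{<\infty}$ (each $f_i$ is a bounded coloring, as required, though the bounds may vary with $i$). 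Unlike in the proofs of Proposition~\ref{P:qWWKL} and Theorem~\ref{T:TS1_nonsquashing}, there is no circularity here and hence no appeal to the recursion theorem is needed, since $\sigma$ is fixed independently of the $f_i$.

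For the verification, the containment $\{X:\Phi_e(X)\text{ is a solution}\}\subseteq\mathcal{S}_{f_{\seq{e,0}},\,\sigma(e,\seq{e,0})}$ from the previous step, together with the measure bound from Lemma~\ref{L:arb_bounds}, gives
\[
\mu\bigl(\{X : X\text{ computes a solution to }\seq{f_i}\}\bigr)\ \le\ \sum_{e\in\omega}\mu\bigl(\mathcal{S}_{f_{\seq{e,0}},\,\sigma(e,\seq{e,0})}\bigr)\ <\ \sum_{e\in\omega}2^{-e-2}\ =\ \tfrac12,
\]
which is below $1$, so the $2$-randoms cannot all lie inside this set, completing the argument. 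The only points that demand any care are the bookkeeping with $\sigma$ and the observation that Lemma~\ref{L:arb_bounds} produces $f_{\seq{e,0}}$ \emph{uniformly} in $e$ (so the assembled instance is genuinely computable); the substantive content --- building a single bounded coloring against which a prescribed Turing functional succeeds only on a set of arbitrarily small measure --- has already been supplied by that lemma, so I do not anticipate a real obstacle.
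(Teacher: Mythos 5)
Your proof is correct and follows essentially the same strategy as the paper: build the instance column-by-column using Lemma~\ref{L:arb_bounds}, use an $s$-$m$-$n$-style column-extraction functional (your $\sigma(e,i)$ plays exactly the role of the paper's $g(e,j)$), and finish by observing that the set of $X$ computing a solution has measure $<1$ while the $2$-randoms have measure $1$. The only minor divergence is bookkeeping: the paper assigns column $\seq{e,j}$ to diagonalize against $\Phi_e$ with measure bound $2^{-j}$, so that for each fixed $e$ the set $\{S:\Phi_e(S)\text{ is a solution}\}$ is squeezed to measure $0$ and the full union is null; you assign only column $\seq{e,0}$ to $\Phi_e$ with bound $2^{-e-2}$, yielding the slightly weaker but still sufficient total measure $<1/2$. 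Both versions are fine, and your observation that no recursion-theorem fixed point is needed (because $\sigma$ is independent of the $f_i$) is correct and matches the paper.
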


\begin{proof}
Let $g$ be a computable function such that
\[
\Phi_{g(e,j)}(S)(x) = \Phi_e(S)(\seq{x,\seq{e,j}})
\]
for all $e,j \in \omega$ and all $S \subseteq \omega$. In other words, $\Phi_{g(e,j)}(S)$ is the restriction of $\Phi_e(S)$ to the $\seq{e,j}$th column. For all $e,j \in \omega$, apply Lemma~\ref{L:arb_bounds} to get a computable bounded coloring $f_{\seq{e,j}} \colon [\omega]^2 \to \omega$ such that
\[
\mu(\mathcal{S}_{f_{\seq{e,j}},g(e,j)}) < 2^{-j}.
\]
Then $\seq{f_i : i \in \omega}$ is a computable sequence of colorings, and further, for all $e \in \omega$ and $S \subseteq \omega$, if $\Phi_e(S)$ is a sequence of infinite rainbows for the $f_i$, then $\Phi_{g(e,j)}(S)$ is an infinite rainbow for $f_{\seq{e,j}}$. Thus for each $e$, it must be that
\[
\mu(\{S \subseteq \omega : \Phi_e(S) \textrm{ is a sequence of infinite rainbows for the } f_i\}) = 0,
\]
for if this measure were at least $2^{-j}$ then so would $\mu(\mathcal{S}_{f_{\seq{e,j}},g(e,j)})$, which cannot be. Since the measure of the $2$-randoms is $1$, it follows that there is a $2$-random $X \subseteq \omega$ that computes no sequence of infinite rainbows for the $f_i$.
\end{proof}

\noindent
After relativization and translation into the language of Weihrauch reducibility, we obtain the following.

\begin{corollary}
$\Seq\RRT^2_{<\infty} \nred 2\mhyphen\WWKL$.
\end{corollary}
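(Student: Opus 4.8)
The plan is to obtain the corollary as the advertised translation of the preceding proposition into the language of Weihrauch reducibility, using nothing beyond that proposition together with the observation that relativizing $2$-randomness to a computable oracle is vacuous. Concretely, I would argue by contradiction: assume $\Seq\RRT^2_{<\infty} \red 2\mhyphen\mathsf{RAN}$ and fix Turing functionals $\Phi$ and $\Psi$ witnessing the reduction. By the previous proposition, fix a computable instance $\seq{f_i : i \in \omega}$ of $\Seq\RRT^2_{<\infty}$ and a $2$-random set $X \subseteq \omega$ such that $X$ computes no solution to $\seq{f_i : i \in \omega}$.

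The next step is to feed this instance through the reduction. The set $\Phi(\seq{f_i : i \in \omega})$ is a computable instance of $2\mhyphen\mathsf{RAN}$; since it is computable, a set is $2$-random relative to it exactly when it is $2$-random in the usual sense, so $X$ is a genuine solution to this instance. By the defining property of $\Phi$ and $\Psi$, it follows that $S = \Psi(\seq{f_i : i \in \omega} \oplus X)$ is a solution to $\seq{f_i : i \in \omega}$. But $\seq{f_i : i \in \omega}$ is computable, so $S \leq_T X$, contradicting the choice of $X$. The same argument handles $\sred$ in place of $\red$, since one may simply discard the extra oracle access.

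There is essentially no difficulty here beyond what is already done in the preceding proposition: the entire combinatorial and measure-theoretic content is contained in that result, via Lemma~\ref{L:arb_bounds} and the construction there. The only point that needs a line of justification --- and hence the closest thing to a main obstacle --- is the identification of a ``solution to a computable instance of $2\mhyphen\mathsf{RAN}$'' with an ``ordinary $2$-random set,'' i.e., the fact that $2$-randomness relative to a computable oracle coincides with $2$-randomness; this is what allows the set $X$ produced by the proposition to be plugged directly into the reduction. In the fully formalized reverse-mathematics reading one would defer to the treatment of $2\mhyphen\mathsf{RAN}$ in Avigad, Dean, and Rute~\cite{ADR-2012}, but for the Weihrauch statement, which concerns only the computable ($\omega$-model) case, this identification is immediate.
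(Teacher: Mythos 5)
Your proof is correct and is exactly the expansion of what the paper means by ``after relativization and translation into the language of Weihrauch reducibility'': take the computable instance and $2$-random $X$ from the preceding proposition, feed the instance through $\Phi$, observe that $X$ solves the (computable) $2\mhyphen\mathsf{RAN}$ instance because $2$-randomness relative to a computable oracle is just $2$-randomness, and conclude that $\Psi(\seq{f_i} \oplus X) \leq_T X$ would be a solution, a contradiction. The remark about $\sred$ is harmless but redundant, since $\nred$ already entails $\nsred$.
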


\section{Questions}

We close by listing a few questions left open by our work. Chief among these is whether the analogue of Theorem \ref{T:Ramsey_non-uniform} holds for (general) Weihrauch reducibility.

\begin{question}
If $n,j,k \geq 2$ and $j < k$, is it the case that $\RT^n_k \nred \RT^n_j$?
\end{question}

Though not our focus here, our results naturally lead to questions about non-uniform reductions as well. In particular, we can ask the following about a non-uniform version of Theorem~\ref{T:Ramsey_non-uniform}, which is closely related to Question 5.5.3 of~\cite{Mileti-2004}.

\begin{question}
If $n,j,k \geq 2$ and $j < k$, does every $f : [\omega]^n \to k$ compute a $g : [\omega]^n \to j$, such that every infinite homogeneous set for $g$ computes an infinite homogeneous set for $f$? 
\end{question}

We also have the following question about thin sets and rainbows.

\begin{question}
Are there analogues of Proposition~\ref{P:combining_ramsey} for $\TS^n_k$ and $\RRT^n_k$?
\end{question}

\appendix
\section{Equivalence of definitions}\label{A:equivalence}

In this section, we provide a proof of the equivalence of Definition \ref{D:uniform_reductions} with the definition of (strong) Weihrauch reducibility employed in computable analysis, in the limited context of where both reductions make sense. We do not include here the technicalities particular to that field, and instead focus on the following primary definition that is used to extend the notion of Weihrauch reducibility to more specific settings.

Our discussion below will be limited to functions from Cantor space to Cantor space, but nothing would be lost by considering instead functions on Baire space.

\begin{definition}\label{D:CA_Weihrauch_defn}
Let $\mathcal{F}$ and $\mathcal{G}$ be sets of partial functions $2^{\omega} \to 2^{\omega}$.
\begin{enumerate}
\item $\mathcal{F}$ is \emph{Weihrauch reducible} to $\mathcal{G}$, written $\mathcal{F} \red \mathcal{G}$, if there exist Turing functionals $\Phi$ and $\Psi$ such that
\[
(\forall G \in \mathcal{G})(\exists F \in \mathcal{F})~F = \Psi \circ \seq{\mathrm{id}, G \circ \Phi}.
\]
\item $\mathcal{F}$ is \emph{strongly Weihrauch reducible} to $\mathcal{G}$, written $\mathcal{F} \sred \mathcal{G}$, if there exist Turing functionals $\Phi$ and $\Psi$ such that
\[
(\forall G \in \mathcal{G})(\exists F \in \mathcal{F})~F = \Psi \circ G \circ \Phi.
\]
\end{enumerate}
\end{definition}

The order of quantifiers here may at first appear to be reversed from that used in our definition of $\red$ and $\sred$. In order to explain this, we shall use the following notation.
\begin{enumerate}
\item Given a $\Pi^1_2$ principle $\mathsf{P}$ of second-order arithmetic, let $\mathcal{F}_{\mathsf{P}}$ be the set of all partial $F : 2^{\omega} \to 2^{\omega}$ whose domain includes the set of instances of $\mathsf{P}$, and $F(A)$ for each instance $A$ of $\mathsf{P}$ is a solution to the that instance.
\item Given an arithmetically-definable set $\mathcal{F}$ of partial functions $2^{\omega} \to 2^{\omega}$, let $\mathsf{P}_{\mathcal{F}}$ be the $\Pi^1_2$ principle of second-order arithmetic whose instances are the members of the intersection of the domains of the functions in $\mathcal{F}$, and the solutions to any such instance $A$ are the sets $F(A)$ for $F \in \mathcal{F}$.
\end{enumerate}
We begin with the following general Galois connection.

\begin{proposition}
Let $\mathsf{P}$ be a $\Pi^1_2$ principle of second-order arithmetic, and let $\mathcal{F}$ be an arithmetically-definable set of partial functions $2^{\omega} \to 2^{\omega}$ with common domain. Then:
\begin{enumerate}
\item $\mathsf{P} \red \mathsf{P}_{\mathcal{F}}$ if and only if $\mathcal{F}_{\mathsf{P}} \red \mathcal{F}$;
\item $\mathsf{P} \sred \mathsf{P}_{\mathcal{F}}$ if and only if $\mathcal{F}_{\mathsf{P}} \sred \mathcal{F}$.
\end{enumerate}
\end{proposition}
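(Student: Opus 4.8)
The plan is to unwind both directions of each biconditional directly from the definitions, the only real content being to match the order of quantifiers in Definition~\ref{D:uniform_reductions} with the order in Definition~\ref{D:CA_Weihrauch_defn}. I will prove~(1); the proof of~(2) is identical with every occurrence of $\Psi \circ \seq{\mathrm{id}, G \circ \Phi}$ replaced by $\Psi \circ G \circ \Phi$ and every reference to ``$A$ in the oracle for $\Psi$'' deleted.

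First I would prove the forward direction of~(1). Assume $\mathsf{P} \red \mathsf{P}_{\mathcal{F}}$, witnessed by Turing functionals $\Phi$ and $\Psi$: whenever $A$ is an instance of $\mathsf{P}$, then $\Phi(A)$ is an instance of $\mathsf{P}_{\mathcal{F}}$, and whenever $T$ is a solution to $\Phi(A)$ then $\Psi(A \oplus T)$ is a solution to $A$. I claim the same $\Phi$ and $\Psi$ witness $\mathcal{F}_{\mathsf{P}} \red \mathcal{F}$. Fix $G \in \mathcal{F}$; I must produce $F \in \mathcal{F}_{\mathsf{P}}$ with $F = \Psi \circ \seq{\mathrm{id}, G \circ \Phi}$. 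The instances of $\mathsf{P}_{\mathcal{F}}$ are by definition the common domain of the functions in $\mathcal{F}$, so $G$ is defined on every instance of $\mathsf{P}_{\mathcal{F}}$; hence for every instance $A$ of $\mathsf{P}$, the point $\Phi(A)$ is an instance of $\mathsf{P}_{\mathcal{F}}$, so $G(\Phi(A))$ is defined and is a solution to $\Phi(A)$ (as $G \in \mathcal{F}$ and solutions to an instance of $\mathsf{P}_{\mathcal{F}}$ are exactly the values of the members of $\mathcal{F}$ there). Therefore $\Psi(A \oplus G(\Phi(A)))$ is a solution to $A$. Define $F$ on the instances of $\mathsf{P}$ by $F(A) = \Psi(A \oplus G(\Phi(A))) = (\Psi \circ \seq{\mathrm{id}, G \circ \Phi})(A)$; this $F$ has domain including the instances of $\mathsf{P}$ and sends each to a solution, so $F \in \mathcal{F}_{\mathsf{P}}$, as required. (Strictly, one takes $F$ to be the partial function $\Psi \circ \seq{\mathrm{id}, G \circ \Phi}$ itself and observes that its restriction to the instances of $\mathsf{P}$ behaves correctly, which is all membership in $\mathcal{F}_{\mathsf{P}}$ demands.)

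Next I would prove the reverse direction. Assume $\mathcal{F}_{\mathsf{P}} \red \mathcal{F}$, witnessed by $\Phi$ and $\Psi$: for every $G \in \mathcal{F}$ there is an $F \in \mathcal{F}_{\mathsf{P}}$ with $F = \Psi \circ \seq{\mathrm{id}, G \circ \Phi}$. I claim $\Phi$ and $\Psi$ witness $\mathsf{P} \red \mathsf{P}_{\mathcal{F}}$. Let $A$ be an instance of $\mathsf{P}$. To see $\Phi(A)$ is an instance of $\mathsf{P}_{\mathcal{F}}$, I must check that every $G \in \mathcal{F}$ is defined at $\Phi(A)$: given such a $G$, the equation $F = \Psi \circ \seq{\mathrm{id}, G \circ \Phi}$ for the associated $F \in \mathcal{F}_{\mathsf{P}}$ forces $F(A) = \Psi(A \oplus G(\Phi(A)))$ to be defined (since $F$, being in $\mathcal{F}_{\mathsf{P}}$, is defined on the instance $A$), hence $G(\Phi(A))$ is defined. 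Thus $\Phi(A)$ lies in the common domain of $\mathcal{F}$, i.e.\ is an instance of $\mathsf{P}_{\mathcal{F}}$. Now let $T$ be a solution to $\Phi(A)$; by definition of $\mathsf{P}_{\mathcal{F}}$ there is some $G \in \mathcal{F}$ with $G(\Phi(A)) = T$. Taking the corresponding $F \in \mathcal{F}_{\mathsf{P}}$, we get $\Psi(A \oplus T) = \Psi(A \oplus G(\Phi(A))) = F(A)$, which is a solution to $A$ since $F \in \mathcal{F}_{\mathsf{P}}$. So $\Psi(A \oplus T)$ is a solution to $A$, completing the verification.

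I do not expect any genuine obstacle here: the statement is a bookkeeping lemma, and the one subtlety worth flagging explicitly is the quantifier alignment. In Definition~\ref{D:uniform_reductions} the functionals $\Phi, \Psi$ come first and must work for \emph{all} instances and \emph{all} solutions, while in Definition~\ref{D:CA_Weihrauch_defn} the functionals come first but then one quantifies over all $G \in \mathcal{G}$ and asks only for \emph{some} $F \in \mathcal{F}$; these match precisely because a solution to an instance of $\mathsf{P}_{\mathcal{F}}$ is, by construction, nothing other than a value $G(A)$ of some $G \in \mathcal{F}$, and a member of $\mathcal{F}_{\mathsf{P}}$ is any partial function extending ``some choice of solution'' on the instances of $\mathsf{P}$. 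The only point requiring a word of care is that membership in $\mathcal{F}_{\mathsf{P}}$ and being an instance of $\mathsf{P}_{\mathcal{F}}$ are defined via domains, so one must check definedness of the relevant compositions, which is exactly what I did above. The arithmetical-definability hypothesis on $\mathcal{F}$ plays no role in the argument itself; it is only what guarantees that $\mathsf{P}_{\mathcal{F}}$ is a legitimate $\Pi^1_2$ principle in the first place.
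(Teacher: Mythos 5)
Your argument is correct and follows essentially the same route as the paper's: both directions reuse the same pair of functionals $\Phi,\Psi$, the forward direction checks that $\Psi \circ \seq{\mathrm{id}, G\circ\Phi}$ lands in $\mathcal{F}_{\mathsf{P}}$, and the reverse direction extracts definedness of $G(\Phi(A))$ from the fact that $F \in \mathcal{F}_{\mathsf{P}}$ must be defined on $A$ and then realizes each solution $T$ of $\Phi(A)$ as $G(\Phi(A))$ for some $G \in \mathcal{F}$. Your version is slightly more explicit than the paper's in spelling out why $\Phi(A)$ lies in the common domain of $\mathcal{F}$, but the underlying argument is identical, and your closing remark about the role of arithmetical definability is apt.
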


\begin{proof}
We prove~(1), the proof of~(2) being analogous. First, suppose $\mathsf{P} \red \mathsf{P}_{\mathcal{F}}$ via $\Phi$ and $\Psi$. We claim that $\Phi$ and $\Psi$ also witness that $\mathcal{F}_{\mathsf{P}} \red \mathcal{F}$. Indeed, fix any $G \in \mathcal{F}$, and define $F = \Psi \circ \seq{\mathrm{id},G \circ \Phi}$. We have only to verify that $F \in \mathcal{F}_{\mathsf{P}}$. If $A$ is an instance of $\mathsf{P}$, then $\Phi(A)$ is an instance of $\mathsf{P}_{\mathcal{F}}$, meaning an element of the intersection of the domains of the members of $\mathcal{P}$, and so in particular, a member of the domain of $G$. Thus, $G(\Phi(A))$ is defined, and by definition of $\mathsf{P}_{\mathcal{F}}$, this is a solution to the instance $\Phi(A)$, meaning $\Psi(A,G(\Phi(A)))$ is a solution to $A$. Thus, $A$ is in the domain of $F$, and $F(A) = \Psi(A,G(\Phi(A)))$ is a solution to $A$, as needed.

In the other direction, suppose $\mathcal{F}_{\mathsf{P}} \red \mathcal{F}$ via $\Phi$ and $\Psi$. Let $A$ be any instance of $\mathsf{P}$, and so a member of the intersection of the domains of the functions in $\mathcal{F}_{\mathsf{P}}$. By the definition of $\red$ above, $\Phi(A)$ is an element of the domain of every function in $\mathcal{F}$, and so an instance of $\mathsf{P}_{\mathcal{F}}$. Let $S$ be any solution to this instance, so that $S = G(\Phi(A))$ for some $G \in \mathcal{F}$, and then let $F \in \mathcal{F}_{\mathsf{P}}$ be as given for $G$ by the definition of $\red$.  We have that $F(A)$ is a solution to $A$, and $F(A) = \Psi(A,G(\Phi(A))) = \Psi(A,S)$. This completes the proof.
\end{proof}

The proposition allows us to translate results employing Definition \ref{D:uniform_reductions} into results employing Definition \ref{D:CA_Weihrauch_defn}.

\begin{corollary}
Let $\mathsf{P}$ and $\mathsf{Q}$ be $\Pi^1_2$ principles of second-order arithmetic. Then:
\begin{enumerate}
\item $\mathsf{P} \red \mathsf{Q}$ if and only if $\mathcal{F}_{\mathsf{P}} \red \mathcal{F}_{\mathsf{Q}}$;
\item $\mathsf{P} \sred \mathsf{Q}$ if and only if $\mathcal{F}_{\mathsf{P}} \sred \mathcal{F}_{\mathsf{Q}}$.
\end{enumerate}
\end{corollary}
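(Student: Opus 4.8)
The plan is to derive the Corollary directly from the preceding Proposition by making a convenient choice of $\mathcal{F}$. First I would fix a $\Pi^1_2$ principle $\mathsf{Q}$ and let $\mathcal{G}_{\mathsf{Q}}$ denote the set of all partial functions $2^\omega \to 2^\omega$ whose domain is \emph{exactly} the set of instances of $\mathsf{Q}$ and which send each instance to one of its solutions. Because both ``$X$ is an instance of $\mathsf{Q}$'' and ``$Y$ is a solution to the instance $X$'' are arithmetical for a $\Pi^1_2$ principle, $\mathcal{G}_{\mathsf{Q}}$ is an arithmetically-definable set of partial functions, and all its members share the same domain, so the Proposition applies with $\mathcal{F} = \mathcal{G}_{\mathsf{Q}}$. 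Next I would observe that $\mathsf{P}_{\mathcal{G}_{\mathsf{Q}}} = \mathsf{Q}$: the intersection of the domains of the members of $\mathcal{G}_{\mathsf{Q}}$ is just that common domain, so $\mathsf{P}_{\mathcal{G}_{\mathsf{Q}}}$ and $\mathsf{Q}$ have the same instances, and for an instance $A$ the $\mathsf{P}_{\mathcal{G}_{\mathsf{Q}}}$-solutions of $A$ are the values $F(A)$ for $F \in \mathcal{G}_{\mathsf{Q}}$, which run over precisely the $\mathsf{Q}$-solutions of $A$ (a given solution $S$ is $F(A)$ for the $F$ that sends $A$ to $S$ and every other instance to its $<$-least solution). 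Thus the Proposition yields $\mathsf{P} \red \mathsf{Q} \Leftrightarrow \mathcal{F}_{\mathsf{P}} \red \mathcal{G}_{\mathsf{Q}}$ and $\mathsf{P} \sred \mathsf{Q} \Leftrightarrow \mathcal{F}_{\mathsf{P}} \sred \mathcal{G}_{\mathsf{Q}}$.

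It then remains to identify reducibility to $\mathcal{G}_{\mathsf{Q}}$ with reducibility to $\mathcal{F}_{\mathsf{Q}}$, that is, $\mathcal{F}_{\mathsf{P}} \red \mathcal{G}_{\mathsf{Q}} \Leftrightarrow \mathcal{F}_{\mathsf{P}} \red \mathcal{F}_{\mathsf{Q}}$ and similarly for $\sred$. Since $\mathcal{G}_{\mathsf{Q}} \subseteq \mathcal{F}_{\mathsf{Q}}$, the direction ``$\Leftarrow$'' is immediate, as functionals handling every $G \in \mathcal{F}_{\mathsf{Q}}$ in particular handle every $G \in \mathcal{G}_{\mathsf{Q}}$. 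For ``$\Rightarrow$'' I would take $\Phi, \Psi$ witnessing $\mathcal{F}_{\mathsf{P}} \red \mathcal{G}_{\mathsf{Q}}$ and, given an arbitrary $G \in \mathcal{F}_{\mathsf{Q}}$, pass to the restriction $G' = G \res \{A : A \text{ is an instance of } \mathsf{Q}\}$, which lies in $\mathcal{G}_{\mathsf{Q}}$. Then $F' = \Psi \circ \seq{\mathrm{id}, G' \circ \Phi}$ belongs to $\mathcal{F}_{\mathsf{P}}$, and I would check that $F = \Psi \circ \seq{\mathrm{id}, G \circ \Phi}$ does too: on any instance $A$ of $\mathsf{P}$ the value $F'(A)$ is defined, which forces $\Phi(A)$ into the domain of $G'$, i.e., $\Phi(A)$ is an instance of $\mathsf{Q}$, so $G$ and $G'$ agree at $\Phi(A)$ and hence $F(A) = F'(A)$ is a solution of $A$. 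The $\sred$ case is word-for-word the same with $\seq{\mathrm{id}, G \circ \Phi}$ replaced by $G \circ \Phi$.

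I do not expect a genuine obstacle here, since all the mathematical content is packaged in the Galois-connection Proposition; the only point requiring care is the mismatch between ``domain equal to'' and ``domain including'' the set of instances of $\mathsf{Q}$, which the restriction $G \mapsto G'$ above resolves. One could instead bypass $\mathcal{G}_{\mathsf{Q}}$ altogether and simply re-run the proof of the Proposition verbatim with $\mathcal{F}$ and $\mathsf{P}_{\mathcal{F}}$ replaced throughout by $\mathcal{F}_{\mathsf{Q}}$ and $\mathsf{Q}$ --- that proof never actually invokes the common-domain hypothesis --- but I would favor the route through $\mathcal{G}_{\mathsf{Q}}$ so as to remain within the literal statement of the cited Proposition.
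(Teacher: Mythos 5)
Your proof is correct. It differs from the paper's in one genuine respect: the paper applies the Proposition directly with $\mathcal{F} = \mathcal{F}_{\mathsf{Q}}$ and observes that $\mathsf{P}_{\mathcal{F}_{\mathsf{Q}}} = \mathsf{Q}$, but glosses over the fact that $\mathcal{F}_{\mathsf{Q}}$ does not actually satisfy the ``common domain'' hypothesis in the statement of the Proposition (its members need only have domain \emph{including} the instances of $\mathsf{Q}$, and may differ beyond that). Your detour through $\mathcal{G}_{\mathsf{Q}}$, the subset of $\mathcal{F}_{\mathsf{Q}}$ whose members have domain \emph{exactly} the instances of $\mathsf{Q}$, is a clean repair: $\mathcal{G}_{\mathsf{Q}}$ literally satisfies the common-domain hypothesis, $\mathsf{P}_{\mathcal{G}_{\mathsf{Q}}} = \mathsf{Q}$ is straightforward, and your two-directional argument identifying $\mathcal{F}_{\mathsf{P}} \red \mathcal{G}_{\mathsf{Q}}$ with $\mathcal{F}_{\mathsf{P}} \red \mathcal{F}_{\mathsf{Q}}$ (the restriction trick for $\Rightarrow$ and the inclusion $\mathcal{G}_{\mathsf{Q}} \subseteq \mathcal{F}_{\mathsf{Q}}$ for $\Leftarrow$) is correct. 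Your closing remark --- that one could instead simply re-run the Proposition's proof with $\mathcal{F}_{\mathsf{Q}}$ in place of $\mathcal{F}$, since that proof never truly uses the common-domain hypothesis beyond identifying the instance set --- is also accurate and is in effect what the paper tacitly does; your route through $\mathcal{G}_{\mathsf{Q}}$ is slightly longer but has the advantage of invoking the Proposition only on inputs that meet its stated hypotheses. One very minor nit: ``its $<$-least solution'' presumes a canonical choice exists; any fixed selection of one solution per instance serves equally well and avoids that assumption.
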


\begin{proof}
By the proposition, we have that $\mathsf{P} \red \mathsf{P}_{\mathcal{F}_{\mathsf{Q}}}$ if and only if $\mathcal{F}_{\mathsf{P}} \red \mathcal{F}_{\mathsf{Q}}$. But it is easily checked that $\mathsf{P}_{\mathcal{F}_{\mathsf{Q}}} = \mathsf{Q}$. This gives us~(1), and the proof of~(2) is analogous.
\end{proof}

Translations in the reverse direction require an additional assumption.

\begin{corollary}
Let $\mathcal{F}$ and $\mathcal{G}$ each be an arithmetically-definable set of partial functions $2^{\omega} \to 2^{\omega}$ with common domain. If $\mathcal{F}_{\mathsf{P}_{\mathcal{F}}} = \mathcal{F}$ then:
\begin{enumerate}
\item $\mathcal{F} \red \mathcal{G}$ if and only if $\mathsf{P}_{\mathcal{F}} \red \mathsf{P}_{\mathcal{G}}$;
\item $\mathcal{F} \sred \mathcal{G}$ if and only if $\mathsf{P}_{\mathcal{F}} \sred \mathsf{P}_{\mathcal{G}}$.
\end{enumerate}
\end{corollary}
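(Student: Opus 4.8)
The plan is to derive this corollary directly from the Galois connection Proposition above, applied to the principle $\mathsf{P}_{\mathcal{F}}$ together with the function class $\mathcal{G}$. First I would note that, since $\mathcal{G}$ is by hypothesis an arithmetically-definable set of partial functions $2^{\omega} \to 2^{\omega}$ with common domain, the principle $\mathsf{P}_{\mathcal{G}}$ is well-defined. Then, applying the Proposition with $\mathsf{P}_{\mathcal{F}}$ playing the role of $\mathsf{P}$ and $\mathcal{G}$ playing the role of $\mathcal{F}$, one obtains
\[
\mathsf{P}_{\mathcal{F}} \red \mathsf{P}_{\mathcal{G}} \iff \mathcal{F}_{\mathsf{P}_{\mathcal{F}}} \red \mathcal{G},
\]
together with the analogous equivalence for $\sred$ coming from part (2) of the Proposition.

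The second and final step is to invoke the hypothesis $\mathcal{F}_{\mathsf{P}_{\mathcal{F}}} = \mathcal{F}$, which rewrites the right-hand side of the displayed equivalence as $\mathcal{F} \red \mathcal{G}$ (respectively $\mathcal{F} \sred \mathcal{G}$), yielding (1) and (2). I do not expect any genuine obstacle here; the one point worth flagging is why the hypothesis $\mathcal{F}_{\mathsf{P}_{\mathcal{F}}} = \mathcal{F}$ is needed at all. One always has the inclusion $\mathcal{F} \subseteq \mathcal{F}_{\mathsf{P}_{\mathcal{F}}}$, since every $F \in \mathcal{F}$ sends an instance of $\mathsf{P}_{\mathcal{F}}$ — that is, a point of the common domain — to one of its designated solutions. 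The reverse inclusion can fail, however: passing from $\mathcal{F}$ to $\mathsf{P}_{\mathcal{F}}$ retains only the instance--solution relation induced by $\mathcal{F}$, and in general there may be further partial functions realizing that same relation. Assuming $\mathcal{F}_{\mathsf{P}_{\mathcal{F}}} = \mathcal{F}$ is precisely the condition that makes the round trip $\mathcal{F} \mapsto \mathsf{P}_{\mathcal{F}} \mapsto \mathcal{F}_{\mathsf{P}_{\mathcal{F}}}$ the identity, which is exactly what closes the loop in the argument above. By contrast, no assumption on $\mathcal{G}$ is required, since $\mathcal{G}$ enters only through $\mathsf{P}_{\mathcal{G}}$ on one side of the equivalence and through itself on the other, just as the Proposition already arranges.
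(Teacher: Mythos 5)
Your proof is correct and follows the same route as the paper: apply the Galois-connection Proposition with $\mathsf{P}_{\mathcal{F}}$ in the role of the principle and $\mathcal{G}$ in the role of the function class, then use $\mathcal{F}_{\mathsf{P}_{\mathcal{F}}} = \mathcal{F}$ to rewrite the function-side. Your additional remarks on why the hypothesis is needed and why none is needed for $\mathcal{G}$ are accurate and a nice supplement, though not part of the paper's argument.
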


\begin{proof}
Again, we only prove~(1). By the proposition, we have that $\mathcal{F} = \mathcal{F}_{\mathsf{P}_{\mathcal{F}}} \red \mathcal{G}$ if and only if $\mathsf{P}_{\mathcal{F}} \red \mathsf{P}_{\mathcal{G}}$, as desired.
\end{proof}

The additional assumption above is natural, as it encompasses most results from computable analysis. Indeed, the primary objects of study in that context are not reductions between arbitrary sets of functions, but rather, reductions between sets of realizers of multi-functions. For completeness, we include definitions of these concepts (see \cite[Definition 2.2]{Brattka-Gherardi-2011wd} for a more technical version better suited for work with represented spaces).

\begin{definition}
Let $f : 2^{\omega} \to 2^{\omega}$ be a partial multi-valued function. A partial single-valued function $F : 2^{\omega} \to 2^{\omega}$ is a \emph{realizer} for $f$, written $F \vdash f$, if $F(A) \in f(A)$ for each $A$ in the domain of $f$. 
\end{definition}

\noindent Notice now that if $f : 2^{\omega} \to 2^{\omega}$ is a partial multi-valued function, then the set $\mathcal{F} = \{ F : F \vdash f\}$ satisfies $\mathcal{F}_{\mathsf{P}_{\mathcal{F}}} = \mathcal{F}$.

\bibliographystyle{plain}
\bibliography{Papers}

\end{document}